\renewcommand\subsection{\@startsection{subsection}{2}{\z@}%
                                     {-3.25ex\@plus -1ex \@minus -.2ex}%
                                     {-0.01 mm}
                                     {\normalfont\large\bfseries}}
\renewcommand\subsubsection{\@startsection{subsubsection}{2}{\z@}%
                                     {-3.25ex\@plus -1ex \@minus -.2ex}%
                                     {-0.01 mm}
                                     {\normalfont\bfseries}}
\newtheorem{theorem}{Theorem}[section]
\newtheorem{example}[theorem]{Example}
\newtheorem{corollary}[theorem]{Corollary}
\newtheorem{definition}[theorem]{Definition}
\newtheorem{proposition}[theorem]{Proposition}
\newtheorem{lemma}[theorem]{Lemma}
\newtheorem{remark}[theorem]{Remark}
\newtheorem{conjecture}[theorem]{Conjecture}
\def\resp{{\em resp.$\ $}}
\def\cqfd{\hfill $\Box$ \bigskip}
\def\adots{\mathinner{\mkern2mu\raise1pt\hbox{.}
\mkern3mu\raise4pt\hbox{.}\mkern1mu\raise7pt\hbox{.}}}
\def\<{\langle\,}
\def\>{\,\rangle}
\def\cf{{\it cf.$\ $}}
\def\ie{{\em i.e.\,}}
\def\eg{{\em e.g.\,}}
\def\l{\lambda}
\def\a{\alpha}
\def\de{\delta}
\def\b{\beta}
\def\ga{\gamma}
\def\N{{\mathbb N}}
\def\Z{{\mathbb Z}}
\def\C{{\mathbb C}}
\def\Q{{\mathbb Q}}
\def\F{{\cal F}}
\def\P{{\cal P}}
\def\SS{{\cal S}}
\def\g{\mathfrak g}
\def\hg{\widehat{\mathfrak g}}
\def\gl{\mathfrak{gl}}
\def\Sl{\mathfrak{sl}}
\def\slchap{\widehat{\mathfrak{sl}}}
\def\H{\widehat H}
\def\A{{\cal A}}
\def\bar{\overline}
\def\<{\langle}
\def\>{\rangle}
\def\pr{{\rm pr\,}}
\def\PP{{\mathbb P}}
\def\deg{{\rm deg}}
\def\CC{{\cal C}}
\def\M{{\cal M}}
\def\RR{{\cal R}}
\def\le{\leqslant}
\def\ge{\geqslant}
\def\eps{\epsilon}
\def\Si{\Sigma}
\def\si{\sigma}
\def\vp{\varphi}
\def\De{\Delta}
\def\md{{\rm mod}}
\def\D{{\cal D}}
\def\ds{\displaystyle}
\def\uu{{\mathbf u}}
\def\vv{{\mathbf v}}
\def\xx{{\mathbf x}}
\def\zz{{\mathbf z}}
\def\Y{{\mathcal Y}}
\def\vpi{\varpi}
\def\om{\omega}
\def\FM{\mathrm{FM}}
\def\tB{\widetilde{B}}
\def\hy{\widehat{y}}
\def\gg{\mathbf{g}}
\def\Gr{\mathrm{Gr}}
\title{\bf Cluster algebras and quantum affine algebras}
\author{David Hernandez and Bernard Leclerc}
\date{}
\begin{document}
\maketitle

\begin{abstract}
Let $\CC$ be the category of finite-dimensional representations of 
a quantum affine algebra $U_q(\hg)$ of simply-laced type.
We introduce certain monoidal subcategories $\CC_\ell\ (\ell\in\N)$
of $\CC$ and we study their Grothendieck rings using cluster algebras.
\end{abstract}

\setcounter{tocdepth}{1}
{\footnotesize \tableofcontents}

\section{Introduction}
\subsection{} 
Let $\g$ be a simple Lie algebra of type $A_n, D_n$ or $E_n$, and let 
$U_q(\hg)$ denote the corresponding quantum affine algebra,
with parameter $q\in\C^*$ not a root of unity.
The monoidal category $\CC$ of finite-dimensional $U_q(\hg)$-modules
has been studied by many authors from different perspectives
(see \eg \cite{AK,CP,FR,GV,KS,N1}).
In particular its simple objects have been classified by 
Chari and Pressley, and Nakajima has calculated their character 
in terms of the cohomology of certain quiver varieties.

In spite of these remarkable results many basic questions
remain open, and in particular little is known about the 
tensor structure of $\CC$. 

When $\g=\Sl_2$, Chari and Pressley \cite{CP2} have shown
that every simple object is isomorphic to a tensor
product of simple objects 
of a special type called Kirillov-Reshetikhin modules. 
Conversely, they have shown that a tensor product
$S_1\otimes \cdots \otimes S_k$ of Kirillov-Reshetikhin
modules is simple if and only if $S_i\otimes S_j$ is 
simple for every $1\le i<j \le k$.
Moreover, $S_i\otimes S_j$ is simple if and only if 
$S_i$ and~$S_j$ are ``in general position'' (a combinatorial condition
on the roots of the Drinfeld polynomials of $S_i$ and $S_j$). 
Hence, the Kirillov-Reshetikhin modules can be regarded as 
the {\em prime} simple objects of~$\CC$ \cite{CP5}, and one knows which 
products of primes are simple.
As an easy corollary, one can see that the tensor powers 
of any simple object of $\CC$ are simple.

For $\g \not = \Sl_2$, the situation is far more complicated.
Thus, already for $\g=\Sl_3$, we do not know a general factorization 
theorem for simple objects
(see \cite{CP5}, where a tentative list of prime simple objects is conjectured).
In fact, it was shown in \cite{L} that the tensor square of
a simple object of $\CC$ is not necessarily simple in general,
so one should not expect results similar to the $\Sl_2$ case
for other Lie algebras $\g$.

Because of these difficulties, we decide in this paper to
focus on some smaller subcategories. 
We introduce a sequence 
\[
 \CC_0 \subset \CC_1 \subset \cdots \subset \CC_\ell \subset \cdots,
\qquad (\ell\in \N),
\]
of full monoidal subcategories of $\CC$, whose objects are
characterized by certain strong restrictions on the roots
of the Drinfeld polynomials of their composition factors.
By construction, the Grothendieck ring $R_\ell$ of $\CC_{\ell}$ is
a polynomial ring in $n(\ell+1)$ variables, where $n$ is the rank of $\g$.
Our starting point is that $R_\ell$ is naturally equipped with
the structure of a cluster algebra.

Recall that cluster algebras were introduced by Fomin and Zelevinsky
\cite{FZ0} as a combinatorial device for studying canonical bases
and total positivity.
They found immediately lots of applications, including a proof
of a conjecture of Zamolodchikov concerning certain discrete dynamical systems
arising from the thermodynamic Bethe ansatz, called $Y$-systems \cite{FZ1}.
As observed by Kuniba, Nakanishi and Suzuki \cite{KNS}, 
$Y$-systems are strongly related with the representation theory 
of $U_q(\hg)$ via some other systems of functional relations called $T$-systems. 
It was conjectured in \cite{KNS} that the characters of the Kirillov-Reshetikhin
modules are solutions of a $T$-system, and this was later proved
by Nakajima \cite{N2} in the simply-laced case, and by Hernandez in the
general case \cite{H2}. Now it is easy to notice that in the simply-laced
case the equations of a $T$-system are exactly of the same form as
the exchange relations in a cluster algebra. 
This led us to introduce a cluster algebra structure on $R_\ell$ by
using an initial seed consisting of a choice of $n(\ell +1)$ Kirillov-Reshetikhin
modules in $\CC_\ell$. The exchange matrix of this seed encodes 
$n\ell$ equations of the $T$-system satisfied by these Kirillov-Reshetikhin modules.
(Note that the seed contains $n$ frozen variables -- or coefficients -- in the sense
of \cite{FZ0}.)
By definition of a cluster algebra, one can obtain new seeds by
applying sequences of mutations to the initial seed. Then one of our main 
conjectures is that all the new cluster variables produced in this way are
classes of simple objects of $\CC_\ell$. (Note that in general, these
simple objects are no longer Kirillov-Reshetikhin modules.) 

\subsection{}
For $\ell = 0$, the cluster structure of $R_0$ is trivial: there is a 
unique cluster consisting entirely of frozen variables.

The case $\ell = 1$ is already very interesting, and most of this
paper will be devoted to it.
Recall that Fomin and Zelevinsky have classified the cluster algebras
with finitely many cluster variables in terms of finite root systems \cite{FZ2}.
It turns out that for every $\g$ the ring $R_1$ has finitely many cluster
variables, and that its cluster type coincides with the root system of~$\g$.
Therefore, one may expect that the tensor structure of the simple
objects of the category $\CC_1$ can be described in ``a finite way''.
In fact we conjecture that for every $\g$ the category $\CC_1$ behaves as nicely as  
the category $\CC$ for $\Sl_2$, and we prove it for $\g$ of type $A_n$
and $D_4$. 

More precisely, we single out a finite set of simple objects
of $\CC_1$ whose Drinfeld polynomials are naturally labeled by the
set of almost positive roots of $\g$ (\ie, positive roots and negative
simple roots). 
Recall that the almost positive roots are in one-to-one
correspondence with the cluster variables \cite{FZ2}, so we shall
call these objects the {\em cluster simple objects}.
To these objects we add $n$ distinguished simple objects
which we call {\em frozen simple objects}.
Our first claim is that the classes of these objects
in $R_1$ coincide with the cluster variables and frozen 
variables.

Recall also that the cluster variables are grouped into overlapping
subsets of cardinality $n$ called clusters \cite{FZ0}. The number
of clusters is a generalized Catalan number, and they can be identified
with the faces of the dual of a generalized associahedron \cite{FZ1}.
Our second claim is that a tensor product of 
cluster simple objects is
simple if and only if all the objects belong to a common cluster.
Moreover, the tensor product of a frozen simple
object with any simple object is again simple.
It follows that every simple object of $\CC_1$
is a tensor product of cluster simple objects and frozen simple objects.
As a consequence, the tensor powers of any simple object of $\CC_1$
are simple.

To prove this, we show that a tensor product $S_1\otimes \cdots \otimes S_k$
of simple objects of $\CC_1$ is simple if and only if $S_i\otimes S_j$ is
simple for every $i\not =j$. This result is proved uniformly for all types.

Note that only the frozen objects and the cluster objects attached
to positive simple roots and negative simple roots are Kirillov-Reshetikhin
modules. The remaining cluster objects (labelled by the positive
non simple roots) probably deserve to be studied more closely.
 
\subsection{}
When $\ell > 1$ the ring $R_\ell$ has in general infinitely many cluster variables,
grouped into infinitely many clusters.
A notable exception is the case $\g=\Sl_2$, for which $R_\ell$ is a 
cluster algebra of finite type $A_\ell$ in the classification of \cite{FZ2}.
In this special case it follows from \cite{CP2} that, again, 
the classes in $R_\ell$ of the simple objects of $\CC_\ell$ are precisely
the cluster monomials of $R_\ell$.

We conjecture that for arbitrary $\g$ and $\ell$, every cluster monomial of $R_\ell$
is the class of a simple object. We also conjecture that, conversely,
the class of a simple object $S$ in $\CC_\ell$ is a cluster monomial if
and only if $S\otimes S$ is simple. In this case, following \cite{L}, 
we call $S$ a {\em real simple object}. We believe that real simple objects
form an interesting class of irreducible $U_q(\hg)$-modules, and the meaning
of our partial results and conjectures is that their characters are governed 
by the combinatorics of cluster algebras.
 
\subsection{}
Let us now describe the contents of the paper in more detail.

In Section~\ref{sect3} we recall the definition of a cluster algebra,
and we introduce the new notion of monoidal categorification of a cluster
algebra (Definition~\ref{defmoncat}). We show (Proposition~\ref{positivity}) that the existence
of a monoidal categorification gives an immediate answer to some important
open problems in the theory of cluster algebras, like the linear independence
of cluster monomials, or the positivity of the coefficients of their
expansion with respect to an arbitrary cluster. 

In Section~\ref{sect2} we briefly review the theory of finite-dimensional
representations of $U_q(\hg)$ and we introduce the categories $\CC_\ell$.
We also recall the definition of the Kirillov-Reshetikhin modules
and we review the $T$-system of equations that they satisfy.

In Section~\ref{sect4} we introduce some simple objects
$S(\a)$ of $\CC_1$ attached to the almost positive roots $\a$,
and we formulate our conjecture (Conjecture~\ref{mainconjecture})
for the category $\CC_1$.
It states that $\CC_1$ is a monoidal categorification
of a cluster algebra $\A$ with the same Dynkin type as $\g$, 
and that the $S(\a)$ are the cluster simple objects.
We illustrate the conjecture in type $A_3$.

In Section~\ref{sect6} we review the definition and main properties of
the $q$-characters of Frenkel-Reshetikhin. One of the main tools to
calculate them is the Frenkel-Mukhin algorithm which we recall and
illustrate with examples.

In Section~\ref{Secttrunc}, we introduce some truncated versions of the
$q$-characters for $\CC_1$. These new truncated characters are much easier
to calculate and they contain all the information to determine the
composition factors of an object of $\CC_1$.
The main result of this section (Proposition~\ref{prop33}) is an
explicit formula for the truncated $q$-character of $S(\a)$
when $\a$ is a multiplicity-free positive root.
 
In Section~\ref{sectFpol}, we review following \cite{FZ1,FZ4} the $F$-polynomials of
the cluster algebra $\A$. These are variants of the Fibonacci polynomials
of \cite{FZ1}, which are the building blocks of the general solution of
a $Y$-system. They satisfy a functional equation similar to a $T$-system
and each cluster variable can be expressed in terms of its $F$-polynomial
in a simple way (Equation~(\ref{eqFpol1})). 
We show that Conjecture~\ref{mainconjecture}~(i) is equivalent to the
fact that the (normalized) truncated $q$-characters of the cluster simple objects are
equal to the $F$-polynomials, and we prove it for the multiplicity-free roots
(Theorem~\ref{multfreeequal}).

In Section~\ref{sectensprod}, we prove an important tensor product theorem
for the category $\CC_1$ (Theorem~\ref{thtensprod}): if $S_1,\ldots, S_k$ are simple
objects of $\CC_1$, then $S_1\otimes\cdots\otimes S_k$ is simple
if and only if $S_i\otimes S_j$ is simple for every $i\not =j$.

In Section~\ref{sectclusterexp}, we introduce following \cite{FZ1} the notions
of compatible roots and cluster expansion. Because of Theorem~\ref{thtensprod}
and of the existence and uniqueness of a cluster expansion \cite{FZ1}, we reduce
Conjecture~\ref{mainconjecture}~(ii) for a given $\g$ to a finite check:
one has to verify that $S(\a)\otimes S(\b)$ is simple for every pair $(\a,\b)$
of compatible roots. 

In Section~\ref{secttypA} and Section~\ref{secttypD} we prove Conjecture~\ref{mainconjecture} 
in type $A_n$ and $D_4$. Conjecture~\ref{mainconjecture}~{(i)} is proved more generally in type $D_n$,
by showing that the truncated $q$-characters of cluster simple objects are given
by the explicit combinatorial formula of \cite{FZ1} for the Fibonacci polynomials of 
2-restricted roots.

In Section~\ref{sectappl} we present some applications of our results
for $\CC_1$. First we show that the $q$-characters of the simple objects
of $\CC_1$ are solutions of a system of functional equations similar to
a periodic $T$-system. Secondly, we explain that the $l$-weight
multiplicities appearing in the truncated $q$-characters of the 
cluster simple objects are equal to some tensor product
multiplicities.
This is reminiscent of the Kostka duality for representations of
$\Sl_n$,
but in our case it is not limited to type~$A$.
Thirdly, we exploit some known geometric formulas for $F$-polynomials
due to Fu and Keller \cite{FK} to express the coefficients
of the truncated $q$-characters of the simple objects of $\CC_1$
as Euler characteristics of some quiver grassmannians. This is similar
to the Nakajima character formula for standard modules, but our
formula works for simple modules.
  
Finally in Section~\ref{sect9} we state our conjectures for 
$\CC_\ell$ (arbitrary $\ell$)
and illustrate them for $\g = \Sl_2$ (arbitrary $\ell$) where they follow
from \cite{CP2}, and $\g=\Sl_3$ ($\ell = 2$). We also explain how our conjecture
for $\g=\Sl_r$ (arbitrary $\ell$) would essentially follow from a general conjecture 
of \cite{GLS1} about the relation between Lusztig's dual canonical and dual semicanonical bases. 
 
\subsection{}
Kedem \cite{Ke} and Di Francesco \cite{KDF} have studied another connection between 
quantum affine algebras and cluster algebras, based on other
types of functional equations ($Q$-systems and generalized $T$-systems).
Keller \cite{Kel2} has obtained a proof of the periodicity
conjecture for $Y$-systems attached to pairs of simply-laced
Dynkin diagrams using 2-Calabi-Yau categorifications
of cluster algebras.
More recently, Inoue, Iyama, Kuniba, Nakanishi and Suzuki \cite{IIKNS}
have also studied the connection between $Y$-systems, $T$-systems,
Grothendieck rings of $U_q(\hg)$ 
and cluster algebras, motivated by periodicity problems.
These papers do not study the relations
between cluster monomials and irreducible $U_q(\hg)$-modules.

After this paper was submitted for publication, Nakajima \cite{N5}
gave a geometric proof of Conjecture~\ref{mainconjecture} for 
all types $A, D, E$, using a tensor category of perverse sheaves 
on quiver varieties. This category also makes sense for non Dynkin
quivers and Nakajima showed that its Grothendieck ring has a cluster
algebra structure and that all cluster monomials are classes of simple
objects. Thanks to Proposition~\ref{positivity} below, this yields strong 
positivity results for every acyclic cluster algebra with a bipartite seed. 
To the best of our knowledge, our conjecture for $\ell > 1$ remains open.

We have presented our main results 
in several seminars and conferences in 2008
and 2009 (IHP Paris (BL), MSRI Berkeley (BL), 
NTUA Athens (BL), ETH Zurich (DH),
UNAM Mexico (DH), Math. Institute Oxford (DH), MIO Oberwolfach (BL)).
We thank these institutions for their kind invitations. 
Special thanks are due to Arun Ram and MSRI for organizing in spring 2008
a program on combinatorial representation theory where a large part of
this work was done. 
We also thank Keller for his preliminary Oberwolfach report
on this work \cite{Kel}.
Finally we thank Nakajima for helpful comments and stimulating
discussions.

\section{Cluster algebras and their monoidal categorifications}\label{sect3}

\subsection{}
We refer to \cite{FZSurv} for an excellent survey on cluster algebras.  
Here we only recall the main definitions and results.

\subsubsection{}\label{defcluster}
Let $0\le n < r$ be some fixed integers.
If $\widetilde{B} = (b_{ij})$ is an $r \times (r-n)$-matrix
with integer entries, then
the {\it principal part} $B$ of 
$\widetilde{B}$ is the square matrix obtained from 
$\widetilde{B}$ by deleting the last $n$ rows.
Given some $k \in [1,r-n]$ define a new $r \times (r-n)$-matrix 
$\mu_k(\widetilde{B}) = (b_{ij}')$ by
\begin{equation}
b'_{ij}=
\left\{
\matrix{
-b_{ij}\hfill & \mbox{if $i=k$ or $j=k$},\cr 
b_{ij} + {\displaystyle\frac{|b_{ik}|b_{kj} + b_{ik}|b_{kj}|}{2}} & \mbox{otherwise},
}
\right.
\end{equation}
where $i \in [1,r]$ and $j \in [1,r-n]$.
One calls $\mu_k(\widetilde{B})$ the {\it mutation} of the matrix $\widetilde{B}$
in direction $k$.
If $\widetilde{B}$ is an integer matrix whose principal part is
skew-symmetric, then it is 
easy to check that $\mu_k(\widetilde{B})$ is also an integer matrix 
with skew-symmetric principal part.
We will assume from now on that $\widetilde{B}$ has skew-symmetric principal part. 
In this case, one can equivalently encode $\widetilde{B}$ by a quiver $\Gamma$ with vertex set
$\{1,\ldots,r\}$ and with $b_{ij}$ arrows from $j$ to $i$ if $b_{ij}>0$
and $-b_{ij}$ arrows from $i$ to $j$ if $b_{ij}<0$.
Note that $\Gamma$ has no loop nor 2-cycle.

Now Fomin and Zelevinsky define a cluster algebra
$\A(\widetilde{B})$ as follows.
Let $\F = \Q(x_1,\ldots,x_r)$ be the field of rational
functions in $r$ commuting indeterminates 
$\xx = (x_1,\ldots,x_r)$. 
One calls $(\xx,\widetilde{B})$ the {\it initial seed} of~$\A(\widetilde{B})$.
For $1 \le k \le r-n$ define 
\begin{equation}\label{mutationformula}
x_k^* = 
\frac{\prod_{b_{ik}> 0} x_i^{b_{ik}} + \prod_{b_{ik}< 0} x_i^{-b_{ik}}}{x_k}.
\end{equation}
The pair 
$(\mu_k(\xx),\mu_k(\widetilde{B}))$, 
where
$\mu_k(\xx)$ is obtained from $\xx$ by replacing $x_k$ by 
$x_k^*$,
is the {\it mutation} of the seed $(\xx,\widetilde{B})$ in direction $k$. 
One can iterate this procedure and obtain new seeds by mutating 
$(\mu_k(\xx),\mu_k(\widetilde{B}))$ in any direction 
$l\in [1,r-n]$. 
Let $\SS$ denote the set of all seeds obtained from $(\xx,\widetilde{B})$ 
by any finite sequence of mutations.
Each seed of $\SS$ consists of an $r$-tuple of elements of $\F$
called a {\em cluster}, and of a matrix.
The elements of a cluster are its {\em  cluster variables}.
Every seed has $r-n$ neighbours obtained by a single mutation in direction
$1 \le k \le r-n$.
One does not mutate the last $n$ elements of a cluster; they are
called {\em frozen variables} and belong to every cluster.
We then define the {\em cluster algebra}
$\A(\widetilde{B})$ as
the subring of $\F$ generated by all the cluster variables of 
the seeds of $\SS$.
The integer $r-n$ is called the {\em rank} of $\A(\widetilde{B})$.

A {\em cluster monomial} is a monomial in the cluster variables 
of a {\em single} cluster.
Note that the exchange relation (\ref{mutationformula}) is of
the form 
\begin{equation}\label{mutation2}
 x_k x_k^* = m_+ + m_-
\end{equation}
where $m_+$ and $m_-$ are two cluster monomials.

\subsubsection{}\label{clusterres}
The first important result of the theory 
is that every cluster variable $z$
of $\A(\widetilde{B})$ is a Laurent polynomial in
$\xx$ with
coefficients in $\Z$.
It is conjectured that the coefficients are positive.
Note that because of (\ref{mutationformula}), every cluster variable 
can be written as a subtraction free rational expression in $\xx$, 
but this is not enough to ensure the positivity of its Laurent expansion.

The second main result is the classification of {\em cluster algebras
of finite type}, \ie with finitely many different cluster variables.
Fomin and Zelevinsky proved that this happens if and only if there
exists a seed $(\zz, \widetilde{C})$ such that the quiver
attached to the principal part of $\widetilde{C}$
is a Dynkin quiver (that is, an arbitrary orientation of a Dynkin diagram of
type $A, D, E$).
In this case, the cluster monomials form a distinguished subset 
of $\A(\widetilde{B})$, which is conjectured to be a $\Z$-basis
\cite[\S 11]{FZ4}.

If $\A(\widetilde{B})$ is not of finite cluster type, the cluster
monomials do not span it, but it is conjectured that they are linearly
independent. It is an interesting open problem to specify a
``canonical'' $\Z$-basis of $\A(\widetilde{B})$ containing the
cluster monomials. 

\subsection{}
We now propose a natural framework which would yield positive answers
to the above questions.
We say that a simple object $S$ of a monoidal category is {\em prime}
if there exists no non trivial factorization $S\cong S_1 \otimes S_2$.
We say that $S$ is {\em real} if $S\otimes S$ is simple.

\begin{definition}\label{defmoncat}
Let $\A$ be a cluster algebra and let $\M$ be an abelian monoidal category.
We say that $\M$ is a monoidal categorification of $\A$
if the Grothendieck ring of $\M$ is isomorphic to $\A$,
and if 
\begin{itemize}
 \item[{\rm (i)}] 
the cluster monomials of $\A$ are the classes of all the 
real simple objects of $\M$; 
\item[{\rm (ii)}]
the cluster variables of $\A$ (including the frozen ones)
are the classes of all the real prime simple objects of $\M$. 
\end{itemize}
\end{definition}

The existence of a monoidal categorification of a cluster algebra
is a very strong property, as shown by the following result.
\begin{proposition}\label{positivity}
Suppose that the cluster algebra $\A$ has a monoidal categorification $\M$.
Then
\begin{itemize}
 \item[{\rm (i)}]
every cluster variable of $\A$ has a Laurent expansion 
with positive coefficients with respect to any cluster;
\item[{\rm (ii)}]
the cluster monomials of $\A$ are linearly independent.
\end{itemize}
\end{proposition}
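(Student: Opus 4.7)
The plan is to translate both statements into the Grothendieck ring of $\M$ and to exploit the fact that the classes of pairwise non-isomorphic simple objects form a $\Z$-basis. Part (ii) will be essentially immediate, while part (i) combines the Laurent phenomenon recalled in \S\ref{clusterres} with the non-negativity of the composition multiplicities of any honest object of $\M$.

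For (ii), Definition~\ref{defmoncat}(i) says that $S\mapsto [S]$ sets up a bijection between isomorphism classes of real simple objects of $\M$ and cluster monomials of $\A$. Distinct cluster monomials therefore come from non-isomorphic simple objects, and classes of pairwise non-isomorphic simple objects are always $\Z$-linearly independent in any Grothendieck ring. Transporting through the ring isomorphism between the Grothendieck ring of $\M$ and $\A$ gives the linear independence in $\A$.

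For (i), fix any cluster $\xx=(x_1,\ldots,x_r)$ of $\A$. By Definition~\ref{defmoncat}(ii) each $x_i$ is the class $[S_i]$ of a real prime simple object $S_i$. Let $z=[T]$ be an arbitrary cluster variable, with $T$ real prime simple. The Laurent phenomenon (\S\ref{clusterres}) produces non-negative integers $a_1,\ldots,a_r$ and a polynomial $P\in\Z[y_1,\ldots,y_r]$ with
\begin{equation*}
z\cdot x_1^{a_1}\cdots x_r^{a_r} \;=\; P(x_1,\ldots,x_r) \;=\; \sum_\alpha c_\alpha\, x_1^{\alpha_1}\cdots x_r^{\alpha_r},
\end{equation*}
and the goal is to show that every $c_\alpha$ is non-negative. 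Each monomial $x_1^{\alpha_1}\cdots x_r^{\alpha_r}$ is itself a cluster monomial of $\A$, hence by Definition~\ref{defmoncat}(i) equals the class $[U_\alpha]$ of a real simple object $U_\alpha$ of $\M$; moreover the $U_\alpha$ are pairwise non-isomorphic as $\alpha$ varies, because the $x_i$ are algebraically independent in $\A\subset \Q(x_1,\ldots,x_r)$. On the other hand, the left-hand side equals the class in the Grothendieck ring of the honest object $M=T\otimes S_1^{\otimes a_1}\otimes\cdots\otimes S_r^{\otimes a_r}$ of $\M$, which decomposes in the $\Z$-basis of simple classes with non-negative (composition) multiplicities. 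Comparing the two expansions of $[M]$ in this basis of simples forces $c_\alpha\ge 0$ for every $\alpha$.

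No serious obstacle arises once the set-up is correct; the whole argument is a piece of bookkeeping translating between $\A$ and the Grothendieck ring of $\M$. The only subtle point worth flagging is that the correspondence $[S]\leftrightarrow$ cluster monomial must genuinely be a bijection on isomorphism classes — this is precisely what Definition~\ref{defmoncat} guarantees, and it is what allows us to read off the coefficients $c_\alpha$ as genuine composition multiplicities in the final step.
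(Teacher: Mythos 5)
Your proof is correct and follows essentially the same route as the paper: part (ii) comes from the fact that cluster monomials are classes of (pairwise non-isomorphic) simple objects, which form a $\Z$-basis of the Grothendieck group, and part (i) clears the denominator of the Laurent expansion, identifies the left-hand side as the class of the tensor product $T\otimes S_1^{\otimes a_1}\otimes\cdots\otimes S_r^{\otimes a_r}$, and reads off the coefficients as composition multiplicities of the simple objects whose classes are the cluster monomials $x_1^{\alpha_1}\cdots x_r^{\alpha_r}$. The paper's proof is the same bookkeeping, so nothing further is needed.
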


\begin{proof}
If $m$ is a cluster monomial, we denote
by $S(m)$ the simple object with class $[S(m)]=m$.
Let $z$ be a cluster variable, and let 
\[
 z = \frac{N(x_1,\ldots,x_n)}{x_1^{d_1}\cdots x_r^{d_r}}
\]
denote its cluster expansion with respect to the cluster
$\xx = (x_1,\ldots,x_r)$.
Here the numerator $N(x_1,\ldots,x_r)$ is a polynomial
with coefficients in $\Z$.
Multiplying both sides by the denominator, we see that 
$N(x_1,\ldots,x_r)$ is the class of the tensor product
\[
P:=S(z)\otimes S(x_1)^{\otimes d_1}\otimes \cdots \otimes S(x_r)^{\otimes d_r}.
\]
Moreover, since $\xx$ is a cluster, every monomial 
$m=x_1^{k_1}\cdots x_r^{k_r}$ is the class of a simple object 
\[
\Sigma = S(x_1)^{\otimes k_1}\otimes \cdots \otimes S(x_r)^{\otimes k_r}
\]
of $\M$. Hence the coefficient of $m$ in 
$N(x_1,\ldots,x_r)$ is equal to the multiplicity of $\Sigma$
as a composition factor of $P$, thus it is nonnegative. This proves (i).

By definition of a monoidal categorification, the cluster
monomials form a subset of the set of classes of all simple
objects of $\M$, which is a $\Z$-basis of the Grothendieck
group. This proves (ii). 
\cqfd 
\end{proof}

\begin{remark}
{\rm
(i)\ 
In recent years, many examples of categorifications of cluster
algebras have been constructed (see \eg  \cite{MRZ, BMRRT, CC, GLS1, BIRS, CK, GLS2}).
They are quite different from the monoidal categorifications introduced
in this paper. Indeed, these categories are only additive and
have no tensor operation. The multiplication of the cluster algebra
reflects the direct sum operation of the category. We shall call
these categorifications {\em additive}.
Note that there is no analogue of Proposition~\ref{positivity} for additive
categorifications. Although additive categorifications have been helpful for proving positivity
of cluster expansions or linear independence of cluster monomials
in some cases,
this always requires some additional work, for example to show the
positivity of some Euler characteristics. 
Finally, to recover the cluster algebra from its additive categorification
one does not consider the Grothendieck group (which would be too small)
but a kind of ``dual Hall algebra'' constructed via a {\em cluster
character}. This is in general a complicated procedure.

(ii)\ 
In view of the strength and simplicity of Proposition~\ref{positivity}, 
one might wonder whether there exist any examples of monoidal categorifications 
of cluster algebras.
One of the aims of this paper is to produce some examples using representations of quantum 
affine algebras.

(iii)\ 
Let $\M$ be an abelian monoidal category. If $\M$ is a monoidal categorification
of a cluster algebra $\A$, then we get new combinatorial insights about 
the tensor structure of $\M$.
In particular if $\A$ has finite cluster type, we can express any simple
object of $\M$ as a tensor product of finitely many prime objects, and this
yields a combinatorial algorithm to calculate the composition factors 
of a tensor product of simple objects of $\M$.
So this can be a fruitful approach
to study certain interesting monoidal categories $\M$. 
This is the point of view we adopt in this paper.
}
\end{remark}

\section{Finite-dimensional representations of $U_q(\hg)$}\label{sect2}

In this section we briefly review some known results in the representation
theory of quantum affine algebras. For more detailed surveys we refer the reader
to the monograph \cite[chap. 12]{CP} and the recent paper \cite{CH}.  

\subsection{}\label{sect3.1}
Let $\g$ be a simple Lie algebra over $\C$ of type $A_n, D_n$ or $E_n$.
We denote by $I=[1,n]$ the set of vertices of the Dynkin diagram,
by $A=[a_{ij}]_{i,j\in I}$ the Cartan matrix of $\g$,
by $h$ the Coxeter number,
by $\Pi=\{\a_i\mid i\in I\}$ the set of simple roots, by $W$ the Weyl
group, with longest element~$w_0$.

Let $U_q(\hg)$ denote the corresponding quantum affine algebra,
with parameter $q\in\C^*$ not a root of unity.
$U_q(\hg)$ has a Drinfeld-Jimbo presentation, which is a $q$-analogue
of the usual presentation of the Kac-Moody algebra $\hg$.
It also has a second presentation, due to Drinfeld, which is
better suited to study finite-dimensional representations.
There are infinitely many generators 
\[
x^+_{i,r},\ x^-_{i,r},\ h_{i,s},\ k_i,\ k_i^{-1},\ c,\ c^{-1}, 
\qquad (i\in I,\ r\in\Z,\ s\in\Z\setminus \{0\}), 
\]
and a list of relations which we will not repeat (see \eg \cite{FR}).
Remember that $\hg$ can be realized as a central extension
of the loop algebra $\g\otimes \C[t,t^{-1}]$.
If $x^+_i,\,x^-_i,\,h_i\ (i\in I)$ denote the Chevalley generators
of $\g$ then $x^\pm_{i,r}$ is a $q$-analogue of $x^\pm_i\otimes t^r$,
$h_{i,s}$ is a $q$-analogue of $h_i\otimes t^s$,
$k_i$ stands for the $q$-exponential of $h_i\equiv h_i\otimes 1$, 
and $c$ for the $q$-exponential of the central element.

For every $a\in\C^*$ there exists an automorphism $\tau_a$ of $U_q(\hg)$
given by
\[
\tau_a(x^\pm_{i,r}) = a^r x^\pm_{i,r},\quad
\tau_a(h_{i,s}) = a^s h_{i,s},\quad
\tau_a(k_i^{\pm1}) = k_i^{\pm1},\quad
\tau_a(c^{\pm1}) = c^{\pm1}. 
\]
There also exists an involutive automorphism $\si$ given by
\[
\si(x^\pm_{i,r}) = x^\mp_{i,-r},\quad
\si(h_{i,s}) = - h_{i,-s},\quad
\si(k_i^{\pm1}) = k_i^{\mp1},\quad
\si(c^{\pm1}) = c^{\mp1}.  
\]

\subsection{}\label{subCC}
We consider the category $\CC$ of finite-dimensional $U_q(\hg)$-modules
(of type 1).
It is easy to see that if $V$ is an object of $\CC$, then $c$ acts
on $V$ as the identity, and the generators $h_{i,s}$ act by pairwise
commuting endomorphisms of $V$. 

Since $U_q(\hg)$ is a Hopf algebra, $\CC$ is an abelian monoidal category.
It is well-known that $\CC$ is not semisimple.

For an object $V$ in $\CC$ and $a\in\C^*$, we denote by $V(a)$
the pull-back of $V$ under $\tau_a$, and by $V^\si$ the
pull-back of $V$ under $\si$.
The maps $V \mapsto V(a)$ and $V \mapsto V^\si$ give auto-equivalences
of $\CC$.

\subsection{}
Let $I$ denote the set of vertices of the Dynkin diagram of $\g$.
It was proved by Chari and Pressley that
the simple objects $S$ of $\CC$ are parametrized by $I$-tuples 
of polynomials $\pi_S = (\pi_{i,S}(u);\ i\in I)$ in one indeterminate $u$
with coefficients in $\C$ and constant term 1, called 
the {\em Drinfeld polynomials} of $S$.
In particular, for $a\in \C^*$ and $i\in I$ we have a {\em fundamental
module} $V_{i,a}$ which is the simple object with Drinfeld polynomials:
\[
 \pi_{j,V_{i,a}}(u)= \left\{
\matrix{1-au &\mbox{if $j=i$,}\cr
              1 &\mbox{if $j\not = i$.}}
\right.
\]

\subsection{}\label{sect23}
Let $S^*$ denote the dual of $S$. 
The Drinfeld polynomials of $S^*$ are easily deduced 
from those of $S$, namely
\[
 \pi_{i,S^*}(u) = \pi_{i^\vee,S}(q^{-h} u), \qquad (i\in I), 
\]
where $i\mapsto i^\vee$ denotes the involution on $I$ given 
by $w_0(\a_i)=-\a_{i^\vee}$.

The Drinfeld polynomials behave simply under the action of the
automorphisms $\tau_a$, namely, for a simple object $S$ of $\CC$
we have
\[
\pi_{i,S(a)}(u) = \pi_{i,S}(au),\quad 
\qquad (i\in I).
\]
They also behave simply under the action of the
involution $\si$ \cite[Prop. 5.1]{CP3} \cite[Cor. 4.11]{H4}, namely, if 
\[
\pi_{i,S}(u) =  \prod_k(1-a_ku),
\]
then 
\[
\pi_{i^\vee,\,S^\si}(u) = \prod_k(1-a_k^{-1}q^{-h}u).
\]

We also have the following compatibility with tensor products.
If $S_1$ and $S_2$ are simple, and if $S$ is the simple object with
Drinfeld polynomials $\pi_{i,S} := \pi_{i,S_1}\pi_{i,S_2}\ (i\in I)$,
then $S$ is a subquotient of the tensor product $S_1\otimes S_2$. 

\subsection{}
Let $R$ denote the Grothendieck ring of $\CC$. 
It is known \cite[Cor. 2]{FR} that $R$ is the polynomial ring over $\Z$
in the classes $[V_{i,a}]\ (i\in I,\ a\in\C^*)$ of 
the fundamental modules.
 
\subsection{}
Since the Dynkin diagram of $\g$ is a bipartite graph, we have a 
partition $I=I_0\sqcup I_1$ such that every edge connects
a vertex of $I_0$ with a vertex of $I_1$.
The following notation will be very convenient and used in
many places. For $i\in I$ we set
\begin{equation}
\xi_i = 
\left\{
\begin{array}{ll}
0& \mbox{if $i\in I_0$,}\\
1& \mbox{if $i\in I_1$,}
\end{array}
\right.
\qquad
\eps_i=(-1)^{\xi_i}.
\end{equation}
Clearly, the map $i\mapsto \xi_i$ is completely determined
by the choice of $\xi_{i_0}\in \{0,1\}$ for a single vertex~$i_0$,
hence there are only two possible such maps.

\subsection{}
Let $\CC_\Z$ be the full subcategory of $\CC$ whose objects 
$V$ satisfy: 
\begin{itemize}
\item[]
for every composition factor $S$ of $V$ and
every $i\in I$, the roots of the Drinfeld poly\-no\-mial $\pi_{i,S}(u)$ 
belong to $q^{2\Z+\xi_i}$.
\end{itemize}
The Grothendieck ring $R_\Z$ of $\CC_\Z$ is the subring of $R$
generated by the classes
$[V_{i,q^{2k+\xi_i}}]\ (i\in I,\ k\in\Z)$.
It is known that every simple object $S$ of $\CC$ can be written 
as a tensor product $S_1(a_1)\otimes\cdots\otimes S_k(a_k)$ 
for some simple objects $S_1, \ldots, S_k$ of $\CC_\Z$ and
some complex numbers $a_1,\ldots,a_k$ such that
\[
\frac{a_i}{a_j} \not \in q^{2\Z}, \qquad (1\le i < j \le k).
\]
(This follows, for example, from the fact that 
such a tensor product satisfies
the irreducibility criterion in \cite{Cha2}.)
Therefore, the description of the simple objects of $\CC$
essentially reduces to the description of the simple
objects of $\CC_\Z$.

\subsection{} \label{sect2.5}
We will now introduce an increasing sequence of subcategories of $\CC_\Z$. 
Let $\ell\in\N$. (Note that in this paper $\N$ denotes the set of nonnegative
integers, that is, $0\in\N$.)
\begin{definition}
The category $\CC_\ell$ is the full subcategory of $\CC_\Z$ 
consisting of those objects $V$ which satisfy:
\begin{itemize}
\item[]
for every composition factor $S$ of $V$ and
every $i\in I$, the roots of the Drinfeld poly\-no\-mial $\pi_{i,S}(u)$ 
belong to $\{q^{-2k-\xi_i}\mid 0\le k \le \ell\}$.
\end{itemize}  
\end{definition}
Note that for every simple object $S$ of $\CC_\Z$, there exists $k\in\Z$ such 
that $S(q^k)$ belongs to $\CC_\ell$ for some $\ell\in\N$.

\begin{proposition}\label{propCCl}
$\CC_\ell$ is an abelian monoidal category, with Grothendieck ring
the polynomial ring
\[
 R_\ell = \Z\left[[V_{i,q^{2k+\xi_i}}];\ 0\le k\le \ell\right].
\]
\end{proposition}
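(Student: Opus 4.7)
The plan is to verify three properties: (a) $\CC_\ell$ is closed under subquotients in $\CC$ (hence abelian), (b) $\CC_\ell$ is closed under tensor products (hence monoidal), and (c) the Grothendieck ring $R_\ell$ is freely generated as a polynomial ring by the classes $[V_{i,q^{2k+\xi_i}}]$ for $i\in I$ and $0\le k\le\ell$. Properties (a) and (b) both reduce to the stability of the defining condition on the roots of the Drinfeld polynomials of composition factors under the relevant constructions, while (c) combines this with the polynomial description of the ambient ring $R$.

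Property (a) is immediate: the composition factors of a subobject or quotient of $V$ form a subset of those of $V$, so the root condition defining $\CC_\ell$ is automatically inherited. For (b), a d\'evissage on composition series reduces the claim to showing that if $S_1, S_2$ are simple objects of $\CC_\ell$, then every composition factor $T$ of $S_1\otimes S_2$ again lies in $\CC_\ell$. The needed input is the fact that the multiset of roots of each $\pi_{i,T}$ is contained in the union of the multisets of roots of $\pi_{i,S_1}$ and $\pi_{i,S_2}$. This is a standard consequence of the Frenkel--Reshetikhin $q$-character theory reviewed in Section~\ref{sect6}: the $q$-character of $S_1\otimes S_2$ is the product of the $q$-characters of $S_1$ and $S_2$, and the Drinfeld polynomial of any composition factor is read off from its highest $\ell$-weight monomial, whose variables are indexed by the roots already present in $\pi_{i,S_1}$ or $\pi_{i,S_2}$.

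For (c), each fundamental module $V_{i,q^{2k+\xi_i}}$ with $0\le k\le\ell$ lies in $\CC_\ell$, since its only nontrivial Drinfeld polynomial is $1-q^{2k+\xi_i}u$, whose root $q^{-2k-\xi_i}$ belongs to the admissible set. The classes $[V_{i,q^{2k+\xi_i}}]$ are algebraically independent in $R_\ell$ as they are among the polynomial generators of $R$, which is the polynomial ring on all $[V_{i,a}]$ by Frenkel--Reshetikhin (Section~2.5). To see that they generate $R_\ell$, take a simple object $S$ of $\CC_\ell$, write $\pi_{i,S}(u)=\prod_k (1-q^{2k+\xi_i}u)^{m_{i,k}}$, and form the tensor product $T=\bigotimes_{i,k} V_{i,q^{2k+\xi_i}}^{\otimes m_{i,k}}$. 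Iterating the tensor-product compatibility of Section~\ref{sect23} shows that $S$ occurs as a composition factor of $T$ with multiplicity one, and by the argument used for (b) every other composition factor of $T$ has Drinfeld polynomial strictly smaller than $\pi_S$ in the natural partial order by total degree. A straightforward induction on the total degree of $\pi_S$ then expresses $[S]$ as a polynomial in the $[V_{i,q^{2k+\xi_i}}]$. The main obstacle lies in step (b): one must control \emph{all} composition factors of a tensor product of simples, not just the ``top'' subquotient given by Section~\ref{sect23}, and this is precisely where one must invoke the $q$-character machinery rather than the elementary highest-weight compatibility alone.
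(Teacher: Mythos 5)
There is a genuine gap in your step (b), and it is exactly at the point you yourself identify as the crux. The ``needed input'' you invoke --- that the multiset of roots of each $\pi_{i,T}$, for $T$ a composition factor of $S_1\otimes S_2$, is contained in the union of the multisets of roots of $\pi_{i,S_1}$ and $\pi_{i,S_2}$ --- is false. The highest $l$-weight of such a $T$ is a dominant monomial of $\chi_q(S_1)\chi_q(S_2)$, i.e.\ of the form $m_{S_1}m_{S_2}M$ with $M$ a monomial in the $A_{i,a}^{-1}$, and multiplying by $A_{i,a}^{-1}$ introduces \emph{new} variables $Y_{j,a}$ (for $j$ adjacent to $i$) that need not occur in $m_{S_1}m_{S_2}$ at all. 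Concretely (writing $Y_{i,r}$ for $Y_{i,q^r}$ as in \ref{sect621}): in type $A_2$ the product $L(Y_{1,0})\otimes L(Y_{1,2})$ has the composition factor $L(Y_{2,1})$, since $Y_{1,0}Y_{1,2}A_{1,1}^{-1}=Y_{2,1}$ is dominant; and in type $A_3$ the relation $[S(-\a_2)][S(\a_2)]=[F_2]+[S(-\a_1)][S(-\a_3)]$ from Example~\ref{exa14cl} exhibits a composition factor of $L(Y_{2,1})\otimes L(Y_{2,3})$ whose Drinfeld polynomials are supported at nodes $1$ and $3$, absent from both factors. So ``the variables of the highest $l$-weight monomial are indexed by roots already present in the factors'' is not a consequence of the $q$-character theory; what is true, and what the paper actually proves in \ref{qcharconseq}, is weaker but sufficient: by Proposition~\ref{FMmon} the only $A_{i,a}^{-1}$ that can occur in $M$ have $a\in q^{\xi_i+2\N+1}$, and if some $A_{i,q^{\xi_i+2s+1}}^{-1}$ with $s\ge\ell$ occurs (take $s$ maximal), then every $Y_{i,q^{\xi_i+2s+2}}$ in $m_{S_1}m_{S_2}M$ appears with negative exponent, so the monomial cannot be dominant. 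Hence only parameters $q^{\xi_i+2k+1}$ with $0\le k\le\ell-1$ occur, and the new $Y$-variables that do appear stay inside the admissible window $\{Y_{i,q^{\xi_i+2k}}\mid 0\le k\le\ell\}$. Your proof must be repaired along these lines; as written, the asserted mechanism would even ``prove'' the false statement that $\CC_0$-type windows are preserved with the same spectral parameters.

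A secondary problem occurs in your step (c): the claim that every other composition factor of the standard module $T=\bigotimes V_{i,q^{2k+\xi_i}}^{\otimes m_{i,k}}$ has Drinfeld polynomials of strictly smaller \emph{total degree} fails in general, because multiplying by $A_{i,a}^{-1}$ at a trivalent node (types $D$, $E$) \emph{increases} the number of $Y$-variables ($Y_{i,aq}^{-1}Y_{i,aq^{-1}}^{-1}\prod_{j:a_{ij}=-1}Y_{j,a}$ has net degree $+1$ there). The unitriangularity that makes the induction work should instead be phrased with respect to the Nakajima partial order $m''\le m$ (equivalently, descent in the ordinary dominance order of weights $\om(m'')<\om(m)$), under which the argument goes through and yields, as in the paper, that $R_\ell$ is the polynomial ring on the classes $[V_{i,q^{2k+\xi_i}}]$, $0\le k\le\ell$. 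Your steps (a) and the overall architecture are fine, but both the key lemma in (b) and the ordering used in (c) need to be replaced by the correct statements above.
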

The proof of Proposition~\ref{propCCl} will be given in
\ref{qcharconseq}.
It is an easy consequence of the theory of $q$-characters.

\begin{example}\label{exC0}
{\rm
The simple objects of the category $\CC_0$ have a simple description.
Indeed, it follows from \cite[Prop. 6.15]{FM} that all tensor products
of fundamental modules of $\CC_0$ are simple, hence every simple
object of $\CC_0$ is isomorphic to the tensor product of
fundamental modules corresponding to the irreducible factors of 
its Drinfeld polynomials.
Moreover any tensor product of simple objects of $\CC_0$ is simple.
} 
\end{example}

\subsection{}\label{sectchoice}
The definition of $\CC_{\ell}$ depends on the map $i \mapsto \xi_i$
which can be chosen in two different ways. However, if $\g$
is not of type $A_{2n}$, the image of $\CC_{\ell}$
under the auto-equivalence $V \mapsto V^\si(q^{h+2\ell+1})$
is the category defined like $\CC_{\ell}$ but with the opposite
choice of $\xi_i$.
If $\g$ is of type $A_{2n}$, the image of $\CC_{\ell}$
under $V \mapsto V^*(q^{h})$ is the category defined like $\CC_{\ell}$ 
but with the opposite choice of $\xi_i$.
This shows that the choice of $\xi_i$ is in fact irrelevant.

\subsection{}\label{sectKR}
For $i\in I$, $k\in \N^*$ and $a\in\C^*$, the simple object 
$W^{(i)}_{k,a}$ with Drinfeld polynomials
\[
 \pi_{j,W^{(i)}_{k,a}}= \left\{
\matrix{(1-au)(1-aq^2u)\cdots(1-aq^{2k-2}u)
 &\mbox{if $j=i$,}\cr
             \ 1\hfill &\mbox{if $j\not = i$,}}
\right.
\]
is called a {\em Kirillov-Reshetikhin module}.
In particular for $k=1$,
$W^{(i)}_{1,a}$ coincides with the fundamental module $V_{i,a}$.
By convention, $W^{(i)}_{0,a}$ is the trivial representation for every
$i$ and $a$.

The classes $[W^{(i)}_{k,a}]$ in $R$ satisfy the following system of equations
indexed by $i\in I$, $k\in\N^*$, and $a\in\C^*$, called
{\em the $T$-system}: 
\begin{equation}\label{eqTsystem}
[W^{(i)}_{k,a}][W^{(i)}_{k,aq^2}] = [W^{(i)}_{k+1,a}][W^{(i)}_{k-1,aq^2}]
+ \prod_j [W^{(j)}_{k,aq}], 
\end{equation}
where in the right-hand side, the product runs over all vertices
$j$ adjacent to $i$ in the Dynkin diagram.
This was conjectured in \cite{KNS} and proved in \cite{N2, H2}.
Using these equations, one can calculate inductively the expression
of any $[W^{(i)}_{k,a}]$ as a polynomial in the classes 
$[W^{(i)}_{1,a}]$ of the fundamental modules.

\begin{example}\label{exa3}
{\rm
For $\g=\Sl_2$, we have $I=\{1\}$, and we may drop the index $i$ in $[W^{(i)}_{k,a}]$.
The $T$-system reads
\[
[W_{k,a}][W_{k,aq^2}] = [W_{k+1,a}][W_{k-1,aq^2}]
+ 1, \qquad (a\in\C^*,\ k\in\N^*). 
\]
This easily implies that $[W_{k,a}]$ is given by the $k\times k$ determinant:
\begin{equation}\label{eqdet}
[W_{k,a}] =
\left|
\matrix{[W_{1,a}] & 1 & 0 & \cdots &0 \cr
         1& [W_{1,aq^2}] & 1   &\ddots &\vdots \cr
         0& 1 & [W_{1,aq^4}] & \ddots & \vdots             \cr 
         \vdots &\ddots& \ddots &\ddots &\vdots \cr 
          0 &\cdots & 0 &1 & [W_{1,aq^{2k-2}}]
}
\right|
.
\end{equation}
}
\end{example}

\section{The case $\ell=1$: statement of results and conjectures}\label{sect4}

We now focus on the subcategory $\CC_1$.

\subsection{}\label{sect41}
Let $Q=\Z\Pi$ be the root lattice of $\g$.
Let $\Phi\subset Q$ be the root system.
Following \cite{FZ1} we denote by $\Phi_{\ge -1}$ the subset of almost
positive roots, that is,
\[
\Phi_{\ge -1} = \Phi_{>0} \cup (-\Pi) 
\]
consists of the positive roots together with the negatives of the simple roots.
In this section, we attach to each $\b \in \Phi_{\ge -1}$ a simple object $S(\b)$ of $\CC_1$.
\subsubsection{}
To the negative simple root $-\a_i$ we attach the module $S(-\a_i)$
whose Drinfeld polynomials are all equal to~1, except 
$P_{i,S(-\a_i)}$ which is equal to
\begin{equation}
P_{i,S(-\a_i)}(u)=
\left\{
\begin{array}{ll}
1-uq^{2}&\quad \mbox{if}\quad i\in I_0,
\\
1-uq&\quad \mbox{if} \quad i\in I_1.
\end{array}
\right.
\end{equation}
In other words, $S(-\a_i)$ is equal to the fundamental module
$V_{i,q^2}$ if $i\in I_0$, and to the fundamental module
$V_{i,q}$ if $i\in I_1$. 

\subsubsection{}
To the simple root $\a_i$ we attach the module $S(\a_i)$
whose Drinfeld polynomials are all equal to~1, except 
$P_{i,S(\a_i)}$ which is equal to 
\begin{equation}
P_{i,S(\a_i)}(u)=
\left\{
\begin{array}{ll}
1-u &\quad \mbox{if}\quad i\in I_0,
\\
1-uq^3&\quad \mbox{if}\quad i\in I_1.
\end{array}
\right.
\end{equation}
In other words, $S(\a_i)$ is equal to the fundamental module
$V_{i,1}$ if $i\in I_0$, and to the fundamental module
$V_{i,q^3}$ if $i\in I_1$. 

\subsubsection{}
To $\b = \sum_{i\in I} b_i \a_i\in\Phi_{>0}$ we attach
the module $S(\ga)$ whose Drinfeld polynomials are
\begin{equation}
P_{i,S(\b)} = \left(P_{i,S(\a_i)}\right)^{b_i}\,,\qquad (i\in I).
\end{equation} 
If $\b$ is not a simple root, this is \emph{not} a Kirillov-Reshetikhin
module.

\subsubsection{}
For $i\in I$, we denote by $F_i$ the simple module whose Drinfeld polynomials
are all equal to~1, except $P_{i,F_i}$ which is equal to
\begin{equation}
P_{i,F_i}(u)=
\left\{
\begin{array}{ll}
(1-uq^0)(1-uq^2) &\quad \mbox{if}\quad i\in I_0,
\\
(1-uq)(1-uq^{3}) &\quad \mbox{if}\quad i\in I_1.
\end{array}
\right.
\end{equation}
This is a Kirillov-Reshetikhin module. 
The classes $[F_i]$ will play the r\^ole of frozen cluster variables
in the Grothendieck ring $R_1$.

\subsection{}\label{sectgraph}
We define a quiver $\Gamma$ with $2n$ vertices as follows.
We start from a copy of the Dynkin diagram oriented in such a way
that every vertex of $I_0$ is a source, and every vertex of $I_1$
is a sink. For every $i\in I$ we then add a new vertex $i'$ and
an arrow $i \leftarrow i'$ if $i\in I_0$ (\resp $i \rightarrow i'$ if $i\in I_1$). 

\begin{example}
{\rm
Let $\g$ be of type $A_3$. We choose $I_0 = \{1,3\}$.
The quiver $\Gamma$ is 
\[
\matrix{1& \leftarrow & 1' \cr 
         \downarrow &&  \cr
 2& \rightarrow & 2' \cr 
         \uparrow &&\cr
3& \leftarrow & 3' 
}
\]
}
\end{example} 

Put $I'=\{i' \mid i\in I\}$. 
It is often convenient to identify $I$ with $[1,n]$, $I'$ with $[n+1,2n]$,
and $i'$ with $i+n$. This will be done freely in the sequel.
As explained in \ref{defcluster} we can attach a matrix $\widetilde{B} = (b_{ij})$
to $\Gamma$. This is a $2n\times n$-matrix with set of column indices $I$, 
and set of row indices $I \cup I'$, the vertex set of $\Gamma$.
The entry $b_{ij}$ is equal to 1 if there is an arrow from $j$ to $i$
in $\Gamma$, to -1 if there is an arrow from $i$ to $j$
in $\Gamma$, and to 0 otherwise.

\begin{example}
{\rm
We continue the previous example. We have
\[
 \widetilde{B}=
\pmatrix{0 & -1 & 0\cr 
         1 & 0 & 1\cr 
         0 & -1 & 0 \cr 
         -1 & 0 & 0 \cr 
         0 & 1 & 0 \cr 
         0 & 0 & -1
}.
\]

}
\end{example}  

\subsection{}\label{clusterA1}
Let $\A=\A(\widetilde{B})$ be the cluster algebra attached as 
in \ref{defcluster} to the initial seed $(\xx,\widetilde{B})$,
where $\xx=(x_1,\ldots,x_{2n})$.
This is a cluster algebra of rank $n$ with $n$ frozen variables.
By construction, the principal part of $\widetilde{B}$ is
a skew-symmetric matrix encoded by a Dynkin quiver (with 
sink-source orientation) of the same Dynkin type as $\g$.
As recalled in \ref{clusterres}, it follows from \cite{FZ2}
that $\A$ has finitely many cluster variables.
Moreover, the (non-frozen) cluster variables are naturally
labelled by $\Phi_{\ge -1}$ via their denominator \cite[Th.1.9]{FZ2}. 
Let $x[\b]$ denote the cluster
variable attached to $\b\in\Phi_{\ge -1}$, 
and let $f_i=x_{i+n}$ denote the frozen variable attached to 
$i'\equiv i+n \in I'$. 

\begin{example}\label{exa9}
{\rm
We continue the previous examples. 
We have 
\[
 \Phi_{\ge -1} = \{-\a_1,\,-\a_2,\,-\a_3,\,\a_1,\,\a_2,\,\a_3,\,\a_1+\a_2,\,\a_2+\a_3,\,\a_1+\a_2+\a_3\}.
\]
The cluster algebra $\A$ is the subring of $\F=\Q(x_1,x_2,x_3,f_1,f_2,f_3)$
generated by 
\[
x[-\a_1]=x_1,\ x[-\a_2]=x_2,\ x[-\a_3]=x_3,\ f_1,\ f_2,\ f_3,
\] 
and the following cluster variables
\[
 \begin{array}{rll}
x[\a_1] &=& \ds\frac{x_2+f_1}{x_1},\\[4mm]
x[\a_2] &=& \ds\frac{x_1x_3+f_2}{x_2},\\[4mm]
x[\a_3] &=& \ds\frac{x_2+f_3}{x_3},\\[4mm]
x[\a_1+\a_2] &=& \ds\frac{f_2x_2+f_1x_1x_3+f_1f_2}{x_1x_2},\\[4mm]
x[\a_2+\a_3] &=& \ds\frac{f_2x_2+f_3x_1x_3+f_2f_3}{x_2x_3},\\[4mm]
x[\a_1+\a_2+\a_3] &=& \ds\frac{f_2x_2^2+f_1f_3x_1x_3+f_1f_2x_2+f_2f_3x_2+f_1f_2f_3}{x_1x_2x_3}.\\[4mm]
\end{array}
\]
}
\end{example} 

\begin{lemma}\label{lem10}
The cluster algebra $\A$ is equal to the polynomial ring in the
$2n$ variables 
\[
x[-\a_i],\ x[\a_i],\quad (i\in I).
\] 
\end{lemma}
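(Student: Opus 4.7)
The plan is to establish both inclusions between $\A$ and the subring $B := \Z[x[-\a_i], x[\a_i] : i\in I]$. The containment $B \subseteq \A$ is immediate: each of the $2n$ generators of $B$ is a cluster variable of $\A$, either one of the initial variables $x[-\a_i] = x_i$ or the outcome $x[\a_i]$ of a single mutation of the initial seed at direction $i$.

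The first substantive step is to exhibit each frozen variable $f_i$ as an element of $B$. Inspecting the arrows of $\Gamma$ at vertex $i\in I$ and applying the mutation formula (\ref{mutationformula}), one gets in both cases $i \in I_0$ and $i \in I_1$ the exchange relation
\[
 x[-\a_i]\cdot x[\a_i] = f_i + \prod_{j\sim i} x[-\a_j],
\]
the alternative between $I_0$ and $I_1$ merely swapping the roles of the two summands on the right since it swaps which arrow at $i$ is incoming and which is outgoing. Solving for $f_i$ gives $f_i \in B$, so $B$ contains the entire initial seed $(x_1,\ldots,x_n,f_1,\ldots,f_n)$. Since those variables freely generate the fraction field $\F$, of transcendence degree $2n$, the $2n$ generators $x[\pm\a_i]$ of $B$ must themselves be algebraically independent. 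Hence $B$ is a polynomial ring in $2n$ variables, and a UFD with $\mathrm{Frac}(B) = \F$.

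To complete the proof, I must show $\A \subseteq B$. Since $\A$ is generated as a ring by its cluster variables and frozen variables, and the latter lie in $B$ by the previous step, it suffices to show $x[\b] \in B$ for each non-simple $\b \in \Phi_{>0}$. I would proceed by induction on the distance in the (finite, connected) exchange graph of $\A$ of a cluster containing $x[\b]$ from the initial cluster; the base case (distance $\le 1$) is precisely the set of $x[\pm\a_i]$, which are in $B$ by definition. For the inductive step, take a shortest mutation path from the initial cluster to a cluster containing $x[\b]$, and consider the final exchange relation $x[\b]\cdot x[\b^*] = m_+ + m_-$: the variable $x[\b^*]$ and all cluster variables appearing in $m_\pm$ belong to a cluster of strictly smaller distance, hence lie in $B$ by the induction hypothesis. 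The Laurent phenomenon applied in that closer cluster expresses $x[\b]$ as a Laurent polynomial in elements of $B$, and combined with the UFD property of $B$ this forces $x[\b] \in B$.

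\emph{Main obstacle.} The subtle point is the final divisibility step: having $x[\b]\cdot x[\b^*] = m_+ + m_-$ with $x[\b^*], m_\pm \in B$ only gives a priori that $x[\b] \in \mathrm{Frac}(B) = \F$, and one needs the full strength of the UFD structure of $B$ together with a concrete Laurent expansion of $x[\b]$ in $B$-variables (from the Laurent phenomenon applied in the closer cluster) to promote this to genuine membership in $B$. Once this is handled, the remainder of the argument is a mechanical induction along the exchange graph, whose finiteness in cluster-finite type guarantees termination.
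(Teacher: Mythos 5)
There is a genuine gap, and it sits exactly where you flagged it: the containment $\A\subseteq B$. Your inductive step only knows that the variables of the ``closer'' cluster lie in $B$, and that $x[\b]$ is a Laurent polynomial in them. But a Laurent polynomial in elements of a UFD need not lie in the UFD (think of $1/z$ with $z\in B$), and the UFD property of $B$ cannot be brought to bear unless you know something about the prime factorizations of those cluster variables inside $B$ --- which the induction hypothesis does not provide (an element of $B$ produced deep in the exchange graph could a priori share prime factors with the numerator or with other cluster variables in uncontrolled ways). Likewise the single relation $x[\b]\,x[\b^*]=m_++m_-$ with $x[\b^*],m_\pm\in B$ only places $x[\b]$ in $\mathrm{Frac}(B)$. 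To rule out denominators one must compare Laurent expansions of $x[\b]$ with respect to at least two clusters whose exchanged variables are \emph{coprime} in $B$; a single expansion, however concrete, can never do this. So the argument as written does not prove $\A\subseteq B$, and this is precisely the nontrivial half of the lemma.

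The step can be repaired, but the repair is essentially the theorem the paper quotes rather than a ``mechanical induction''. The paper's proof is a citation of Berenstein--Fomin--Zelevinsky: since the seed is acyclic, the standard monomials in $x_1,x_1',\ldots,x_n,x_n'$ (i.e.\ in the $x[-\a_i],x[\a_i]$) form a basis of $\A$ over $\Z[f_i]$ \cite[Cor.~1.21]{BFZ}, and substituting $f_i=x_ix_i'-\prod_{j\neq i}x_j^{-a_{ij}}$ turns this into a $\Z$-basis of monomials in the $2n$ variables; algebraic independence then comes for free from the basis statement (your transcendence-degree argument for independence, and your derivation of $f_i\in B$ from the initial exchange relations, are fine and agree with the paper). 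If you want a self-contained version of your route, the right move is not to induct along the exchange graph but to use the Laurent phenomenon with respect to the initial cluster \emph{and} each of its $n$ one-step mutations: the variables occurring there are exactly the polynomial generators $x[-\a_i],x[\a_i]$ of $B$, hence pairwise non-associate primes, and comparing the expansions (whose denominators at the mutated cluster omit $x_i$ and include $x[\a_i]$, coprime to $x_i$) forces every $x_i$-power out of the denominator. Even then one must check that frozen variables do not occur in denominators --- again a statement of BFZ upper-bound type --- so in substance your approach ends up re-proving the acyclicity result the paper invokes in one line.
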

\begin{proof}
This follows from \cite{BFZ}.
Indeed, $\A$ is an acyclic cluster algebra, and $x[-\a_i],\ x[\a_i]$
are the generators denoted by $x_i,\, x'_i$ in \cite{BFZ}.
The monomials in $x_1,\, x'_1,\ldots, x_n,\, x'_n$ which contain no product
of the form $x_jx'_j$ are called {\em standard}, and by \cite[Cor. 1.21]{BFZ} they form a basis of $\A$
over the ring $\Z[f_i \mid i\in I]$. 
It then follows from the relations $f_i=x_ix'_i-\prod_{j\not = i}x_j^{-a_{ij}}$
that the set of all monomials in $x_1,\, x'_1,\ldots, x_n,\, x'_n$ is a basis of $\A$
over $\Z$. 
\cqfd 
\end{proof}

\begin{example}\label{exa10}
{\rm
We continue Example~\ref{exa9}.
The generators of $\A$ can be expressed as polynomials in 
\[
 x[-\a_1],\ x[-\a_2],\ x[-\a_3],\ x[\a_1],\ x[\a_2],\ x[\a_3],\ 
\]
as follows: 
\[
\begin{array}{rll}
f_1 &=& x[-\a_1]x[\a_1]-x[-\a_2],\\[2mm]
f_2 &=& x[-\a_2]x[\a_2]-x[-\a_1]x[-\a_3],\\[2mm]
f_3 &=& x[-\a_3]x[\a_3]-x[-\a_2],\\[2mm]
x[\a_1+\a_2] &=& x[\a_1]x[\a_2]-x[-\a_3],\\[2mm]
x[\a_2+\a_3] &=& x[\a_2]x[\a_3]-x[-\a_1],\\[2mm]
x[\a_1+\a_2+\a_3] &=& x[\a_1]x[\a_2]x[\a_3]-x[-\a_1]x[\a_1]-x[-\a_3]x[\a_3]+x[-\a_2].
\end{array}
\]
}
\end{example}

\subsection{}\label{sect44}
We have seen in \ref{sect2.5} that the Grothendieck ring $R_1$
is the polynomial ring in the classes of the $2n$ fundamental modules
of $\CC_1$:
\[
 S(-\a_i),\ S(\a_i),\qquad (i\in I).
\]
By Lemma~\ref{lem10}, the assignment 
\[
 x[-\a_i]\mapsto [S(-\a_i)],\quad x[\a_i] \mapsto [S(\a_i)],\qquad (i\in I),
\]
extends to a ring isomorphism $\iota$ from $\A$ to $R_1$. 

We can now state the main theorem-conjecture of this section.
\begin{conjecture}\label{mainconjecture}
\begin{itemize}
 \item[\rm(i)] 
We have 
\[
\iota(x[\b]) = [S(\b)],\quad \iota(f_i) = [F_i],\qquad (\b\in\Phi_{\ge -1},\ i\in I).
\]
\item[\rm(ii)]
If we identify $\A$ to $R_1$ via $\iota$, 
$\CC_1$ becomes a monoidal categorification of $\A$. 
Moreover, the class in $R_1$ of any simple object of
$\CC_1$ is a cluster monomial (in other words, every simple
object of $\CC_1$ is real). 
\end{itemize}
\end{conjecture}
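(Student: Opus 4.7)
The natural plan is to exploit the exact match between the $T$-system relations of Kirillov--Reshetikhin modules and the cluster exchange relations in $\A$, and then reduce the categorification statement (ii) to a finite combinatorial check on pairs of compatible roots.

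For part (i), I would begin with the first layer of mutations from the initial seed. Mutating at a non-frozen vertex $i$ produces the exchange relation
\begin{equation}
x[-\a_i]\, x[\a_i] \ =\ \prod_{j\sim i} x[-\a_j] \ +\ f_i,
\end{equation}
while the $T$-system (\ref{eqTsystem}) at level $k=1$, evaluated at the spectral parameter $a=1$ if $i\in I_0$ and $a=q$ if $i\in I_1$, yields
\begin{equation}
[S(-\a_i)]\, [S(\a_i)] \ =\ \prod_{j\sim i}[S(-\a_j)] \ +\ [W^{(i)}_{2,a}].
\end{equation}
Comparing Drinfeld polynomials identifies $W^{(i)}_{2,a}$ with the module $F_i$ of the definition, so $\iota$ sends the frozen variable $f_i$ to $[F_i]$ and the cluster variable $x[\a_i]$ to $[S(\a_i)]$ simultaneously. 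For a non-simple positive root $\b$, the variable $x[\b]$ is reached by further mutations whose combinatorics is encoded by Fomin--Zelevinsky $F$-polynomials. The plan would be to introduce truncated $q$-characters of the simples of $\CC_1$ and to prove that the normalized truncated $q$-character of $S(\b)$ coincides with the $F$-polynomial attached to $x[\b]$: both satisfy a common recursion of shape $XX'=Y+Z$ with identical initial data, which pins them down uniquely. Making this rigorous requires an explicit computation of the truncated $q$-characters, best done via the Frenkel--Mukhin algorithm and already feasible for the multiplicity-free roots.

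For part (ii), assuming (i), the game is to identify cluster monomials with classes of real simple objects. The main tool I would use is a tensor product theorem for $\CC_1$ asserting that $S_1\otimes\cdots\otimes S_k$ is simple iff each pairwise product $S_i\otimes S_j$ is; this should follow from an analysis of Chari's irreducibility criterion applied to the Drinfeld polynomials of the $S(\pm\a_i)$, which are restricted to a very small set of roots by definition of $\CC_1$. Combined with the existence and uniqueness of cluster expansions in the finite-type cluster algebra $\A$, this reduces the categorification statement to a single bi-implication: for every pair $(\a,\b)$ of almost positive roots, $S(\a)\otimes S(\b)$ is simple if and only if $\a$ and $\b$ are compatible (\ie~lie in a common cluster). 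The ``only if'' direction is immediate from uniqueness of the cluster expansion---non-compatibility forces the expansion of the product $x[\a]\,x[\b]$ to involve several monomial summands, hence more than one composition factor. The ``if'' direction is the crux and constitutes the main obstacle: although compatible pairs can be enumerated by hand in types $A_n$ and $D_4$ (and everything is trivial for $\Sl_2$), a uniform proof for all simply-laced types appears to need substantial new input, most plausibly a geometric realization on quiver varieties or an interpretation via dual canonical bases.
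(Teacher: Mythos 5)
Your outline reproduces the paper's overall architecture (initial seed matching the level-one $T$-system, reduction of (i) to the identity $\widetilde{\chi}_q(S(\b))_{\le 2}=F_{\tau_-(\b)}$, a pairwise tensor product criterion plus cluster expansions reducing (ii) to a finite check), but as it stands it has genuine gaps at each of the three places where the real work happens. For part (i), the assertion that the truncated $q$-characters and the $F$-polynomials ``satisfy a common recursion of shape $XX'=Y+Z$ with identical initial data'' is circular: there is no a priori reason the classes $[S(\b)]$, or their truncated characters, obey the cluster exchange recursion --- that is precisely what Conjecture~\ref{mainconjecture}~(i) asserts, and proving those exchange identities in $R_1$ is essentially as hard as (ii). The paper instead computes both sides explicitly: Proposition~\ref{prop33} gives a closed formula for $\chi_q(S(\a))_{\le 2}$ for multiplicity-free roots, matched against the Fomin--Zelevinsky combinatorial formula (Proposition~\ref{propFZ}) in Theorem~\ref{multfreeequal}; the multiplicity-two roots in type $D$ require a separate inductive argument (Theorem~\ref{th2proveD}, Lemma~\ref{longD4}) which your plan does not address. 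Also, ``compute via the Frenkel--Mukhin algorithm'' is not automatic: the algorithm can fail for non-Kirillov--Reshetikhin simples of $\CC_1$ (Example~\ref{exa24}), and its validity for the $S(\a_J)$ is only obtained a posteriori (Corollary~\ref{correg}).

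For part (ii), the tensor product theorem does not ``follow from an analysis of Chari's irreducibility criterion applied to the Drinfeld polynomials'': that criterion governs tensor products of Kirillov--Reshetikhin/fundamental modules, whereas the simples of $\CC_1$ are not such products in general. The paper's proof of Theorem~\ref{thtensprod} needs the cyclicity theorem for suitably ordered tensor products of fundamental modules (Theorem~\ref{thK}), Damiani's triangularity of the coproduct on the $h_{i,r}$ (Proposition~\ref{Dami}), the analysis of the subspaces $S_{\ge 3}$ via $S^\pm$, and a duality argument \`a la Chari--Pressley; none of this is sketched. Moreover your reduction is incomplete: besides ``compatible $\Rightarrow S(\a)\otimes S(\b)$ simple'' one must also prove that $F_i\otimes F_j$ and $F_i\otimes S(\a)$ are simple and, crucially for clause (ii) of Definition~\ref{defmoncat}, that each $S(\a)$ with $\a\in\Phi_{>0}$ is \emph{prime} (Corollary~\ref{corprime}, plus a separate argument for the highest root in $D_4$); primeness is absent from your plan. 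Finally, the finite check itself --- the case analysis of compatible pairs carried out in Sections~\ref{secttypA} and~\ref{secttypD} using the explicit truncated characters --- is where most of the paper's effort lies and is not carried out, so the proposal proves the statement in no type, not even $A_n$ or $D_4$.
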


The conjecture will be proved in Section~\ref{secttypA} for $\g$ of type $A_n$. 
In Section~\ref{secttypD} we prove it for $\g$ of type $D_4$,
and we prove (i) for $\g$ of type $D_n$.

Conjecture~\ref{mainconjecture}~(ii) immediately implies the following
\begin{corollary}\label{cor13}
\begin{itemize}
\item[\rm (i)] The category $\CC_1$ has finitely many prime simple objects,
namely, the cluster simple objects $S(\b)\ (\b\in \Phi_{\ge -1})$, and
the frozen simple objects $F_i\ (i\in I)$.
\item[\rm (ii)] Each simple object of $\CC_1$ is a tensor product of primes
whose classes belong to one of the clusters of $\A$.

\item[\rm (iii)] All the tensor powers of a simple object of $\CC_1$ are simple.

\item[\rm (iv)] The cluster monomials form a $\Z$-basis of $\A$.
\cqfd
\end{itemize}
\end{corollary}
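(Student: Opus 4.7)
The plan is to derive each of the four parts directly from Conjecture~\ref{mainconjecture}~(ii), combined with Definition~\ref{defmoncat} and Proposition~\ref{positivity}. Each part is essentially a translation of cluster-algebraic data into the tensor structure of $\CC_1$ under the isomorphism $\iota$.

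For part (i), I would observe that under a monoidal categorification, Definition~\ref{defmoncat}~(ii) identifies the set of cluster variables (frozen included) with the set of classes of real prime simple objects. Since Conjecture~\ref{mainconjecture}~(ii) asserts that \emph{every} simple object of $\CC_1$ is real, the primes of $\CC_1$ are precisely those simples whose class is a cluster variable. By Conjecture~\ref{mainconjecture}~(i) these classes are exactly the $[S(\b)]$ for $\b\in\Phi_{\ge -1}$ together with the frozen classes $[F_i]$. Finally, since $\widetilde{B}$ has Dynkin type (as noted in~\ref{clusterA1}), the classification of finite-type cluster algebras recalled in~\ref{clusterres} guarantees finiteness of this list.

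For part (ii), let $S$ be simple. By Conjecture~\ref{mainconjecture}~(ii) the class $[S]$ is a cluster monomial, i.e.\ $[S] = \prod_j y_j^{m_j}$ where the $y_j$ are cluster variables lying in a single cluster. By part~(i) each $y_j = [P_j]$ for a prime simple $P_j$. Consider the tensor product $T := \bigotimes_j P_j^{\otimes m_j}$. Then $[T] = [S]$ in $R_1$, so $T$ and $S$ have the same class. Because $[S]$ is the class of a real simple object (namely $S$ itself), and simple objects of $\CC_1$ are determined up to isomorphism by their class in $R_1$, we obtain $S \cong T$, proving the factorization.

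For part (iii), if $[S] = m$ is a cluster monomial living in a cluster $\mathbf{c}$, then $m^k$ is again a monomial in the variables of $\mathbf{c}$, hence a cluster monomial. By Definition~\ref{defmoncat}~(i) this is the class of a (real) simple object, so $S^{\otimes k}$ has simple class in $R_1$; arguing as in part~(ii), $S^{\otimes k}$ is simple. For part (iv), Proposition~\ref{positivity}~(ii) applied to the monoidal categorification $\CC_1 \to \A$ already gives linear independence of cluster monomials. For spanning, note that Conjecture~\ref{mainconjecture}~(ii) says every class of a simple object of $\CC_1$ is a cluster monomial; conversely every cluster monomial is such a class by Definition~\ref{defmoncat}. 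Since the classes of simple objects form a $\Z$-basis of the Grothendieck ring $R_1 \cong \A$, the cluster monomials do as well. No part of the argument is genuinely difficult once the conjecture is granted; the only thing to handle carefully is the fact that isomorphism of simples is detected by their class in $R_1$, which is standard since the classes of pairwise non-isomorphic simples are $\Z$-linearly independent in the Grothendieck ring.
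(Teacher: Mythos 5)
Your proposal is correct and matches the paper's intent: the paper presents Corollary~\ref{cor13} as an immediate consequence of Conjecture~\ref{mainconjecture}~(ii) together with Definition~\ref{defmoncat} and Proposition~\ref{positivity}, giving no further argument, and your write-up simply spells out those same deductions (including the key point that equality of classes in $R_1$ forces a tensor product to be simple and isomorphic to the given simple, by linear independence of classes of simples in the Grothendieck group).
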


\begin{example}\label{exa14cl}
{\rm
We continue the previous examples. 
By Corollary~\ref{cor13}, for $\g$ of type $A_3$ the category $\CC_1$
has 12 prime simple objects:
\[
\begin{array}{c}
 S(-\a_1),\ S(-\a_2),\ S(-\a_3),\ S(\a_1),\ S(\a_2),\ S(\a_3),\ S(\a_1+\a_2),\ S(\a_2+\a_3),\ S(\a_1+\a_2+\a_3),
\\[2mm]
F_1,\ F_2,\ F_3.
\end{array} 
\]
The  first 6 modules are fundamental representations, $F_1, F_2, F_3$ are Kirillov-Reshetikhin
modules, $S(\a_1+\a_2)$, $S(\a_2+\a_3)$ are minimal affinizations
(in the sense of \cite{Cha}), but $S(\a_1+\a_2+\a_3)$ is {\em not} a minimal affinization.
Its underlying $U_q(\g)$-module is isomorphic to 
$V(\varpi_1+\varpi_2+\varpi_3)\oplus V(\varpi_2)$.
(Here we denote by $\varpi_i \ (i\in I)$ the fundamental weights
of $\g$, and by $V(\l)$ the irreducible $U_q(\g)$-module with
highest weight $\l$.)

Using the known dimensions of the fundamental modules and 
the formulas of Example~\ref{exa10}, we can easily calculate their
dimensions, namely (in the same order):
\[
 4,\ 6,\ 4,\ 4,\ 6,\ 4,\ 20,\ 20,\ 70,\ 10,\ 20,\ 10.
\]

The cluster algebra $\A$ has 14 clusters:
\[
 \begin{array}{c}
\{-\a_1,\,-\a_2,\,-\a_3\},\ 
\{\a_1,\,-\a_2,\,-\a_3\},\ 
\{-\a_1,\,\a_2,\,-\a_3\},\ 
\{-\a_1,\,-\a_2,\,\a_3\},\ 
\{\a_1,\,-\a_2,\,\a_3\},\ 
\\[2mm]
\{-\a_1,\,\a_2,\,\a_2+\a_3\},\
\{-\a_1,\,\a_3,\,\a_2+\a_3\},\
\{-\a_3,\,\a_2,\,\a_1+\a_2\},\ 
\{-\a_3,\,\a_1,\,\a_1+\a_2\},\
\\[2mm]
\{\a_1+\a_2,\,\a_2,\,\a_2+\a_3\},\
\{\a_1,\,\a_3,\,\a_1+\a_2+\a_3\},\
\{\a_1,\,\a_1+\a_2,\,\a_1+\a_2+\a_3\},\ 
\\[2mm]
\{\a_3,\,\a_2+\a_3,\,\a_1+\a_2+\a_3\},\ 
\{\a_1+\a_2,\,\a_2+\a_3,\,\a_1+\a_2+\a_3\}.
\end{array}
\]
Here we have written for short $\b$ instead of $x[\b]$ and
we have omitted the three frozen variables which belong to every
cluster.

The simple objects of $\CC_1$ are exactly
all tensor products of the form
\[
S(\b_1)^{\otimes k_1}\otimes S(\b_2)^{\otimes k_2}\otimes S(\b_3)^{\otimes k_3}\otimes 
F_1^{\otimes l_1}\otimes F_2^{\otimes l_2}\otimes F_3^{\otimes l_3},
\quad (k_1,k_2,k_3,l_1,l_2,l_3)\in\N^6,
\]
in which $\{\b_1,\,\b_2,\,\b_3\}$ runs over the 14 clusters listed above. 
Moreover, simple objects multiply (in the Grothendieck ring) as the
corresponding cluster monomials in $\A$.
For instance, the exchange formula
\[
 x[-\a_2]x[\a_2] = f_2 + x[-\a_1]x[-\a_3]
\]
shows that the tensor product $S(-\a_2)\otimes S(\a_2)$
has two composition factors isomorphic to $F_2$ and
$S(-\a_1)\otimes S(-\a_3)$. 
}
\end{example} 

\section{$q$-characters}

\label{sect6}

\subsection{}
An essential tool for our proof of Conjecture~\ref{mainconjecture}
in type $A$ and $D$ is the theory of $q$-characters
of Frenkel and Reshetikhin \cite{FR}. We shall recall briefly 
their definition and main properties. For more details see \eg \cite{CH}.

\subsubsection{}
Recall from \ref{subCC} that if
$V$ is a finite-dimensional $U_q(\hg)$-module, the endomorphisms
of $V$ which represent the generators $h_{i,m}$ commute with each other.
Moreover, by Drinfeld's presentation the $k_i$ are pairwise
commutative, and they commute with all the $h_{i,m}$.
Hence we can write $V$ as a finite direct sum of common generalized eigenspaces
for the simultaneous action of the $k_i$ and of the $h_{i,m}$.
These common generalized eigenspaces are called the
{\em l-weight-spaces} of $V$. (Here {\em l} stands for ``loop'').
The {\em $q$-character of $V$} is a Laurent polynomial with positive
integer coefficients in some indeterminates
$Y_{i,a}\ (i\in I, a\in\C^*)$,
which encodes the decomposition of~$V$
as the direct sum of its {\em l}-weight-spaces .

More precisely, the eigenvalues of the $h_{i,m}\ (m>0)$ 
in an {\em l}-weight-space $W$ of $V$ 
are always of the form
\begin{equation}\label{eq24}
\frac{q^m-q^{-m}}{m(q-q^{-1})}
\left(
\sum_{r=1}^{k_i}(a_{ir})^m-\sum_{s=1}^{l_i}(b_{is})^m
\right)
\end{equation}
for some nonzero complex numbers $a_{ir}, b_{is}$.
Moreover, they completely determine 
the eigenvalues of the $h_{i,m}\ (m<0)$ and of the $k_i$ on $W$.
We encode this collection of eigenvalues with 
the Laurent monomial
\begin{equation}\label{eq23}
 \prod_{i\in I}\left(
\prod_{r=1}^{k_i} Y_{i,a_{ir}} \prod_{s=1}^{l_i} Y_{i,b_{is}}^{-1}
\right),
\end{equation}
and the coefficient
of this monomial in the $q$-character of $V$ is the dimension of 
$W$ \cite[Prop. 2.4]{FM}.
The collection of eigenvalues (\ref{eq24}) is called the {\em l-weight}
of $W$. By a slight abuse, we shall often say that the {\em l}-weight
of $W$ is the monomial (\ref{eq23}).

Let $\Y = \Z[Y_{i,a}^\pm ; i\in I, a\in\C^*]$, and denote by
$\chi_q(V)\in\Y$ the $q$-character of $V\in\CC$.
One shows that $\chi_q(V)$ depends only on the class of $V$
in the Grothendieck ring $R$, and that the induced map
$\chi_q : R \to \Y$ is an injective ring homomorphism.

\begin{example}\label{exsl2}
{\rm
Let $\g=\Sl_2$.
Then $I=\{1\}$, and we can drop the index $i\in I$.
Hence 
\[
\Y = \Z[Y_a^\pm ; a\in\C^*].
\] 
One calculates easily the $q$-character of the fundamental module 
$W_{1,a}$. It is two-dimensional, and decomposes as a sum of 
two common eigenspaces:
\begin{equation}\label{eqW1}
 \chi_q(W_{1,a})=Y_a+Y_{aq^2}^{-1}.
\end{equation}
From the identity 
\[
[W_{1,a}][W_{1,aq^2}] = [W_{2,a}][W_{0,aq^2}]+ 1=[W_{2,a}]+1,
\]
(see Example~\ref{exa3}), one deduces that
\[
\chi_q(W_{2,a})=Y_aY_{aq^2}+Y_aY_{aq^4}^{-1}+Y_{aq^2}^{-1}Y_{aq^4}^{-1}.
\]
One can calculate similarly the $q$-character of every Kirillov-Reshetikhin
module for $U_q(\slchap_2)$ (see below Example~\ref{exKRsl2}).
}
\end{example}

\subsubsection{}
$U_q(\hg)$ has a natural subalgebra isomorphic to $U_q(\g)$,
hence every $V\in\CC$ can be regarded as a $U_q(\g)$-module by restriction.
The {\em l}-weight-space decomposition of $V$ is a refinement
of its decomposition as a direct sum of $U_q(\g)$-weight-spaces. 
Let $P$ be the weight lattice of $\g$, with basis given by the fundamental
weights $\vpi_i\ (i\in I)$.
We denote by $\om$ the $\Z$-linear map from $\Y$ to $\Z[P]$ defined by
\begin{equation}
 \om\left(\prod_{i,a}Y_{i,a}^{u_{i,a}}\right) = \sum_i \left(\sum_a u_{i,a}\right) \vpi_i.
\end{equation}
If $W$ is an {\em l}-weight-space of $V$ with {\em l}-weight the 
Laurent monomial $m\in \Y$, then $W$ is a subspace of the $U_q(\g)$-weight-space
with weight $\om(m)$.
Hence, the image $\om(\chi_q(V))$ of the $q$-character of $V$ is
the ordinary character of the underlying $U_q(\g)$-module.

For $i\in I$ and $a\in\C^*$ define
\begin{equation}\label{eqrootA}
 A_{i,a} = Y_{i,aq}Y_{i,aq^{-1}}\prod_{j\not = i}Y_{j,a}^{a_{ij}}.
\end{equation}
(Note that, because of our general assumption that $\g$ is simply-laced 
(see \S\ref{sect3.1}), for $i\not = j$ we have $a_{ij}\in\{0,-1\}$.)
Thus $\om(A_{i,a})=\a_i$, and the $A_{i,a}\ (a\in \C^*)$ should be viewed as affine 
analogues of the simple root~$\a_i$.
Following \cite{FR}, we define a partial order on the set $\M$ 
of Laurent monomials in the $Y_{i,a}$ by setting:
\[
m \le m'\quad \Longleftrightarrow \quad \mbox{$\ds\frac{m'}{m}$ is a monomial in the $A_{i,a}$.}
\]
This is an affine analogue of the usual partial order on $P$,
defined by $\l \le \l'$ if and only if $\l'-\l$ is a sum of simple roots $\a_i$.
 
Let $S$ be a simple object of $\CC$ with Drinfeld polynomials
\begin{equation}\label{eqDr}
 \pi_{i,S}(u) = \prod_{k=1}^{n_i}(1-ua_k^{(i)}),\qquad (i\in I).
\end{equation}
Then the subset $\M(S)$ of $\M$ consisting of all the monomials occuring in $\chi_q(S)$ has a 
unique maximal element with respect to $\le$, which is equal to
\begin{equation}\label{eqmon}
m_S=\prod_{i\in I}\prod_{k=1}^{n_i}Y_{i,a_k^{(i)}}
\end{equation}
and has coefficient $1$ \cite[Th. 4.1]{FM}. This is the {\em highest weight monomial}
of $\chi_q(S)$.
The one-dimensional {\em l}-weight-space of $S$ with {\em l}-weight
$m_S$ consists of the {\em highest-weight vectors} of $S$, that is,
the {\em l}-weight vectors $v\in S$ such that $x_{i,r}^+v=0$ for every 
$i\in I$ and $r\in\Z$. 

A monomial $m\in\M$ is called {\em dominant} if it does not contain
negative powers of the variables $Y_{i,a}$. The highest weight
monomial $m_S$ of an irreducible $q$-character $\chi_q(S)$ is always dominant.
We will denote by $\M_+$ the set of dominant monomials.

Conversely, we can associate to any dominant monomial as in (\ref{eqmon})
a unique set of Drinfeld polynomials given by (\ref{eqDr}).
Hence, we can equivalently parametrize the isoclasses of simple objects
of $\CC$ by $\M_+$. 
In the sequel, the simple module whose $q$-character
has highest weight monomial $m\in\M_+$ will be denoted by $L(m)$.
To summarize, we have
\[
 \chi_q(L(m)) = m\left(1 + \sum_p M_p\right),
\]
where all the $M_p$ are monomials in the variables $A_{i,a}^{-1}$.
It will sometimes be convenient to renormalize the $q$-characters
and work with
\begin{equation}\label{eqchit}
 \widetilde{\chi}_q(L(m)) := \frac{1}{m}\chi_q(L(m)) = 1 + \sum_p M_p.
\end{equation}
This is a polynomial with positive integer coefficients in the variables
$A_{i,a}^{-1}$.
\begin{example}\label{exKRsl2}
{\rm 
We continue Example~\ref{exsl2} and describe the $q$-characters
of all the simple objects of $\CC$ for $\g=\Sl_2$.
For $a\in\C^*$, we have $A_a = Y_{aq}Y_{aq^{-1}}$.
For $k\in\N$, put 
$m_{k,a}=Y_{a}Y_{aq^2}\cdots Y_{aq^{2k-2}}$.
It follows from (\ref{eqdet}) and (\ref{eqW1}) that for the Kirillov-Reshetikhin modules we have \cite{FR}:
\begin{equation}\label{eqKR}
 \chi_q(W_{k,a})=m_{k,a}\left(1+A_{aq^{2k-1}}^{-1}\left(1+A_{aq^{2(k-1)-1}}^{-1}
\left(1+\cdots\left(1+A_{aq^3}^{-1}(1+A_{aq}^{-1})\right)\cdots \right)\right)\right).
\end{equation}
We call {\em $q$-segment of origin $a$ and length $k$}
the string of complex numbers 
\[
\Si(k,a)=\{a,\ aq^2,\ \cdots, aq^{2k-2}\}.
\]
Two $q$-segments are said to be in {\em special position} if one does 
not contain the other, and their union is a $q$-segment.
Otherwise we say that there are in {\em general position}.
It is easy to check that every finite multi-set 
$\{b_1,\ldots, b_s\}$ of elements of $\C^*$ 
can be written uniquely as a union of segments $\Si(k_i,a_i)$ 
in such a way that
every pair $(\Si(k_i,a_i),\,\Si(k_j,a_j))$ is in general position. 
Then, Chari and Pressley \cite{CP2} have proved that the simple 
module $S$
with Drinfeld polynomial 
\[
\pi_{S}(u)=\prod_{m=1}^s(1-ub_m)
\] 
is isomorphic to the tensor product of Kirillov-Reshetikhin modules $\bigotimes_i W_{k_i,a_i}$.
Hence $\chi_q(S)$ can be calculated using (\ref{eqKR}).
} 
\end{example}

An important consequence of the existence and uniqueness of
the highest $l$-weight of a simple module is:
\begin{proposition} {\rm \cite{FM}} \label{prop21}
Let $V$ and $W$ be two objects of $\CC$. If $\chi_q(V)$ and 
$\chi_q(W)$ have the same dominant monomials with the same multiplicities,
then $\chi_q(V)=\chi_q(W)$.
\end{proposition}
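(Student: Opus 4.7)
The plan is to reduce the statement to a lemma about linear independence of the $q$-characters of simple objects, and then to extract the coefficients of a putative Jordan--H\"older decomposition by descending induction on the partial order $\le$ on $\M$. Recall from the excerpt that the classes $\{[L(m)] \mid m \in \M_+\}$ form a $\Z$-basis of $R$, that $\chi_q : R \to \Y$ is an injective ring homomorphism, and that $\chi_q(L(m)) = m\bigl(1+\sum_p M_p\bigr)$ where each $M_p$ is a nontrivial monomial in the $A_{i,a}^{-1}$. Thus every monomial occurring in $\chi_q(L(m))$ is $\le m$, and the coefficient of $m$ itself is exactly $1$.

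Write $[V]=\sum_{m\in\M_+} c_m[L(m)]$ and $[W]=\sum_{m\in\M_+} d_m[L(m)]$; these are finite sums because $V$ and $W$ have finite length. To conclude it suffices to show $c_m=d_m$ for every $m$, and this I will do by descending induction along $\le$. The triangularity recalled above gives, for any dominant monomial $m_0$,
\[
[\chi_q(V):m_0] \;=\; c_{m_0} \;+\; \sum_{m' \in \M_+,\ m' > m_0} c_{m'}\,[\chi_q(L(m')):m_0],
\]
and the analogous identity for $W$.

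Let $S=\{m\in\M_+\mid c_m\neq d_m\}$, which is finite since only finitely many simples appear in the composition series of $V$ and $W$. Assume for contradiction that $S\neq\emptyset$ and pick $m_0\in S$ maximal for $\le$. For every dominant $m'>m_0$ the inductive hypothesis (i.e.\ maximality of $m_0$ in $S$) gives $c_{m'}=d_{m'}$, so subtracting the two identities above yields
\[
[\chi_q(V):m_0]-[\chi_q(W):m_0] \;=\; c_{m_0}-d_{m_0}.
\]
Since $m_0$ is dominant, the hypothesis of the proposition forces the left-hand side to vanish, whence $c_{m_0}=d_{m_0}$, contradicting $m_0\in S$. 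Therefore $S=\emptyset$, $[V]=[W]$ in $R$, and $\chi_q(V)=\chi_q(W)$. I do not foresee any serious obstacle: the only subtlety worth flagging is that one should choose $m_0$ maximal inside the finite set $S$ rather than globally in $\M_+$, because the latter need not contain a maximal element; the finiteness of $S$, which makes the descending induction legitimate, is precisely what the finite-dimensionality of $V$ and $W$ supplies.
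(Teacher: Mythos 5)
Your proof is correct and rests on the same mechanism as the paper's: the unitriangularity of $\chi_q(L(m))$ with respect to the order $\le$ (unique highest monomial $m$ with coefficient $1$, all others strictly smaller), combined with an induction along maximal dominant monomials. The paper phrases this as a greedy algorithm that peels off $\chi_q(L(m))$ for a maximal dominant monomial $m$ and iterates, while you phrase it as comparing coefficients of $[V]$ and $[W]$ in the basis of simple classes via a maximal element of the finite set where they differ; this is the same argument in slightly different packaging (yours incidentally never needs positivity of the intermediate differences).
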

Indeed, to express $\chi_q(V)$ as a sum of irreducible $q$-characters, one
can use the following simple procedure. Pick a dominant 
monomial $m$ in $\chi_q(V)$ which is maximal with respect
to the partial order $\le$. Then $\chi_q(V)-\chi_q(L(m))$ is
a polynomial with nonnegative coefficients.
If $\chi_q(V)-\chi_q(L(m))\not = 0$, then pick a maximal dominant
monomial $m'$ in $\chi_q(V)-\chi_q(L(m))$ and consider
$\chi_q(V)-\chi_q(L(m))-\chi_q(L(m'))$. And so forth.
After a finite number of steps one gets the decomposition of
$\chi_q(V)$ into irreducible characters using only its subset
of dominant monomials.

\subsection{}
We now recall an algorithm due to Frenkel and Mukhin \cite{FM} which
attaches to any $m\in\M_+$ a polynomial $\FM(m)$, equal
to the $q$-character of $L(m)$ under certain conditions.

\subsubsection{}
We first introduce some notation.
Given $i\in I$, we say that $m\in\M$ is {\em $i$-dominant} if every 
variable $Y_{i,a}\ (a\in\C^*)$ occurs in $m$ with non-negative
exponent, and in this case we write $m\in\M_{i,+}$.
Using the known irreducible $q$-characters of $U_q(\slchap_2)$
(see Example~\ref{exKRsl2}) we define for $m\in\M_{i,+}$ a polynomial $\vp_i(m)$ as follows.
Let $\bar{m}$ be the monomial obtained from $m$ by replacing
$Y_{j,a}$ by $Y_a$ if $j = i$ and by $1$ if $j\not = i$. 
Then there is a unique irreducible $q$-character 
$\chi_q(\bar{m})$ of $U_q(\slchap_2)$
with highest weight monomial $\bar{m}$.
Write $\chi_q(\bar{m})=\bar{m}(1 + \sum_p \bar{M}_p)$, where
the $\bar{M}_p$ are monomials in the variables $A_a^{-1}\ (a\in\C^*)$.
Then one sets $\vp_i(m) := m(1 + \sum_p M_p)$ where each $M_p$
is obtained from the corresponding $\bar{M}_p$ by replacing
each variable $A_a^{-1}$ by $A_{i,a}^{-1}$.

Suppose now that $m\in\M_+$.
We define the subset $D_m$ of $\M$ as follows.
A monomial $m'$ belongs to $D_m$ if there is a finite
sequence $(m_0=m, m_1,\ldots, m_t=m')$ such that for all
$r=1,\ldots,t$ there exists $i\in I$ such that $m_{r-1}\in\M_{i,+}$
and $m_r$ is a monomial occuring in $\vp_i(m_{r-1})$.
Clearly, $D_m$ is countable and every $m'\in D_m$ satisfies $m'\le m$.
We can therefore write 
\[
D_m=\{m_0=m,\ m_1,\ m_2,\ \dots\ \}
\] 
in such a way that if $m_t\le m_r$ then $t\ge r$.

Finally, we define inductively some sequences of integers 
$(s(m_r))_{r\geq 0}$ and $(s_i(m_r))_{r\geq 0}\ (i\in I)$
as follows. 
The initial condition is $s(m_0)=1$ and $s_i(m_0)=0$ for all $i\in I$.
For $t\geq 1$, we set
\begin{eqnarray}
s_i(m_t)&=&\ds\sum_{r<t,\ m_r\in\M_{i,+}}(s(m_{r})-s_i(m_{r}))[\vp_i(m_{r}) : m_t],\quad (i\in I),\label{eqsim}
\\[2mm]
s(m_t) &=& \max\{s_i(m_t)\mid i\in I\},\label{eqsm}
\end{eqnarray}
where $[\vp_i(m_{r}) : m_t]$ denotes the coefficient of $m_t$
in the polynomial $\vp_i(m_r)$.
We then put
\begin{equation}\label{eqFM}
\FM(m) := \sum_{r\geq 0}s(m_r) m_r.
\end{equation} 

\begin{example}\label{exa22}
{\rm
Take $\g$ of type $A_2$ and $m=Y_{1,1}Y_{2,q^3}$.
We have 
\[
\begin{array}{l}
\vp_1(m) = Y_{1,1}Y_{2,q^3} + Y_{1,q^2}^{-1}Y_{2,q}Y_{2,q^3},\\[2mm]
\vp_2(m) = Y_{1,1}Y_{2,q^3} + Y_{1,1}Y_{1,q^4}Y_{2,q^5}^{-1},
\end{array}
\]
so we put $m_1=Y_{1,q^2}^{-1}Y_{2,q}Y_{2,q^3}$ and 
$m_2=Y_{1,1}Y_{1,q^4}Y_{2,q^5}^{-1}$.
The monomial $m_1$ is 2-dominant, and we have
\[
\vp_2(m_1)= Y_{1,q^2}^{-1}Y_{2,q}Y_{2,q^3} + Y_{1,q^2}^{-1}Y_{1,q^4}Y_{2,q}Y_{2,q^5}^{-1}
+ Y_{1,q^4}Y_{2,q^3}^{-1}Y_{2,q^5}^{-1};
\]
similarly, $m_2$ is 1-dominant and we have
\[
 \vp_1(m_2)= Y_{1,1}Y_{1,q^4}Y_{2,q^5}^{-1} + Y_{1,q^2}^{-1}Y_{1,q^4}Y_{2,q}Y_{2,q^5}^{-1}
+ Y_{1,1}Y_{1,q^6}^{-1}+Y_{1,q^2}^{-1}Y_{1,q^6}^{-1}Y_{2,q}.
\]
We set $m_3=Y_{1,q^2}^{-1}Y_{1,q^4}Y_{2,q}Y_{2,q^5}^{-1}$, 
$m_4 = Y_{1,q^4}Y_{2,q^3}^{-1}Y_{2,q^5}^{-1}$,
$m_5 = Y_{1,1}Y_{1,q^6}^{-1}$, and 
$m_6 = Y_{1,q^2}^{-1}Y_{1,q^6}^{-1}Y_{2,q}$.
We see that $m_3$ is neither 1-dominant nor 2-dominant.
The monomial $m_4$ is 1-dominant and 
\[
 \vp_1(m_4)=Y_{1,q^4}Y_{2,q^3}^{-1}Y_{2,q^5}^{-1}+Y_{1,q^6}^{-1}Y_{2,q^3}^{-1};
\]
similarly, $m_5$ and $m_6$ are 2-dominant and 
\[
 \vp_2(m_5)= Y_{1,1}Y_{1,q^6}^{-1}, \qquad 
\vp_2(m_6)=Y_{1,q^2}^{-1}Y_{1,q^6}^{-1}Y_{2,q} + Y_{1,q^6}^{-1}Y_{2,q^3}^{-1}.
\]
Finally the monomial $m_7= Y_{1,q^6}^{-1}Y_{2,q^3}^{-1}$ is neither 1-dominant nor 2-dominant.
So 
\[
D_m = \{m,\ m_1,\ m_2,\ m_3,\ m_4,\ m_5,\ m_6,\ m_7\}.
\]
It is easy to check that $\FM(m)$ is the sum of all the elements of $D_m$
with coefficient 1, and that we have
\[
\FM(m)=\vp_1(m)+\vp_1(m_2)+\vp_1(m_4)=\vp_2(m)+\vp_2(m_1)+\vp_2(m_5)+\vp_2(m_6). 
\]
} 
\end{example}

\subsubsection{}
Let $m\in\M_+$. We say that the simple module $L(m)$ is {\em minuscule} 
if $m$ is the only dominant monomial of $\chi_q(L(m))$. (In \cite{N3}
these modules are called {\em special}.)

\begin{theorem}{\rm\cite{FM}\ }
If $L(m)$ is minuscule then $\chi_q(L(m))=\FM(m)$. Moreover, all the fundamental
modules are minuscule.
\end{theorem}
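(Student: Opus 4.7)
My plan is to prove both statements by analyzing the restrictions of $\chi_q(L(m))$ to each of the $U_q(\slchap_2)$-subalgebras attached to the simple roots, and to combine this with the uniqueness property of $q$-characters given by Proposition~\ref{prop21}. The first input I would record is a structural fact about arbitrary $V\in\CC$: for each $i\in I$, if one treats the variables $Y_{j,a}\ (j\neq i)$ as scalars, then $\chi_q(V)$ decomposes uniquely as a positive integral combination
\[
\chi_q(V)=\sum_{m''\in\M_{i,+}} c_{i,m''}\,\vp_i(m''),
\]
where the multiplicities $c_{i,m''}\in\N$ are determined by descending induction on~$\le$ through
\[
c_{i,m''}=[\chi_q(V):m'']-\sum_{m'''\in\M_{i,+},\ m''' > m''} c_{i,m'''}\,[\vp_i(m'''):m''].
\]
This reflects the decomposition, at the Grothendieck-ring level, of the restriction of $V$ to $U_q(\slchap_2)$ as a sum of Chari--Pressley simples whose $q$-characters are the $\vp_i(m'')$ recalled in Example~\ref{exKRsl2}.

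The core of the proof is then an induction on $t$ in the enumeration $D_m=\{m_0=m,m_1,\ldots\}$, comparing $s(m_t)$ to the true coefficient $\sigma(m_t):=[\chi_q(L(m)):m_t]$. The joint inductive claim is that $s(m_t)=\sigma(m_t)$ and $s(m_t)-s_i(m_t)=c_{i,m_t}$ for every $i\in I$, with $c_{i,m_t}:=0$ when $m_t\notin\M_{i,+}$. Granting this for all $r<t$, the defining formula~(\ref{eqsim}) for $s_i(m_t)$ matches exactly the partial sum computing $\sigma(m_t)-c_{i,m_t}$, because the enumeration places every $m_r$ contributing to $\vp_i(m_r)$ above $m_t$ at an index $r<t$. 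Taking the maximum over $i$, we get $s(m_t)=\sigma(m_t)-\min_i c_{i,m_t}$, and the minuscule hypothesis enters decisively: for $m_t\neq m$, non-dominance supplies some $i_0$ with $m_t\notin\M_{i_0,+}$, whence $c_{i_0,m_t}=0$ and $s(m_t)=\sigma(m_t)$. A parallel descending induction shows that $D_m$ contains the whole support of $\chi_q(L(m))$: for $m'\neq m$ in that support, non-dominance yields an index $j$ with $m'\notin\M_{j,+}$, so $m'$ appears in $\vp_j(m''')$ for some $j$-dominant $m'''$ in the support strictly above $m'$, which lies in $D_m$ by induction. These two statements together yield $\FM(m)=\chi_q(L(m))$.

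The minuscule-ness of the fundamental modules is a separate, combinatorial verification. I would argue that starting from $m=Y_{i,a}$, no nontrivial product of factors $A_{j,b}^{-1}$ from~(\ref{eqrootA}) produces a dominant monomial: since each $A_{j,b}^{-1}$ introduces factors $Y_{j,bq}^{-1}Y_{j,bq^{-1}}^{-1}$ that can only be compensated by a further $A_{j,bq^{\pm 2}}^{-1}$ (which itself introduces fresh negative contributions), one can track the cascade of inverse factors type by type. In types $A_n$ and $D_n$ the resulting sequences admit a transparent combinatorial description (as in \cite{FM}); in type $E$ the check is finite and can be carried out by running the FM algorithm explicitly for each of the fundamental modules.

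The main obstacle is aligning the FM recursion with the $\slchap_2$-decomposition term by term: this requires that the enumeration of $D_m$ refines the partial order $\le$ on all relevant $\vp_i$-expansions simultaneously, so that the bookkeeping behind $s(m_t)-s_i(m_t)=c_{i,m_t}$ survives the induction. Once this is in place, the minuscule hypothesis removes every potential discrepancy, and the only remaining work is the type-dependent but elementary verification that the fundamental modules satisfy the hypothesis.
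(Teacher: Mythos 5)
The paper does not actually prove this statement: it is quoted from Frenkel--Mukhin \cite{FM}, so what you have written is in effect a reconstruction of the proof in \cite{FM}. For the first assertion your outline is essentially their argument: the joint induction identifying $s(m_r)-s_i(m_r)$ with the coefficient of $\vp_i(m_r)$ in the decomposition of \ref{belong}, run together with the descending induction showing that every monomial of $\chi_q(L(m))$ lies in $D_m$, is exactly how the minuscule hypothesis is used, and the compatibility of the enumeration of $D_m$ with the order $\le$ is what makes the bookkeeping close up. Two points you should make explicit: the two inductions must be carried out jointly (the identity $s_i(m_t)=\sigma(m_t)-c_{i,m_t}$ needs all $i$-dominant monomials of the support lying strictly above $m_t$ to have already been enumerated, with the correct coefficients), and monomials of $D_m$ that do \emph{not} occur in $\chi_q(L(m))$ must be shown to receive $s(m_t)=0$; this follows from the nonnegativity of the inductively known coefficients, but it is part of the induction, not a consequence of it.

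The genuine gap is in the second assertion. Verifying that the fundamental modules are minuscule ``by running the FM algorithm explicitly'' in type $E$ is circular: the algorithm is only known to compute $\chi_q(L(m))$ \emph{under} the minuscule hypothesis, so the absence of dominant monomials in $\FM(Y_{i,a})$ says nothing a priori about the dominant monomials of the true character (and the sufficient condition of Remark~\ref{rem25}~(ii) does not help, since it presupposes knowledge of those dominant monomials). Likewise, the ``cascade of inverse factors'' sketch is a heuristic, not a proof: a negative variable created by some $A_{j,b}^{-1}$ could in principle be cancelled by contributions arriving from several adjacent directions, and nothing in your argument rules this out. The missing idea in \cite{FM} is the notion of a \emph{right-negative} monomial, which this paper itself invokes in the proof of Proposition~\ref{prop28}: one proves, by induction on the actual $q$-character of $L(Y_{i,a})$ using the $U_q(\slchap_2)$-restrictions of \ref{belong} (not the algorithm), that every monomial other than $Y_{i,a}$ is right-negative, the key facts being that $A_{j,b}^{-1}$ is right-negative, that products of right-negative monomials are right-negative, and that a right-negative monomial cannot be dominant. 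This yields a uniform, type-independent proof of minusculeness of the fundamental modules and removes any need for a case-by-case check in type $E$.
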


It was proved in \cite{N2} that Kirillov-Reshetikhin modules are also
minuscule.
Unfortunately, there exist simple modules for which the Frenkel-Mukhin
algorithm fails, as shown by the next example. For another example in type $C_3$
see \cite{NN}.
\begin{example}\label{exa24}
{\rm 
Take $\g$ of type $A_2$ and $m=Y_{1,1}^2Y_{2,q^3}$.
Clearly, $L(m)$ is a simple object of $\CC_1$, and using
Conjecture~\ref{mainconjecture}~(ii) (which will be proved in Section~\ref{secttypA} in
type $A_n$), we have in the notation of \ref{sect41}
\[
 L(m)\cong S(\a_1)\otimes S(\a_1+\a_2) = L(Y_{1,1})\otimes L(Y_{1,1}Y_{2,q^3}).
\]
It is easy to calculate that for $\g$ of type $A_{n+1}$ and $a\in\mathbb{C}^*$, we have
\[
\chi_q(L(Y_{1,a}))=Y_{1,a} + Y_{1,aq^2}^{-1}Y_{2,aq} + Y_{2,aq^3}^{-1}Y_{3,aq^2} + \cdots + Y_{n,aq^{n+1}}^{-1}.
\]
Hence, for our example in type $A_2$, $\chi_q(L(m))=\chi_q(L(Y_{1,1}))\chi_q(L(Y_{1,1}Y_{2,q^3}))$ contains
the monomial 
\[
Y_{2,q^3}^{-1} Y_{1,1}Y_{2,q^3} = Y_{1,1}=mA_{1,q}^{-1}A_{2,q^2}^{-1}
\]
with coefficient 1.
On the other hand, $\vp_1(m)=m(1+2A_{1,q}^{-1}+A_{1,q}^{-2})$
and $\vp_2(m)=m(1+A_{2,q^4}^{-1})$.
Now $m_1:=mA_{1,q}^{-1}=Y_{1,1}Y_{1,q^2}^{-1}Y_{2,q}Y_{2,q^3}$ is
2-dominant and $\vp_2(m_1)= m_1(1+A_{2,q^4}^{-1}+A_{2,q^2}^{-1}A_{2,q^4}^{-1})$. 
Thus $\FM(m) - m -2m_1$ is of the form $mP$ where
$P\in\Z[A_{1,a}^{-1},A_{2,a}^{-1};\ a\in\C^*]$ contains
only monomials divisible by $A_{1,q}^{-2}$ or $A_{2,q^4}^{-1}$.
Hence $\FM(m)$ does not contain the monomial $mA_{1,q}^{-1}A_{2,q^2}^{-1}$,
thus $\FM(m)$ is not equal to $\chi_q(L(m))$.
} 
\end{example}
\begin{remark}\label{rem25}
{\rm
(i)\ 
When $\FM(m)=\chi_q(L(m))$, this polynomial
can be written for every $i\in I$ as a positive sum of polynomials 
of the form $\vp_i(m')$ for some $m'\in\M_{i,+}$. 
For $m'\in D_m \cap \M_{i,+}$, the integer $s(m')-s_i(m')$
is then the coefficient of $\vp_i(m')$ in this sum.

(ii)\ 
One can slightly generalize \cite[Th. 5.9]{FM} as follows:
a sufficient condition for having $\FM(m)=\chi_q(L(m))$
is that $\FM(m)$ contains all the dominant monomials of $\chi_q(L(m))$.
The proof is essentially the same as in \cite[Th. 5.9]{FM}.

(iii)\ 
In \cite{H1}, a polynomial $F(m)$ defined by formulas
similar to (\ref{eqsim}), (\ref{eqsm}), (\ref{eqFM}),
has been defined for any $m\in\M_+$.
The only difference between the definitions of $F(m)$ and $\FM(m)$
is the formula giving $s(m_t)$.
If $L(m)$ is minuscule, then $F(m)=\FM(m)=\chi_q(L(m))$.
Otherwise $F(m)$ may not be equal to $\FM(m)$.
The polynomial $\FM(m)$ has nonnegative coefficients,
may contain several dominant monomials, and it may not
belong to the image of the map $\chi_q : R \to \Y$.
On the other hand, $F(m)$ belongs to this image, has
a unique dominant monomial (equal to $m$), but may have
negative coefficients. 

This is well illustrated by Example~\ref{exa24}. In that case
$L(m)$ has dimension 24, and we have
\[
\FM(m) = \chi_q(L(m)) - Y_{1,1},\qquad
F(m) =  \chi_q(L(m)) - Y_{1,1} - Y_{1,q^2}^{-1}Y_{2,q} - Y_{2,q^3}^{-1},
\]
and $F(m)$ contains the monomial $Y_{2,q^3}^{-1}$ with coefficient $-1$.

In this paper when we refer to the Frenkel-Mukhin algorithm we will always mean the polynomial $\FM(m)$.
}
\end{remark}

\subsubsection{}\label{conseqFM}
Let us note a useful consequence of the Frenkel-Mukhin algorithm. 

\begin{proposition}\label{FMmon}
Let $m=\prod_{k=1}^r Y_{i_k,a_k}\in \M_+$ and let $mM$ be a monomial
occuring in $\chi_q(L(m))$, where $M$ is a monomial in the variables
$A_{i,a}^{-1} \ (i\in I,\ a\in\C^*)$.
If $A_{j,b}^{-1}$ occurs in $M$ there exist $k\in \{1,\ldots, r\}$
and $l\in\N\setminus\{0\}$ such that $b=a_kq^l$. Moreover, there also exist
finite sequences 
\[
(j_1=i_k, j_2, \ldots, j_s=j) \in I^s,\quad
(l_1=1\le l_2\le \cdots \le l_s=l) \in \N^s
\]
such that $a_{j_t,j_{t+1}}=-1$ and $A_{j_t,a_kq^{l_t}}^{-1}$ 
occurs in $M$ for every $t=1,\ldots, s-1$.
Finally, $l_t$ is odd if $\eps_{j_t}=\eps_{i_k}$ and even otherwise. 
\end{proposition}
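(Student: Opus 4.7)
The plan is to induct on the total degree $d(M) := \sum_{j,a} \nu_{j,a}$ of $M = \prod_{j,a} A_{j,a}^{-\nu_{j,a}}$. The key tool is the $U_q(\slchap_2)$-reduction of $q$-characters, which is an unconditional structural fact (independent of whether the Frenkel--Mukhin algorithm produces $\chi_q(L(m))$): for each $i \in I$, the restriction of $L(m)$ to the subalgebra $U_q(\slchap_2^{(i)}) \subset U_q(\hg)$ decomposes into a direct sum of irreducible $U_q(\slchap_2^{(i)})$-submodules, so $\chi_q(L(m))$ equals a positive integer combination of polynomials $\vp_i(m')$ with $m'$ an $i$-dominant monomial occurring in $\chi_q(L(m))$. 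Combined with the explicit $U_q(\slchap_2)$-character formula~(\ref{eqKR}), this says every monomial in $\vp_i(m')$ is of the form $m' \cdot A_{i, c_1}^{-1} \cdots A_{i, c_p}^{-1}$, and each $A_{i, c_k}^{-1}$ factor corresponds to ``consuming'' a positive occurrence of $Y_{i, c_k q^{-1}}$ in $m'$.

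The base case $d(M) = 1$ is immediate: $M = A_{j,b}^{-1}$; choosing $i = j$ and $m' = m$ in the above reduction, $Y_{j, b q^{-1}}$ must occur with positive multiplicity in $m = \prod_k Y_{i_k, a_k}$, hence $j = i_k$ and $b = a_k q$ for some~$k$. The trivial path $(j_1 = i_k)$ with $l_1 = 1$ works, and the parity condition $\eps_{j_1} = \eps_{i_k}$ with $l_1$ odd is automatic. For the inductive step with $d(M) \ge 2$, I pick $i \in I$ and an $i$-dominant $m' = m M' \in \chi_q(L(m))$ with $d(M') < d(M)$ such that $m M = m' \cdot A_{i, c_1}^{-1} \cdots A_{i, c_p}^{-1}$; then $M$ is obtained from $M'$ by adjoining the $A_{i, c_k}^{-1}$'s, so by induction each $A$-factor of $M'$ admits a valid path, and it remains to produce one for each new $A_{i, c_k}^{-1}$. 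By the $\slchap_2$-step, $Y_{i, c_k q^{-1}}$ appears with positive coefficient in $m'$. Either (a)~this contribution is inherited from~$m$, forcing $i = i_{k_0}$, $c_k = a_{k_0} q$ and yielding the trivial path, or (b)~it is created by some $A_{i', b'}^{-1}$ in $M'$ with $a_{i, i'} = -1$ and $b' = c_k q^{-1}$, in which case prepending the path provided by induction for $A_{i', b'}^{-1}$ with the extension to vertex~$i$ gives a valid path for $A_{i, c_k}^{-1}$ whose last $l$-value increases by exactly~$1$.

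Monotonicity of the $l_t$'s and the parity statement both follow directly from this construction: each extension increases $l$ by exactly~$1$, and the bipartite nature of the Dynkin diagram ($\xi_{j_{t+1}} = 1 - \xi_{j_t}$) forces the parities to alternate so that $l_t$ is odd precisely when $\eps_{j_t} = \eps_{i_k}$. The main obstacle is rigorously establishing case~(b) of the inductive step: one must show that every positive contribution to the $Y_{i, c_k q^{-1}}$-exponent of $m'$ that is not inherited from $m$ must come from an adjacent $A_{i', b'}^{-1}$ in $M'$ with $b' = c_k q^{-1}$. This reduces to a bookkeeping computation from the formula $A_{i', b'}^{-1} = Y_{i', b' q}^{-1} Y_{i', b' q^{-1}}^{-1} \prod_{j \sim i'} Y_{j, b'}$: the only source of positive $Y_{i, \cdot}$-contributions among the $A^{-1}$-factors of $M'$ is the third product with $j = i$, which requires $i' \sim i$ and $b' = c_k q^{-1}$; the $i$-dominance of $m'$ then guarantees that such a predecessor $A_{i', b'}^{-1}$ is actually present in $M'$ to serve as the preceding step of the path.
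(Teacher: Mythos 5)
Your base case goes through (even in the sub-case you gloss over, where $mA_{j,b}^{-1}$ occurs only as a \emph{leading} term of $\vp_j(mA_{j,b}^{-1})$: then $j$-dominance of $mA_{j,b}^{-1}$ itself forces $Y_{j,bq^{-1}}$ to occur in $m$, so the conclusion survives). The genuine gap is the first line of your inductive step: you assume you can ``pick $i\in I$ and an $i$-dominant $m'=mM'$ occurring in $\chi_q(L(m))$ with $d(M')<d(M)$ such that $mM=m'A_{i,c_1}^{-1}\cdots A_{i,c_p}^{-1}$'', i.e.\ that every non-highest monomial of $\chi_q(L(m))$ occurs \emph{non-leadingly} in the decomposition of \ref{belong} for at least one $i$. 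That is precisely the hidden hypothesis behind the Frenkel--Mukhin algorithm, and it fails in general; Example~\ref{exa24} of the paper is a counterexample to your step. There, in type $A_2$ with $m=Y_{1,1}^2Y_{2,q^3}$, the monomial $mM=Y_{1,1}$, $M=A_{1,q}^{-1}A_{2,q^2}^{-1}$, occurs in $\chi_q(L(m))$. Since $M'$ must again be a product of $A^{-1}$'s, the only candidate $m'$ for $i=2$ is $mA_{1,q}^{-1}$ and for $i=1$ it is $mA_{2,q^2}^{-1}$. But $Y_{1,1}$ is not a monomial of $\vp_2(mA_{1,q}^{-1})$ (its node-$2$ restriction is the Kirillov--Reshetikhin character $Y_qY_{q^3}(1+A_{q^4}^{-1}+A_{q^4}^{-1}A_{q^2}^{-1})$, which never contains $A_{q^2}^{-1}$ alone), and $mA_{2,q^2}^{-1}$ is not a monomial of $\chi_q(L(m))$ at all, as one checks from the factorization $\chi_q(L(m))=\chi_q(L(Y_{1,1}))\,\chi_q(L(Y_{1,1}Y_{2,q^3}))$ given in that example. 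So for this $mM$ the pair $(i,m')$ you need does not exist for either $i$, and the induction cannot proceed; note also that the leading coefficients in \ref{belong} come from a composition series of the restriction to $U_q(\slchap_2)$ at node $i$, not from actual highest weight vectors, so you cannot escape by arguing that ``only leading occurrences for all $i$'' would force $mM=m$.

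This is exactly why the paper proves the statement by a different route: for a fundamental module one has $\chi_q(L(Y_{i,a}))=\FM(Y_{i,a})$, so the chain condition holds by construction of the algorithm, and for general $m$ one uses \ref{sect23}: $L(m)$ is a subquotient of $\bigotimes_{k}L(Y_{i_k,a_k})$, hence every monomial $mM$ of $\chi_q(L(m))$ is a product of monomials $Y_{i_k,a_k}M_k$ of the fundamental characters, and the chain for any $A_{j,b}^{-1}$ inside the relevant $M_k$ is already a chain inside $M$. Your bookkeeping in case (b) (adjacency, $l$ increasing by one, parity alternating with the bipartition) is fine, but to repair the proof you must replace, or supplement, the single-$i$ lowering step by some argument of this kind that reaches monomials, such as $Y_{1,1}$ above, which are invisible to it.
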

\begin{proof}
If $m=Y_{i,a}$ is the highest weight monomial of a fundamental
module, then $\chi_q(L(m))=\FM(m)$, and the proposition holds
by definition of the algorithm $\FM$.
In the general case, by \ref{sect23} $L(m)$ is a subquotient
of the tensor product $\otimes_{k=1}^r L(Y_{i_k,a_k})$, and
therefore its $q$-character is contained in the product of
the $q$-characters of the factors, which proves the proposition.
\cqfd
\end{proof}

\subsubsection{}\label{qcharconseq}
We can now prove Proposition~\ref{propCCl}.
Let $L(m)$ and $L(m')$ be in $\CC_{\ell}$.
This means that $m$ and $m'$ are monomials in the variables
$Y_{i,q^{\xi_i+2k}}\ (0\le k\le \ell)$.
If $L(m'')$ is a composition factor of $L(m)\otimes L(m')$
then $m''$ is a product of monomials of $\chi_q(L(m))$ and
$\chi_q(L(m'))$.
So we have $m''=m m' M$ where $M$ is a monomial in the $A_{i,a}^{-1}$. 
We claim that, for $m''$ to be dominant,
the spectral parameters $a$
have to be of the form $a=q^{\xi_i+2k+1}$ with $0\le k \le \ell-1$.
Indeed, by Proposition~\ref{FMmon} we know that
these parameters belong to $q^{\xi_i+2\N+1}$.
Let 
\[
 s = \max\{r \mid A_{i,q^{\xi_i + 2r+1}}^{-1} \mbox{occurs in $M$ for some $i$} \}. 
\]
If $s \ge \ell$ then all variables $Y_{i,q^{\xi_i+2s+2}}$ occuring in $m''$
have a negative exponent, and $m''$ cannot be dominant. 
Hence $a=q^{\xi_i+2k+1}$ with $0\le k\le \ell-1$.
It follows that $m''$ depends only on the variables 
$Y_{i,q^{\xi_i+2k}}\ (0\le k\le \ell)$.
Thus $L(m'')$ is in $\CC_{\ell}$ and $\CC_{\ell}$ is closed under
tensor products. The description of the Grothendieck ring $R_\ell$
immediately follows, and this finishes the proof of Proposition~\ref{propCCl}.

\subsubsection{}\label{cons2}
As a consequence of Proposition~\ref{FMmon}, all
simple modules in the category $\mathcal{C}_0$ are minuscule. 
Indeed consider such a module $S$ with highest monomial $m$. 
A monomial $m'$ occuring in $\chi_q(S) - m$ contains at least one 
$A_{j,q^r}^{-1}$ with $r\geq 1$ and $j\in I$. 
As $m\in\Z[Y_{i,q^{\xi_i}}]_{i\in I}$, we can
conclude as in Section \ref{qcharconseq} that $m'$ is not dominant.

This property also holds for other subcategories equivalent to $\mathcal{C}_0$ 
obtained by shifting the spectral parameter by $a\in \C^*$. 
More explicitly, consider the
category of finite-dimensional representations $V$ which satisfy : for every composition factor $S$ of $V$ and
every $i\in I$, the Drinfeld poly\-no\-mial $\pi_{i,S}(u)$ belongs to $\Z[(1 - a^{-1}q^{-\xi_i}u)]$. 
Then one proves exactly as above
that in this category any simple object is minuscule, and every tensor product of simple objects is simple.

\subsection{}\label{belong}
The next proposition is often helpful to prove that certain
monomials belong to the $q$-character of a module.
\begin{proposition}{\rm \cite[Prop.~3.1]{H3}}
Let $V$ be an object of $\CC$ and fix $i\in I$. Then there is a 
unique decomposition of $\chi_q(V)$ as a finite sum
\[
 \chi_q(V) = \sum_{m\in\M_{i,+}} \l_m \vp_i(m), 
\]
and the $\l_m$ are nonnegative integers. 
\end{proposition}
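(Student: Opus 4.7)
The plan is to establish existence by a greedy subtraction procedure, and uniqueness by a leading-term matching argument, with the $U_q(\slchap_2)$-reduction supplying the key positivity input.

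First I would record the structural observation that $\vp_i(m)$ has $m$ as its unique $i$-dominant monomial, and with coefficient $1$. This holds because $\vp_i(m)/m$ is a polynomial in the $A_{i,a}^{-1}$ whose image under the substitution $Y_{j,a}\mapsto 1$ for $j\neq i$ is precisely the normalised irreducible $U_q(\slchap_2)$-$q$-character $\widetilde{\chi}_q(L(\bar m))$; since every simple $U_q(\slchap_2)$-module is minuscule (Example~\ref{exKRsl2}), $\bar m$ is its only dominant monomial, and the explicit form $A_{i,a}^{-1} = Y_{i,aq}^{-1}Y_{i,aq^{-1}}^{-1}\prod_{j\sim i}Y_{j,a}$ lifts this back to the statement that $m$ is the only $i$-dominant monomial in $\vp_i(m)$.

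For existence, I would fix any total order on $\M$ refining the natural partial order $m\preceq m'\iff m'/m$ is a monomial in the $A_{j,a}$'s, and then run the following greedy procedure on $\chi_q(V)$: pick the maximal $i$-dominant monomial $m^\star$ with coefficient $\lambda^\star>0$ and replace $\chi_q(V)$ by $\chi_q(V) - \lambda^\star\,\vp_i(m^\star)$. By the previous paragraph $m^\star$ no longer appears in the difference, so provided the difference still has nonnegative coefficients, the total multiplicity of $i$-dominant monomials strictly decreases, and iteration terminates with the desired decomposition. The nonnegativity will come from restricting $V$ to the subalgebra $U_q(\slchap_2)_i\subset U_q(\hg)$ generated by the $i$-th Drinfeld generators $x^{\pm}_{i,r}, h_{i,s}, k_i^{\pm1}, c^{\pm1}$: the Drinfeld commutation relations between $h_{j,s}$ and $x^{\pm}_{i,r}$ show that $x^{\pm}_{i,r}$ sends an $l$-weight vector of weight $M$ into a sum of $l$-weight vectors of weights $M\cdot A_{i,a}^{\mp 1}$, so for each orbit $O$ of the $\langle A_{i,a}^{\pm1}\mid a\in\C^*\rangle$-action on $\M$ the subspace $V_O:=\bigoplus_{M\in O}V_M$ is a $U_q(\slchap_2)_i$-submodule and $V|_{U_q(\slchap_2)_i}=\bigoplus_O V_O$. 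Each $V_O$ has a finite composition series as a $U_q(\slchap_2)$-module, so its $q$-character decomposes as a nonneg integer combination of irreducible $U_q(\slchap_2)$-$q$-characters; each such irreducible lifts back along $O$ (the non-$i$ part of any monomial in $O$ is determined by its $i$-part once a basepoint is fixed, via the above form of $A_{i,a}^{-1}$) to a polynomial of the form $\vp_i(m_k)$ with $m_k$ an $i$-dominant monomial in $O$, and summing over orbits gives a decomposition $\chi_q(V)=\sum_k\mu_k\,\vp_i(m_k)$ with $\mu_k\in\N$ directly.

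Uniqueness would then be immediate: given two such decompositions, the first paragraph shows that matching coefficients of the maximal $i$-dominant monomial in the order $\preceq$ forces the two values of $\lambda_{m^\star}$ to coincide, and iteration gives equality for every $m$. I expect the main obstacle to be the orbit-wise lift in the existence step: one must verify that each simple $U_q(\slchap_2)$-constituent of $V_O$ gives back genuinely a $\vp_i(m_k)$ rather than merely a polynomial projecting to $\widetilde{\chi}_q(L(\bar m_k))$ under the specialisation $Y_{j,a}\mapsto 1$. This should be resolved by the explicit form of $A_{i,a}^{-1}$ cited above, which rigidly determines the non-$i$ part of every monomial in $O$ from its $i$-part and matches exactly the structure of $\vp_i$.
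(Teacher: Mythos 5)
Your overall strategy is the right one, and it is in fact the strategy behind the proof this paper relies on: the proposition is not proved here but quoted from \cite[Prop.~3.1]{H3}, where the argument is precisely the restriction of $V$ to the copy of $U_q(\slchap_2)$ in direction $i$, the decomposition $V=\bigoplus_O V_O$ over orbits of multiplication by the $A_{i,a}^{\pm 1}$, and additivity of $q$-characters along a composition series of each $V_O$. However, your opening ``structural observation'' is false: it is not true that every simple $U_q(\slchap_2)$-module is minuscule, and $\vp_i(m)$ may well contain $i$-dominant monomials other than $m$. For example the segments $\Si(3,a)$ and $\Si(1,aq^2)$ are in general position (one contains the other), so $L(Y_aY_{aq^2}^2Y_{aq^4})\cong W_{3,a}\otimes W_{1,aq^2}$ is simple, yet its $q$-character contains the dominant monomial $Y_aY_{aq^2}$, obtained as the highest monomial of $\chi_q(W_{3,a})$ times the lowest monomial $Y_{aq^4}^{-1}$ of $\chi_q(W_{1,aq^2})$; Example~\ref{exKRsl2} only asserts minusculeness for Kirillov--Reshetikhin modules, not for all simples. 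Fortunately your uniqueness argument only needs the weaker, true, statement that $m$ is the unique \emph{maximal} monomial of $\vp_i(m)$ and has coefficient $1$ (every other monomial is $m$ times a nonempty monomial in the $A_{i,b}^{-1}$, hence strictly smaller), so that $m^\star$ can occur in $\vp_i(m)$ only if $m\ge m^\star$; uniqueness then follows by the usual downward triangularity induction. The greedy subtraction step should simply be dropped: its positivity input is exactly what the orbit argument provides, so it is both circular and redundant, and the restriction argument alone carries existence.

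The step you yourself flag as the main obstacle is genuinely the heart of the matter and is still missing: you must prove that inside a fixed orbit $O$ a monomial is determined by its $i$-part, equivalently that a Laurent monomial $\prod_a A_{i,a}^{n_a}$ whose $i$-part $\prod_a (Y_{i,aq}Y_{i,aq^{-1}})^{n_a}$ is trivial is itself trivial. This is what guarantees both that the generalized eigenspaces of the $h_{i,r}$ and $k_i$ on $V_O$ coincide with the genuine $l$-weight spaces of $V$ lying in $O$, and that each simple $U_q(\slchap_2)$-constituent of $V_O$ contributes exactly $\vp_i(m_k)$ rather than some other polynomial with the same image under $Y_{j,a}\mapsto 1$ $(j\neq i)$. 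It does not follow formally from ``the explicit form of $A_{i,a}^{-1}$''; one needs the short argument that if $(n_a)$ is finitely supported and $\sum_a n_a(\delta_{aq}+\delta_{aq^{-1}})=0$, then along each string $aq^{2\Z}$ one gets $n_{aq^{2}}=-n_a$, so finite support forces $n\equiv 0$. With this lemma inserted (and the minor sign correction that $x^{\pm}_{i,r}$ shifts $l$-weights by $A_{i,b}^{\pm 1}$, not $A_{i,b}^{\mp 1}$, which is harmless for the orbit decomposition), your second argument becomes a complete proof, essentially the one of \cite{H3}.
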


Suppose that we know that $m\in\M_{i,+}$ occurs in $\chi_q(V)$.
Assume also that if $m'\in\M_{i,+}\setminus\{m\}$ is such that $\vp_i(m')$
contains $m$ then $m'$ does not occur in $\chi_q(V)$.
Then the proposition implies that all the monomials of $\vp_i(m)$
occur in $\chi_q(V)$. 
In particular, let $m\in \M_+$ and let $mM$ be a monomial
of $\chi_q(L(m))$, where $M$ is a monomial in the 
$A_{j,a}^{-1}\ (j\in I)$.
If $M$ contains no variable $A_{i,a}^{-1}$ then $mM\in \M_{i,+}$
and $\varphi_i(mM)$ is contained in $\chi_q(L(m))$.


\subsection{}\label{belongJ}
We will sometimes need a natural generalization of \ref{belong}
in which the singleton $\{i\}$ is replaced by an arbitrary subset $J$ of~$I$.
To formulate it we first need to imitate the definition 
of $\vp_i(m)$ and introduce some polynomials $\vp_J(m)$.
We say that $m\in\M$ is {\em $J$-dominant}, and we write $m\in\M_{J,+}$,
if $m$ does not contain negative powers of $Y_{j,a}\ (j\in J,\ a\in\C^*)$.
For $m\in\M$ we denote by $\bar{m}$ the monomial obtained from $m$
by replacing $Y_{i,a}^\pm$ by $1$ if $i\not\in J$. 
If $m\in\M_{J,+}$ then $\bar{m}$ can be regarded as a dominant monomial for the 
subalgebra $U_q(\hg_J)$ of $U_q(\hg)$ generated by the Drinfeld generators
attached to the vertices of~$J$.
Denote by $\chi_{q}(\bar{m})$ the $q$-character of the unique irreducible
$q$-character of $U_q(\hg_J)$ with highest weight monomial $\bar{m}$.
Write $\chi_q(\bar{m})=\bar{m}(1 + \sum_p \bar{M}_p)$, where
the $\bar{M}_p$ are monomials in the variables $\bar{A_{j,a}^{-1}}\ (j\in J,\ a\in\C^*)$.
Then one sets $\vp_J(m) := m(1 + \sum_p M_p)$ where each $M_p$
is obtained from the corresponding $\bar{M}_p$ by replacing
each variable $\bar{A_{j,a}^{-1}}$ by $A_{j,a}^{-1}$.
In particular, if $m\in\M_+$ then $\vp_J(m)$ is the sum of all
the monomials of $\chi_q(L(m))$ of the form $mM$ where
$M$ is a monomial in the $A_{j,a}^{-1}\ (j\in J)$.
We can now state
\begin{proposition}{\rm \cite[Prop.~3.1]{H3}}
Let $V$ be an object of $\CC$ and let $J$ be an arbitrary subset of $I$. 
Then there is a unique decomposition of $\chi_q(V)$ as a finite sum
\[
 \chi_q(V) = \sum_{m\in\M_{J,+}} \l_m \vp_J(m), 
\]
and the $\l_m$ are nonnegative integers. 
\end{proposition}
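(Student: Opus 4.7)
The plan is to adapt the proof of the single-vertex case \cite[Prop.~3.1]{H3} recalled above, by systematically replacing the role of the $U_q(\widehat{\Sl_2})$-subalgebra attached to vertex $i$ with the subalgebra $U_q(\hg_J)$ introduced in \ref{belongJ}.

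For uniqueness, I would introduce the partial order $\leq_J$ on $\M$ where $m' \leq_J m$ iff $m/m'$ is a monomial in the $A_{j,a}\ (j\in J,\ a\in\C^*)$ with non-negative exponents. By the very definition of $\vp_J(m)$, every monomial occurring in $\vp_J(m)$ is $\leq_J m$, and $m$ itself occurs with coefficient $1$. If $\sum_m \l_m \vp_J(m) = 0$ is a finite sum with not all $\l_m$ vanishing, choose a $\leq_J$-maximal $m_0$ with $\l_{m_0}\ne 0$: then $m_0$ can appear in $\vp_J(m)$ (among the $m$ with $\l_m\ne 0$) only for $m=m_0$, and reading off the coefficient of $m_0$ forces $\l_{m_0}=0$, a contradiction.

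For existence, I would consider the $J$-highest weight subspace
\[
V^{hw,J} \;:=\; \bigcap_{j\in J,\ r\in\Z}\ker\bigl(x^+_{j,r}|_V\bigr).
\]
The key structural observation is that $V^{hw,J}$ is stable under all Cartan-like Drinfeld generators $h_{i,s}, k_i^{\pm}$ for every $i\in I$ (not only $i\in J$), thanks to the Drinfeld relation $[h_{i,s}, x^+_{j,r}]\in\C\cdot x^+_{j,r+s}$ and its $k_i$-analogue. Consequently $V^{hw,J}$ decomposes as a direct sum of $U_q(\hg)$-$l$-weight spaces, supported on $\M_{J,+}$, and setting $\l_m := \dim V^{hw,J}_m$ yields non-negative integers. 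To prove the identity $\chi_q(V)=\sum_m \l_m\vp_J(m)$, I would use the equivalence relation $n\sim_J n'$ iff $n/n'$ is a monomial in the $A_{j,a}^{\pm}\ (j\in J)$, which partitions the support of $\chi_q(V)$ into classes $[n]$ such that each $V_{[n]}:=\bigoplus_{n'\sim_J n}V_{n'}$ is a $U_q(\hg_J)$-submodule of $V$ (because the Drinfeld action of $x^\pm_{j,r}$ shifts $l$-weights only by $A_{j,a}^{\mp 1}$). Iterating the single-vertex proposition over the vertices of $J$ produces its $U_q(\hg_J)$-analogue, which applied to each $V_{[n]}$ yields a positive decomposition of its $U_q(\hg_J)$-$q$-character as a sum of simple $U_q(\hg_J)$-characters. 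Lifting back to $\Y$ via $\bar A_{j,a}^{-1}\mapsto A_{j,a}^{-1}$ turns each summand into some $\vp_J(m)$, where $m$ is the $U_q(\hg)$-$l$-weight of the highest-weight vector of the corresponding composition factor.

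The main obstacle lies in this lifting step: one has to verify that each $U_q(\hg_J)$-composition factor of $V_{[n]}$ contributes exactly one summand $\vp_J(m)$ in $\Y$, with a well-defined $U_q(\hg)$-$l$-weight $m$ determined by the ambient embedding (made consistent by the fact that the non-$J$ Cartans preserve $V^{hw,J}$). Concretely, the substitution $\bar A_{j,a}^{-1}\mapsto A_{j,a}^{-1}$ brings in the factors $\prod_{i\notin J} Y_{i,a}^{-a_{ij}}$ dictated by the Dynkin adjacencies of $\g$, and these extra factors must coincide with the non-$J$ parts of the $l$-weights actually appearing in $V_{[n]}$. Once this coherence is established, summing the contributions over all classes $[n]$ yields the required positive integer decomposition, and combining with uniqueness identifies the $\l_m$ with the dimensions $\dim V^{hw,J}_m$.
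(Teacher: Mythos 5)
The paper itself does not prove this proposition: it is quoted from \cite[Prop.~3.1]{H3}, whose argument is the restriction-to-$U_q(\hg_J)$ argument that the second half of your proposal reproduces. Your uniqueness argument is fine, and your class decomposition $V_{[n]}$, together with the Jordan--H\"older multiplicities of the $U_q(\hg_J)$-composition factors of each $V_{[n]}$ and the lifting argument (consistent because a monomial $M$ in the $A_{j,a}\ (j\in J)$ is determined by its $J$-part $\bar M$), does give existence --- granted the key lemma that $x^{\pm}_{j,r}$ maps an $l$-weight space $V_n$ into $\bigoplus_a V_{nA_{j,a}^{\pm1}}$. That shift lemma is true but is essentially the technical heart of the proofs in \cite{FM} and \cite{H3}; as written you invoke it without proof or reference, and it is where all the real work lies.

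The genuine flaw is your definition $\l_m:=\dim V^{hw,J}_m$ and the claim that the identity holds with these coefficients: a composition factor of $V$ as a $U_q(\hg_J)$-module need not contribute a singular vector inside $V$ itself. Concretely, take $\g=\Sl_2$, $J=I$, and $V=L(Y_0)\otimes L(Y_2)$, the module analyzed after Theorem~\ref{thK}. Here $\vp_J(m)=\chi_q(L(m))$ and $\chi_q(V)=\chi_q(L(Y_0Y_2))+\chi_q(L(1))$, so $\l_1=1$; but the $l$-weight space of $l$-weight $1$ is spanned by $v_0\otimes u_2$, which satisfies $x^+_0(v_0\otimes u_2)=u_0\otimes u_2\neq 0$, so $\dim V^{hw,J}_1=0$. (Structurally, $V$ is a nonsplit extension $0\to L(Y_0Y_2)\to V\to L(1)\to 0$, and the highest-weight vector of the quotient does not lift to a singular vector of $V$.) Thus $\chi_q(V)\neq\sum_m \dim V^{hw,J}_m\,\vp_J(m)$ in general. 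The correct coefficients are the multiplicities of the simple $U_q(\hg_J)$-modules $L_J(\bar m)$ in a composition series of $V_{[n]}$ (equivalently of $V$ restricted to $U_q(\hg_J)$), which is exactly what the rest of your argument produces; so the fix is simply to discard the $V^{hw,J}$ thread and define $\l_m$ by those Jordan--H\"older multiplicities.
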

In particular, let $m\in\M_+$ and 
let $mM$ be a monomial
of $\chi_q(L(m))$, where $M$ is a monomial in the 
$A_{i,a}^{-1}\ (i\in I)$.
If $M$ contains no variable $A_{j,a}^{-1}$ with $j\in J$
then $mM\in \M_{J,+}$
and $\varphi_J(mM)$ is contained in $\chi_q(L(m))$.
 

\section{Truncated $q$-characters}\label{Secttrunc}

The Frenkel-Mukhin algorithm is an important tool because
there is no general formula (like the Weyl character formula)
for calculating an irreducible $q$-character of $\CC$.
Unfortunately, even when the Frenkel-Mukhin algorithm is successful,
the full expansion of the irreducible $q$-character is in general impossible
to handle because it contains too many monomials.
For example, the $q$-character of the 5th fundamental representation
for $\g$ of type $E_8$ has 6899079264 monomials 
(counted with their multiplicities) \cite{N4}.
However, when dealing with the subcategory $\CC_1$, we can work with 
certain truncations of the $q$-characters, as we shall explain in this section.
Thus, we will see that in the category $\CC_1$ in type~$D_4$, the
$q$-character $\chi_q(L(Y_{1,q^3}Y_{2,1}^2Y_{3,q^3}Y_{4,q^3}))$ 
which contains $167237$ monomials (counted with their multiplicities)
can be controlled by its truncated $q$-character which has
only 14 monomials.

\subsection{}\label{sect621}
From now on we will work in the subcategory $\CC_\Z$.
It follows from Proposition~\ref{FMmon} that the
$q$-characters of objects of $\CC_\Z$ involve only monomials in the
variables $Y_{i,q^r}\ (i\in I,\, r\in\Z)$.
To simplify notation, {\em we will henceforth write $Y_{i,r}$ instead
of $Y_{i,q^r}$}. Similarly, {\em we will write $A_{i,r}$ instead of
$A_{i,q^r}$}.

\subsection{}\label{secttrunc}
Let $V$ be an object of $\CC_1$. We can write
\[
 \chi_q(V) = \sum_k m_k(1+\sum_p M_p^{(k)})
\]
where the $m_k$ are dominant monomials in
the variables $Y_{i,\xi_i}, Y_{i,\xi_i+2}\ (i\in I)$, 
and the $M_p^{(k)}$ are certain monomials in the $A_{i,r}^{-1}$.
The factorization of a monomial 
$m=m_kM_p^{(k)}$ is in general not unique. For example,
in type $A_2$ we have $Y_{1,0}Y_{1,2}A_{1,1}^{-1} = Y_{2,1}$.
However, if $m_kM_p^{(k)}$ is such that $M_p^{(k)}$ contains
a negative power of $A_{i,r}$ for some $i\in I$ and some $r\ge 3$, 
because of the restriction on the variables $Y$ occuring
in $m_k$ and of the formula
\[
A_{i,r}^{-1}=Y_{i,r+1}^{-1}Y_{i,r-1}^{-1}\prod_{j:\ a_{ij}=-1}Y_{j,r},
\]
for any other expression $m_kM_p^{(k)} = \widetilde{m}_k\widetilde{M}_p^{(k)}$
the monomial $\widetilde{M}_p^{(k)}$ also contains 
a negative power of $A_{i,r}$ for some $i\in I$ and some $r\ge 3$.  
We define the {\em truncated $q$-character of} $V$ to be the Laurent
polynomial obtained from $\chi_q(V)$ by keeping only the monomials $M_p^{(k)}$
which do not contain any $A_{i,r}^{-1}$ with $r\ge 3$.
We denote this truncated polynomial by $\chi_q(V)_{\le 2}$.
Our motivation for introducing this truncation is the following
\begin{proposition}\label{prop28}
The map $V \mapsto \chi_q(V)_{\le 2}$ is an injective homomorphism 
from the Grothendieck ring $R_1$ of $\CC_1$ to $\Y$.
\end{proposition}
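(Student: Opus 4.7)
The plan is to split the statement into a ring-homomorphism part and an injectivity part, reducing the latter to Proposition~\ref{prop21} via the preservation of dominant monomials. The well-definedness of the truncation has already been established just before the statement.

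For the homomorphism property, I would use that $\chi_q : R\to\Y$ is already an injective ring homomorphism, so it suffices to show that truncation commutes with multiplication on $\CC_1$. If $m_1=m_kM_1$ is a monomial in $\chi_q(V)$ and $m_2=m_lM_2$ is a monomial in $\chi_q(W)$ in the prescribed form, then $m_1m_2$ admits the factorization $(m_km_l)(M_1M_2)$, where $m_km_l$ is a dominant monomial in the variables $Y_{i,\xi_i}, Y_{i,\xi_i+2}$ and $M_1M_2$ is a monomial in the $A_{i,r}^{-1}$ with nonnegative exponents. Because $M_1$ and $M_2$ individually have nonnegative exponents, $M_1M_2$ contains some $A_{i,r}^{-1}$ with $r\ge 3$ iff at least one of $M_1,M_2$ does; combined with the well-definedness of the truncation this yields $\chi_q(V\otimes W)_{\le 2}=\chi_q(V)_{\le 2}\,\chi_q(W)_{\le 2}$, while additivity is immediate.

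For injectivity, the key claim is that every dominant monomial of $\chi_q(V)$ (with $V\in\CC_1$) already appears in $\chi_q(V)_{\le 2}$, with the same multiplicity. Granting this, the dominant monomials of $\chi_q(V)$ together with their multiplicities can be read off $\chi_q(V)_{\le 2}$, so Proposition~\ref{prop21} forces $\chi_q(V)=\chi_q(W)$ whenever $\chi_q(V)_{\le 2}=\chi_q(W)_{\le 2}$, and the injectivity of $\chi_q$ on $R_1$ finishes the argument. To prove the claim, let $m=m_kM$ be any expression of a dominant monomial of $\chi_q(V)$ in the prescribed form, and suppose for contradiction that $s := \max\{r \mid A_{i,r}^{-1}\ \text{occurs in}\ M\ \text{for some}\ i\}$ satisfies $s\ge 3$; pick $i$ with $A_{i,s}^{-1}$ in $M$. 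Using the formula
\[
A_{j,t}^{-1}=Y_{j,t+1}^{-1}Y_{j,t-1}^{-1}\prod_{k:\,a_{jk}=-1}Y_{k,t}
\]
and the maximality of $s$ (which excludes $A_{i,s+2}^{-1}$ and $A_{k,s+1}^{-1}$ for $k\sim i$), the only $A^{-1}$ contributing to the exponent of $Y_{i,s+1}$ in $M$ is $A_{i,s}^{-1}$ itself, which gives a strictly negative contribution. Since $m_k$ uses only $Y_{j,\xi_j}, Y_{j,\xi_j+2}$ and $s+1\ge 4$ lies outside the range $\{0,1,2,3\}$ of these indices, $Y_{i,s+1}$ has exponent $0$ in $m_k$; hence $Y_{i,s+1}$ has strictly negative exponent in $m=m_kM$, contradicting dominance.

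The main obstacle is this injectivity step, which rests on the combinatorial examination of the highest-index $A^{-1}$ appearing in $M$. The mechanism is the same as in~\ref{qcharconseq}, so no new idea is needed; the homomorphism part and the appeal to Proposition~\ref{prop21} are formal.
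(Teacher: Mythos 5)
Your proposal is correct and follows essentially the same route as the paper: additivity/multiplicativity of the truncation (which the paper treats as immediate from the definition), plus the observation that any monomial $m_kM$ with $A_{i,s}^{-1}$ in $M$ for maximal $s\ge 3$ has $Y_{i,s+1}$ with strictly negative exponent (the ``right negative'' argument), so all dominant monomials survive truncation and Proposition~\ref{prop21} gives injectivity.
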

\begin{proof}
It is clear from the definition that our truncated
$q$-character is additive and multiplicative, hence induces
a homomorphism from $R_1$ to $\Y$.
Let us prove injectivity. 
Let $S$ be a simple object of $\CC_1$, 
and $\chi_q(S)=m(1+\sum_k M_k)$ where the $M_k$ are monomials in the
$A_{i,r}^{-1}$.
If $M_k$ contains a variable $A_{i,r}^{-1}$ with $r\ge 3$ then
$mM_k$ can not be dominant.
Indeed, if 
\[
s = \max\{r \mid \mbox{$A_{i,r}^{-1}$ occurs in $M_k$ for some $i$ }\},
\]
then all variables $Y_{i,s+1}$ occuring in $mM_k$ have a negative exponent, and therefore
$mM_k$ is not dominant. (In the terminology of \cite[\S 6.1]{FM},
$mM_k$ is a {\em right negative} monomial.)
Hence, the dominant monomials of 
$\chi_q(S)$ are all contained in its truncated version $\chi_q(S)_{\le 2}$.
Thus, if for two objects $V$ and $W$ of $\CC_1$ we have
{$\chi_q(V)_{\le 2}=\chi_q(W)_{\le 2}$}
then $\chi_q(V)$ and $\chi_q(W)$ have the same dominant monomials, and 
the claim follows from Proposition~\ref{prop21}.
\cqfd 
\end{proof}

\begin{remark}
{\rm
One might consider a different truncated $q$-character,
obtained by keeping only the dominant monomials. 
By Proposition~\ref{prop21}, this truncation is
injective on $R_\Z$, but it is difficult to use because 
it is not multiplicative.  
}
\end{remark}

\begin{example}\label{exa28}
{\rm We continue Example~\ref{exa22} in type $A_2$. 
With our new simplified notation, we have $m= Y_{1,0}Y_{2,3}$, 
and one checks easily that 
\[
\chi_q(L(m))_{\le 2}=m+m_1=Y_{1,0}Y_{2,3}+Y_{1,2}^{-1}Y_{2,1}Y_{2,3}
=Y_{1,0}Y_{2,3}\left(1+A_{1,1}^{-1}\right).
\] 
On the other hand, it follows from Example~\ref{exa24}
that 
\[
\begin{array}{lcl}
 \chi_q(L(Y_{1,0}^2Y_{2,3}))_{\le 2} &=&
\chi_q(L(Y_{1,0}))_{\le 2}\,\chi_q(L(Y_{1,0}Y_{2,3}))_{\le 2}\\[1.5mm]
& = &Y_{1,0}\left(1+A_{1,1}^{-1}+A_{1,1}^{-1}A_{2,2}^{-1}\right)
Y_{1,0}Y_{2,3}\left(1+A_{1,1}^{-1}\right)\\
&=&\ Y_{1,0}^2Y_{2,3}\left(1+ 2A_{1,1}^{-1}+A_{1,1}^{-2}+A_{1,1}^{-1}A_{2,2}^{-1}+A_{1,1}^{-2}A_{2,2}^{-1}\right).
\end{array}
\]
} 
\end{example}

\begin{example}\label{exa31}
{\rm 
Let $\g$ be of type $A, D, E$.
Then for $i\in I$
\[
\chi_q(L(Y_{i,2}))_{\le 2}=Y_{i,2},\ \
\chi_q(L(Y_{i,1}))_{\le 2}=Y_{i,1}\left(1+A_{i,2}^{-1}\right),\]
\[\chi_q(L(Y_{i,0}))_{\le 2}=Y_{i,0}\left(1+A_{i,1}^{-1}\prod_{j\in I, a_{ij} = -1}(1 + A_{j,2}^{-1})\right),\]
\[\chi_q(L(Y_{i,\xi_i}Y_{i,\xi_i+2}))_{\le 2}=Y_{i,\xi_i}Y_{i,\xi_i+2}. 
\]
Indeed, for all these modules the $q$-character is given by the Frenkel-Mukhin
algorithm (the first three are fundamental modules, and the last one is a
Kirillov-Reshetikhin module.)
}
\end{example}

Let us introduce for $P, Q \in \Z[Y_{i,r}^\pm; i\in I, r\in\Z]$ the notation 
\[
 P \le Q \quad \Longleftrightarrow \quad Q - P \in \N[Y_{i,r}^\pm; i\in I, r\in\Z].
\]
\begin{corollary}\label{corregular}
Let $S=L(m)$ be a simple object of $\CC_1$.
Suppose that $\FM(m)\ge\chi_q(S)_{\le 2}$.
Then $\FM(m)=\chi_q(S)$. 
\end{corollary}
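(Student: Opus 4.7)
The plan is to combine two ingredients already in the excerpt: the improved version of the Frenkel–Mukhin criterion recorded in Remark~\ref{rem25}(ii), and the key observation from the proof of Proposition~\ref{prop28} that every dominant monomial of $\chi_q(S)$ already appears in its truncation $\chi_q(S)_{\le 2}$.

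First, I would recall from Remark~\ref{rem25}(ii) that to conclude $\FM(m)=\chi_q(L(m))$ it is enough to know that $\FM(m)$ contains (as a formal sum, i.e.\ with multiplicities) all the dominant monomials of $\chi_q(L(m))$. So the whole game is to verify this containment under the hypothesis $\FM(m)\ge \chi_q(S)_{\le 2}$.

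Next, I would argue that every dominant monomial of $\chi_q(S)$ is in fact a monomial of $\chi_q(S)_{\le 2}$. For this, let $\chi_q(S)=m\bigl(1+\sum_k M_k\bigr)$ with each $M_k$ a monomial in the $A_{i,r}^{-1}$. Since $S\in\CC_1$, the highest weight monomial $m$ uses only variables $Y_{i,\xi_i}, Y_{i,\xi_i+2}$. Now, as in the proof of Proposition~\ref{prop28}, if some $M_k$ contains a factor $A_{i,r}^{-1}$ with $r\ge 3$, and $s$ is the maximum such $r$, then the formula
\[
A_{i,s}^{-1}=Y_{i,s+1}^{-1}Y_{i,s-1}^{-1}\prod_{j:\,a_{ij}=-1}Y_{j,s}
\]
and the range restriction on $m$ forces every variable $Y_{i,s+1}$ in $mM_k$ to have negative exponent; thus $mM_k$ is right negative and in particular not dominant. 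Hence any dominant term of $\chi_q(S)$ must come from an $M_k$ involving only $A_{j,r}^{-1}$ with $r\le 2$, and such a term is, by definition, retained in $\chi_q(S)_{\le 2}$ with the same multiplicity.

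Combining these two steps finishes the proof: the hypothesis $\FM(m)\ge \chi_q(S)_{\le 2}$ gives, coefficient-wise, that every monomial of $\chi_q(S)_{\le 2}$ occurs in $\FM(m)$ with at least the correct multiplicity; in particular $\FM(m)$ contains all dominant monomials of $\chi_q(L(m))$, and Remark~\ref{rem25}(ii) then yields $\FM(m)=\chi_q(S)$. There is really no serious obstacle here, since the two inputs do all the work; the only point to handle carefully is that the ``right-negativity'' argument must be carried out with multiplicities (not just as a set-theoretic statement about supports), so one should spell out that the factorization of a monomial of $\chi_q(S)$ as $mM_k$ is unique up to equivalence of the truncation condition, as already explained in Section~\ref{secttrunc}.
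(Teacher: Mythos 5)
Your proof is correct and follows essentially the same route as the paper's: the paper likewise combines the observation from the proof of Proposition~\ref{prop28} that all dominant monomials of $\chi_q(S)$ survive in $\chi_q(S)_{\le 2}$ (via the right-negativity argument) with the criterion of Remark~\ref{rem25}~(ii). Your extra care about multiplicities and the well-definedness of the truncation is a fair elaboration of what the paper leaves implicit, but it is not a different argument.
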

\begin{proof}
By the proof of Proposition~\ref{prop28}, $\chi_q(S)_{\le 2}$
contains all the dominant monomials of $\chi_q(S)$,
and the claim follows from Remark~\ref{rem25}~(ii).
\cqfd 
\end{proof}

\subsection{}

Let $\ga=\sum_{i\in I} c_i \a_i$ be an element of the root lattice with nonnegative
coordinates $c_i$. 
We denote by 
$J=\{j\in I\mid c_j \not = 0\}$ the {\em support of $\ga$},
and we assume that $J$ is connected.
Let 
\[
m = \prod_{i\in I_0}Y_{i,0}^{c_i}\prod_{i\in I_1}Y_{i,3}^{c_i}.
\]
This is the highest $l$-weight of 
a simple object $L(m)$ of $\CC_1$.
The next proposition shows how to calculate the truncated $q$-character
of $L(m)$ in terms of the truncation of $\vp_J(m)$.

Let $K=\{k\in I-J \mid a_{kj}=-1 \mbox{ for some $j\in J$}\}$
be the subset of vertices adjacent to $J$. 
For $k\in K$ denote by $j_k$ the unique $j\in J$ such that
$a_{kj}=-1$.
(The uniqueness of $j_k$ follows from the fact that the Dynkin graph has 
no cycle and $J$ is connected.)
Write
\[
\vp_J(m)_{\le 2}\ = m\left(1+\sum_pM_p\right),
\]
where the $M_p$ are monomials in $A_{j,1}^{-1},\,A_{j,2}^{-1}\ (j\in J)$.
In fact, by Proposition~\ref{FMmon}, it is easy to see that each $M_p$ is of the form
$M_p = \prod_{j\in J} A_{j,1+\xi_j}^{-\mu_{j,p}}$
for some $\mu_{j,p}\in\N$. 
\begin{proposition}\label{prop30}
We have
\[
 \chi_q(L(m))_{\le 2}\ = m\left(1+ \sum_pM_p\prod_{k\in K\cap I_1}(1+A_{k,2}^{-1})^{\mu_{j_k,p}}\right).
\]
\end{proposition}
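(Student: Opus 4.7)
My plan is to apply the $J'$-dominant decomposition (the Proposition in \S\ref{belongJ}) to the set $J' := K \cap I_1$. The key structural observation is that, since $I = I_0 \sqcup I_1$ is bipartite and $J' \subset I_1$, the vertices of $J'$ are pairwise non-adjacent in the Dynkin diagram. Consequently $\g_{J'} \cong \Sl_2^{\oplus |J'|}$, so each polynomial $\vp_{J'}(m')$ factorizes as a product over $J'$, completely explicitly via the $U_q(\slchap_2)$-characters of Example~\ref{exKRsl2}.

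As a first step, I combine Proposition~\ref{FMmon} with the truncation condition to show that every monomial of $\chi_q(L(m))_{\leq 2}$ is of the form $mN$, where $N$ is a product of factors $A_{i,1}^{-1}$ with $i \in J \cap I_0$ and $A_{j,2}^{-1}$ with $j \in (J \cup K) \cap I_1$. Indeed, chains of $A^{-1}$-factors from Proposition~\ref{FMmon} starting at a $Y_{i,3}$ with $i \in J \cap I_1$ begin with an index $\geq 4$ and lie entirely outside the truncation; chains starting at a $Y_{i,0}$ with $i \in J \cap I_0$ begin with $A_{i,1}^{-1}$ and, by the monotonicity and parity constraints, extend by at most one step to a factor $A_{j,2}^{-1}$ with $j \in I_1$ adjacent to $J \cap I_0$, i.e.\ $j \in (J \cup K)\cap I_1$.

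Applying the Proposition of \S\ref{belongJ} then gives
\[
\chi_q(L(m)) \;=\; \sum_{m' \in \M_{J',+}} \lambda_{m'}\,\vp_{J'}(m'), \qquad \lambda_{m'} \in \N.
\]
By the first step, for $k \in J'$ the only variable $Y_{k,r}$ appearing positively in a truncated monomial is $Y_{k,1}$, produced solely by the factors $A_{j_k,1}^{-1}$. Hence any $J'$-dominant truncated monomial $m' = mM$ contains no $A_{k,2}^{-1}$ factor, so $M$ is supported only on $A_{j,1+\xi_j}^{-1}$ with $j \in J$. The $\Sl_2^{\oplus |J'|}$-factorization then yields
\[
\vp_{J'}(m') \;=\; m' \prod_{k \in J'}\bigl(1 + A_{k,2}^{-1}\bigr)^{c_k(m')},
\]
where $c_k(m')$ is the exponent of $Y_{k,1}$ in $m'$; and $m'$ lies in the truncation if and only if the whole $\vp_{J'}(m')$ does.

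Finally I match coefficients. By the characterization of $\vp_J$ recalled in \S\ref{belongJ}, the $J'$-dominant monomials $m' = mM_p$ surviving truncation coincide with the monomials of $\vp_J(m)_{\leq 2}$, with the same multiplicities: $\lambda_{mM_p}$ equals the coefficient of $mM_p$ in $\chi_q(L(m))$, which equals its coefficient in $\vp_J(m)$. Because $j_k$ is the unique neighbor of $k$ in $J$, the only factor of $M_p$ producing $Y_{k,1}$ is $A_{j_k,1}^{-\mu_{j_k,p}}$, so $c_k(mM_p) = \mu_{j_k,p}$. Substitution gives the claimed formula. I expect the main obstacle to be the structural first step: combining the chain and parity constraints of Proposition~\ref{FMmon} with the spectral shifts of $m$ to rule out $A^{-1}$-factors of index $>2$ or lying outside $(J\cup K)\cap I_1$ requires careful case analysis.
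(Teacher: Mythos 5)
Your route is genuinely different from the paper's. The paper first obtains the right-hand side as a lower bound, using $\vp_J(m)$ together with the single-vertex polynomials $\vp_k(mM_p)$ for $k\in K\cap I_1$, and then proves equality by a maximality/contradiction argument on a hypothetical extra monomial. You instead classify the truncated monomials once and for all via the chains of Proposition~\ref{FMmon}, and then decompose $\chi_q(L(m))$ along $J'=K\cap I_1$, exploiting that $J'\subset I_1$ is an independent set so that $\vp_{J'}$ factors into $U_q(\slchap_2)$-strings $(1+A_{k,2}^{-1})^{c_k}$. Your structural first step is correct (the parity and monotonicity constraints in Proposition~\ref{FMmon} do force every factor of a truncated monomial to be $A_{i,1}^{-1}$ with $i\in J\cap I_0$ or $A_{j,2}^{-1}$ with $j\in(J\cup K)\cap I_1$), and so are the identifications $c_k(mM_p)=\mu_{j_k,p}$ and the matching, via \S\ref{belongJ}, of the truncated $J'$-dominant monomials of $\chi_q(L(m))$ with the monomials of $\vp_J(m)_{\le 2}$, multiplicities included.

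There is, however, one genuine gap: the assertion that $\l_{mM_p}$ equals the coefficient of $mM_p$ in $\chi_q(L(m))$. This does not follow from the characterization of $\vp_J$ or $\vp_{J'}$: in the decomposition $\chi_q(L(m))=\sum_{m'\in\M_{J',+}}\l_{m'}\vp_{J'}(m')$ the coefficient $\l_{m'}$ is in general strictly smaller than the multiplicity of $m'$ in the character, namely whenever $m'$ also occurs in $\vp_{J'}(m'')$ for some higher $J'$-dominant monomial $m''$ of $\chi_q(L(m))$ --- exactly the caveat spelled out after the Proposition in \S\ref{belong}. Since all the multiplicities in your final formula rest on this equality, you must rule out such contributions; this is precisely where the paper's proof does its real work. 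Fortunately your own ingredients supply the repair: if $m''$ is a truncated $J'$-dominant monomial, then by your analysis its $Y_{k}$-content is $Y_{k,1}^{c_k(m'')}$ only, so every non-leading monomial of $\vp_{J'}(m'')=m''\prod_{k\in J'}(1+A_{k,2}^{-1})^{c_k(m'')}$ carries a negative power of some $Y_{k,3}$ and hence cannot equal any $mM_p$; if $m''$ is not truncated, then by the ``only if'' half of your iff (which itself deserves the one-line right-negative argument of \S\ref{secttrunc}) no monomial of $\vp_{J'}(m'')$ is truncated, so again none can equal $mM_p$. Equivalently, restrict the identity $\chi_q(L(m))_{\le 2}=\sum_{m'\ \mathrm{truncated}}\l_{m'}\vp_{J'}(m')$ to its $J'$-dominant part to read off $\sum_{m'}\l_{m'}\,m'=\vp_J(m)_{\le 2}$, which is the missing equality. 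With this point made explicit, your proof is complete.
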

\begin{proof}
From \ref{belongJ}, we have that $\vp_J(m) \le \chi_q(L(m))$.
If $k\in K\cap I_1$ then $j_k\in J\cap I_0$,
and if a monomial $M_p$ contains $A_{j_k,1}^{-1}$
with exponent $\mu_{j_k,p}>0$,
then $mM_p$ contains $Y_{k,1}^{\mu_{j_k,p}}$ and no other $Y_{k,r}$
for $r\not =1$. Therefore $mM_p$ is $k$-dominant. 
Using \ref{belong} we deduce that
\[
\vp_k(mM_p)=M_p(1+A_{k,2}^{-1})^{\mu_{j_k,p}}\le \chi_q(L(m)),
\]
and this implies that
\begin{equation}\label{eq29}
m\left(1+ \sum_pM_p\prod_{k\in K\cap I_1}(1+A_{k,2}^{-1})^{\mu_{j_k,p}}\right)
\le 
\chi_q(L(m))_{\le 2}.
\end{equation}
Note that
if $k\in K\cap I_0$ then $j_k\in J\cap I_1$, 
and if $mM_p$ is $k$-dominant then it contains $Y_{k,2}^{\mu_{j_k,p}}$.
Hence, in this case
$\vp_k(mM_p)_{\le 2} = mM_p$ does not contribute anything
new to the truncated $q$-character. 

Let us now show that (\ref{eq29}) is an equality.
Suppose on the contrary that it is a strict inequality,
and let $mM$ be a monomial in $\chi_q(L(m))_{\le 2}$ which
appears in the left-hand side of (\ref{eq29}) with a strictly smaller multiplicity.
Here $M$ is a monomial in the variables $A_{i,\xi_i+1}^{-1}$.
Moreover, by definition of $\vp_J(m)$ all the monomials of $\chi_q(L(m))$
obtained from $m$ by multiplying by variables $A_{j,\xi_j+1}^{-1}$
with $j\in J$ appear in $\vp_J(m)$ with the same multiplicity.
Hence $M$ has to contain at least one variable $A_{k,\xi_k+1}^{-1}$
with $k\not\in J$. By Proposition~\ref{FMmon} we can assume that $k\in K\cap I_1$.
We will also assume that $mM$ is maximal with these properties.
Denote by $r$ the multiplicity of $mM$ in $\chi_q(L(m))$,
and by $v$ the exponent of $A_{k,2}^{-1}$ in $M$.
Since $mM$ belongs to a truncated $q$-character, $M$ does
not contain any variable $A_{i,3}^{-1}$ with $a_{ik}=-1$,
hence $mM$ has to contain $Y_{k,3}^{-v}$ and therefore it
can not be $k$-dominant.
Hence, by \ref{belong}, there exist $k$-dominant monomials 
$m_1, \ldots, m_s,$ in $\chi_q(L(m))$ (counted with their
multiplicities) such that each $\vp_k(m_i)$ contains $mM$ 
with multiplicity $r_i$ and $r_1+\cdots+r_s=r$. 
Since $m_i>mM$ for every $i$, the multiplicities of $m_i$
in both sides of (\ref{eq29}) are equal. 
Moreover, by our construction, 
the left-hand side of (\ref{eq29}) also contains $\sum_i \vp_k(m_i)$, 
hence the multiplicity of $mM$ in this left-hand side 
is at least $r$, which contradicts our assumption.
Thus (\ref{eq29}) is an equality. 
\cqfd
\end{proof}

\subsection{}
We now calculate $\chi_q(S(\a))_{\le 2}$ for a {\em multiplicity-free positive root} $\a$,
\ie a root of the form $\a=\a_J=\sum_{j\in J} \a_j$ for some connected subset $J$ of $I$.
If $\g$ is not of type $A$, we assume that the trivalent node of the Dynkin diagram
belongs to $I_0$ (this is no loss of generality, see \ref{sectchoice}). 

\begin{proposition}\label{prop33}
Let $J$ be a connected subset of $I$, and $\a=\a_J$ the corresponding 
multiplicity-free positive root.
Let $\eta \in \{0,1\}^I$ be the characteristic function of $J$,
\ie $\eta_i = 1$ if $i\in J$ and $\eta_i=0$ otherwise.
Let $m$ be the highest weight monomial of $\chi_q(S(\a))$.
We have
\[
\chi_q(S(\a))_{\le 2}\ =\ m\sum_{\nu \in \SS}\prod_{i\in I} A_{i,1+\xi_i}^{-\nu_i}, 
\]
where $\SS$ denotes the set of all finite sequences $\nu \in \{0,1\}^I$
such that $\nu_i \le \eta_i$ for every $i\in I_0$, and  for every $i\in I_1$
\begin{equation}\label{eqnu}
\nu_i \le \max\left(0,\ -\eta_i+\sum_{j\ :a_{ij}=-1}\nu_j\right).
\end{equation}
\end{proposition}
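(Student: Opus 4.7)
The natural plan is to combine Proposition~\ref{prop30} with an explicit computation of the internal truncated character $\vp_J(m)_{\le 2}$. Proposition~\ref{prop30} splits the problem into (a) computing $\vp_J(m)_{\le 2}$, the truncated $q$-character of the simple $U_q(\hg_J)$-module $L_J(\bar m)$, and (b) inflating by the factor $(1+A_{k,2}^{-1})^{\mu_{j_k,p}}$ for every $k\in K\cap I_1$. Assuming the internal claim
\[
 \vp_J(m)_{\le 2}\ =\ m\sum_{\mu\in\T_J}\prod_{j\in J}A_{j,1+\xi_j}^{-\mu_j},
\]
where $\T_J$ denotes the restriction of $\SS$ to $J$-supported sequences, a combinatorial unpacking identifies the result with the announced sum over $\SS$: the inflation factor above reproduces the range $\nu_k\in\{0,\mu_{j_k}\}$ coming from (\ref{eqnu}) for $k\in K\cap I_1$, and for any $i\notin J\cup(K\cap I_1)$, condition (\ref{eqnu}) together with the bipartite hypothesis forces $\nu_i=0$.

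The key point for the internal claim is a degree observation: since the Dynkin graph is bipartite and, by the convention of \ref{sectchoice}, the trivalent node (if any) lies in $I_0$, every vertex $i\in I_1$ has at most two neighbours in $I_0$. Hence for any $T\subseteq J\cap I_0$ and $i\in J\cap I_1$, the integer $t_i:=|N(i)\cap T|$ is at most $2$, and the truncated $\slchap_2$-character of $L(Y_1^{t_i}Y_3)$ reduces (modulo the highest monomial) to the term $A_2^{-1}$ with coefficient $1$ if $t_i=2$, and vanishes otherwise; the relevant calculation uses the $q$-segment decomposition of Example~\ref{exKRsl2}. To prove the ``$\ge$'' half of the internal claim, I first iterate Proposition~\ref{belong} at the vertices of any chosen $T\subseteq J\cap I_0$ to obtain $\bar m\prod_{j\in T}A_{j,1}^{-1}\in\chi_q(L_J(\bar m))$; then, given $\mu\in\T_J$ with $T=\mu^{-1}(1)\cap I_0$ and $U=\mu^{-1}(1)\cap I_1$, the set $U$ is independent in the bipartite Dynkin graph, so $\g_U\cong\bigoplus_{i\in U}\Sl_2$. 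Applying Proposition~\ref{belongJ} to this $U$ and the monomial $\bar m\prod_{j\in T}A_{j,1}^{-1}$ (which at each $i\in U$ restricts to $Y_{i,1}^{t_i}Y_{i,3}$ with $t_i=2$ by the defining condition of $\T_J$) produces $\bar m\prod_{j\in T}A_{j,1}^{-1}\prod_{i\in U}A_{i,2}^{-1}$ with multiplicity one.

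For the reverse inclusion one argues by right-negativity, following the proof of Proposition~\ref{prop28}. Any truncated monomial has the form $\bar m\prod A_{j,1+\xi_j}^{-\mu_j}$ with $\mu_j\ge 0$; a second application of the same $A_{j,1+\xi_j}^{-1}$ would create right-negative factors $Y_{j,\xi_j}^{-2}$ that cannot be compensated within the truncation, so $\mu_j\le 1$. For $j\in J\cap I_1$, the factor $Y_{j,3}^{-1}$ introduced by $A_{j,2}^{-1}$ can only avoid rendering the monomial unreachably right-negative if at least two $Y_{j,1}$ contributions come from $I_0$-neighbours, yielding $|N(j)\cap T|\ge 2$, which is precisely the constraint of (\ref{eqnu}). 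Multiplicity one then follows from the saturation of the iterative $\vp$-decomposition built above. The main obstacle lies in this last step: translating the ``no forbidden right-negative monomials'' condition into the explicit combinatorial constraint of $\SS$ requires careful bookkeeping of the $Y$-exponents, exploiting both the bipartite partition of $I$ and the bound $t_i\le 2$.
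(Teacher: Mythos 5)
Your reduction of the general case to the ``internal'' case via Proposition~\ref{prop30}, and your lower bound, are sound: iterating \ref{belong} at the vertices of $T\subseteq J\cap I_0$ and then applying \ref{belongJ} to the independent set $U\subseteq J\cap I_1$ (where the local $\slchap_2$-computation for $L(Y_1^{2}Y_3)$ indeed yields the $A_2^{-1}$-term with coefficient $1$) does show that every monomial of the announced formula occurs in $\chi_q(S(\a))_{\le 2}$. This is in fact a cleaner route to the ``$\ge$'' half than the paper's induction. The gap is in the reverse inclusion, which is where the actual content of the proposition lies, and your ``right-negativity'' argument cannot deliver it. Right-negativity (as used in the proof of Proposition~\ref{prop28}) only shows that certain monomials are not \emph{dominant}; it never shows that a monomial is absent from a $q$-character, and the monomials you must exclude are not right-negative at all. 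Concretely, in type $A_2$ with $m=Y_{1,0}Y_{2,3}$, the monomial $mA_{1,1}^{-1}A_{2,2}^{-1}$, which violates (\ref{eqnu}) (it has $\nu_2=1$ with only one lowered $I_0$-neighbour), equals the \emph{trivial} monomial $1$: it is dominant, and it genuinely occurs in $\chi_q(L(Y_{1,0}))\,\chi_q(L(Y_{2,3}))$. Hence no argument that only inspects the shape of the candidate monomial can rule it out of $\chi_q(L(m))_{\le 2}$; one must use the simplicity of $L(m)$ through an actual \emph{upper} bound, e.g. subquotient inequalities $\chi_q(L(m))\le\chi_q(L(m_1))\,\chi_q(L(m_2))$ with explicitly known small characters (this is exactly what the paper's induction does, cf.\ the bound (\ref{exa3eq}) and the two cases in its proof), or the Frenkel--Mukhin algorithm for these modules. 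Your proposal invokes only lower-bound tools (\ref{belong}, \ref{belongJ}), so nothing in it can produce such an exclusion.

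Two further points in the same step are off. First, the claimed bound $\mu_j\le 1$: a second factor $A_{j,1}^{-1}$ at $j\in J\cap I_0$ produces $Y_{j,0}^{-1}$ (exponent $1-\mu_j=-1$, not $-2$) together with $Y_{j,2}^{-2}$, and the latter \emph{can} be compensated inside the truncation by $A_{i,2}^{-1}$ at neighbours $i\in I_1$; the uncompensated $Y_{j,0}^{-1}$ is merely a negative exponent, which by itself does not prevent a monomial from occurring in a $q$-character (characters abound in non-dominant monomials). A correct argument here needs either the $\vp_j$-decomposition combined with the $\slchap_2$ lowest-spectral-parameter analysis, or again an upper bound by a product of smaller characters. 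Second, ``multiplicity one follows from the saturation of the iterative $\vp$-decomposition'' is an assertion, not a proof: controlling multiplicities is precisely the part that requires the upper bound. So the proposal establishes $\chi_q(S(\a))_{\le2}\ \ge\ m\sum_{\nu\in\SS}\prod_i A_{i,1+\xi_i}^{-\nu_i}$ but not the equality, and the missing half cannot be repaired by the stated means.
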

\begin{proof}
We first prove the result for $J=I=\{1,\ldots,n\}$. In this case $\eta_i \equiv 1$
and the condition $\nu_i\le\eta_i$ is always satisfied.
We use induction on $n$.
For $n = 1$ we have $\chi_q(S(\a))_{\le 2}\ = Y_{1,0}(1+A_{1,1}^{-1})$
if $I=\{1\}=I_0$,
and $\chi_q(S(\a))_{\le 2}\ = Y_{1,3}$ if $I=I_1$.
Assume now that $n\ge 2$, and let $m_1$ be a monomial 
in the truncated $q$-character of $S(\a)$.
We can suppose that the vertex labelled $1$ is monovalent and adjacent
to the vertex labelled $2$.
Put $I'=\{2,\ldots,n\}$.
There are two cases. 

(a) \ If $1\in I_1$ then $m = Y_{1,3}Y_{2,0}Y_{3,3}\cdots$. Put $m'=mY_{1,3}^{-1}$.
As $L(m)$ appears as a subquotient of $L(Y_{1,3})\otimes L(m')$ and
$$\chi_q(L(Y_{1,3}))_{\le 2}\chi_q(L(m'))_{\le 2} = Y_{1,3} \chi_q(L(m'))_{\le 2},$$
we have $m_1=Y_{1,3}m'_1$ where $m'_1$ is a monomial in $\chi_q(L(m'))_{\le 2}$. 
By Proposition~\ref{prop30}, $\chi_q(L(m'))_{\le 2}$ can be calculated from
$\vp_{I'}(m')_{\le 2}$, and by induction on $n$ we may assume that
\[
 \vp_{I'}(m')_{\le 2} = \ m'\sum_{\nu \in \SS'}\prod_{i\in I'} A_{i,1+\xi_i}^{-\nu_i}, 
\]
where $\SS'$ is defined like $\SS$ replacing $I$ by $I'$.
By Proposition~\ref{prop30}, $\chi_q(L(m'))_{\le 2}$ can only differ from
$\vp_{I'}(m')_{\le 2}$ by certain summands $M_p(1+A_{1,2}^{-1})^{\mu_{2,p}}$
so this shows that the exponent $\nu_i$ of $A_{i,1+\xi_i}^{-1}$ in
$m_1/m$ satisfies the condition (\ref{eqnu}) for $i\in I'$. 
It remains to check that $\nu_1 = 0$.
For this we can use the fact that 
$\chi_q(L(m)) \le \chi_q(L(Y_{1,3}Y_{2,0}))\chi_q(L(m'Y_{2,0}^{-1}))$.
Now by Proposition \ref{prop30} and the first formula in Example \ref{exa28}
(with the nodes $1$ and $2$ of the $A_2$ Dynkin diagram exchanged), we get 
\[
\chi_q(L(Y_{1,3}Y_{2,0}))_{\le 2} = Y_{1,3}Y_{2,0}(1+A_{2,1}^{-1}\prod_{j\neq 1, a_{j2} = -1}(1+A_{j,2}^{-1})).
\]
This does not contain $A_{1,2}^{-1}$, neither $\chi_q(L(m'Y_{2,0}^{-1}))_{\le 2}$.

(b) \ If $1\in I_0$ then $2\in I_1$ is not trivalent, so $\nu_2\le 1$.
We can assume that the vertex $3$ is adjacent to~$2$.
We thus have $m = Y_{1,0}Y_{2,3}Y_{3,0}\cdots$. Put $m'=mY_{1,0}^{-1}$.
Then $m_1=m_0m'_1$ where $m_0$ is a monomial in $\chi_q(L(Y_{1,0}))_{\le 2}$
and $m'_1$ is a monomial in $\chi_q(L(m'))_{\le 2}$.
By Proposition~\ref{prop30}
the monomial $m_0$ belongs to
$\{Y_{1,0},\ Y_{1,0}A_{1,1}^{-1},\ Y_{1,0}A_{1,1}^{-1}A_{2,2}^{-1}\}$.
On the other hand, by Proposition~\ref{prop30} again, 
$\chi_q(L(m'))_{\le 2} = \vp_{I'}(m')_{\le 2}$ in this case, which is known by induction.
If $m_0 \not = Y_{1,0}A_{1,1}^{-1}A_{2,2}^{-1}$ we see that the exponent 
$\nu_i$ of $A_{i,1+\xi_i}^{-1}$ in $m_1/m$ satisfies condition (\ref{eqnu}) for 
every $i\in I$.
If $m_0 = Y_{1,0}A_{1,1}^{-1}A_{2,2}^{-1}$ condition (\ref{eqnu}) would be
violated if $m'_1/m'$ had $\nu_3=0$. But in this case $m_0m'_1$ could not be a monomial
of $\chi_q(L(m))_{\le 2}$. Indeed let us prove that if $A_{2,2}^{-1}$ appears, then $A_{3,1}^{-1}$
must appear. We have the inequality
$\chi_q(L(m))_{\le 2}\ \le\ \chi_q(L(Y_{1,0}Y_{2,3}Y_{3,0}))_{\le 2}\,\chi_q(L(m'Y_{2,3}^{-1}Y_{3,0}^{-1}))_{\le 2}$.
For type $A_3$, again by using the first formula in Example \ref{exa28} and Proposition~\ref{prop30}, we have 
$$\chi_q(L(Y_{2,3}Y_{3,0}))_{\le 2} = Y_{2,3}Y_{3,0}(1 + A_{3,1}^{-1})\ , \ {\chi_q(L(Y_{1,0}Y_{2,3}))_{\le 2} = Y_{1,0}Y_{2,3}(1 + A_{1,1}^{-1})}.$$
Then we have the two inequalities $\chi_q(L(Y_{1,0}Y_{2,3}Y_{3,0}))_{\le 2} \le \chi_q(L(Y_{1,0}))_{\le 2}\chi_q(L(Y_{2,3}Y_{3,0}))_{\le 2}$ and $\chi_q(L(Y_{1,0}Y_{2,3}Y_{3,0})) \le \chi_q(L(Y_{1,0}Y_{2,3}))_{\le 2}\chi_q(Y_{3,0}))_{\le 2}$ which follow as above from a subquotient argument. As a consequence we get
\begin{equation}\label{exa3eq}
\chi_q(L(Y_{1,0}Y_{2,3}Y_{3,0}))_{\le 2} \le
Y_{1,0} Y_{2,3} Y_{3,0}\left(1+A_{1,1}^{-1}+A_{3,1}^{-1}+A_{1,1}^{-1}A_{3,1}^{-1}(1+A_{2,2}^{-1})\right). 
\end{equation}
Now we get the result by Proposition~\ref{prop30}.

Finally, the general case of a root $\a = \a_J$ for a subinterval $J$ of $I$
follows from the case $J=I$. 
Indeed what we have already proved gives us 
$\vp_J(m)_{\le 2}$, and Proposition~\ref{prop30} then yields
the value of $\chi_q(S(\a))_{\le 2}$. 
\cqfd 
\end{proof}

\begin{corollary}\label{correg}
Assume that the trivalent node is in $I_0$ if $\g$ is of type $D$ or $E$.
Let $\a_J$ be a multiplicity-free root. 
Then $\chi_q(S(\a_J))$ is given by the Frenkel-Mukhin algorithm. 
\end{corollary}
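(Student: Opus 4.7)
The plan is to invoke Corollary~\ref{corregular}, which reduces the claim to the inequality $\FM(m)\ge \chi_q(S(\a_J))_{\le 2}$, where $m=\prod_{i\in J\cap I_0} Y_{i,0}\prod_{i\in J\cap I_1} Y_{i,3}$ is the highest $l$-weight monomial of $S(\a_J)$. Proposition~\ref{prop33} furnishes the explicit formula
\[
\chi_q(S(\a_J))_{\le 2}=m\sum_{\nu\in\SS}\prod_{i\in I} A_{i,1+\xi_i}^{-\nu_i},
\]
in which every monomial has coefficient $1$. So the task reduces to showing that for each $\nu \in \SS$ the monomial $m_\nu := m \prod_i A_{i,1+\xi_i}^{-\nu_i}$ appears in $\FM(m)$ with strictly positive multiplicity.

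First I would construct, for each such $\nu$, an explicit chain $m = m^{(0)},\,m^{(1)},\,\ldots,\,m^{(|\nu|)}=m_\nu$ inside $D_m$ that realises $m_\nu$ as the endpoint of a valid sequence of mutations in the Frenkel--Mukhin algorithm. The ordering is dictated by the formula: apply first the factors $A_{j,1}^{-1}$ for $j\in J\cap I_0$ with $\nu_j = 1$, in any order (each intermediate monomial remains $j$-dominant, since these factors never create negative powers of $Y_{j,r}$); then apply the factors $A_{i,2}^{-1}$ for $i\in I_1$ with $\nu_i > 0$. The defining bound (\ref{eqnu}) on $\SS$ guarantees that by the time one needs to apply $A_{i,2}^{-1}$, the current monomial already contains enough copies of $Y_{i,2}$ coming from previously applied $A_{j,1}^{-1}$ with $j$ adjacent to $i$ in $J\cap I_0$ to be $i$-dominant. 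This construction mirrors step by step the iterated application of $\vp_J$ and $\vp_k$ used in the proof of Proposition~\ref{prop33}.

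Second, I would verify by induction along the chain that $s(m^{(t)}) \ge 1$ for every $t$. The base case is $s(m)=1$. For the inductive step, (\ref{eqsim})--(\ref{eqsm}) give
\[
s(m^{(t)}) \;\ge\; s_i(m^{(t)}) \;\ge\; \bigl(s(m^{(t-1)}) - s_i(m^{(t-1)})\bigr)\,[\vp_i(m^{(t-1)}):m^{(t)}],
\]
and the bracket is at least $1$ by construction for the vertex $i$ used in the $t$-th step. The main obstacle is controlling the bookkeeping of $s_i(m^{(t-1)})$: a given intermediate monomial may be reached from several $i$-dominant predecessors, and one must rule out cancellations of the form $s(m^{(t-1)}) = s_i(m^{(t-1)})$. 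I expect to handle this by exploiting the very restricted structure of the monomials listed in Proposition~\ref{prop33}: since (\ref{eqnu}) bounds each $\nu_i$ by the number of contributing neighbours, only a small, explicit set of predecessors contributes to $s_i$, and the hypothesis that the trivalent node lies in $I_0$ in types $D$ and $E$ prevents the branching patterns that would otherwise cause such cancellations. Once $\FM(m) \ge \chi_q(S(\a_J))_{\le 2}$ is established, Corollary~\ref{corregular} immediately delivers $\FM(m) = \chi_q(S(\a_J))$.
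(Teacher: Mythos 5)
Your reduction is exactly the paper's: both proofs use Proposition~\ref{prop33} to get the explicit truncated character and Corollary~\ref{corregular} to reduce the claim to $\FM(m)\ge\chi_q(S(\a_J))_{\le 2}$. The problem is that the one step you defer ("rule out cancellations of the form $s(m^{(t-1)})=s_i(m^{(t-1)})$\,", handled by "I expect to...") together with the unproved assertion that the bracket $[\vp_i(m^{(t-1)}):m^{(t)}]$ is "at least $1$ by construction" is precisely the nontrivial content of the corollary. Recall Example~\ref{exa24}: the Frenkel--Mukhin algorithm can genuinely miss monomials of the form $mA_{i_0,1}^{-1}A_{i_1,2}^{-1}$, so the occurrence in $\FM(m)$ of the monomials of Proposition~\ref{prop33} with $\nu_i=1$ at a vertex $i\in I_1$ adjacent to two lowered $I_0$-vertices cannot be taken for granted; it requires both that the relevant $2$-dominant predecessor contributes with $s-s_i\ge 1$ in (\ref{eqsim}) \emph{and} that the corresponding $\Sl_2$-character actually contains the lowered monomial, and the latter is a computation, not a formal consequence of (\ref{eqnu}). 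Your appeal to the trivalent-node hypothesis is also misdirected: the delicate configuration is the sink--source--sink pattern $j-i-j'$ with $j,j'\in J\cap I_0$, $i\in I_1$, which already occurs in type $A_3$ where there is no trivalent node; the hypothesis on the trivalent node is only what makes Proposition~\ref{prop33} (hence the formula you start from) available in types $D,E$.

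The paper closes this gap by localizing it: using the bound (\ref{exa3eq}) established in the proof of Proposition~\ref{prop33}, the whole verification reduces to the single type $A_3$ check that, for $m=Y_{1,0}Y_{2,3}Y_{3,0}$, the monomial $mA_{1,1}^{-1}A_{3,1}^{-1}A_{2,2}^{-1}$ occurs in $\FM(m)$. This is done explicitly: $mA_{1,1}^{-1}A_{3,1}^{-1}$ clearly lies in $\FM(m)$ and is $2$-dominant, and $\vp_2(mA_{1,1}^{-1}A_{3,1}^{-1})=\vp_2(Y_{2,1}^2Y_{2,3})=\vp_2(Y_{2,1})\,\vp_2(Y_{2,1}Y_{2,3})$ (the Chari--Pressley factorization of the non-KR $\Sl_2$-module with highest monomial $Y_{2,1}^2Y_{2,3}$) visibly contains the required monomial, and one checks $s-s_2\ge1$ for this predecessor since it is reached only through vertices $1$ and $3$. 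To repair your argument you would need to supply this local computation (or an equivalent one) rather than postulate it; once it is in place, your chain bookkeeping and the paper's proof amount to the same thing.
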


\begin{proof}
Let $m$ be the highest $l$-weight of $S(\a)$.
It follows easily from the explicit formula of Proposition~\ref{prop33}
that $\FM(m)\ge\chi_q(S(\a))_{\le 2}$. The only point
which needs to be checked reduces to type $A_3$ for $m = Y_{1,0}Y_{2,3}Y_{3,0}$. By (\ref{exa3eq}), it suffices to check that 
\[
m(1+A_{1,1}^{-1}+A_{3,1}^{-1}+A_{1,1}^{-1}A_{3,1}^{-1}(1+A_{2,2}^{-1}))\le \FM(m).
\]
We have clearly $m(1+A_{1,1}^{-1}+A_{3,1}^{-1}+A_{1,1}^{-1}A_{3,1}^{-1})\le \FM(m)$. Then 
\[
\varphi_2(mA_{1,1}^{-1}A_{3,1}^{-1}) = \varphi_2(Y_{2,1}^2Y_{2,3}) 
= \varphi_2(Y_{2,1})\varphi_2(Y_{2,1}Y_{2,3})
\] 
where $mA_{1,1}^{-1}A_{3,1}^{-1}A_{2,2}^{-1}$ appears.
Now the claim follows from Corollary~\ref{corregular}.
\cqfd 
\end{proof}

If $\g$ is of type $A$, all its positive roots are multiplicity-free, thus 
Proposition~\ref{prop33} and Example~\ref{exa31} give explicit formulae for all the 
truncated $q$-characters $\chi_q(S(\a))_{\le 2}$ in that case.

\begin{example}
{\rm
We take $\g$ of type $A_3$. We assume that $I_0=\{1,3\}$ and $I_1=\{2\}$.
The truncated $q$-characters of the modules $S(\a)\ (\a\in\Phi_{\ge -1})$ are 
\[
\begin{array}{lcllcl}
\chi_q(S(-\a_1))_{\le 2}&=&Y_{1,2},
&\chi_q(S(\a_1))_{\le 2}&=&Y_{1,0}(1+A_{1,1}^{-1}+A_{1,1}^{-1}A_{2,2}^{-1}),
\\[2mm]
\chi_q(S(-\a_2))_{\le 2}&=&Y_{2,1}(1+A_{2,2}^{-1}),\ \
&\chi_q(S(\a_2))_{\le 2}&=&Y_{2,3},
\\[2mm]
\chi_q(S(-\a_3))_{\le 2}&=&Y_{3,2},
&\chi_q(S(\a_3))_{\le 2}&=&Y_{3,0}(1+A_{3,1}^{-1}+A_{2,2}^{-1}A_{3,1}^{-1}),
\end{array}
\]
\[
\begin{array}{lcl}
\chi_q(S(\a_1+\a_2))_{\le 2}&=&Y_{1,0}Y_{2,3}(1+A_{1,1}^{-1}),\\[2mm]
\chi_q(S(\a_2+\a_3))_{\le 2}&=&Y_{2,3}Y_{3,0}(1+A_{3,1}^{-1}),\\[2mm]
\chi_q(S(\a_1+\a_2+\a_3))_{\le 2}&=&Y_{1,0}Y_{2,3}Y_{3,0}(1+A_{1,1}^{-1}
+A_{3,1}^{-1}+A_{1,1}^{-1}A_{3,1}^{-1}+A_{1,1}^{-1}A_{2,2}^{-1}A_{3,1}^{-1}).\\[2mm]
\end{array}
\]
Moreover, by Example~\ref{exa31}, the frozen simple objects $F_i$ have the following truncated $q$-characters
\[
 \chi_q(F_1)_{\le 2} = Y_{1,0}Y_{1,2},\quad
 \chi_q(F_2)_{\le 2} = Y_{2,1}Y_{2,3},\quad
 \chi_q(F_3)_{\le 2} = Y_{3,0}Y_{3,2}.
\]
} 
\end{example}

\begin{corollary}\label{corprime}
Let $\a_J$ be a multiplicity-free root. 
Then $S(\a_J)$ is a prime simple object. 
\end{corollary}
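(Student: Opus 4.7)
The strategy is to prove primality by contradiction, using the explicit truncated $q$-character formula of Proposition~\ref{prop33} together with the multiplicativity of the truncated $q$-character (Proposition~\ref{prop28}). The case $|J|=1$ reduces to the obvious fact that a fundamental module is prime, so I will assume $|J|\ge 2$.

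Assume $S(\a_J)\cong L(m_1)\otimes L(m_2)$ with both factors non-trivial. The highest $l$-weight of $S(\a_J)$ is
$m=\prod_{j\in J\cap I_0}Y_{j,0}\prod_{j\in J\cap I_1}Y_{j,3}$;
since it is multiplicity-free, the equality $m=m_1m_2$ forces a non-trivial partition $J=J_1\sqcup J_2$ with $m_k=\prod_{j\in J_k\cap I_0}Y_{j,0}\prod_{j\in J_k\cap I_1}Y_{j,3}$. Connectedness of $J$ guarantees an edge of the Dynkin diagram between $J_1$ and $J_2$; bipartiteness forces one endpoint to lie in $I_0$ and the other in $I_1$, and after swapping the roles of the two factors if necessary, I may write these endpoints as $j_1\in J_1\cap I_0$ and $j_2\in J_2\cap I_1$.

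The key input is the monomial $mA_{j_1,1}^{-1}A_{j_2,2}^{-1}$, which I will show lies in $\chi_q(L(m_1))_{\le 2}\chi_q(L(m_2))_{\le 2}$ while not lying in $\chi_q(S(\a_J))_{\le 2}$, contradicting Proposition~\ref{prop28}. That it is absent from $\chi_q(S(\a_J))_{\le 2}$ follows from Proposition~\ref{prop33}: the monomial corresponds to $\nu_{j_1}=\nu_{j_2}=1$ and $\nu_i=0$ elsewhere, but as $j_2\in J$ we have $\eta_{j_2}=1$ and only $j_1$ is a neighbor with $\nu=1$, so the inequality $\nu_{j_2}\le\max(0,-1+1)=0$ fails. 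That it is present in the product is obtained by applying the construction of Section~\ref{belongJ} with $J'=\{j_1,j_2\}$ to $L(m_1)$: since $j_2\notin J_1$, the restriction $\bar m_1$ equals $Y_{j_1,0}$, and the $q$-character of $L(Y_{j_1,0})$ in the $A_2$-subalgebra spanned by $\{j_1,j_2\}$ is the standard three-dimensional fundamental one. Lifting the formula via $\bar A_{j,a}^{-1}\leftrightarrow A_{j,a}^{-1}$ gives
\[
\vp_{J'}(m_1)\;=\;m_1\bigl(1+A_{j_1,1}^{-1}+A_{j_1,1}^{-1}A_{j_2,2}^{-1}\bigr)\;\le\;\chi_q(L(m_1)),
\]
whose last term lies in $\chi_q(L(m_1))_{\le 2}$ (both spectral parameters are $\le 2$). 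Multiplying by $m_2\in\chi_q(L(m_2))_{\le 2}$ produces the desired monomial inside the product.

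The main care point is the subalgebra computation of $\vp_{J'}(m_1)$: one must verify that the $A_2$-fundamental $q$-character does contribute precisely the term $A_{j_1,1}^{-1}A_{j_2,2}^{-1}$ after the formal substitution $\bar A\leftrightarrow A$, and that its two spectral parameters $q^1,q^2$ remain within the truncation threshold. All other steps are direct applications of results already established in the paper (Propositions~\ref{prop28} and~\ref{prop33} and Section~\ref{belongJ}), and the contradiction closes immediately once both halves of the claim are in place.
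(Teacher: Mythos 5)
Your proof is correct and takes essentially the same route as the paper's: both arguments derive a contradiction by exhibiting the monomial $mA_{j_1,1}^{-1}A_{j_2,2}^{-1}$ (with $j_1\in I_0$, $j_2\in I_1$ adjacent, lying in different tensor factors), which is excluded from $\chi_q(S(\a_J))$ by Proposition~\ref{prop33} yet forced to appear in the character of any nontrivial tensor factorization. The only difference is cosmetic: the paper produces the monomial in the factor via Proposition~\ref{prop30}, whereas you use $\vp_{\{j_1,j_2\}}$ from \S\ref{belongJ} directly, which incidentally avoids the connected-support hypothesis of Proposition~\ref{prop30}.
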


\begin{proof}
Let $m$ be the highest $l$-weight of $S(\a_J)$.
We have $m=\prod_{i\in J\cap I_0} Y_{i,0}\prod_{i\in J\cap I_1}Y_{i,3}$.
Suppose that $S(\a_J)$ is not prime. 
Then 
\begin{equation}\label{wrong}
S(\a_J)\cong L(m_1)\otimes\cdots\otimes L(m_k),
\end{equation}
where $m_1\cdots m_k = m$.
Clearly, there must exist
$i\in J\cap I_0$ and $j\in J\cap I_1$ 
with $a_{ij}=-1$ and such 
that $Y_{i,0}$ and $Y_{j,3}$ do not occur in the 
same monomial $m_r\ (1\le r\le k)$. Let $m_s$ be the
monomial containing $Y_{i,0}$. Then 
$\chi_q(L(m_s))$ contains $m_sA_{i,1}^{-1}A_{j,2}^{-1}$
by Proposition~\ref{prop30},
hence $mA_{i,1}^{-1}A_{j,2}^{-1}$ occurs in
$\chi_q(L(m_1)\otimes\cdots\otimes L(m_k))$.
On the other hand, it follows from Proposition~\ref{prop33}
that $mA_{i,1}^{-1}A_{j,2}^{-1}$ is not a monomial of
$\chi_q(S(\a_J))$, which contradicts (\ref{wrong}).
\cqfd 
\end{proof}

\section{$F$-polynomials}
\label{sectFpol}
In \cite{FZ4}, Fomin and Zelevinsky have shown that the cluster
variables of $\A$ have a nice expression in terms of certain
polynomials called the $F$-polynomials, which are closely related
to the Fibonacci polynomials of \cite{FZ1}.
Moreover, \cite{FZ1} gives some explicit formulae for the 
Fibonacci polynomials in type $A$ and $D$.
We will recall these results in a form suitable to our present
purpose.

\subsection{}\label{sectFpol1}
In Section~\ref{sect4}, we have used 
$\xx=\{x[-\a_i]\mid i\in I\}$ as our reference cluster.
It will be convenient here to work with a slightly different
cluster denoted by $\zz=\{z_i \mid i\in I\}$,
and given by 
\[
z_i = \left\{ 
\begin{array}{ll}
x[-\a_i] & \mbox{if $i\in I_0$,}\\[2mm]
x[\a_i] & \mbox{if $i\in I_1$.}
\end{array}
\right. 
\]
It is easy to see that one passes from $\xx$ to $\zz$
by applying the sequence of mutations $\prod_{k\in I_1}\mu_k$
(it does not matter in which order since they pairwise
commute).
A straightforward calculation shows that
the exchange matrix $\tB_\zz=[b_{ij}^{\zz}]$
at $\zz$ is given by
\[
b_{ij}^\zz = \left\{ 
\begin{array}{ll}
\eps_j a_{ij} & \mbox{if $i,j\in I$ and $i\not = j$,}\\[2mm]
-1 & \mbox{if $j\in I$ and $i=j+n\in I'$,}\\[2mm]
-a_{kj} & \mbox{if $j\in I_0$, and $i=k+n\in I'$ with $k\not = j$,}\\[2mm] 
0 & \mbox{otherwise.}
\end{array}
\right. 
\] 
Here the column set is indexed by $I$ and the row set is
indexed by $I\cup I'\equiv [1,n]\cup [n+1,2n]$.
Following \cite[\S 6]{FZ4} we define the following elements
of $\F$:
\begin{equation}\label{equay}
y_j = \prod_{i\in I}f_i^{b_{i+n,j}^\zz},\qquad
\hy_j = y_j\prod_{i\in I}z_i^{b_{ij}^\zz}.
\end{equation}
\begin{example}
{\rm
We take $\g$ of type $A_3$ and $I_0=\{1,3\}$. We have
\[
 \tB_\zz=
\pmatrix{0 & 1 & 0\cr 
         -1 & 0 & -1\cr 
         0 & 1 & 0 \cr 
         -1 & 0 & 0 \cr 
         1 & -1 & 1 \cr 
         0 & 0 & -1
}.
\]
We have 
\[
y_1 = f_1^{-1}f_2,\quad
y_2 = f_2^{-1},\quad
y_3 = f_2f_3^{-1},\quad
\hy_1 = z_2^{-1} f_1^{-1}f_2,\quad
\hy_2 = z_1z_3 f_2^{-1},\quad
\hy_3 = z_2^{-1} f_2f_3^{-1}. 
\]
}
\end{example}  
Following \cite[\S 10]{FZ4} we denote by $E$ the linear automorphism
of the root lattice $Q$ given by
\[
E(\a_i)=-\eps_i\a_i,\qquad (i\in I).
\]
We also define piecewise-linear involutions $\tau_\eps\ (\eps=\pm1)$
of $Q$ by
\begin{equation}
[\tau_\eps(\ga) : \a_i] = 
\left\{
\begin{array}{ll}
-[\ga : \a_i] - \displaystyle\sum_{j\not = i} a_{ij} \max(0, [\ga : \a_j])
&\mbox{if} \  \eps_i = \eps,\\[5mm]
[\ga : \a_i] &\mbox{if} \ \eps_i\not = \eps.
\end{array}
\right.
\end{equation} 
Here, for $\ga\in Q$, we denote by $[\ga : \a_i]$ the coefficient of 
$\a_i$ in the expansion of $\ga$ on the basis of simple roots.
It is easy to see that $\tau_{\eps}$ preserves $\Phi_{\ge -1}$.
For $\a\in\Phi_{\ge -1}$, one then defines the {\em g-vector}
\begin{equation}
\gg(\a)=E\tau_{-}(\a). 
\end{equation}
The involution $\tau_{-}$ relates the natural labellings of the cluster 
variables with respect to $\xx$ and $\zz$. 
We shall write $z[\a]=x[\tau_{-}(\a)]$.
In particular, since $\tau_{-}(-\a_i)=-\eps_i\a_i$, we have
$z_i=z[-\a_i]\ (i\in I)$.

\subsection{}\label{trop}
Consider the multiplicative group $\PP$ of all Laurent monomials in 
the variables $f_i\ (i\in I)$. 
As in \cite[Def. 2.2]{FZ4}, we introduce
the addition $\oplus$ given by
\[
\prod_i f_i^{a_i} \oplus \prod_i f_i^{b_i} =
\prod_i f_i^{\min(a_i,b_i)}.
\]
Endowed with this operation and its ordinary multiplication and
division, $\PP$ becomes a semifield, called the {\em tropical semifield}.
If $F(t_1,\ldots,t_n)$ is a subtraction-free rational expression
with integer coefficients in some variables $t_i$,
then we can evaluate it in $\PP$ by specializing the $t_i$ to 
some elements $p_i$ of $\PP$. This will be denoted by
$F|_{\PP}(p_1,\ldots,p_n)$.

\subsection{}
To define the $F$-polynomials, we need to introduce a variant
of $\A$ called the {\em cluster algebra with principal coefficients}.
We shall denote it by $\A_{\pr}$. It is given by the initial seed
$(\uu,\tB_{\pr})$, where $\uu=(u_1,\ldots,u_{n},v_1,\ldots,v_n)$, and
$\tB_{\pr}$ is the $2n \times n$ matrix with the same principal part
as $\tB_\zz$ and with lower part equal to the $n\times n$ identity matrix.
Thus $(v_1,\ldots,v_n)$ are the frozen variables of $\A_{\pr}$.
By \cite{FZ2} every cluster variable of $\A_{\pr}$ is of the form
\begin{equation}
u[\a] = \frac{N_\a(u_1,\ldots,u_n,v_1,\ldots,v_n)}{u_1^{a_1}\cdots u_n^{a_n}} 
\end{equation}
for some $\a = \sum_{i\in I} a_i\a_i\in\Phi_{\ge -1}$.
Here $N_\a$ is a polynomial, and $N_{-\a_i}\equiv 1\ (i\in I)$.
Following \cite[\S 3]{FZ4}, we can now define
the {\em $F$-polynomials} by specializing all the $u_i$ to 1:
\begin{equation}
F_\a(v_1,\ldots,v_n) = N_\a(1,\ldots,1,v_1,\ldots,v_n),\qquad (\a\in\Phi_{\ge -1}). 
\end{equation}
For example, for the simple root $\a_i$ we have $F_{\a_i}(v_1,\ldots,v_n) = 1+v_i$.
It is known that $F_\a$ is a polynomial with positive integer coefficients
\cite[Cor.~11.7]{FZ4}.
The main formula is then \cite[Cor.~6.3]{FZ4}:
\begin{equation}\label{eqFpol1}
z[\a] = \frac{F_\a(\hy_1,\ldots,\hy_n)}{F_\a|_{\PP}(y_1,\ldots,y_n)}\zz^{\gg(\a)},  
\end{equation}
where, if $\gg(\a)=(g_1,\ldots,g_n)$, we write for short $\zz^{\gg(\a)}=z_1^{g_1}\cdots z_n^{g_n}$. 
This means that the cluster variable $z[\a]$ is completely determined
by the corresponding $F$-polynomial $F_\a$ and $g$-vector $\gg(\a)$.

\subsection{}
Recall from \S\ref{sect44} the ring isomorphism $\iota : \A \to R_1$.
Note that by comparing the relation 
\[
x[\a_i]x[-\a_i] = f_i + \prod_{j: a_{ij}=-1} x[-\a_j] 
\]
with the $T$-system equation (\ref{eqTsystem}) satisfied by the Kirillov-Reshetikhin
modules $S(\a_i)$ and $S(-\a_i)$, we obtain
immediately 
\[
\iota(f_i) = [F_i], \qquad (i\in I).
\]
Taking into account Proposition~\ref{prop28}, we can regard $\iota$
as an isomorphism from $\A$ to the subring of $\Y$ generated by
the truncated $q$-characters of objects of $\CC_1$.
We then have (\cf Example~\ref{exa31})
\begin{equation}\label{eqatrtr}
\iota(z_i) = Y_{i,\xi_i+2},\qquad \iota(f_i) = Y_{i,\xi_i}Y_{i,\xi_i+2},
\qquad (i\in I). 
\end{equation}
Moreover, extending $\iota$ to a homomorphism from $\F$
to the fraction field of $\Y$, we can consider 
the elements $\iota(\hy_j)$. 
\begin{lemma}\label{lem82}
For $j\in I$, we have 
\[
\iota(\hy_j) = A_{j,\xi_j+1}^{-1}.
\] 
\end{lemma}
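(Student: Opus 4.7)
The plan is a direct computation, splitting into two cases according to whether $j\in I_0$ or $j\in I_1$, using the explicit formula for the exchange matrix $\tB_\zz$ given just above.

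First I would substitute the entries of $\tB_\zz$ into the definition $\hy_j=\prod_{i\in I}f_i^{b^\zz_{i+n,j}}\,\prod_{i\in I}z_i^{b^\zz_{ij}}$ to obtain a closed form. For $j\in I_0$, reading off the column of $\tB_\zz$ indexed by $j$ gives $b^\zz_{j+n,j}=-1$, $b^\zz_{k+n,j}=-a_{kj}$ for $k\ne j$, and $b^\zz_{ij}=\eps_j a_{ij}=a_{ij}$ for $i\in I\setminus\{j\}$; using that $a_{ij}\in\{0,-1\}$ for $i\ne j$ and that the Dynkin diagram is bipartite (so $a_{ij}=-1$ forces $i\in I_1$, \ie $\xi_i=1$), I obtain
\[
\hy_j \;=\; f_j^{-1}\prod_{i:\,a_{ij}=-1} f_i \;\cdot\; \prod_{i:\,a_{ij}=-1} z_i^{-1}.
\]
For $j\in I_1$ the same reading gives $b^\zz_{j+n,j}=-1$, $b^\zz_{k+n,j}=0$ for $k\ne j$, and $b^\zz_{ij}=\eps_j a_{ij}=-a_{ij}$ for $i\in I\setminus\{j\}$, so
\[
\hy_j \;=\; f_j^{-1}\prod_{i:\,a_{ij}=-1} z_i.
\]

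Next I apply $\iota$ to each of these two expressions using (\ref{eqatrtr}), which says $\iota(z_i)=Y_{i,\xi_i+2}$ and $\iota(f_i)=Y_{i,\xi_i}Y_{i,\xi_i+2}$. For $j\in I_0$ (so $\xi_j=0$ and every neighbour $i$ satisfies $\xi_i=1$), the $Y_{i,3}$ factors coming from $\iota(f_i)$ cancel the $Y_{i,3}^{-1}$ factors coming from $\iota(z_i^{-1})$, leaving
\[
\iota(\hy_j)\;=\;Y_{j,0}^{-1}Y_{j,2}^{-1}\prod_{i:\,a_{ij}=-1}Y_{i,1}.
\]
For $j\in I_1$ (so $\xi_j=1$ and every neighbour satisfies $\xi_i=0$) one gets directly
\[
\iota(\hy_j)\;=\;Y_{j,1}^{-1}Y_{j,3}^{-1}\prod_{i:\,a_{ij}=-1}Y_{i,2}.
\]

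Finally I compare with (\ref{eqrootA}): with the convention $Y_{i,r}:=Y_{i,q^r}$ and $A_{i,r}:=A_{i,q^r}$ adopted in \ref{sect621}, we have $A_{j,r}=Y_{j,r-1}Y_{j,r+1}\prod_{i\ne j}Y_{i,r}^{a_{ij}}$. For $r=\xi_j+1$ this becomes exactly the reciprocal of the expression just computed for $\iota(\hy_j)$ in each case, which proves $\iota(\hy_j)=A_{j,\xi_j+1}^{-1}$.

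There is no real obstacle here; the only thing that requires care is the bookkeeping of exponents and, in particular, using that the Dynkin diagram is bipartite so that $\xi_i=1-\xi_j$ whenever $a_{ij}=-1$. This is what makes the products of $Y$-variables align perfectly with the definition of $A_{j,\xi_j+1}$.
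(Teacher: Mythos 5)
Your proposal is correct and follows essentially the same route as the paper's own proof: a direct case-by-case computation ($j\in I_0$ versus $j\in I_1$) reading off the column of $\tB_\zz$, applying $\iota$ via (\ref{eqatrtr}), and matching the result against the definition (\ref{eqrootA}) of $A_{j,\xi_j+1}$, with bipartiteness supplying the needed shift of spectral parameters. The only difference is cosmetic bookkeeping: the paper keeps the exponents as $a_{ij}$ and uses the identities $\xi_i+2=\xi_j+1$ (for $j\in I_1$) and $\xi_i=\xi_j+1$ (for $j\in I_0$), while you expand over neighbours with $a_{ij}=-1$.
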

\begin{proof}
If $j\in I_1$, we have
\[
\iota(\hy_j) = \iota(f_j)^{-1}\prod_{i\not = j}\iota(z_i)^{-a_{ij}}
= Y_{j,\xi_j}^{-1}Y_{j,\xi_j+2}^{-1} \prod_{i\not = j}Y_{i,\xi_i+2}^{-a_{ij}}
= A_{j,\xi_j+1}^{-1},
\]
since $\xi_i+2=\xi_j+1$ if $j\in I_1$ and $i\in I_0$.
On the other hand, if $j\in I_0$, we have
\[
\iota(\hy_j) = \iota(f_j)^{-1}\prod_{i\not = j}\iota\left(\frac{z_i}{f_i}\right)^{a_{ij}}
= Y_{j,\xi_j}^{-1}Y_{j,\xi_j+2}^{-1} \prod_{i\not = j}Y_{i,\xi_i}^{-a_{ij}}Y_{i,\xi_i+2}^{-a_{ij}}Y_{i,\xi_i+2}^{a_{ij}}
= Y_{j,\xi_j}^{-1}Y_{j,\xi_j+2}^{-1} \prod_{i\not = j}Y_{i,\xi_i}^{-a_{ij}}
= A_{j,\xi_j+1}^{-1},
\]
since $\xi_i=\xi_j+1$ if $j\in I_0$ and $i\in I_1$.
\cqfd 
\end{proof}

\begin{lemma}\label{lem83}
Let $\a\in\Phi_{\ge -1}$ and set $\b=\tau_{-}\a=\sum_ib_i\a_i$.
We have
\[
\iota\left(\frac{\zz^{\gg(\a)}}{F_\a|_{\PP}(y_1,\ldots,y_n)}\right)=
\left\{
\begin{array}{ll}
\ds\prod_{i\in I_0}Y_{i,0}^{b_i} \prod_{i\in I_1}Y_{i,3}^{b_i} & \mbox{if $\b > 0$,}\\[5mm]
Y_{i,2-\xi_i} &\mbox{if $\b=-\a_i$.}
\end{array}
\right.
\]
\end{lemma}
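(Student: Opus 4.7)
The plan is to verify the formula case by case, computing the two factors $\zz^{\gg(\alpha)}$ and $F_\alpha|_{\PP}(y_1,\ldots,y_n)$ separately and then combining them. Since $E(\alpha_i)=-\epsilon_i\alpha_i$ and $\beta=\sum_i b_i\alpha_i$, we have $\gg(\alpha)=E(\beta)=-\sum_i \epsilon_i b_i\alpha_i$, hence $\zz^{\gg(\alpha)}=\prod_{i\in I_0}z_i^{-b_i}\prod_{i\in I_1}z_i^{b_i}$. Applying (\ref{eqatrtr}) gives
$$
\iota\bigl(\zz^{\gg(\alpha)}\bigr)=\prod_{i\in I_0}Y_{i,2}^{-b_i}\prod_{i\in I_1}Y_{i,3}^{b_i}.
$$

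For the case $\beta=-\alpha_i$, one first uses the definition of $\tau_{-}$ to see that $\tau_{-}(-\alpha_i)=-\alpha_i$ when $i\in I_0$ and $\tau_{-}(\alpha_i)=-\alpha_i$ when $i\in I_1$, so $\alpha$ is either $-\alpha_i$ or $\alpha_i$. In the first subcase $F_{-\alpha_i}=1$ is trivial, and $\iota(\zz^{\gg(\alpha)})=Y_{i,2}=Y_{i,2-\xi_i}$, as required. In the second subcase $F_{\alpha_i}=1+v_i$ and, in the tropical semifield, $F_{\alpha_i}|_{\PP}(y)=1\oplus y_i=1\oplus f_i^{-1}=f_i^{-1}$. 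Dividing $\iota(\zz^{\gg(\alpha)})=Y_{i,3}^{-1}$ by $\iota(f_i^{-1})=Y_{i,1}^{-1}Y_{i,3}^{-1}$ gives $Y_{i,1}=Y_{i,2-\xi_i}$, as required.

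For the case $\beta>0$, after applying $\iota$ and using $\iota(f_i)=Y_{i,\xi_i}Y_{i,\xi_i+2}$, the identity reduces to the single tropical claim
$$
F_\alpha|_{\PP}(y_1,\ldots,y_n)=\prod_{i\in I_0}f_i^{-b_i}.
$$
Since $F_\alpha$ has nonnegative integer coefficients \cite[Cor.~11.7]{FZ4}, the tropical evaluation equals the componentwise minimum of the exponent vectors of the monomials $y^{\lambda}$ over $\lambda\in\mathrm{supp}(F_\alpha)$. Using the explicit form of the $y_j$ derived from $\tB_{\zz}$ (namely $y_j=f_j^{-1}$ for $j\in I_1$ and $y_j=f_j^{-1}\prod_{i\in I_1,\,i\sim j}f_i$ for $j\in I_0$), the exponent of $f_i$ in $y^\lambda$ is $-\lambda_i$ if $i\in I_0$ and $-\lambda_i+\sum_{j\in I_0,\,j\sim i}\lambda_j$ if $i\in I_1$. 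The desired identity is therefore equivalent to the two extremum statements
$$
\max_{\lambda\in\mathrm{supp}(F_\alpha)}\lambda_i=b_i\ (i\in I_0),\qquad \max_{\lambda\in\mathrm{supp}(F_\alpha)}\Bigl(\lambda_i-\sum_{j\sim i,\,j\in I_0}\lambda_j\Bigr)=0\ (i\in I_1).
$$

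Both statements follow from the explicit combinatorial description of $F_\alpha$ for finite type cluster algebras in \cite[\S 10--11]{FZ4} and the formulas for Fibonacci polynomials in \cite[\S 3]{FZ1}: the Newton polytope of $F_\alpha$ has $v^\beta$ as its unique top monomial (giving the first claim), and every monomial $v^\lambda$ of $F_\alpha$ satisfies $\lambda_i\le\sum_{j\sim i,\,j\in I_0}\lambda_j$ for $i\in I_1$, with equality for the constant term $\lambda=0$ (giving the second claim). The main obstacle is precisely this combinatorial verification on the Newton polytope of $F_\alpha$: while straightforward from the explicit Fibonacci formulas in types $A$ and $D$, the general finite-type statement relies on the compatibility condition characterizing which monomials occur in $F_\alpha$.
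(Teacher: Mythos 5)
Your overall route is the same as the paper's: you compute $\gg(\a)=E(\b)$ and the monomials $y_j$ from $\tB_\zz$, you treat the two subcases with $\b=-\a_i$ directly (both are correct, including the tropical evaluation $1\oplus f_i^{-1}=f_i^{-1}$ when $\a=\a_i$, $i\in I_1$), and you reduce the case $\b>0$ to the tropical identity $F_\a|_{\PP}(y_1,\ldots,y_n)=\prod_{i\in I_0}f_i^{-b_i}$, exactly as in the paper. The problems are in your treatment of this last identity. First, your justification of the statement $\max_\l \l_i=b_i$ for $i\in I_0$ rests on the assertion that $\prod_i v_i^{b_i}$ is the unique top monomial of $F_\a$. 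That is false: by \cite[Cor.~10.10]{FZ4} (which is what the paper invokes) the unique maximal monomial is $\prod_i v_i^{a_i}$ where $\a=\sum_i a_i\a_i$, and $\a\neq\b$ in general --- for instance $\a=\a_1$ in type $A_3$ with $1\in I_0$ gives $F_{\a_1}=1+v_1$ while $\b=\tau_-(\a_1)=\a_1+\a_2$, so $v^\b$ does not even occur in $F_\a$. Your first extremum statement is nevertheless true, but only because $\tau_-$ leaves the $I_0$-coordinates unchanged, i.e. $b_i=a_i$ for $i\in I_0$; this must be said, and the top monomial must be $v^\a$, not $v^\b$.

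Second, and more seriously, the inequality you isolate as ``the main obstacle'' --- that every monomial $\prod_i v_i^{\l_i}$ of $F_\a$ satisfies $\l_i\le\sum_{j\sim i,\,j\in I_0}\l_j$ for all $i\in I_1$ whenever $\b>0$ --- is precisely the crux of the $\b>0$ case, and you do not establish it: the explicit formulas of \cite[Prop.~2.10]{FZ1} and \cite[Th.~11.6]{FZ4} only cover $2$-restricted roots, hence all roots in types $A$ and $D$ but not all roots in type $E$, whereas the lemma is needed for all simply-laced types; and ``the compatibility condition characterizing which monomials occur in $F_\a$'' is not a precise argument. (The paper itself is brief at this point, concluding the tropical evaluation from \cite[Cor.~10.10]{FZ4} together with the constant term $1$, so your reduction actually makes the missing combinatorial input explicit --- but it then leaves it unproved.) A clean way to close the gap is the quiver-Grassmannian description of $F_\a$ recalled later in the paper (Theorem~\ref{thFK}, after \cite[Th.~6.5]{FK}): the exponent vectors occurring in $F_\a$ are dimension vectors of subrepresentations $N$ of the indecomposable representation $M[\a]$ of the quiver $C$ with sources in $I_1$; for a source $i\in I_1$ the map $M[\a]_i\to\bigoplus_{j\sim i}M[\a]_j$ is injective unless $M[\a]$ has a direct summand $S_i$, i.e. unless $\a=\a_i$ with $i\in I_1$, which is exactly the excluded case $\b=-\a_i$; injectivity then yields $\dim N_i\le\sum_{j\sim i}\dim N_j$, which is the required inequality. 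With these two repairs your argument becomes a complete proof, essentially along the lines of the paper's.
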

\begin{proof}
Write $\a=\sum_ia_i\a_i$.
If $\a=-\a_i$ then $F_\a = 1$. 
Moreover, $\b=-\eps_i\a_i$, $\gg(\a)=\a_i$, so 
\[
\iota\left(\frac{\zz^{\gg(\a)}}{F_\a|_{\PP}(y_1,\ldots,y_n)}\right)=
\iota(z_i)=Y_{i,\xi_i+2},
\]
which proves the formula in this case.

Otherwise if $\a>0$,
by \cite[Cor.~10.10]{FZ4} the polynomial $F_\a$ has a unique monomial
of maximal degree, which is divisible by all the other occuring monomials
and has coefficient 1. This monomial is $m=\prod_i v_i^{a_i}$.
When we evaluate $m$ at 
$v_i = y_i = \prod_{k\in I}f_k^{b_{k+n,i}^\zz}$
we obtain
\[
\prod_{i\in I} f_i^{-a_i}\prod_{i\in I_1} f_i^{-\sum_{j\not = i}a_ja_{ij}}
=
\prod_{i\in I_0} f_i^{-a_i}\prod_{i\in I_1} f_i^{-a_i-\sum_{j\not = i}a_ja_{ij}}
=
\prod_{i\in I_0} f_i^{-b_i}\prod_{i\in I_1} f_i^{b_i}.
\]
Moreover $F_\a$ has constant term $1$.
Therefore, if $\b>0$, then $b_i>0$ for every $i\in I$ and 
\[
 F_\a|_{\PP}(y_1,\ldots,y_n) = \prod_{i\in I_0} f_i^{-b_i}.
\]
Otherwise, if $\b=-\a_i$ with $i\in I_1$ we have $F_\a|_{\PP}(y_1,\ldots,y_n) = f_i^{-1}$
(the case $\b=-\a_i$ with $i\in I_0$ has already been dealt with).
On the other hand, $\gg(\a)=E(\b)=-\sum_{i\in I} b_i\eps_i\a_i$.
Hence, if $\b>0$ then
\[
\iota\left(\frac{\zz^{\gg(\a)}}{F_\a|_{\PP}(y_1,\ldots,y_n)}\right)=
\prod_{i\in I_0} \iota(f_i)^{b_i} \prod_{i\in I_0} \iota(z_i)^{-b_i}
\prod_{i\in I_1} \iota(z_i)^{b_i} 
=
\prod_{i\in I_0}Y_{i,0}^{b_i} \prod_{i\in I_1}Y_{i,3}^{b_i}.
\]
If $\b=-\a_i$ and $i\in I_1$, we have
\[
\iota\left(\frac{\zz^{\gg(\a)}}{F_\a|_{\PP}(y_1,\ldots,y_n)}\right)=
\iota(f_i)\iota(z_i^{-1})=Y_{i,1}.
\]
\cqfd 
\end{proof}

\begin{corollary}
Let $\b\in\Phi_{\ge -1}$ and set $\a=\tau_{-}\b$.
Let $Y^\b$ denote the highest weight monomial of $\chi_q(S(\b))$.
We have
\[
 \iota(x[\b]) = Y^\b F_\a\left(A_{1,\xi_1+1}^{-1},\ldots,A_{n,\xi_n+1}^{-1}\right).
\]
\end{corollary}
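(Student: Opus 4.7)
The proof is essentially an assembly of the results that immediately precede the corollary, so my plan is to chain together formula~(\ref{eqFpol1}), Lemma~\ref{lem82} and Lemma~\ref{lem83}.

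First I would observe that since $\tau_{-}$ is an involution on $\Phi_{\ge -1}$ preserving $\Phi_{\ge -1}$, the equality $\a = \tau_{-}\b$ in the statement is equivalent to $\b = \tau_{-}\a$, so by the convention $z[\a] = x[\tau_{-}\a]$ introduced in~\ref{sectFpol1} we have $x[\b] = z[\a]$. Applying the cluster algebra isomorphism $\iota$ to formula~(\ref{eqFpol1}) then gives
\[
\iota(x[\b]) \;=\; \iota(z[\a]) \;=\; \iota\!\left(F_\a(\hy_1,\ldots,\hy_n)\right)\cdot \iota\!\left(\frac{\zz^{\gg(\a)}}{F_\a|_{\PP}(y_1,\ldots,y_n)}\right).
\]
This factorisation is legitimate because $F_\a|_{\PP}(y_1,\ldots,y_n)\in\PP$ is a single Laurent monomial in the frozen variables $f_i$, so the second factor is itself a Laurent monomial on which $\iota$ acts cleanly.

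Next I would handle each of the two factors in turn. For the first factor, Lemma~\ref{lem82} says $\iota(\hy_j) = A_{j,\xi_j+1}^{-1}$ for every $j\in I$; since $F_\a$ is a polynomial with integer coefficients, this gives
\[
\iota(F_\a(\hy_1,\ldots,\hy_n)) \;=\; F_\a\!\left(A_{1,\xi_1+1}^{-1},\ldots,A_{n,\xi_n+1}^{-1}\right).
\]
For the second factor, Lemma~\ref{lem83} applied with $\b=\tau_{-}\a$ (exactly the $\b$ of the present statement) yields
\[
\iota\!\left(\frac{\zz^{\gg(\a)}}{F_\a|_{\PP}(y_1,\ldots,y_n)}\right)
\;=\;
\prod_{i\in I_0}Y_{i,0}^{b_i}\prod_{i\in I_1}Y_{i,3}^{b_i}
\quad\text{if $\b>0$, resp.}\quad Y_{i,2-\xi_i}\ \text{if $\b=-\a_i$.}
\]

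The last step is to identify this monomial with $Y^\b$. By the construction of $S(\b)$ in Section~\ref{sect41}, when $\b=-\a_i$ the module $S(\b)$ is $V_{i,q^{2-\xi_i}}$, whose highest weight monomial is $Y_{i,2-\xi_i}$; when $\b = \sum_i b_i\a_i >0$ the Drinfeld polynomials of $S(\b)$ are the $b_i$-th powers of those of $S(\a_i)$, so its highest weight monomial is $\prod_{i\in I_0}Y_{i,0}^{b_i}\prod_{i\in I_1}Y_{i,3}^{b_i}$. In both cases the formula of Lemma~\ref{lem83} is exactly $Y^\b$, and multiplying the two factors gives the claimed equality. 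I do not foresee any genuine obstacle: the two preceding lemmas were clearly designed to split the verification into the two pieces used above, and the only subtlety to flag is the involutivity of $\tau_{-}$ together with the book-keeping relating $\a$ to $\b$.
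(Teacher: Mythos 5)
Your proof is correct and follows exactly the paper's argument: the paper also deduces the corollary immediately from Lemma~\ref{lem82}, Lemma~\ref{lem83} and the relation $x[\b]=z[\tau_{-}\b]$ applied to formula~(\ref{eqFpol1}). The only difference is that you spell out the identification of the monomial in Lemma~\ref{lem83} with $Y^\b$, which the paper leaves implicit.
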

\begin{proof}
This follows immediately from Lemma~\ref{lem82}, Lemma~\ref{lem83}, and
the relation $x[\b]=z[\tau_{-}\b]$.
\cqfd 
\end{proof}

Using the notation of (\ref{eqchit}), we see that 
the proof of Conjecture~\ref{mainconjecture}~(i)
is now reduced to establishing the following polynomial identity
in $\N[A_{i,\xi_i+1}^{-1}]$:
\begin{equation}\label{eq2prove}
\widetilde{\chi}_q(S(\b))_{\le 2} = F_{\tau_{-}(\b)},\qquad (\b\in \Phi_{>0}), 
\end{equation}
that is, {\em the normalized truncated $q$-character of $S(\b)$ should coincide with
the $F$-polynomial attached to the root $\tau_{-}(\b)$}.

\subsection{}
We now recall an explicit formula of Fomin and Zelevinsky for
the $F$-polynomials.
This is obtained by combining \cite[Prop. 2.10]{FZ1}
with \cite[Th. 11.6]{FZ4}.
It covers all the $F$-polynomials in type $A$ and $D$.

\subsubsection{}
Let $\a=\sum_i a_i\a_i \in \Phi_{>0}$.
Assume that $a_i \le 2$ for every $i\in I$. 
Let $\ga = \sum_i c_i\a_i$. We say that $\ga$ is {\em $\a$-acceptable}
if 
\begin{itemize}
\item[(i)] $0\le c_i \le a_i$ for every $i\in I$;
\item[(ii)] if $i\in I_1$ and $j\in I_0$ are adjacent then $c_i\le (2-a_j)+c_j$; 
\item[(iii)] there is no simple
path $(i_0,\ldots,i_m)$ of length
$m\ge 1$ contained in the support of $\a$ such that $a_{i_0}=a_{i_m}=1$
and for $k=0,\ldots, m$
\[
c_{i_k}=\left\{
\begin{array}{ll}
 1 & \mbox{if $i_k\in I_1$,}\\
 a_{i_k}-1 & \mbox{if $i_k\in I_0$.}
\end{array}
\right.
\]
\end{itemize}
In condition (iii) above, by a simple path we mean any path in the Dynkin
diagram whose vertices are pairwise distinct.
Finally, let $e(\ga,\a)$ be the number of connected components
of the set
\[
\{i\in\ I\mid c_i = 1 \mbox{ if } i\in I_1 \mbox{ and } c_i = a_i-1 \mbox{ if } i\in I_0\}
\]
that are contained in $\{i\in I \mid a_i = 2\}$.
Then
\begin{proposition}{\rm\cite{FZ1,FZ4}}\label{propFZ}
\begin{equation}\label{eqFpol}
F_\a(v_1,\ldots,v_n) = \sum_{\ga} 2^{e(\ga,\a)} \vv^\ga,
\end{equation}
where the sum is over all $\a$-acceptable $\ga\in Q$, and
we write $\vv^\ga = \prod_{i\in I}v_i^{c_i}$.
\end{proposition}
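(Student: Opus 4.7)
The statement is presented by the authors as a combination of two results already in the literature, namely \cite[Prop.~2.10]{FZ1} and \cite[Th.~11.6]{FZ4}. Accordingly, the plan is not to reprove anything substantive but to carry out a careful matching of conventions so that these two results glue together to yield formula (\ref{eqFpol}).

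First, I would recall the framework of \cite[\S 10--11]{FZ4}. For a cluster algebra of finite type with a bipartite initial seed (which is exactly our case: the principal part of $\widetilde{B}_\zz$ is given by the bipartite sink-source orientation of a Dynkin diagram), Fomin and Zelevinsky parametrize the non-initial cluster variables by $\Phi_{\ge -1}$ via their $g$-vectors. Theorem~11.6 of \cite{FZ4} then identifies, for each such $\alpha$, the associated $F$-polynomial $F_\alpha(v_1,\ldots,v_n) \in \Z[v_1,\ldots,v_n]$ with the corresponding \emph{Fibonacci polynomial} of \cite{FZ1}, under the evaluation $y_i \mapsto v_i$. This reduces our problem to computing explicitly these Fibonacci polynomials.

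Second, I would invoke \cite[Prop.~2.10]{FZ1}, which gives an explicit monomial expansion of the Fibonacci polynomial attached to a positive root $\alpha = \sum a_i \alpha_i$ satisfying the $2$-restricted condition $a_i \le 2$ for every $i \in I$ (so, in particular, all positive roots in types $A$ and $D$). The expansion is precisely a sum over $\gamma = \sum c_i \alpha_i$ subject to the three conditions (i) $0 \le c_i \le a_i$, (ii) the ``bipartite compatibility'' inequality $c_i \le (2-a_j) + c_j$ for adjacent $i \in I_1$, $j \in I_0$, and (iii) the ``no simple path'' condition, with weights $2^{e(\gamma,\alpha)}$ counting components of a certain $2$-restricted subset. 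These are literally the conditions defining $\alpha$-acceptability in the statement.

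The only real work is bookkeeping. I would verify: (a) that the bipartition $I=I_0 \sqcup I_1$ used here coincides (up to the irrelevant swap discussed in \ref{sectchoice}) with the ``white/black'' partition of \cite{FZ1,FZ4}, so that the roles played by $I_0$ and $I_1$ in conditions (ii)--(iii) agree with those of \cite{FZ1}; (b) that the labelling of $F$-polynomials by $\Phi_{\ge -1}$ adopted here, via $u[\alpha] = N_\alpha / u_1^{a_1}\cdots u_n^{a_n}$ with $\alpha = \sum a_i \alpha_i$, matches the denominator-vector parametrization of \cite[Th.~1.9]{FZ2} which is the one used in \cite{FZ1,FZ4}; and (c) that the evaluation $y_i \mapsto v_i$ used in transferring from Fibonacci polynomials to $F$-polynomials is compatible with the principal-coefficients convention of \cite[\S 3]{FZ4}. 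Each of these is routine, and the main (rather mild) obstacle is simply to be careful with the sign conventions $\eps_i = (-1)^{\xi_i}$, which flip the roles of $I_0$ and $I_1$ relative to some of the formulas in \cite{FZ1}. Once these compatibilities are verified, (\ref{eqFpol}) is just the statement of \cite[Prop.~2.10]{FZ1} read off after applying the identification of \cite[Th.~11.6]{FZ4}.
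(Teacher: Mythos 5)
Your proposal is correct and matches the paper exactly: the paper gives no independent proof of this proposition, stating only that it "is obtained by combining \cite[Prop.~2.10]{FZ1} with \cite[Th.~11.6]{FZ4}", which are precisely the two results you identify and glue together. Your additional remarks on matching the bipartition, the denominator-vector labelling, and the principal-coefficients evaluation are the same routine convention checks implicit in the paper's citation.
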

\begin{example}
{\rm
Take $\g$ of type $D_4$ and choose $I_0=\{2\}$, where $2$
labels the trivalent node. Let $\a=\a_1+2\a_2+\a_3+\a_4$
be the highest root.
Then
\[
F_\a = 1 + 2v_2 + v_2^2 + 
v_1v_2 + v_2v_3 + v_2v_4 + v_1v_2^2 + v_2^2v_3
+ v_2^2v_4 + v_1v_2^2v_3 + v_1v_2^2v_4 + v_2^2v_3v_4 
+v_1v_2^2v_3v_4.  
\]
} 
\end{example}

\subsubsection{}\label{ssmfree}
If $\a$ is a {\em multiplicity-free root}, that is, if
$a_i\le 1$ for all roots, (\ref{eqFpol}) simplifies
greatly. Indeed, in the definition of an $\a$-acceptable
vector $\ga$ condition~(ii) is a consequence of condition~(i),
and condition~(iii) reduces to
\begin{itemize}
\item[(iv)]\qquad for every $i\in I_1$, 
$c_i \le \min\{c_j \mid \mbox{$j\in\ $supp$(\a)$ and $j$ adjacent to $i$}\}$.
\end{itemize} 

\begin{example}
{\rm
Take $\g$ of type $A_3$ and choose $I_0=\{1,3\}$.
Then
\[
\begin{array}{lll}
F_{\a_1} &=& 1 + v_1,\\
F_{\a_2} &=& 1 + v_2,\\
F_{\a_3} &=& 1 + v_3,\\
F_{\a_1+\a_2} &=& 1 + v_1 + v_1v_2,\\
F_{\a_2+\a_3} &=& 1 + v_3 + v_2v_3,\\
F_{\a_1+\a_2+\a_3} &=& 1 + v_1 + v_3 + v_1v_3 + v_1v_2v_3.
\end{array}
\]
} 
\end{example}

\subsection{}
We can now prove the following particular case of Conjecture~\ref{mainconjecture}~(i)
valid for all Lie types (see Eq.~(\ref{eq2prove})).
We assume (as we may, \cf \ref{sectchoice}) that the
trivalent node is in $I_0$ if $\g$ is of type $D$ or $E$.

\begin{theorem}\label{multfreeequal}
Let $\b$ be a multiplicity-free positive root. Then
\[
\widetilde{\chi}_q(S(\b))_{\le 2} = F_{\tau_{-}(\b)}. 
\]
\end{theorem}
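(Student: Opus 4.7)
The plan is to prove the identity by directly comparing the explicit combinatorial formula for $\chi_q(S(\b))_{\le 2}$ furnished by Proposition~\ref{prop33} with the explicit formula for $F_{\tau_{-}(\b)}$ furnished by Proposition~\ref{propFZ}. Write $\b = \a_J$ with $J$ connected and $\eta$ the characteristic function of $J$. First I would compute $\a := \tau_{-}(\b) = \sum_i a_i\a_i$ using the definition of $\tau_{-}$: one finds $a_i = \eta_i$ for $i \in I_0$ and $a_i = -\eta_i + |\{j \in J : j \sim i\}|$ for $i \in I_1$. The degenerate case $J = \{i\}$ with $i \in I_1$ (where $\a = -\a_i$, $F_\a = 1$) is handled separately using Example~\ref{exa31}, which gives $\widetilde{\chi}_q(S(\a_i))_{\le 2} = 1$.

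Next I would verify that $\a$ is itself multiplicity-free, so that the simplified formula of \ref{ssmfree} applies to $F_\a$. The only way to get $a_i \ge 2$ is for $i \in I_1 \setminus J$ to have two neighbors in $J$; but removing such an $i$ from the Dynkin diagram would disconnect these neighbors (since under the standing assumption that trivalent vertices lie in $I_0$, every vertex in $I_1$ has valence $\le 2$, and the Dynkin graph is a tree). This contradicts the connectedness of $J$. Thus $a_i \in \{0,1\}$ for all $i$, so by \ref{ssmfree},
\[
F_\a = \sum_{\ga \text{ $\a$-acceptable}} \vv^\ga,
\]
where the acceptability conditions reduce to $0 \le c_i \le a_i$ together with $c_i \le \min\{c_j \mid j \in \operatorname{supp}(\a),\ j \sim i\}$ for $i \in I_1$.

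The main step is then to establish a bijection between the set $\SS$ of Proposition~\ref{prop33} and the set of $\a$-acceptable vectors, via $\nu \mapsto \ga(\nu) := \sum_i \nu_i\a_i$. Under the isomorphism $\iota$, the variables $v_i$ correspond (by Lemma~\ref{lem82}) to $A_{i,\xi_i+1}^{-1}$, so this bijection will give the desired identity term-by-term. The constraint check splits into cases for $i \in I_1$, using the preliminary observation that $\nu_j \le \eta_j = 0$ for any $j \in I_0 \setminus J$:
\begin{itemize}
\item If $i \in I_1 \setminus J$ with no neighbor in $J$ ($a_i = 0$), then $\nu_i \le \sum_{j \sim i}\nu_j = 0$, matching $c_i = 0$.
\item If $i \in I_1 \setminus J$ with one neighbor $j^* \in J$ ($a_i = 1$), then the $\SS$-constraint becomes $\nu_i \le \nu_{j^*}$, matching $c_i \le c_{j^*}$.
\item If $i \in I_1 \cap J$ with one neighbor $j^* \in J$ ($a_i = 0$), then $\nu_i \le \max(0, \nu_{j^*} - 1) = 0$, matching $c_i = 0$.
\item If $i \in I_1 \cap J$ with two neighbors $j_1^*, j_2^* \in J$ ($a_i = 1$), then $\nu_i \le \max(0, \nu_{j_1^*} + \nu_{j_2^*} - 1)$, which for $\nu_i \in \{0,1\}$ is equivalent to $\nu_i \le \min(\nu_{j_1^*}, \nu_{j_2^*})$, matching the acceptability condition.
\end{itemize}
For $i \in I_0$, the $\SS$-condition $\nu_i \le \eta_i$ is exactly $c_i \le a_i$. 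Summing $\vv^{\ga(\nu)}$ over $\nu \in \SS$ therefore produces $F_\a$, which yields $\widetilde{\chi}_q(S(\b))_{\le 2} = F_{\tau_{-}(\b)}$.

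The main obstacle is the case analysis in the bijection step, especially the elimination of the spurious contributions coming from neighbors of $i$ outside $J$, and the verification that $\a$ remains multiplicity-free; the latter is what forces the hypothesis that the trivalent node lies in $I_0$, without which the whole strategy of comparing with the simplified acceptability formula of \ref{ssmfree} would collapse.
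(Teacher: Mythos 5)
Your proposal is correct and follows essentially the same route as the paper's proof: after disposing of the degenerate case $\b=\a_i$ with $i\in I_1$, both arguments identify $\tau_{-}(\b)$ as a multiplicity-free positive root (which is exactly where the assumption that the trivalent node lies in $I_0$ enters) and then match, vertex by vertex, the defining conditions of the set $\SS$ in Proposition~\ref{prop33} with the simplified $\a$-acceptability conditions of \ref{ssmfree}. If anything, your explicit computation of the coefficients of $\tau_{-}(\a_J)$ and your separate case for a vertex of $I_1\cap J$ with a single neighbour make the bookkeeping slightly more careful than the paper's description of $\tau_{-}(\a_J)$ via the set $K=(J\setminus J')\sqcup K'$, which tacitly assumes every such vertex has two neighbours in $J$.
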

\begin{proof}
Suppose first that $\b=\a_i$ with $i\in I_1$. Then
$\chi_q(S(\b))_{\le 2} = Y_{i,3}$, and $\tau_-(\b)=-\a_i$
so that $F_{\tau_{-}(\b)} = 1$. Hence the claim is verified
in this case.

So we can assume that $\b=\a_J$ is the multiplicity-free
root supported on a connected subset $J$ of $I$ which is not
reduced to a single element of $I_1$. 
Set
\[
\begin{array}{lcl}
J' &=& \{ j\in J \mid j\in I_1 \mbox{ and } a_{ij}=-1 \mbox{ for some } i\in I\setminus J\}, \\[2mm]
K' &=& \{ i\in I\setminus J \mid i\in I_1 \mbox{ and } a_{ij}=-1 \mbox{ for some } j\in J\},
\end{array}
\]
and define $K=(J\setminus J')\sqcup K'$.  
It follows immediately from the definition of $\tau_-$ that the 
multiplicity-free root $\a_K$ supported on $K$ is equal to $\tau_-(\a_J)$.

Let us now compare the sequences $\nu\in\SS$ of Proposition~\ref{prop33} for $\a_J$,
with the $\a_K$-acceptable vectors $\ga = \sum_i c_i\a_i$ of \ref{ssmfree}. 
First we note that if $i\not\in K \cup J = K'\sqcup (J\setminus J') \sqcup J'$
then both $\nu_i$ and $c_i$ are equal to 0 (this is a straightforward 
consequence of the definitions of $\SS$ and of an $\a_K$-acceptable vector).  
For $j\in J\cap I_0 = K\cap I_0$ the only condition satisfied by $\nu_j$
and $c_j$ is that they should belong to $\{0,1\}$.
If $j\in (J\setminus J')\cap I_1$ then $j$ must have two neighbours 
$j'$ and $j''$ in $J\cap I_0$, and the condition on $v_j$ is
$0\le \nu_j \le \max\{0, \nu_{j'}+\nu_{j''}-1\}$, while the condition on $c_j$
is $0\le c_j \le \min\{c_{j'}, c_{j''}\}$. 
Clearly, since $c_{j'}$ and $c_{j''}$ belong to $\{0,1\}$, the 
second condition can be rewritten $0\le c_j \le \max\{0, c_{j'}+c_{j''}-1\}$.
If $j\in K'$ then $j$ has a unique neighbour $j'\in J$, and the 
condition on $\nu_j$ is $0\le \nu_j \le \nu_{j'}$ while the condition
on $c_j$ is $0\le c_j \le c_{j'}$. Finally, if $j\in J'$ then 
$j$ has a unique neighbour $j'\in J$, and the 
condition on $\nu_j$ is $0\le \nu_j \le \max\{0,-1+v_{j'}\}$ while the condition
on $c_j$ is $c_j=0$. Clearly the condition on $\nu_j$ forces $\nu_j=0$.
In conclusion, the exponents $\nu$ and the vectors $\ga$ must satisfy
the same conditions, and the theorem follows from Proposition~\ref{prop33}
and \ref{ssmfree}.
\cqfd
\end{proof}

\section{A tensor product theorem}
\label{sectensprod}
\subsection{}
In a cluster algebra $\A$, a product $y_1\cdots y_k$ of 
cluster variables is a cluster monomial if and only if
for every $1\le i<j\le k$ the product $y_iy_j$ is a cluster monomial.  
In this section we show the following theorem, which is consistent
with our categorification Conjecture~\ref{mainconjecture}~{(ii)},
and will be needed to prove it.
\begin{theorem}\label{thtensprod}
Let $S_1,\ldots , S_k$, be simple objects of $\CC_1$.
Suppose that
$S_i\otimes S_j$ is simple for every $1\le i<j\le k$.
Then $S_1\otimes \cdots\otimes S_k$ is simple. 
\end{theorem}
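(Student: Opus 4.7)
The plan is to prove the theorem by induction on $k$, with base case $k=2$ given by hypothesis. Set $V = S_1 \otimes \cdots \otimes S_k$ and $m = m_1 \cdots m_k$, where $m_i$ is the highest $l$-weight monomial of $S_i$. By Proposition~\ref{prop28}, the truncated $q$-character $V \mapsto \chi_q(V)_{\le 2}$ is an injective ring homomorphism on $R_1$, so $V \cong L(m)$ is equivalent to the polynomial identity
\[
\chi_q(S_1)_{\le 2}\,\chi_q(S_2)_{\le 2}\cdots\chi_q(S_k)_{\le 2}\ =\ \chi_q(L(m))_{\le 2}.
\]
Pairwise simplicity yields $\chi_q(S_i)_{\le 2}\chi_q(S_j)_{\le 2} = \chi_q(L(m_im_j))_{\le 2}$ for every $i<j$. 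Since each truncated character is a polynomial in the $n$ variables $A_{i,\xi_i+1}^{-1}$ (the only variables which can occur in truncated $q$-characters of objects of $\CC_1$, by the formula defining $A_{i,a}$), the whole question is reduced to a polynomial combinatorial problem.

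I would then argue that no dominant monomial other than $m$ can arise as the highest $l$-weight of a composition factor of $V$. Assuming for contradiction that $L(m')$ is such a factor with $m' = m M$, where $M \ne 1$ is a monomial in the $A_{i,\xi_i+1}^{-1}$, choose $m'$ maximal with this property. Every occurrence of $m'$ in $\prod_i\chi_q(S_i)_{\le 2}$ arises from a factorization $m' = \prod_i m_iM_i$ with $m_iM_i$ a monomial of $\chi_q(S_i)_{\le 2}$ and $\prod_i M_i = M$. For each pair $(i,j)$, pairwise simplicity forces the partial product $m_im_j M_iM_j$ to appear in $\chi_q(L(m_im_j))_{\le 2}$, which is an explicitly controlled polynomial via Propositions~\ref{prop30} and~\ref{prop33}. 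Using the rigidity of $\CC_1$ (at most one variable $A_{i,\xi_i+1}^{-1}$ per vertex can appear as a ``lowering step'' in each factor), one aims to show that the simultaneous pairwise constraints are incompatible with $M \ne 1$, either directly or by exhibiting some pair $(i,j)$ for which $m_im_jM_iM_j$ cannot lie in $\chi_q(L(m_im_j))_{\le 2}$, contradicting pairwise simplicity.

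The hardest part will be carrying out the multiplicity bookkeeping at the end of the previous paragraph: it is comparatively easy to match the \emph{supports} of the two sides of the polynomial identity, but rather more delicate to match their multiplicities. A possible conceptual complement is to invoke the $R$-matrix viewpoint: pairwise simplicity means that each normalized $R$-matrix $R_{ij}:S_i\otimes S_j\to S_j\otimes S_i$ is an isomorphism, and the Yang--Baxter equation then produces isomorphisms $V\cong S_{\sigma(1)}\otimes\cdots\otimes S_{\sigma(k)}$ for every $\sigma\in\SG_k$. The tensor $v = v_1\otimes\cdots\otimes v_k$ of highest $l$-weight vectors is a highest $l$-weight vector in $V$ of $l$-weight $m$; using that each $v_i\otimes v_j$ generates $S_i\otimes S_j$ (pairwise cyclicity), together with the permutation symmetries, one would argue that $v$ generates the whole of $V$, forcing $V$ to be a highest $l$-weight module of weight $m$ and hence equal to $L(m)$.
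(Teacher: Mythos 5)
Both of your routes stop exactly where the real difficulty begins, so there is a genuine gap. In the truncated-character route, the reduction to ``pairwise constraints force $M=1$'' is only stated, not proved, and the tools you invoke do not control the relevant polynomials: Proposition~\ref{prop30} applies to highest weights of the special form $\prod_{i\in I_0}Y_{i,0}^{c_i}\prod_{i\in I_1}Y_{i,3}^{c_i}$ and Proposition~\ref{prop33} only to multiplicity-free roots, whereas a general simple object of $\CC_1$ has highest weight mixing $Y_{i,\xi_i}$ and $Y_{i,\xi_i+2}$ arbitrarily, and no explicit formula for $\chi_q(L(m_im_j))_{\le 2}$ is available at this stage. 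The ``rigidity'' you rely on is also false: exponents of $A_{i,\xi_i+1}^{-1}$ in truncated $q$-characters of objects of $\CC_1$ can exceed $1$ (see Example~\ref{exa28}, where $A_{1,1}^{-2}$ occurs), so the multiplicity bookkeeping you defer is not a routine check but is essentially the whole theorem.

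The $R$-matrix route has two problems. First, ``pairwise cyclicity plus permutation symmetry implies $v_1\otimes\cdots\otimes v_k$ generates $V$'' is precisely the hard inductive step; it is asserted, not argued, and it is not a formal consequence of the Yang--Baxter isomorphisms. Second, even granting cyclicity, your final inference ``$V$ is a highest $l$-weight module of weight $m$, hence equal to $L(m)$'' is wrong: a cyclic highest $l$-weight module need not be simple (ordered tensor products of fundamental modules are highest $l$-weight but generally reducible); cyclicity only excludes proper submodules containing the highest weight space. One must also rule out proper submodules avoiding it, which the paper does by a duality argument: it proves the same cyclicity statement for $V^*=S_k^*\otimes\cdots\otimes S_1^*$ and then uses \cite[Prop. 5.1(b)]{CP4prime} to show a proper submodule of $V$ would yield a proper submodule of $V^*$ containing both its lowest and highest weight spaces, a contradiction. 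For comparison, the paper's cyclicity step does not use $R$-matrices for general simples at all: it writes each $S_i$ as the image of $S_i^+\otimes S_i^-$ with $S_i^\pm$ tensor products of fundamental modules (Theorem~\ref{thK}), uses the truncation $\chi_q(\cdot)_{\ge 3}$ and Lemma~\ref{lemtp} to build surjections $S_j^+\otimes S_i\twoheadrightarrow S_i\otimes S_j^+$ from the pairwise hypothesis, and concludes that $V$ is a quotient of a cyclic product of fundamental modules; some such representation-theoretic input specific to $\CC_1$ is needed, and your proposal does not supply it.
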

Note that we may have $S_i \cong S_j$ for some $i<j$, in which
case our assumption includes the simplicity of $S_i^{\otimes 2}$.
The proof will be given in \ref{proofth}.

A similar result for a special class of modules of the Yangian of $\gl_n$
attached to skew Young diagrams was given by Nazarov and Tarasov \cite{NT}.

\subsection{}
We need to recall some results about tensor products
of objects of $\CC$.
Let $\De$ be the comultiplication of $U_q(\hg)$.
In general one does not know explicit formulae for 
calculating $\De$ in terms of the Drinfeld generators.
However, the next proposition contains partial information
which will be sufficient for our purposes.
We have a natural grading of $U_q(\hg)$ by the
root lattice $Q$ of $\g$ given by
\[
\deg(x^\pm_{i,r}) = \pm\a_i,\quad
\deg(h_{i,s})=\deg(k_i^\pm) = \deg(c^\pm) = 0,
\qquad
(i\in I,\ r\in\Z,\ s\in\Z\setminus \{0\}). 
\]
Let $U^+$ (\resp $U^-$) be the subalgebra of $U_q(\hg)$
consisting of elements of positive 
(\resp negative) $Q$-degree.
\begin{proposition}{\rm \cite[Prop. 7.1]{D}}\label{Dami}
For $i\in I$ and $r>0$ we have
\[
\De(h_{i,r}) - h_{i,r}\otimes 1 - 1\otimes h_{i,r}
\in U^-\otimes U^+. 
\]
\end{proposition}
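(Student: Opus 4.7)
The plan is to reduce the statement about the imaginary Drinfeld generators $h_{i,r}$ to approximate coproduct formulas for the real root vectors $x^{\pm}_{j,s}$, and then exploit the Drinfeld commutator relation that expresses $h_{i,r}$ as a bracket of these. First I would invoke the PBW-type triangular decomposition $U_q(\hg)=U^-\cdot U^0\cdot U^+$, where $U^0$ is the commutative subalgebra generated by the $k_j^{\pm 1}$, $c^{\pm 1}$ and all the $h_{j,s}$. The $Q$-grading with $\deg(x^{\pm}_{j,s})=\pm\a_j$ and $\deg(U^0)=0$ is preserved by $\De$, so every homogeneous component of $\De(h_{i,r})$ lies in a piece $U^{\ga_1}\otimes U^{\ga_2}$ with $\ga_1+\ga_2=0$; it then suffices to rule out components with $\ga_1,\ga_2$ both nonzero positive or both nonzero negative.

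Second, by induction on $|s|$, I would establish approximate coproducts of the shape
\[
\De(x^+_{j,s}) \equiv x^+_{j,s}\otimes 1 + k_j\otimes x^+_{j,s} \pmod{U^- U^0 \otimes U^+ U^0},
\]
where the residual term has first tensor factor of strictly negative $Q$-degree and second factor of strictly positive $Q$-degree, together with the mirror statement for $x^-_{j,s}$. The base case is the standard coproduct of the Drinfeld-Jimbo Chevalley generators, corresponding to $s=0$ plus the affine simple root generator. The inductive step uses Lusztig-type braid group automorphisms of $U_q(\hg)$: although these do not intertwine $\De$ on the nose, the obstruction can be shown to land inside the error space above, because the braid action preserves the decomposition into positive and negative parts up to Cartan factors.

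Third, I would use Drinfeld's relation
\[
[x^+_{i,r},\,x^-_{i,0}]\ =\ \frac{\psi^+_{i,r}-\psi^-_{i,r}}{q-q^{-1}},
\]
combined with the exponential identity expressing $\psi^+_{i,r}$ in terms of $k_i$ and the $h_{i,s}$ for $1\le s\le r$, to write $h_{i,r}$ as a nonzero scalar multiple of $[x^+_{i,r},x^-_{i,0}]$ modulo a polynomial in Cartan generators of strictly lower index. Applying $\De$ to this bracket, substituting the approximate coproducts of the previous step, and expanding, the diagonal pieces reproduce $h_{i,r}\otimes 1 + 1\otimes h_{i,r}$ (the Cartan prefactors cancelling by the $Q$-degree constraint), while every cross contribution has at least one factor of $x^-_{i,t}$ on the left and one factor of $x^+_{i,u}$ on the right, and therefore sits in $U^-\otimes U^+$. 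An outer induction on $r$ then absorbs the lower-index Cartan corrections coming from the exponential relation.

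The main obstacle is the second step, namely controlling the error in the approximate coproduct of $x^\pm_{j,s}$ with enough precision that after commutation with $x^-_{i,0}\otimes k_i^{-1}+1\otimes x^-_{i,0}$ the result still lies in $U^-\otimes U^+$. The standard device is to fix a convex ordering of the positive real affine roots coming from a reduced expression for a suitable translation in the affine Weyl group, and to express every $x^\pm_{j,s}$ as an iterated $q$-bracket of simple root vectors in that order; approximate primitivity then propagates cleanly along the induction and the final cancellation becomes a clean $Q$-degree count against the triangular decomposition.
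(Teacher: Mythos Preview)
The paper does not supply its own proof of this proposition: it is quoted verbatim from Damiani \cite[Prop.~7.1]{D} and used as a black box. So there is nothing in the present paper to compare your argument against.

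That said, your strategy is the standard one and is essentially Damiani's. A couple of small corrections are worth noting. First, your remark that it ``suffices to rule out components with $\ga_1,\ga_2$ both nonzero positive or both nonzero negative'' is off: since $\De$ preserves the $Q$-grading and $h_{i,r}$ has degree $0$, every homogeneous piece of $\De(h_{i,r})$ automatically has $\ga_1+\ga_2=0$, so the two cases you name are vacuous. What you actually need to rule out are the components with $\ga_1>0$ (hence $\ga_2<0$), i.e.\ those lying in $U^+\otimes U^-$, and also to check that the $(0,0)$-component is exactly $h_{i,r}\otimes 1+1\otimes h_{i,r}$ with no extra Cartan terms. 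Second, in your step~2 the approximate coproduct for $x^+_{j,s}$ should really be written modulo a filtered subspace such as $\sum_{\ga>0} U_{-\ga}\otimes U_{\ga+\a_j}$ (Damiani works with a height filtration on the positive affine roots), not merely $U^-U^0\otimes U^+U^0$: otherwise when you commute with $1\otimes x^-_{i,0}+x^-_{i,0}\otimes k_i^{-1}$ you cannot conclude that the cross terms stay inside $U^-\otimes U^+$, because $x^-_{i,0}$ acting on the right factor could lower the degree back to zero. Damiani handles this by a careful double induction on $r$ and on the height of the error terms, which is exactly the ``main obstacle'' you identify in your final paragraph.
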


The next result is about tensor products of fundamental modules.
For a fundamental module $L(Y_{i,r}) = V_{i,q^r}$ in $\CC_\Z$
we denote by $u_{i,r}$ a highest weight vector (it is unique
up to rescaling by a non-zero complex number).

\begin{theorem}{\rm\cite{Cha2,K,VV}}\label{thK}
Let $r_1,\ldots, r_s$ be integers with $r_1\le \cdots \le r_s$.
Then, for any $i_1,\ldots,i_s$, the tensor product 
of fundamental modules $L(Y_{i_s,r_s})\otimes \cdots \otimes L(Y_{i_1,r_1})$
is a cyclic module generated by the tensor product
of highest weight vectors
$u_{i_s,r_s}\otimes \cdots \otimes u_{i_1,r_1}$.
Moreover, there is a unique homomorphism 
\[
\phi : L(Y_{i_s,r_s})\otimes \cdots \otimes L(Y_{i_1,r_1})
\to L(Y_{i_1,r_1})\otimes \cdots \otimes L(Y_{i_s,r_s}) 
\]
with $\phi(u_{i_s,r_s}\otimes \cdots \otimes u_{i_1,r_1})
= u_{i_1,r_1}\otimes \cdots \otimes u_{i_s,r_s}$, and
its image is the simple module $L(Y_{i_1,r_1}\cdots Y_{i_s,r_s})$.
\end{theorem}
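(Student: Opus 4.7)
The plan is to prove three statements in sequence: (A) cyclicity of $M := L(Y_{i_s,r_s})\otimes\cdots\otimes L(Y_{i_1,r_1})$ on $v := u_{i_s,r_s}\otimes\cdots\otimes u_{i_1,r_1}$; (B) existence and uniqueness of $\phi$; (C) identification of $\Im(\phi)$ with $L(m)$, where $m := Y_{i_1,r_1}\cdots Y_{i_s,r_s}$.

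For (A), I would induct on $s$, the essential base case being $s=2$: given $r\le r'$, the tensor product $L(Y_{i',r'})\otimes L(Y_{i,r})$ is cyclic on $u_{i',r'}\otimes u_{i,r}$. The key tool is Proposition~\ref{Dami}, which controls $\Delta(h_{i,k})$ modulo $U^-\otimes U^+$, together with its analogues for the $x^\pm_{i,k}$. The hypothesis $r\le r'$ is exactly what makes the resulting system of equations for the $l$-weight vectors triangular, so that every $l$-weight space of the tensor product can be reached by applying products of $x^-_{j,k}$ to $u_{i',r'}\otimes u_{i,r}$. A telescoping argument then propagates the result to arbitrary $s$.

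For (B) and (C), uniqueness of $\phi$ is immediate from (A). For existence, let $M^{\op} := L(Y_{i_1,r_1})\otimes\cdots\otimes L(Y_{i_s,r_s})$ and $w := u_{i_1,r_1}\otimes\cdots\otimes u_{i_s,r_s} \in M^{\op}$. By Proposition~\ref{Dami}, $w$ is an $l$-highest weight vector of $l$-weight $m$, so $N := U_q(\hg)\cdot w$ is a highest $l$-weight module of highest weight $m$. A mirror-image argument, obtained by applying duality to (A), shows that $N$ is simple and hence isomorphic to $L(m)$. We then define $\phi$ as the composition of the canonical surjection $M\twoheadrightarrow L(m)$ (unique simple quotient of the highest $l$-weight module $M$), the isomorphism $L(m)\cong N$, and the inclusion $N\hookrightarrow M^{\op}$. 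By construction $\phi(v)$ is a nonzero scalar multiple of $w$, and $\Im(\phi) = N \cong L(m)$.

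The main obstacle is (A). Without a closed formula for $\Delta$ on Drinfeld generators, the cyclicity proof requires a delicate triangularity analysis, and this is handled by three essentially distinct technologies in the cited references: Chari~\cite{Cha2} uses braid-group automorphisms of $U_q(\hg)$, Kashiwara~\cite{K} uses global crystal bases, and Varagnolo--Vasserot~\cite{VV} use perverse sheaves on quiver varieties.
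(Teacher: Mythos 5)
The paper offers no proof of this statement to compare with: Theorem~\ref{thK} is imported from Chari \cite{Cha2}, Kashiwara \cite{K} and Varagnolo--Vasserot \cite{VV} and is used as a black box (notably in the proof of Theorem~\ref{thtensprod}). Judged on its own, the architecture of your proposal is sound, and your parts (B)--(C) are essentially the standard derivation of the ``moreover'' clause from cyclicity: dualizing (A) --- note explicitly that duality reverses the tensor order, replaces each $i_k$ by $i_k^\vee$ and shifts all spectral parameters by $q^{-h}$ (see \ref{sect23}), so the ordering hypothesis is preserved; this is the same device the paper uses at the end of the proof of Theorem~\ref{thtensprod} --- gives that $M^{\rm op}$ has simple socle isomorphic to $L(m)$. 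To conclude that $N=U_q(\hg)\cdot w$ is simple you need one more (easy) observation that your ``mirror-image'' phrase glosses over: the weight space of $M^{\rm op}$ of top $U_q(\g)$-weight is one-dimensional and spanned by $w$, so the highest weight vector of the socle is proportional to $w$ and hence $N$ coincides with the socle. With that, your definition of $\phi$ as the projection of the cyclic module $M$ onto its simple head $L(m)$, followed by $L(m)\cong N\hookrightarrow M^{\rm op}$, is correct, and uniqueness follows from cyclicity.

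The genuine gap is (A), and while you are candid about it, your sketch should not be presented as close to a proof. Proposition~\ref{Dami} controls only $\De(h_{i,r})$, $r>0$, modulo $U^-\otimes U^+$; the ``analogues for $x^\pm_{i,r}$'' are not available in the paper, and no triangularity bookkeeping based on such coproduct estimates alone is known to yield cyclicity even for $s=2$ --- that two-factor statement is precisely the deep theorem for which \cite{Cha2}, \cite{K} and \cite{VV} develop braid-group, global-basis and geometric techniques, respectively. Moreover the passage from $s=2$ to general $s$ is not a formal ``telescoping'': knowing that each pair of factors generates a cyclic module does not automatically give cyclicity of the full product, and the cited papers prove the $s$-fold statement directly. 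Since the paper itself simply cites these references, deferring (A) to them is legitimate; but then what you have written is a correct reduction of the ``moreover'' clause to the cyclicity results of \cite{Cha2,K,VV}, not an independent proof of the theorem.
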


\begin{example}
{\rm (\cf Example~\ref{exsl2}.)
Take $\g=\Sl_2$ and consider the tensor product $L(Y_0)\otimes L(Y_2)$.
We have 
\[
L(Y_0)=\C u_0 \oplus \C v_0,\qquad
L(Y_2)=\C u_2 \oplus \C v_2,
\]
where $u_0$ and $u_2$ are the highest weight vectors, and 
$
v_0=x_0^-u_0,\ v_2=x^-_0u_2.
$ 
Each of these four vectors spans a one-dimensional $U_q(\Sl_2)$-weight-space,
hence also an {\em l}-weight-space, and we have
\[
h_r u_0 = \frac{[r]_q}{r}u_0,\quad
h_r u_2 = \frac{[r]_q}{r}q^{2r}u_2,\quad
h_r v_0 = -\frac{[r]_q}{r}q^{2r}v_0,\quad
h_r v_2 = -\frac{[r]_q}{r}q^{4r}v_2,\quad 
\]
where $[r]_q = (q^r-q^{-r})/(q-q^{-1})$.
By Theorem~\ref{thK}, the submodule of $L(Y_0)\otimes L(Y_2)$ generated
by $u_0\otimes u_2$ is the three-dimensional simple module $L(Y_0Y_2)$, with
basis
\[
u_0\otimes u_2,\quad 
x_0^-(u_0\otimes u_2) = u_0\otimes v_2 + q v_0\otimes u_2,\quad
v_0\otimes v_2.
\]
The $U_q(\Sl_2)$-weight-spaces of $L(Y_0Y_2)$ are also one-dimensional and 
coincide with its {\em l}-weight-spaces.
By Proposition~\ref{Dami} we have
\[
h_r(x^-_0(u_0\otimes u_2)) = \frac{[r]_q}{r}(1-q^{4r})u_0\otimes v_2 + \l v_0\otimes u_2
\]
for some $\l\in\C$.
Hence $u_0\otimes v_2 + q v_0\otimes u_2$ has {\em l}-weight $Y_0Y_4^{-1}$,
which is the product of the {\em l}-weights of $u_0$ and $v_2$.
On the other hand, again by Proposition~\ref{Dami}, we have $h_r(v_0\otimes u_2)=0$
for every $r>0$, hence $v_0\otimes u_2$ is an {\em l}-weight-vector 
with {\em l}-weight $1$. This shows that $u_0\otimes v_2$
is {\em not} an {\em l}-weight vector.
}
\end{example}

\subsection{}
To an object $V$ of $\CC_1$ which is generated
by a highest weight vector $v$, we attach as in \ref{secttrunc}
a truncated {\em below} $q$-character $\chi_q(V)_{\ge 3}$
as follows.
We have $\chi_q(V)=m(1+\sum_p M^{(p)})$, where $m$ is the {\em l}-weight
of $v$, and the $M^{(p)}$ are monomials in the 
$A_{i,r}^{-1}\ (i\in I,\ r\in\Z_{>0})$.
Define
\[
\chi_q(V)_{\ge 3}\ := m\left(1+\sum_p{}^* M^{(p)}\right) 
\]
where $\sum^*$ means the sum restricted to the $M^{(p)}$
which have no variable 
$A_{i,1}^{-1},\ A_{i,2}^{-1}\ (i\in I)$.
We shall also denote by $V_{\ge 3}$ the subspace of $V$ 
obtained by taking the direct sum of the corresponding {\em l}-weight-spaces.

\subsection{}
Let $S$ be a simple object in $\CC_1$ and let 
$m = \prod_{i\in I} Y_{i,\xi_i}^{a_i}Y_{i,\xi_i+2}^{b_i}$
be its highest weight monomial.
We define
\[
m^-=\prod_{i\in I} Y_{i,\xi_i}^{a_i},\quad
S^-=L(m^-),\quad
m^+=\prod_{i\in I} Y_{i,\xi_i+2}^{b_i},\quad 
S^+=L(m^+).
\]
Then $S^-$ is a simple object of the subcategory $\CC_0$, and
therefore 
$S^- \cong \bigotimes_{i\in I} L(Y_{i,\xi_i})^{\otimes a_i}$
is a tensor product of fundamental modules (\cf Example~\ref{exC0}).
Similarly, 
$S^+ \cong \bigotimes_{i\in I} L(Y_{i,\xi_i+2})^{\otimes b_i}$.
Applying Theorem~\ref{thK}, we get a surjective homomorphism
$\phi_S : S^+\otimes S^- \twoheadrightarrow S \subset S^-\otimes S^+$.

\begin{lemma}\label{lemtp}
Let $u^-$ be a highest weight vector of $S^-$.
We have $\phi_S^{-1}(S_{\ge 3}) = S^+\otimes u^-$
and $\phi_S$ restricts to a bijection from
$S^+\otimes u^-$ to $S_{\ge 3}$. 
\end{lemma}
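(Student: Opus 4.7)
The plan combines three ingredients: Proposition~\ref{Dami} to identify $v^+\otimes u^-$ as an $l$-weight vector, Proposition~\ref{FMmon} to control which $A^{-1}_{i,r}$ can appear in $l$-weights of $S^\pm$, and the multiplicativity $\chi_q(V)=\chi_q(S^+)\chi_q(S^-)$ of $q$-characters for $V:=S^+\otimes S^-$. I first establish the easy inclusion $\phi_S(S^+\otimes u^-)\subseteq S_{\ge 3}$. For any $l$-weight vector $v^+\in S^+$ of $l$-weight $m_{v^+}$, Proposition~\ref{Dami} ensures that the correction $\Delta(h_{i,r})-h_{i,r}\otimes 1-1\otimes h_{i,r}\in U^-\otimes U^+$ annihilates $v^+\otimes u^-$, since its $U^+$-factor kills the highest weight vector $u^-$; hence $v^+\otimes u^-$ is an $l$-weight vector of $V$ of $l$-weight $m_{v^+}m^-$. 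Since $m^+=\prod Y_{i,\xi_i+2}^{b_i}$, Proposition~\ref{FMmon} forces every $A^{-1}_{j,q^r}$ appearing in $\chi_q(S^+)$ to have $r=\xi_i+2+l$ with $l\ge 1$, so $r\ge 3$; therefore $m_{v^+}m^-$ is of ``$\ge 3$-type'' relative to $m$, giving the inclusion.

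Next I identify the $\ge 3$-type $l$-weight spaces of $V$. The key observation is that any non-trivial $l$-weight $m^-M^-$ of $S^-\in\CC_0$ contains some $A^{-1}_{i,r}$ with $r\in\{1,2\}$: the first step $l_1=1$ in the sequence provided by Proposition~\ref{FMmon} enforces exactly this. Consequently, in $\chi_q(V)=\chi_q(S^+)\chi_q(S^-)$ only pairs with $M^-=1$ contribute to the $\ge 3$-type part, yielding $\chi_q(V)_{\ge 3}=\chi_q(S^+)\cdot m^-$. For $\nu$ of $\ge 3$-type this gives $\dim V_\nu=\dim S^+_{\nu/m^-}$; combined with the inclusion $S^+_{\nu/m^-}\otimes u^-\subseteq V_\nu$ just proved, it forces $V_\nu=S^+_{\nu/m^-}\otimes u^-$, and summing over such $\nu$ gives $V_{\ge 3}=S^+\otimes u^-$.

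The main obstacle is to show $\chi_q(K)_{\ge 3}=0$, where $K:=\ker\phi_S$. I argue that the only $\ge 3$-type dominant monomial in $\chi_q(V)$ is $m$ itself, with multiplicity one: for any $\ge 3$-type $M\ne 1$, let $r\ge 3$ be maximal such that some $A^{-1}_{i,r}$ occurs in $M$; then $mM$ contains $Y^{-1}_{i,r+1}$ with $r+1\ge 4$, which cannot be cancelled by any $Y$-factor of $m$ (whose $Y$-support lies in $\{0,1,2,3\}$), so $mM$ is right-negative and non-dominant. Since $m$ appears with multiplicity one in $\chi_q(S)$, the dominant monomials of $\chi_q(K)=\chi_q(V)-\chi_q(S)$ are all non-$\ge 3$-type. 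As further multiplication by any $A^{-1}_{j,s}$ cannot cancel an existing $A^{-1}_{i,r'}$ with $r'\in\{1,2\}$, every $l$-weight of every composition factor of $K$ is non-$\ge 3$-type, which yields $\chi_q(K)_{\ge 3}=0$.

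Combining everything, $K\cap V_{\ge 3}=0$, so $\phi_S|_{S^+\otimes u^-}=\phi_S|_{V_{\ge 3}}$ is injective; surjectivity onto $S_{\ge 3}$ follows from surjectivity of $\phi_S$ onto $S$ together with its compatibility with the $l$-weight decomposition, giving the bijection $S^+\otimes u^-\cong S_{\ge 3}$. The identity $\phi_S^{-1}(S_{\ge 3})=S^+\otimes u^-$ then follows from the $l$-weight decomposition $V=\bigoplus_\nu V_\nu$ combined with the vanishing $\chi_q(K)_{\ge 3}=0$.
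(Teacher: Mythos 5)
Your proof is correct, and while it shares the paper's overall skeleton, it replaces the paper's central step by a genuinely different argument. Like the paper, you use Proposition~\ref{Dami} to see that $S^+\otimes u^-$ consists of $l$-weight vectors with $l$-weights $m^- \cdot(\mbox{$l$-weights of }S^+)$, and Proposition~\ref{FMmon} to get $\chi_q(S^+\otimes S^-)_{\ge 3}=m^-\chi_q(S^+)$, whence $(S^+\otimes S^-)_{\ge 3}=S^+\otimes u^-$ by a dimension count. The divergence is in how the target space $S_{\ge 3}$ is matched: the paper proves the character identity $\chi_q(S)_{\ge 3}=m^-\chi_q(S^+)$ directly, getting the upper bound from the factorization $m=m^+m^-$ and the lower bound from the fact that $S^+$ is minuscule (\S\ref{cons2}, so $\chi_q(S^+)=\FM(m^+)$) together with an inductive use of \ref{belongJ}, and then concludes by comparing dimensions of $l$-weight spaces. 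You instead study $K=\ker\phi_S$: the only dominant monomial of $\chi_q(S^+\otimes S^-)$ of $\ge 3$-type is $m$ itself (your right-negativity argument, which is the same mechanism as in the proof of Proposition~\ref{prop28}), it occurs with multiplicity one and is already accounted for by $\chi_q(S)$, so every composition factor of $K$ has a non-$\ge 3$-type highest weight monomial and hence $K$ has no $\ge 3$-type $l$-weights; injectivity of $\phi_S$ on $S^+\otimes u^-$ follows, and surjectivity onto $S_{\ge 3}$ comes from surjectivity of $\phi_S$ together with preservation of $l$-weight spaces. Your route avoids the least explicit part of the paper's proof (the converse inequality via the \ref{belongJ} induction), using only the unique-highest-monomial property of simple $q$-characters, and it recovers $\chi_q(S)_{\ge 3}=m^-\chi_q(S^+)$ as a byproduct, plus the slightly stronger fact that $\ker\phi_S$ carries no $\ge 3$-type $l$-weights; the paper's route has the advantage of producing that character identity explicitly before any kernel considerations. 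One caveat on your last sentence: the full set-theoretic preimage is $\phi_S^{-1}(S_{\ge 3})=(S^+\otimes u^-)\oplus\ker\phi_S$, since $\ker\phi_S$ always maps into $0\in S_{\ge 3}$; so the first assertion of the lemma should be understood as the identification $(S^+\otimes S^-)_{\ge 3}=S^+\otimes u^-$ together with the bijection onto $S_{\ge 3}$ — an imprecision already present in the lemma as stated (the paper's own proof establishes exactly these two facts), so it does not affect the substance of your argument.
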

\begin{proof}
First we have 
$\chi_q(S^+\otimes S^-)_{\ge 3} = \chi_q(S^+)_{\ge 3}\ \chi_q(S^-)_{\ge 3}$.
Then by Proposition \ref{FMmon} we have $\chi_q(S^+)_{\ge 3} = \chi_q(S^+)$
and $\chi_q(S^-)_{\ge 3} = m^-$. So $\chi_q(S^+\otimes S^-)_{\ge 3} = \chi_q(S^+)m^-$.
Then we note that
$\chi_q(S)_{\ge 3}=m^-\chi_q(S^+)$.
Indeed, the inequality $\chi_q(S)_{\ge 3}\ \le m^-\chi_q(S^+)$
follows from the factorization $m=m^-m^+$.
On the other hand, $S^+$ is minuscule by \S~\ref{cons2}, 
so $\chi_q(S^+) = \FM(m^+)$.
By using inductively \ref{belongJ} for $\chi_q(S)$ we get
the converse inequality.
Hence we get that $\chi_q(S^+\otimes S^-)_{\ge 3} = \chi_q(S)_{\ge 3}$.
Since $\phi_S$ is a homomorphism of $U_q(\hg)$-modules, it preserves
{\em l}-weight-spaces, hence it follows that it restricts to a
vector space isomorphism from $(S^+\otimes S^-)_{\ge 3}$
to $S_{\ge 3}$. 
Now since $u^-$ is a highest-weight vector of $S^-$,
Proposition~\ref{Dami}
shows that for every {\em l}-weight vector $v\in S^+$, 
$v\otimes u^-$ is an {\em l}-weight vector of $S^+\otimes S^-$
with {\em l}-weight equal to the product of the {\em l}-weight
of $v$ by $m^-$. This implies that 
$S^+\otimes u^-\subset (S^+\otimes S^-)_{\ge 3}$.
Hence, since these two vector spaces have the same 
$q$-character, they are equal and the lemma is proved. 
\cqfd 
\end{proof}

\subsection{}\label{proofth}
We can now give the proof of Theorem~\ref{thtensprod}.
For $i=1,\ldots,k$, define $m_i^\pm$, 
$S_i^\pm$, $u_i^\pm$, $\phi_{S_i}$ as above.
First consider a pair $1\le i<j \le k$.
Note that by Example~\ref{exC0} and Section~\ref{cons2}, 
$S_i^\pm \otimes S_j^\pm$ is minuscule and simple.
Now, since $S_i\otimes S_j$ is simple, and 
$S_j^+\otimes S_i^+\otimes S_i^-\otimes S_j^-$
can be written as a product of fundamental modules
with non-increasing spectral parameters, 
by Theorem~\ref{thK}, we have a surjective homomorphism
$\phi : S_j^+\otimes S_i^+\otimes S_i^-\otimes S_j^- \twoheadrightarrow S_i\otimes S_j$,
which is unique up to a scalar multiple.
Using three times Theorem~\ref{thK}, this homomorphism, composed with the inclusion $S_i\otimes S_j\subset S_i^-\otimes S_i^+\otimes S_j^-\otimes S_j^+$, can be factored as follows
\[
S_j^+\otimes (S_i^+\otimes S_i^-)\otimes S_j^- 
\to 
(S_j^+\otimes S_i^-)\otimes S_i^+\otimes S_j^- 
\to 
S_i^-\otimes (S_j^+\otimes S_i^+)\otimes S_j^- 
\cong
\]
\[
\cong
S_i^-\otimes S_i^+\otimes (S_j^+\otimes S_j^-)
\to
S_i^-\otimes S_i^+\otimes S_j^-\otimes S_j^+.
\]
The map from the second module to the fourth one can be written
as $\a \otimes {\rm id}_{S_j^-}$, where
\[ 
\a : S_j^+\otimes S_i^-\otimes S_i^+ \to S_i^-\otimes S_i^+\otimes S_j^+
\]
restricts to a homomorphism $\bar{\a}$ from $S_j^+\otimes S_i$ to
$S_i\otimes S_j^+$.  
Since $\phi$ is surjective, by applying Lemma~\ref{lemtp} to $S=S_j$,
we get that ${\rm im} (\bar{\a}) \otimes u^-_j$ contains $S_i\otimes S_j^+\otimes u^-_j$,
hence $\bar{\a}$ is surjective. 
To summarize, we have obtained the existence of a unique homomorphism
$S_j^+\otimes S_i \to S_i\otimes S_j^+$ mapping $u_j^+\otimes u_i$
to $u_i\otimes u_j^+$, which is surjective.  

Now we proceed by induction on $k\ge 2$ to prove Theorem \ref{thtensprod}.
For $k=2$ there is nothing to prove, so take $k\ge 3$
and assume that $S_1\otimes \cdots \otimes S_{k-1}$ is simple.
By Theorem~\ref{thK} we obtain a surjective homomorphism
\[
S_k^+\otimes(S_1^+\otimes \cdots \otimes S_{k-1}^+)
\otimes
(S_1^-\otimes \cdots \otimes S_{k-1}^-)\otimes S_k^-
\twoheadrightarrow
S_k^+\otimes(S_1\otimes \cdots \otimes S_{k-1})
\otimes S_k^-.
\]
Using the above result, we then have a sequence of surjective 
homomorphisms
\[
S_k^+\otimes(S_1\otimes \cdots \otimes S_{k-1})
\twoheadrightarrow
S_1\otimes S_k^+\otimes S_2\otimes\cdots \otimes S_{k-1}
\twoheadrightarrow
\cdots
\twoheadrightarrow
(S_1\otimes \cdots \otimes S_{k-1})\otimes S^+_k, 
\]
hence
\[
S_k^+\otimes(S_1^+\otimes \cdots \otimes S_{k-1}^+)
\otimes
(S_1^-\otimes \cdots \otimes S_{k-1}^-)\otimes S_k^-
\twoheadrightarrow
(S_1\otimes \cdots \otimes S_{k-1})\otimes S^+_k
\otimes S^-_k
\twoheadrightarrow
S_1\otimes \cdots \otimes S_{k}. 
\]
Thus we have shown that $V := S_1\otimes \cdots \otimes S_{k}$ is 
a quotient of a tensor product of fundamental modules, which
by Theorem~\ref{thK} 
is a cyclic module generated by its highest weight vector. 
Hence $V$ has no proper submodule containing its highest
weight-space.

We can now conclude, as in \cite[\S 4.10]{CP2}, by considering
the dual module $V^* = S_k^*\otimes \cdots \otimes S_{1}^*$.
By our assumption, $S_j^*\otimes S_i^* \cong (S_i \otimes S_j)^*$
is simple for every $1\le i<j \le k$. Moreover, by \ref{sect23}, 
the modules $S_i^*$ belong to a
category defined like $\CC_1$ except for a shift of all
spectral parameters by $q^{-h}$.
Thus, by using the same proof, 
$V^*$ also has no
proper submodule containing its highest weight-space.
But if $W$ was a proper submodule of $V$ not containing 
its highest weight-space, then the annihilator $W^{\circ}$
of $W$ in $V^*$ would be a proper submodule of $V^*$ containing its lowest
weight-space. Let
$\lambda_i$ (resp. $\mu_i$) be the lowest (resp. highest)
weight of $S_i^*$ considered as a $U_q(\g)$-module. 
Then $\lambda = \lambda_1 + \cdots + \lambda_n$
(resp. $\mu = \mu_1+\cdots +\mu_n$)
is the lowest (resp. highest) weight of $V^*$.
In the direct sum decomposition of $V^*$
as a $U_q(\g)$-module, there is a unique simple module $L$
with lowest weight $\lambda$, and by our assumption, $L$
is contained in $W^{\circ}$.
By \cite[Proposition 5.1 (b)]{CP4prime} (see also \cite[Theorem 1.3 (3)]{FM}), 
$\mu$ is the highest weight of $L$, and therefore $W^{\circ}$ must also contain the
highest weight-space of $V^*$, which is impossible.
This finishes the proof of Theorem~\ref{thtensprod}. \cqfd

\section{Cluster expansions}\label{sectclusterexp}

\subsection{}
Following \cite{FZ1, FZ2}, we say that two roots $\a, \b\in \Phi_{\ge -1}$
are {\em compatible} if the cluster variables $x[\a]$ and $x[\b]$ belong 
to a common cluster of $\A$. Let $\ga$ be an element of the root lattice~$Q$.
A {\em cluster expansion} of $\ga$ is a way to express $\ga$
as
\begin{equation}\label{clustexp}
 \ga = \sum_{\a\in\Phi_{\ge -1}} n_\a \a,
\end{equation}
where all $n_\a$ are nonnegative integers, and $n_\a n_\b = 0$
whenever $\a$ and $\b$ are not compatible.
In plain words, a cluster expansion is an expansion into a
sum of pairwise compatible roots in $\Phi_{\ge -1}$.

\begin{theorem}{\em \cite[Th. 3.11]{FZ1}}\label{thFZcl}
Every element of the root lattice has a unique cluster expansion. 
\end{theorem}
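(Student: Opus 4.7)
The plan is to deduce both existence and uniqueness from two structural facts about the simplicial cluster complex whose maximal simplices are the clusters of $\A$, viewed as subsets of $\Phi_{\ge -1}$: (i) each cluster $C$ is a $\Z$-basis of the root lattice $Q$, and (ii) the cones $\R_{\ge 0}C := \{\sum_{\a\in C} r_\a \a \mid r_\a\ge 0\}$, as $C$ ranges over all clusters, form a complete simplicial fan in $Q\otimes_\Z\R$. Both facts are classical results of Fomin and Zelevinsky \cite{FZ1,FZ2}, so the proof is essentially a clean extraction of the theorem from that fan structure.

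Granting these, existence is immediate: for $\ga\in Q$, property (ii) yields a cluster $C$ with $\ga\in\R_{\ge 0}C$, so $\ga=\sum_{\a\in C} r_\a\a$ for some $r_\a\ge 0$. By (i) this $\R$-linear expansion coincides with the unique $\Z$-linear expansion of $\ga$ in the basis $C$, forcing $r_\a\in\N$; extending by zero outside $C$ produces a cluster expansion. For uniqueness, suppose $\ga$ admits two cluster expansions with supports $S\subset C$ and $S'\subset C'$. Both presentations place $\ga$ simultaneously in $\R_{\ge 0}C$ and $\R_{\ge 0}C'$, so by (ii) it lies in their common face, which is the cone spanned by $C\cap C'$. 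Hence $S,S'\subset C\cap C'$, and since $C\cap C'$ is a linearly independent subset by (i), the two expansions must agree coefficient by coefficient.

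The real content of the theorem is therefore preparatory fact (ii), which is also the main obstacle: one must prove that the cones $\R_{\ge 0}C$ cover all of $Q\otimes\R$ and that distinct clusters yield cones with disjoint relative interiors. In \cite{FZ1} this is achieved through a careful combinatorial analysis of the compatibility relation on $\Phi_{\ge -1}$, using the piecewise-linear involutions $\tau_{\pm}$ on $Q$ recalled in Section~\ref{sectFpol} to propagate clusters and to partition $Q\otimes\R$ cone by cone. Fact (i) is considerably easier and can be read off from the denominator parametrization $\a\mapsto x[\a]$, since the cluster variables in a single cluster are algebraically independent.
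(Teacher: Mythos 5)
The paper itself gives no argument for this statement: it is quoted directly from Fomin--Zelevinsky \cite{FZ1}, so the only proof to measure yours against is the one in that reference. Your derivation of the theorem from facts (i) and (ii) is sound as a conditional argument; the steps you elide (that $\R_{\ge 0}C\cap\R_{\ge 0}C'$ is a common face of both cones and hence equals $\R_{\ge 0}(C\cap C')$ because distinct elements of $\Phi_{\ge -1}$ span distinct rays, and that the support of any cluster expansion, being a compatible set, extends to a cluster) are easily supplied. The genuine gap is where you propose to get (i) and (ii). In \cite{FZ1} the logical order is exactly the reverse of yours: Theorem 3.11 (existence and uniqueness of cluster expansions) is proved first, by a direct induction on the root lattice using the piecewise-linear maps $\tau_\pm$ --- compatibility is shown to be $\tau_\pm$-invariant, the coefficient of $-\a_i$ in any cluster expansion of $\ga$ is forced to equal $\max(0,-[\ga : \a_i])$, these negative simple contributions are peeled off to reduce to a smaller root subsystem, and iterating $\tau=\tau_+\tau_-$ eventually produces a nonpositive coordinate --- and the completeness of the simplicial fan (Theorem 1.10 of \cite{FZ1}) is then deduced as a corollary of the expansion theorem, not the other way around. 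So citing \cite{FZ1} for your fact (ii) makes the argument circular as written: you have not replaced the inductive argument, only relocated it to a source where it exists only downstream of the statement you are proving. (Independent proofs of the fan property do exist in the later literature, e.g.\ via quiver representations or Cambrian/$\gg$-vector fans, so your route can be repaired, but those are then the results one must invoke.)

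The justification you offer for fact (i) also does not hold up: algebraic independence of the cluster variables belonging to one cluster says nothing about the sublattice of $Q$ generated by the corresponding roots, and \cite[Th.~1.9]{FZ2} only provides the denominator parametrization $\a\mapsto x[\a]$, not unimodularity of clusters. In \cite{FZ1} the statement that each cluster is a $\Z$-basis of $Q$ is again obtained as a consequence of the cluster expansion theorem. Thus both pillars of your proof are, in the reference you lean on, corollaries of the theorem itself; to make your argument non-circular you must either supply independent proofs of (i) and (ii) or follow the original $\tau_\pm$-induction.
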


\subsection{}\label{clusterexpa}
For $\ga=\sum_i c_i\a_i \in Q$, define
\[
Y^\ga := \prod_{c_i<0} Y_{i,2-\xi_i}^{|c_i|} \prod_{c_i>0} Y_{i,\xi_i-\eps_i+1}^{c_i}  
\in \M_+.
\]
This definition is such that for $\a\in\Phi_{\ge -1}$, the monomial $Y^\a$
is the highest $l$-weight of $S(\a)$.

Let $S$ be a simple object in $\CC_1$. Its highest $l$-weight
is of the form
\[
m = \prod_{i\in I} Y_{i,\xi_i}^{a_i} Y_{i,\xi_i+2}^{b_i}
\]
for some nonnegative integers $a_i, b_i$.
Clearly we can write $m$ as
\[
m=  \left(\prod_{i\in I} (Y_{i,\xi_i}Y_{i,\xi_i+2})^{\min(a_i,b_i)}\right)Y^\ga,
\]
for some unique $\ga\in Q$.
Hence using Theorem~\ref{thFZcl} and (\ref{clustexp}), we have a unique factorization
\[
m= \prod_{i\in I} (Y_{i,\xi_i}Y_{i,\xi_i+2})^{\min(a_i,b_i)}
\prod_{\a\in\Phi_{\ge -1}} (Y^\a)^{n_\a},
\]
where the $\a$ for which $n_\a>0$ are pairwise compatible.
Suppose that Conjecture~\ref{mainconjecture}~{(i)}
is established.
Since $S(\a)=L(Y^\a)$ and $F_i=L(Y_{i,\xi_i}Y_{i,\xi_i+2})$, 
to prove Conjecture~\ref{mainconjecture}~{(ii)}
we then need to prove that
\[
L(m) \cong  \bigotimes_{i\in I} F_i^{\otimes\min(a_i,b_i)}
\bigotimes_{\a\in\Phi_{\ge -1}} S(\a)^{\otimes n_\a}.
\]
Given that the highest $l$-weights of the two sides coincide,
this amounts to prove that the tensor product of the right-hand side 
is a simple module. 
Because of Theorem~\ref{thtensprod}, this will be the case if
and only if every pair of factors has a simple tensor product.
Thus, assuming Conjecture~\ref{mainconjecture}~{(i)}, the
proof of Conjecture~\ref{mainconjecture}~{(ii)} reduces 
to check that

\begin{itemize}
\item[(i)] $F_i\otimes S(\a)$ and $F_i\otimes F_j$ are simple for every $\a\in\Phi_{\ge -1}$
and $i,j\in I$; 
\item[(ii)] if $\a,\b\in\Phi_{\ge -1}$ are compatible,
then $S(\a)\otimes S(\b)$ is simple;
\item[(iii)] for every $\a\in\Phi_{>0}$, $S(\a)$ is prime.
\end{itemize} 
Note that the simple objects $F_i$ and $S(-\a_i)$ are clearly prime. 
Indeed, if $F_i$ was not prime we could only have $F_i \cong S(\a_i)\otimes S(-\a_i)$,
in contradiction with the $T$-system
\[
 [S(\a_i)\otimes S(-\a_i)] = [F_i] + \prod_{j\,:\,a_{ij}=-1} [S(-\a_j)].  
\]

\section{Type $A$}\label{secttypA}

In this section we assume that $\g$ is of type $A_n$.
The vertices of the Dynkin diagram are labelled by 
the interval $I=[1,n]$ in linear order.

\subsection{}\label{proofA(i)}
The positive roots are all multiplicity-free, labelled by the subintervals of $I$.
For $[i,j]\subset I$, we denote by $\a_{[i,j]} := \sum_{k=i}^j \a_k$
the corresponding positive root.
By Theorem~\ref{multfreeequal}, Eq.~(\ref{eq2prove}) is verified for all
positive roots. This and (\ref{eqatrtr}) proves Conjecture~\ref{mainconjecture}~{(i)} 
in type $A$.

\subsection{}\label{proofA(ii)}
We will now prove \ref{clusterexpa}~(i) and (ii). Note that \ref{clusterexpa}~(iii)
follows from Corollary~\ref{corprime}. 

\subsubsection{}\label{proof92i}
We first dispose of (i), which is easy.
Indeed, by Example~\ref{exa31},
$\chi_q(F_i\otimes F_j)_{\le 2}$ is equal to a single dominant monomial
hence $F_i\otimes F_j$ is simple.
Next, we use the fact that $\chi_q(S(\a))_{\le 2}$ is given
by the Frenkel-Mukhin algorithm (see Example~\ref{exa31} and Corollary~\ref{correg}). 
Thus, if $m$ is the highest $l$-weight of $S(\a)$, we have 
\[
\FM(Y_{i,\xi_i}Y_{i,\xi_i+2}m)_{\le 2} = Y_{i,\xi_i}Y_{i,\xi_i+2}\FM(m)_{\le 2}
= Y_{i,\xi_i}Y_{i,\xi_i+2}\chi_q(L(m))_{\le 2}.
\]
Therefore $\chi_q(L(Y_{i,\xi_i}Y_{i,\xi_i+2}m))$
contains $\chi_q(F_i\otimes S(\a))_{\le 2}$, and
$F_i\otimes S(\a)$ is simple.

\subsubsection{}
To describe explicitly the pairs of compatible roots
we are going to use the geometric model of \cite[\S 3.5]{FZ1}.
The set $\Phi_{\ge-1}$ has cardinality $n(n+1)/2 + n = n(n+3)/2$.
We identify the elements of $\Phi_{\ge -1}$ with the diagonals
of a regular convex $(n+3)$-gon $\P_{n+3}$ as follows. 
Let $1,\ldots,n+3$  be the vertices of $\P_{n+3}$,
labelled counterclockwise. The negative simple roots are
identified with the following diagonals:
\[
-\a_k \equiv
\left\{
\begin{array}{ll}
[i,\ n+3-i] &\mbox{if $k=2i-1$,}\\[3mm]
[i+1,\ n+3-i] &\mbox{if $k=2i$.}
\end{array}
\right.
\]
These diagonals form a ``snake'', as shown in Figure~\ref{snake}.
\begin{figure}
\begin{center}
\leavevmode
\epsfxsize =8cm
\epsffile{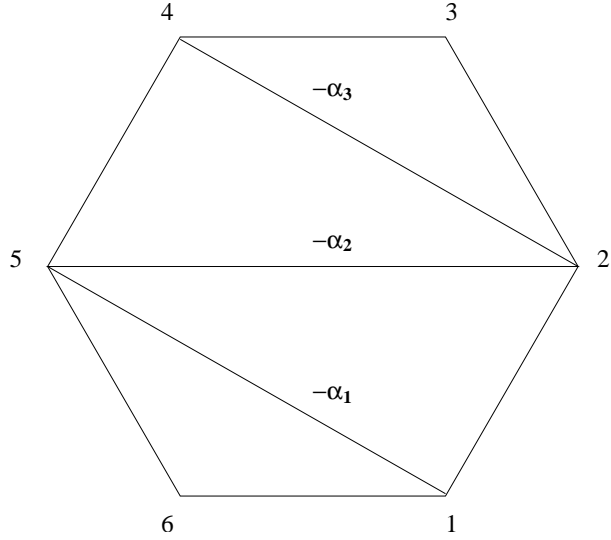}
\end{center}
\caption{\small \label{snake}
{\it The snake in type $A_3$.}}
\end{figure} 
To identify the remaining diagonals (not belonging to the snake)
with positive roots, we associate each $\a_{[i,j]}$ with the unique
diagonal that crosses the diagonals $-\a_i,\,-\a_{i+1},\,\ldots,\,-\a_j$
and does not cross any other diagonal $-\a_k$ of the snake. 
Under this identification the roots $\a$ and $\b$ are compatible
if and only if they are represented by two non-crossing diagonals.
Hence the clusters are in one-to-one correspondence with the
triangulations of $\P_{n+3}$.
\begin{example}
{\rm
Take $n=3$.
The identification of the negative simple roots with the snake diagonals
of a hexagon is shown in Figure~\ref{snake}.
The positive roots are represented by the remaining diagonals as
follows:
\[ 
\a_1 \equiv [2,6],\ 
\a_2 \equiv [1,4],\
\a_3 \equiv [3,5],\ 
\a_1+\a_2 \equiv [4,6],\ 
\a_2+\a_3 \equiv [1,3],\ 
\a_1+\a_2+\a_3 \equiv [3,6].\  
\]
The 14 clusters listed in Example~\ref{exa14cl} correspond to the 14 
triangulations of the hexagon.
} 
\end{example}

\subsubsection{} \label{proof92iiA}
It follows from the geometric model that all
pairs of compatible roots $(\a,\b)$ of $\Phi_{\ge -1}$ are 
of one of the following forms:
\begin{itemize}
\item[(a)] $\a= -\a_i$, $\b = -\a_j$;
\item[(b)] $\a= -\a_i$, $\b\in\Phi_{>0}$ with $[\b : \a_i]=0$;
\item[(c)] $\a=\a_{[i,j]}$, $\b=\a_{[k,l]}$ with $k > j+1$ (two disjoint intervals);
\item[(d)] $\a=\a_{[i,j]}$, $\b=\a_{[i,l]}$ or $\a=\a_{[i,k]}$, $\b=\a_{[j,k]}$ 
(two intervals with a common end);
\item[(e)] $\a=\a_{[i,k]}$, $\b=\a_{[j,l]}$ with $i<j\le k<l$ and $k-j$ even
(two overlapping intervals);
\item[(f)] $\a=\a_{[i,l]}$, $\b=\a_{[j,k]}$ with $i<j< k<l$ and $k-j$ odd
(one interval strictly contained into the other).
\end{itemize}
Let us prove that $S(\a)\otimes S(\b)$ is simple in all these cases.

\medskip
(a) This follows from Example~\ref{exC0}. Indeed, $S(-\a_i)$ and $S(-\a_j)$
are fundamental modules in a shift of the subcategory $\CC_0$.

\medskip
(b) Write $\b=\a_J$. Let $m=Y_{i,\xi_i+1}\prod_{j\in J\cap I_0}Y_{j,0}\prod_{j\in J\cap I_1}Y_{j,3}$
be the highest weight monomial of $\chi_q(S(-\a_i)\otimes S(\b))$.
Since by assumption $i\not \in J$,
it is easy to check that $\FM(m)$ contains the product
\[
\chi_q(L(Y_{i,\xi_i+1}))_{\le 2}\,\chi_q(S(\b))_{\le 2}
\ =\ \chi_q(S(-\a_i)\otimes S(\b))_{\le 2}
\]
given by Example~\ref{exa31} and Proposition~\ref{prop33}.
Hence, by \ref{sect23}, it contains 
in particular $\chi_q(L(m))_{\le 2}$.
By Corollary~\ref{corregular}, it follows that $\FM(m)=\chi_q(L(m))$.
Thus, 
$\chi_q(S(-\a_i)\otimes S(\b))=\chi_q(L(m))$, and $S(-\a_i)\otimes S(\b)$
is irreducible.

\medskip
(c) Same reasoning as (b).

\medskip
(d) We assume that $i\le j\le l$ and argue by induction on $s=l-i$.
If $s=0$, we have to check that $S(\a_i)\otimes S(\a_i)$ is simple.
This is clear, since if we choose $\xi_i=1$ then 
$\chi_q(S(\a_i))_{\le 2}= Y_{i,3}$  and its square contains a single
dominant monomial.
Suppose the claim is proved when $l-i=s$, and take $l-i=s+1$.
Let $m_{[i,j]}$ and $m_{[i,l]}$ denote the highest weight monomials
of $S(\a_{[i,j]})$ and $S(\a_{[i,l]})$.
Choose again $\xi_i=1$. 
Then, by Proposition~\ref{prop33}, the truncated $q$-characters
of $S(\a_{[i,j]})$ and $S(\a_{[i,l]})$ do not contain any $A_{k,\xi_k+1}^{-1}$ 
with $k\le i$. So we have
\[
\chi_q(S(\a_{[i,j]}))_{\le 2} = \vp_{[i+1,j+1]}(m_{[i,j]})_{\le 2},
\qquad
\chi_q(S(\a_{[i,l]}))_{\le 2} = \vp_{[i+1,l+1]}(m_{[i,l]})_{\le 2}.
\]
Hence $\chi_q(S(\a_{[i,j]})\otimes S(\a_{[i,l]}))_{\le 2} = \vp_{[i+1,l+1]}(m_{[i,j]}m_{[i,l]})_{\le 2}$. 
So by using the induction hypothesis and \ref{belongJ}, we get
\[
\chi_q(S(\a_{[i,j]})\otimes S(\a_{[i,l]}))_{\le 2} \le \chi_q(L(m_{[i,j]}m_{[i,l]}))_{\le 2}. 
\]
This shows that $S(\a_{[i,j]})\otimes S(\a_{[i,l]})\cong L(m_{[i,j]}m_{[i,l]})$ is 
simple.
The case of $S(\a_{[i,j]})\otimes S(\a_{[k,j]})$ with $1\le k\le j$ is similar.

\medskip
(e) (f) 
Let $m=m_{[i,k]}m_{[j,l]}=m_{[i,l]}m_{[j,k]}$.
We want to show that 
\begin{equation}\label{eq2prov}
\chi_q(L(m))_{\le2} =
\left\{
\begin{array}{ll}
\chi_q(L(m_{[i,k]}))_{\le2}\ \chi_q(L(m_{[j,l]}))_{\le2} & \mbox{if $k-j$ is odd,}\\[2mm]
\chi_q(L(m_{[i,l]}))_{\le2}\ \chi_q(L(m_{[j,k]}))_{\le2} & \mbox{if $k-j$ is even.} 
\end{array}
\right.
\end{equation}
To do this, it is enough to show that all the monomials in the right-hand
side occur in the left-hand side with the same multiplicity. 
We argue by induction on $l-i\ge 2$.
For $l-i=2$ we are necessarily in the first case with $j=k=i+1=l-1$.
Let us choose $\xi_i=1$. Then $\xi_l=1$, and by Proposition~\ref{prop30}
\[
\chi_q(L(m_{[i,k]}))_{\le2}\ 
=\vp_{[i,l]}(m_{[i,k]})_{\le 2},
\qquad
\chi_q(L(m_{[j,l]}))_{\le2}
= \vp_{[i,l]}(m_{[i,k]})_{\le 2}.
\]
Thus we can assume for simplicity of notation that $I=[i,l]=[1,3]$.
Writing for short
$v_m=A_{m,\xi_m+1}^{-1}$, we have
\[
\chi_q(L(m_{[1,2]}))_{\le2}\ \chi_q(L(m_{[2,3]}))_{\le2}
=
m(1+v_2+v_2v_3)(1+v_2+v_1v_2). 
\]
If a monomial in the right-hand side does not contain $v_1$,
then it belongs to
\[
\vp_{[2,3]}(m_{[1,2]})_{\le2}\ \vp_{[2,3]}(m_{[2,3]})_{\le2}
\ = \
\vp_{[2,3]}(m)_{\le2}
\ \le \ \chi_q(L(m))
\]
(the equality follows from (d)).
For the same reason, if a monomial does not contain $v_3$,
it belongs to $\chi_q(L(m))$.
The only monomial in the right-hand side which contains both $v_1$
and $v_3$ is $m'=mv_1v_2^2v_3$.
Now $m'':=mv_2^2v_3= Y_{1,1}^2Y_{1,3}Y_{2,2}^{-1}Y_{3,1}$ belongs to $\chi_q(L(m))$ by what 
we have just said, and it is $1$-dominant.
We have $\vp_1(m'')_{\le 2} = m''+m'$, thus, by \ref{belong}, $m'$ 
also belongs to $\chi_q(L(m))$.
This proves (\ref{eq2prov}) when $l-i=2$. 

Assume now that (\ref{eq2prov}) holds when $l-i=s$ and take $l-i=s+1$.
Choose $\xi_i=1$.
Using again Proposition~\ref{prop30}, we can also assume
without loss of generality that $I=[i,l]=[1,s+1]$.
Arguing as in the case $l-i=2$ and using $(c)$ or $(d)$
or the induction hypothesis, we see that every monomial $m'$ occuring
in the right-hand side of (\ref{eq2prov}) occurs in $\chi_q(L(m))$
with the same multiplicity if $m'm^{-1}$
does not contain $v_1v_2\cdots v_{s+1}$.
So we only need to consider the monomials $m'=mv_1^{c_1}\cdots v_{s+1}^{c_{s+1}}$
with $c_k>0$ for every $k\in I$.
For such a monomial $m'$, 
it can be checked using Proposition~\ref{prop33} that 
there exists $k$ with $c_k=2$, and that the smallest
such $k$ belongs to $I_0$. 
Let us denote it by $k_{\min}$.
Then Proposition~\ref{prop33} shows that
$m'':=m'/v_{k_{\min}-1}$ appears in the right-hand side of (\ref{eq2prov}),
and since $m''$ does not contain $v_{k_{\min}-1}$, by what we 
said above, $m''$ also appears in $\chi_q(L(m))$ with the same multiplicity.
Now $m''$ is $(k_{\min}-1)$-dominant
and $\vp_{k_{\min}-1}(m'')=m''+m'$ is contained in $\chi_q(L(m))$
by \ref{belong}. Moreover, the multiplicities of $m'$ and $m''$ in $\vp_{k_{\min}-1}(m'')$
are the same. Hence, their multiplicities are also the same on both sides of (\ref{eq2prov}).
This finishes the proof of (e) and (f).

\medskip
This finishes the proof of \ref{clusterexpa}~(ii) in type $A$.
Hence Conjecture~\ref{mainconjecture}~{(ii)} is proved in type $A$.

\subsection{}
The truth of Conjecture~\ref{mainconjecture} in type $A$ has the 
following interesting consequence, which will be used to validate
Conjecture~\ref{mainconjecture}~(i) in type $D$.

Let $\ga=\sum_i c_i\a_i$, with $c_i\ge 0$.
We define $Y^\ga\in \M_+$ as in \ref{clusterexpa}.
Write $\tau_-(\gamma)=\de=\sum_i d_i\a_i$.

\begin{corollary}\label{corA}
If $0\le d_i \le 2$ for every $i\in I$, the truncated $q$-character
of $L(Y^\ga)$ is equal to
\[
\chi_q(L(Y^\ga))_{\le 2} = Y^\ga\, F_\de\left(A_{1,\xi_1+1}^{-1},\ldots,A_{n,\xi_n+1}^{-1}\right), 
\]
where the $F$-polynomial $F_\de$ is given by the explicit formula~(\ref{eqFpol}). 
\end{corollary}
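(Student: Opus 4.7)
The plan is to use Conjecture~\ref{mainconjecture} (established in type $A$ in \S\ref{proofA(i)}--\S\ref{proofA(ii)}) and Theorem~\ref{multfreeequal} to reduce the corollary to a combinatorial identity between $F$-polynomials. Since all $c_i \ge 0$, a glance at the formula defining $Y^\ga$ in \S\ref{clusterexpa} shows that no factor $Y_{i,2-\xi_i}$ occurs, and that $\min(a_i,b_i)=0$ for every $i$ in the notation of that paragraph. Hence the cluster expansion $\ga = \sum_{\b \in \Phi_{\ge -1}} n_\b \b$ of Theorem~\ref{thFZcl} involves no negative simple root and no frozen contribution, so only pairwise compatible $\b\in \Phi_{>0}$ appear. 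Because $Y^\ga = \prod_\b (Y^\b)^{n_\b}$, Conjecture~\ref{mainconjecture}~(ii) gives
\[
L(Y^\ga) \;\cong\; \bigotimes_{\b \in \Phi_{>0}} S(\b)^{\otimes n_\b}.
\]
All positive roots in type $A$ are multiplicity-free, so Theorem~\ref{multfreeequal} applies to each factor and yields
\[
\chi_q(L(Y^\ga))_{\le 2} \;=\; Y^\ga \prod_\b F_{\tau_-(\b)}\bigl(A_{1,\xi_1+1}^{-1},\ldots,A_{n,\xi_n+1}^{-1}\bigr)^{n_\b}.
\]

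It then remains to prove the polynomial identity $F_\de = \prod_\b F_{\tau_-(\b)}^{n_\b}$, where both sides are defined through the explicit formula~(\ref{eqFpol}). Since $\ga$ lies in the positive cone $Q_+$, one has $\max(0,c_j)=c_j$, so the piecewise-linear involution $\tau_-$ acts linearly on $\ga$ and $\de = \sum_\b n_\b \tau_-(\b)$, which in particular legitimates applying formula~(\ref{eqFpol}) to $\de$ under the standing hypothesis $0 \le d_i \le 2$. I would establish the identity by exhibiting an explicit bijection between $\de$-acceptable vectors $\nu$, each counted with its multiplicity $2^{e(\nu,\de)}$, and tuples $(\nu^{(\b)})_\b$ of $\tau_-(\b)$-acceptable vectors summing (with multiplicities $n_\b$) to $\nu$. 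The doubling factor $2^{e(\nu,\de)}$ records exactly the freedom to distribute a coordinate of size $d_i=2$ among the supports of several $\tau_-(\b)$'s meeting at~$i$; these doubling loci correspond via the polygon model of \S\ref{proof92iiA} to the overlapping interval pairs of cases (e)--(f) there.

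The main obstacle is translating the global combinatorial conditions (i)--(iii) in the definition of ``$\de$-acceptable'' into local conditions on each piece $\nu^{(\b)}$, and matching the connected components entering $e(\nu,\de)$ with the overlap structure of the cluster expansion. I expect the cleanest route to be an induction on $\sum_i c_i$: select a maximal (with respect to inclusion of supports) root $\b_0$ in the cluster expansion, apply the inductive hypothesis to $\ga - \b_0$ to rewrite $\prod_{\b} F_{\tau_-(\b)}^{n_\b}$ as $F_{\tau_-(\b_0)}\cdot F_{\tau_-(\ga-\b_0)}$, and finally reduce to a binary identity whose content matches the truncated $q$-character computations carried out in cases (c)--(f) of \S\ref{proof92iiA}. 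Alternatively, one may appeal to formula~(\ref{eqFpol1}) and use the fact that cluster monomials in the principal-coefficients cluster algebra $\A_\pr$ have $F$-polynomial equal to the product of the $F$-polynomials of their cluster-variable factors, so that both $F_\de$ and $\prod_\b F_{\tau_-(\b)}^{n_\b}$ coincide with the $F$-polynomial of a single well-defined cluster monomial in $\A_\pr$.
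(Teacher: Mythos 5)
Your first half is exactly the paper's reduction: since all $c_i\ge 0$ the cluster expansion of $\ga$ involves only pairwise compatible positive roots, Conjecture~\ref{mainconjecture}~(ii) in type $A$ gives $L(Y^\ga)\cong\bigotimes_\b S(\b)^{\otimes n_\b}$, Theorem~\ref{multfreeequal} applies to each (multiplicity-free) factor, and linearity of $\tau_-$ on the positive cone gives $\de=\sum_\b n_\b\tau_-(\b)$, which is the cluster expansion of $\de$. Up to this point the argument is correct and coincides with the paper's.

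The gap is in the remaining step, the identity
$\prod_\b F_{\tau_-(\b)}^{n_\b}=\sum_{\nu}2^{e(\nu,\de)}\vv^{\nu}$
with the right-hand side given by formula~(\ref{eqFpol}) at $\de$. Your primary route (a bijection between $\de$-acceptable vectors weighted by $2^{e(\nu,\de)}$ and tuples of acceptable vectors for the pieces, or an induction on $\sum_i c_i$ reducing to cases (c)--(f) of \S\ref{proof92iiA}) is only a plan: you yourself flag the main obstacle (matching the global conditions (i)--(iii) and the components entering $e(\nu,\de)$ with the overlap structure), and none of it is carried out, so the corollary is not actually proved. Your alternative route is circular: it is true essentially by definition that $\prod_\b F_{\tau_-(\b)}^{n_\b}$ is the $F$-polynomial of the cluster monomial determined by the cluster expansion of $\de$, but Proposition~\ref{propFZ} asserts formula~(\ref{eqFpol}) only for \emph{roots} $\a$ with $a_i\le 2$; for a non-root $2$-restricted $\de$ the statement that this $F$-polynomial is computed by (\ref{eqFpol}) is precisely what has to be established, so "both sides coincide with the $F$-polynomial of a single cluster monomial" does not close the argument. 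The paper avoids any new combinatorics here: it \emph{defines} $F_\de$ as the product $\prod_\a F_{\tau_-(\a)}^{n_\a}$ and observes that the two main steps of the proof of Proposition~\ref{propFZ} in \cite{FZ1} (Lemmas 2.11 and 2.12 there) are proved for an arbitrary $2$-restricted vector, not only for roots, so formula~(\ref{eqFpol}) remains valid for $\de$. If you want to keep your bijective or inductive approach you must actually supply that combinatorial argument; otherwise the efficient fix is to cite the $2$-restricted form of the lemmas of \cite{FZ1} as the paper does.
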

\begin{proof}
Let $\ga = \sum_{\a \in \Phi_{>0}}n_\a \a$ be the cluster expansion 
of $\gamma$.
By \ref{proofA(ii)}, we have
\[
\chi_q(L(Y^\ga))_{\le 2} = \prod_{\a\in\Phi_{>0}} \chi_q(S(\a))_{\le 2}^{n_\a}
= Y^\ga \prod_{\a\in\Phi_{>0}} F_{\tau_-(\a)}(A_{i,\xi_i+1}^{-1})^{n_\a}. 
\]
Since $\tau_-$ is linear on $\oplus_i \N\a_i$, we have 
$\tau_-(\ga)=\sum_{\a\in\Phi_{>0}}n_\a\tau_-(\a) = \de$, and this is the
cluster expansion of $\de$.
Now if we define 
$F_\de(v_1,\ldots,v_n) := \prod_{\a\in\Phi_{>0}} F_{\tau_-(\a)}(v_1,\ldots,v_n)^{n_\a}$,
the proof of Proposition~\ref{propFZ} given in \cite{FZ1} shows that,
since $\de$ is 2-restricted, $F_\de$ can still be calculated by formula~(\ref{eqFpol}).
Indeed, the two main steps (Lemmas 2.11 and 2.12) are proved for a 2-restricted
vector which is not necessarily a root. 
\cqfd 
\end{proof}

\begin{example}\label{ex121}
{\rm
Let $\g$ be of type $A_3$ and take $I_0=\{2\}$ and $I_1=\{1,3\}$.
Choose $\ga=\a_1+2\a_2+\a_3$. Then $\tau_{-}(\ga)=\ga$.
Writing $v_i = A_{i,\xi_i+1}^{-1}$, we have
\[
\chi_q(L(Y_{1,3}Y_{2,0}^2Y_{3,3}))_{\le 2}
\ =\  Y_{1,3}Y_{2,0}^2Y_{3,3}
(1+2v_2+v_2^2+v_1v_2+v_2v_3+v_1v_2^2+v_2^2v_3+v_1v_2^2v_3),
\]
where the monomials of the right-hand side are given by the 
combinatorial rule of Proposition~\ref{propFZ}.
} 
\end{example}

\section{Type $D$}\label{secttypD}

In this section we take $\g$ of type $D_n$,
and we label the Dynkin diagram as in \cite{B}.
 
\subsection{}
Let $\b\in\Phi_{>0}$ and let $\a=\tau_-(\b)$.
We want to prove
\begin{theorem}\label{th2proveD}
The normalized truncated $q$-character of $S(\b)$ is equal to
\[
\widetilde{\chi}_q(S(\b))_{\le 2} \ = \ F_\a(v_1,\ldots,v_n), 
\]
where we write for short $v_i = A_{i,\xi_i+1}^{-1}\ (i\in I)$,
and the $F$-polynomial $F_\a$ is given by the combinatorial
formula of Proposition~\ref{propFZ}.
\end{theorem}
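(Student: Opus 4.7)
The proof proceeds by case analysis on $\b$. If $\b$ is multiplicity-free, Theorem~\ref{multfreeequal} gives the identity directly, and the formula (\ref{eqFpol}) of Proposition~\ref{propFZ} reduces in that case to the simpler rule of \ref{ssmfree}. Otherwise, by the classification of positive roots of $D_n$, $\b$ must be of the form $\eps_i + \eps_j$ with $1 \le i < j \le n-2$, whose expansion in simple roots reads
\[
\b = \a_i + \cdots + \a_{j-1} + 2\a_j + \cdots + 2\a_{n-2} + \a_{n-1} + \a_n.
\]
It is precisely these roots that require a new argument.

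For such a $\b$, the plan is to compute $\chi_q(S(\b))_{\le 2}$ by a restriction-extension procedure. With the trivalent node $n-2$ placed in $I_0$ (as we may, by \ref{sectchoice}), the fork vertices $n-1$ and $n$ both lie in $I_1$. Let $J = I \setminus \{n\}$, a type $A_{n-1}$ subdiagram. Restricting $Y^\b$ to $J$ produces the monomial $Y^\ga$ attached to the $2$-restricted element $\ga = \a_i + \cdots + \a_{j-1} + 2\a_j + \cdots + 2\a_{n-2} + \a_{n-1}$ of the root lattice of $A_{n-1}$. After checking that $\tau_-(\ga)$ is also $2$-restricted in $A_{n-1}$, Corollary~\ref{corA} yields a closed expression for $\vp_J(Y^\b)_{\le 2}$ via the combinatorial rule of Proposition~\ref{propFZ}. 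Proposition~\ref{prop30} applied with $K \cap I_1 = \{n\}$ then extends this formula to the full $\chi_q(S(\b))_{\le 2}$ by attaching, to each monomial $M_p$ of $\vp_J(Y^\b)_{\le 2}$, a factor $(1 + A_{n,2}^{-1})^{\mu_p}$, where $\mu_p \in \{0,1,2\}$ is the exponent of $A_{n-2,1}^{-1}$ in $M_p$ (the upper bound follows from the $2$-restriction on $\b$).

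The principal obstacle is to match the resulting polynomial monomial-by-monomial with $F_{\tau_-(\b)}$ as prescribed by Proposition~\ref{propFZ}. This amounts to exhibiting a bijection between the $\tau_-(\b)$-acceptable vectors $\delta$ (weighted by their duplication factor $2^{e(\delta, \tau_-(\b))}$) and the monomials produced by the restriction-plus-fork procedure. The duplications $2^{e(\delta, \tau_-(\b))}$ arise from the binomial expansion of $(1 + A_{n,2}^{-1})^2$ in the cases $\mu_p = 2$; the propagation condition (ii) of Proposition~\ref{propFZ} is inherited from the type-$A$ formula supplied by Corollary~\ref{corA}; and the simple-path exclusion (iii) accounts for those candidate monomials that would fail to be dominant at intermediate steps and so are absent from $\chi_q(S(\b))_{\le 2}$. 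Once $\widetilde{\chi}_q(S(\b))_{\le 2} = F_{\tau_-(\b)}$ is thereby established, the theorem follows; as a bonus, Corollary~\ref{corregular} applied to this explicit expression shows that the full $\chi_q(S(\b))$ is given by the Frenkel-Mukhin algorithm.
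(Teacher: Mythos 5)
Your treatment of the multiplicity-free case and the restriction to a type $A_{n-1}$ subdiagram via Corollary~\ref{corA} agree with the paper, but the extension step is where the argument breaks. Proposition~\ref{prop30} expresses $\chi_q(L(m))_{\le 2}$ in terms of $\vp_J(m)_{\le 2}$ only when $J$ is the \emph{support} of the highest weight monomial $m$ and the adjoined vertices $k\in K$ lie \emph{outside} that support; here $b_n=1$ and $n\in I_1$, so $Y_{n,3}$ divides $Y^\b$, the vertex $n$ belongs to the support of $\b$, and the proposition does not apply with $J=I\setminus\{n\}$. Worse, the formula you want is false in this situation. Take $D_4$ with trivalent node $2\in I_0$ and $\b$ the highest root, so $Y^\b=Y_{1,3}Y_{2,0}^2Y_{3,3}Y_{4,3}$. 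Restricting to $J=\{1,2,3\}$ gives (Example~\ref{ex121}) $\vp_J(Y^\b)_{\le 2}=Y^\b(1+2v_2+v_1v_2+v_2v_3+v_2^2+v_1v_2^2+v_2^2v_3+v_1v_2^2v_3)$, and multiplying each monomial by $(1+v_4)^{\mu_{2,p}}$ produces coefficient $2$ at $v_2v_4$ and at $v_2^2v_4$, the monomials $v_1v_2v_4$ and $v_2v_3v_4$, and $v_4^2$-terms; but by the third formula of Lemma~\ref{longD4} the correct truncation is $Y^\b\bigl(1+v_2(2+v_1+v_3+v_4)+v_2^2(1+v_1)(1+v_3)(1+v_4)\bigr)$, which has coefficient $1$ at $v_2v_4$ and $v_2^2v_4$, contains neither $v_1v_2v_4$ nor $v_2v_3v_4$, and has no $v_4^2$. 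When the adjoined node already carries $Y_{n,3}$ in the highest weight, the admissible exponent of $v_n$ is controlled by a bound of the shape $\nu_n\le\max(0,\mu_{n-2,p}-1)$, as in condition~(\ref{eqnu}) of Proposition~\ref{prop33}, not by a factor $(1+v_n)^{\mu_{n-2,p}}$; and the duplication $2^{e(\ga,\a)}$ of Proposition~\ref{propFZ} is already produced by the type $A$ restriction (the $2v_2$ above), not by a binomial expansion of $(1+A_{n,2}^{-1})^2$.

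Because of this, the bijection you invoke in your last paragraph cannot exist as described, and in any case it is asserted rather than constructed, so the core of the argument is missing. The paper's proof never extends across a support vertex: using \ref{belongJ} and Corollary~\ref{corA} on the two $A_{n-1}$ subdiagrams obtained by deleting $n$, respectively $n-1$, it identifies all monomials of $\widetilde{\chi}_q(S(\b))_{\le 2}$ not involving $v_n$, respectively $v_{n-1}$, with the corresponding specializations of $F_\a$; the genuinely new monomials, namely those containing both $v_{n-1}$ and $v_n$, are shown to be of the form $mv_{n-2}^2v_{n-1}v_n$ with $m$ a monomial in $v_1,\ldots,v_{n-3}$, and their multiplicity is proved equal to that of $mv_{n-2}^2$ by an $\{n-1,n\}$-dominance argument in one direction and, in the other, by a product estimate anchored in the explicit $D_4$ computation of Lemma~\ref{longD4}. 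Some substitute for this analysis of the mixed monomials is what your proposal would need to supply.
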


We assume that the trivalent node $n-2$ belongs to $I_0$
(by \ref{sectchoice}, this is no loss of generality).
We first establish some formulas in type~$D_4$.
\begin{lemma}\label{longD4}
Let $\g$ be of type $D_4$. We have
\[
\begin{array}{rcl}
\chi_q(L(Y_{1,3}Y_{2,0}))_{\le 2} &=& Y_{1,3}Y_{2,0}
\left(1+v_2(1+v_3)(1+v_4)\right),\\[3mm]
\chi_q(L(Y_{2,0}Y_{3,3}Y_{4,3}))_{\le 2} &=& Y_{2,0}Y_{3,3}Y_{4,3}
\left(1+v_2(1+v_1)\right),\\[3mm]
\chi_q(L(Y_{1,3}Y_{2,0}^2Y_{3,3}Y_{4,3}))_{\le 2} &=& Y_{1,3}Y_{2,0}^2Y_{3,3}Y_{4,3}
\left(1+v_2(2+v_1+v_3+v_4)
\right.\\[2mm]
&&\left.
+\ v_2^{2}(1+v_1)(1 + v_3)(1+v_4)\right).
\end{array}
\]
\end{lemma}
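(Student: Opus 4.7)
For the first two formulas, I would apply Proposition~\ref{prop33} directly. With $I_0=\{2\}$ and $I_1=\{1,3,4\}$, the roots $\alpha_1+\alpha_2$ and $\alpha_2+\alpha_3+\alpha_4$ are multiplicity-free with connected supports; enumerating the sequences $\nu\in\{0,1\}^4$ permitted by that proposition (with $\eta$ the characteristic function of each support, and writing $v_i=A_{i,\xi_i+1}^{-1}$) will immediately yield the two stated polynomials after factoring.

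For the third formula, I would write $m_3=Y_{1,3}Y_{2,0}^2Y_{3,3}Y_{4,3}$ and denote by $P_\theta$ the claimed polynomial $1+v_2(2+v_1+v_3+v_4)+v_2^2(1+v_1)(1+v_3)(1+v_4)$. The plan is to sandwich $\chi_q(L(m_3))_{\le 2}$ between matching bounds. For the upper bound, Theorem~\ref{thK} realizes $L(m_3)$ as a quotient of $T:=L(Y_{1,3}Y_{2,0})\otimes L(Y_{2,0}Y_{3,3}Y_{4,3})$, so multiplying the first two formulas gives
\[
\chi_q(L(m_3))_{\le 2}\le\chi_q(T)_{\le 2}=m_3\bigl[P_\theta+v_2 v_3 v_4\bigr].
\]
The discrepancy $m_3 v_2 v_3 v_4=Y_{1,1}Y_{1,3}Y_{2,0}Y_{2,2}$ is the highest $l$-weight of the simple quotient of $F_1\otimes F_2$, which by Example~\ref{exa31} has truncated $q$-character reduced to its highest $l$-weight alone.

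For the matching lower bound $\chi_q(L(m_3))_{\le 2}\ge m_3 P_\theta$, I would apply \ref{belongJ} iteratively to $m_3$, choosing subsets $J\subset I$ whose induced subdiagram is of type $A$ and for which the restriction of $\gamma=\alpha_1+2\alpha_2+\alpha_3+\alpha_4$ to $J$ has a $2$-restricted $\tau_-$-image; this enables Corollary~\ref{corA} to compute $\varphi_J(m_3)_{\le 2}$ through the explicit formula of Proposition~\ref{propFZ}. The three $A_2$ cases $J=\{1,2\},\{2,3\},\{2,4\}$ together with the three $A_3$ cases $J=\{1,2,3\},\{1,2,4\},\{2,3,4\}$ will produce every monomial of $m_3 P_\theta$ except $m_3 v_1 v_2^2 v_3 v_4$, which will then follow from \ref{belong} applied at node $4$ to the already-established $4$-dominant monomial $m_3 v_1 v_2^2 v_3$.

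Combining bounds, $\chi_q(L(m_3))_{\le 2}$ must equal either $m_3 P_\theta$ or $m_3 P_\theta+m_3 v_2 v_3 v_4$. A count of monomials with multiplicity ($14$ in $\chi_q(T)_{\le 2}$ versus $13+1$ from $m_3 P_\theta$ plus the highest $l$-weight of $L(Y_{1,1}Y_{1,3}Y_{2,0}Y_{2,2})$), together with the injectivity of the truncated $q$-character on $R_1$ (Proposition~\ref{prop28}), will force the decomposition $[T]=[L(m_3)]+[L(Y_{1,1}Y_{1,3}Y_{2,0}Y_{2,2})]$ and rule out the second possibility. The principal obstacle will be the careful bookkeeping of multiplicities in the $\varphi_J$-applications, ensuring that each monomial of $m_3 P_\theta$ is produced exactly once and that the spurious $m_3 v_2 v_3 v_4$ is attributable solely to the second composition factor rather than to $L(m_3)$ itself.
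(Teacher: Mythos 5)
Your handling of the first two formulas and of the lower bound is sound: Proposition~\ref{prop33} does give the first two characters; the six applications of \ref{belongJ} (evaluated in type $A$ via Corollary~\ref{corA}, e.g. Example~\ref{ex121} for $J=\{1,2,3\}$) produce every monomial of $m_3P$ except $m_3v_1v_2^2v_3v_4$ with the correct multiplicities, where $m_3=Y_{1,3}Y_{2,0}^2Y_{3,3}Y_{4,3}$ and $P$ is your claimed polynomial; and your use of \ref{belong} at node $4$ on the $4$-dominant monomial $m_3v_1v_2^2v_3=Y_{1,1}Y_{3,1}Y_{4,1}^2Y_{4,3}$ legitimately supplies the last monomial. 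The upper bound $\chi_q(L(m_3))_{\le 2}\le m_3(P+v_2v_3v_4)$ is also correct (the subquotient argument of \ref{sect23} is what is needed here rather than Theorem~\ref{thK}, whose hypotheses concern fundamental modules). The genuine gap is the final elimination of $m_3v_2v_3v_4=Y_{1,1}Y_{1,3}Y_{2,0}Y_{2,2}$. A count of monomials together with Proposition~\ref{prop28} cannot force the decomposition you want: the two scenarios, namely $T:=L(Y_{1,3}Y_{2,0})\otimes L(Y_{2,0}Y_{3,3}Y_{4,3})$ simple with $\chi_q(L(m_3))_{\le 2}=m_3(P+v_2v_3v_4)$, versus $[T]=[L(m_3)]+[L(Y_{1,1}Y_{1,3}Y_{2,0}Y_{2,2})]$ with $\chi_q(L(m_3))_{\le 2}=m_3P$, are both consistent with your bounds and with any monomial count (with multiplicity the totals are $15$ versus $14+1$; distinct, $14$ versus $13+1$ — indistinguishable by additivity). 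Injectivity of the truncated character on $R_1$ only says that the class of $L(m_3)$ is determined by its truncated character; it does not tell you which candidate character is the right one, and a simple object of $\CC_1$ may well carry a second dominant monomial (cf. Example~\ref{exa24}), so the presence of $Y_{1,1}Y_{1,3}Y_{2,0}Y_{2,2}$ in $\chi_q(T)$ does not by itself make $T$ reducible.

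What is missing is a proof that the dominant monomial $m_3v_2v_3v_4$ does not occur in $\chi_q(L(m_3))$, i.e. a second upper bound omitting it. This is exactly how the paper closes the argument: it compares upper bounds coming from different tensor factorizations of $m_3$ and invokes the triality $1\leftrightarrow 3\leftrightarrow 4$. The cheapest repair of your proof is to apply the diagram automorphism to your own factorization: by Proposition~\ref{prop33} applied to the rotated roots, $\chi_q(L(Y_{3,3}Y_{2,0}))_{\le 2}\,\chi_q(L(Y_{2,0}Y_{1,3}Y_{4,3}))_{\le 2}=m_3\left(1+v_2(1+v_1)(1+v_4)\right)\left(1+v_2(1+v_3)\right)$ is again an upper bound for $\chi_q(L(m_3))_{\le 2}$, and it contains no $v_2v_3v_4$. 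Intersecting this with your bound yields $\chi_q(L(m_3))_{\le 2}=m_3P$; only then does additivity give $[T]=[L(m_3)]+[L(Y_{1,1}Y_{1,3}Y_{2,0}Y_{2,2})]$ — as a consequence, not as an input.
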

\begin{proof}
The first two formulas concern multiplicity-free roots, and thus follow from Proposition~\ref{prop33}.
For the third one,
taking $J=\{1,2\}$ and setting $m=Y_{1,0}Y_{2,3}^2Y_{3,0}Y_{4,0}$,
we have by Example~\ref{exa28} (or Corollary~\ref{corA})
\[
 \vp_J(m)_{\le 2}\ = m\left(1+v_2(2+v_1)+ v_2^2(1+v_1)\right),
\]
and by \ref{belongJ} we know that $\vp_J(m) \le \chi_q(L(m))$.
Replacing $J$ by $\{2,3\}$ and by $\{2,4\}$ we get that
\[
m\left(1+v_2(2+v_1+v_3+v_4)
+ v_2^2(1+v_1+ v_3+v_4)\right)
\le \chi_q(L(m))_{\le 2}.
\]
Now $m_1:=mv_2^{2}v_1=Y_{1,1}Y_{2,2}^{-1}Y_{3,1}^2Y_{3,3}Y_{4,1}^2Y_{4,3}$
is $J$-dominant for $J=\{3,4\}$, and 
\[
 \vp_J(m_1)_{\le 2} = m_1(1+v_3)(1+v_4),
\]
so, by \ref{belongJ}, we see that $m_1(1+v_3)(1+v_4) \le \chi_q(L(m))$.
Arguing similarly with $m_2:=mv_2^{2}v_3$ and $m_3:=mv_2^2v_4$
we obtain that 
\begin{equation}\label{lowerb}
m\left(1+v_2(2+v_1+v_3+v_4)
+v_2^{2}(1+v_1)(1 + v_3)(1+v_4)\right) 
\le \chi_q(L(m))_{\le 2}.
\end{equation}
Conversely, we have 
$\chi_q(L(m)) \le \chi_q(L(Y_{2,0}))\chi_q(L(Y_{1,3}Y_{2,0}Y_{3,3}Y_{4,3})$.
Using Proposition~\ref{prop33}, we get 
\[
\chi_q(L(m))_{\le 2}\ \le 
m\left(1+v_2(1+v_1)(1+v_3)(1+v_4)\right)(1+v_2). 
\]
This upper bound is equal to the lower bound of (\ref{lowerb}) plus 
\[
m\left(v_2v_1v_3+v_2v_1v_4
+v_2v_3v_4+v_2v_1v_3v_4\right).
\] 
We also have
$\chi_q(L(m)) \le \chi_q(L(Y_{1,3}Y_{2,0}))\chi_q(Y_{2,0}Y_{3,3}Y_{4,3})$.
Using the first two formulas, we get
\[
\chi_q(L(m))_{\le 2}\ \le 
m\left(1+v_2(1+v_3)(1+v_4)\right)\left(1+v_2(1+v_1)\right). 
\]
This second upper bound does not contain the monomials 
$mv_2v_1v_3$
and $mv_2v_1v_3v_4$.
We can rule out the two remaining monomials by using the
three-fold symmetry $1 \leftrightarrow 3 \leftrightarrow 4$, and we obtain
an upper bound equal to the lower bound.
\cqfd 
\end{proof}

\noindent
{\em Proof of Th.~\ref{th2proveD} ---}\quad
If $\b$ is a multiplicity-free positive root,
the result follows from Theorem~\ref{multfreeequal}.
So we assume that $\b=\sum_i b_i\a_i$ has some multiplicity.
In view of the list of roots for $D_n$,
this implies that $b_{n-2}=2$, and $b_{n-1}=b_n=1$.
Since $n-2\in I_0$, we also have $\a=\sum_i a_i\a_i$ with $a_{n-2}=2$
and $a_{n-1}=a_n=1$.

Consider the subgraph $\De'$ of the Dynkin diagram supported
on $[1,n-1]$, of type $A_{n-1}$.
Set $\b'=\b-\a_n$ and $\a'=\a-\a_n$.
Then $\a'=\tau_-'(\b')$, where $\tau_-'$ is defined like
$\tau_-$, but for $\De'$.
Let $Y^\b$ be the highest $l$-weight of $S(\b)$.
By Corollary~\ref{corA},
\[
\vp_{[1,n-1]}(Y^\b) = Y^\b F_{\a'}(v_1,\ldots,v_{n-1}),
\]
where $F_{\a'}$ is the $F$-polynomial for $\De'$,
given by the explicit formula~(\ref{eqFpol}).
Since $n\in I_1$, this formula shows that
$F_{\a'}(v_1,\ldots,v_{n-1})=F_{\a}(v_1,\ldots,v_{n-1},0)$.
Hence the specialization at $v_n=0$ of the polynomial
$\widetilde{\chi}_q(S(\b))_{\le 2}$ is equal to
$F_{\a}(v_1,\ldots,v_{n-1},0)$.
Similarly, the specialization at $v_{n-1}=0$ of 
$\widetilde{\chi}_q(S(\b))_{\le 2}$ is equal to
$F_{\a}(v_1,\ldots,v_{n-2},0,v_n)$.

Thus we are reduced to prove that the monomials
of $\widetilde{\chi}_q(S(\b))_{\le 2}$
and $F_{\a}(v_1,\ldots,v_n)$
containing both $v_{n-1}$ and $v_n$ are the
same and have the same coefficients.

We first show that such a monomial is 
of the form $mv_{n-2}^2v_{n-1}v_n$ where $m$
is a monomial in $v_1,\ldots,v_{n-3}$.
For the polynomial $F_{\a}(v_1,\ldots,v_n)$ this follows
easily from the definition of an $\a$-acceptable
vector and the values of $a_{n-2}, a_{n-1}, a_n$.
For $\widetilde{\chi}_q(S(\b))_{\le 2}$, we have
\[
\widetilde{\chi}_q(S(\b))_{\le 2} \
\le\ 
\widetilde{\chi}_q(S(\b-2\a_{n-2}-\a_{n-1}-\a_n))_{\le 2}\
\widetilde{\chi}_q(S(2\a_{n-2}+\a_{n-1}+\a_n))_{\le 2}\,,
\]
where, by Proposition~\ref{prop30}, 
$\widetilde{\chi}_q(S(\b-2\a_{n-2}-\a_{n-1}-\a_n))_{\le 2}$
does not contain $v_{n-2}, v_{n-1}, v_n$.
Moreover, $2\a_{n-2}+\a_{n-1}+\a_n$ is supported on a 
root system of type $A_3$, so by Example~\ref{ex121} and Proposition~\ref{prop30}
all the monomials of $\widetilde{\chi}_q(S(2\a_{n-2}+\a_{n-1}+\a_n))_{\le 2}$
containing $v_{n-1}$ and $v_{n}$ are of the form
$m'v_{n-2}^2v_{n-1}v_n$ where $m'$
is a monomial in $v_{n-3}$.

We now note that if $mv_{n-2}^2v_{n-1}v_n$ appears in 
$F_{\a}(v_1,\ldots,v_n)$
(where $m$ is a monomial in the variables $v_1,\ldots,v_{n-3}$), 
then $mv_{n-2}^2$ also occurs,
and with the same multiplicity. This follows again 
from the definition of an $\a$-acceptable vector $\ga$
and of the integer $e(\ga,\a)$.
We have seen that $mv_{n-2}^2$ occurs in 
$\widetilde{\chi}_q(S(\b))_{\le 2}$ and 
$F_{\a}(v_1,\ldots,v_n)$ with the same multiplicity.
Thus all we have to do is to show that 
$mv_{n-2}^2$ and
$mv_{n-2}^2v_{n-1}v_n$
occur in $\widetilde{\chi}_q(S(\b))_{\le 2}$
with the same multiplicity.
Let us denote these multiplicities by $a$ and $b$,
respectively.

Since $Y^\b mv_{n-2}^2$ is $\{n-1,n\}$-dominant,
$\widetilde{\chi}_q(S(\b))_{\le 2}$  
contains $mv_{n-2}^2(1+v_{n-1})(1+v_n)$ with multiplicity
at least $a$ by \ref{belongJ}, so $a\le b$.

Conversely, let $\b''=\b-\a_n-\a_{n-1}$.
The multiplicity $a$ of $mv_{n-2}^2$ in $\widetilde{\chi}_q(S(\b))_{\le 2}$
is equal to its multiplicity in $\widetilde{\chi}_q(S(\b''))_{\le 2}$.
This is a multiplicity in type $A_{n-2}$, so using Corollary~\ref{corA},
we can check that it coincides with the multiplicity of $mv_{n-2}^2$ 
in the product
\[
\widetilde{\chi}_q(S(\b''-2\a_{n-2}-\a_{n-3}))_{\le 2}\ 
\widetilde{\chi}_q(S(2\a_{n-2}+\a_{n-3}))_{\le 2} 
\]
or equivalently in the product
\[
\widetilde{\chi}_q(S(\b''-2\a_{n-2}-\a_{n-3}))_{\le 2}\ 
\widetilde{\chi}_q(S(2\a_{n-2}+\a_{n-3}+\a_{n-1}+\a_n))_{\le 2}. 
\]
Using the third formula of Lemma~\ref{longD4} and the fact that the first
factor contains no variable $v_{n-2}$, $v_{n-1}$, or $v_n$, we obtain
that in this product the multiplicity $a$ of $mv_{n-2}^2$ is equal
to the multiplicity of $mv_{n-2}^2v_{n-1}v_n$.
But, by \ref{sect23}, this is greater or equal to $b$. Hence $a\ge b$.
This concludes the proof of Theorem~\ref{th2proveD}.
\cqfd

Hence (\ref{eq2prove}) is verified, and this proves Conjecture~\ref{mainconjecture}~{(i)} in type $D_n$.

\subsection{}
In this section we take $\g$ of type $D_4$ and we prove that 
Conjecture~\ref{mainconjecture}~(ii) is verified.
For this we need to prove \ref{clusterexpa}~(i) (ii) and (iii). 

\subsubsection{}
The proof that $F_i\otimes F_j$ is simple is the same as in type $A_n$
(\cf \ref{proof92i}).
We then consider $F_i\otimes S(\a)$ for $\a\in \Phi_{\ge -1}$.
If $\a=-\a_i$ or if $\a$ is a multiplicity-free positive root, 
we can repeat the argument of \ref{proof92i}.
If $\a=\a_1+2\a_2+\a_3+\a_4$, we choose $I_1=\{2\}$, so that
$\tau_-(\a)=\a_1+\a_2+\a_3+\a_4$.
By Theorem~\ref{th2proveD}, we have
\[
\widetilde{\chi}_q(S(\a))_{\le 2} =
1+v_1+v_3+v_4+v_1v_3+v_1v_4+v_3v_4+v_1v_3v_4+v_1v_2v_3v_4. 
\]
It is easy to check that this is the result given by
the Frenkel-Mukhin algorithm, so we can argue again as in
\ref{proof92i}. This proves \ref{clusterexpa}~(i).

\subsubsection{}
Let $\a, \b \in \Phi_{\ge -1}$ be two compatible roots.
We want to show that $S(\a)\otimes S(\b)$ is simple.
If $\a$ or $\b$ is negative, we can repeat the argument
of \ref{proof92iiA} (a) (b). So let us suppose that 
$\a, \b \in \Phi_{>0}$.

If the union of the supports of $\a$ and $\b$ is 
strictly smaller than $I$, we can assume by symmetry
that it is contained in $\{1,2,3\}$.
Take $I_1 = \{2\}$.
By Proposition~\ref{prop30}, we then have
\[
\chi_q(S(\a))_{\le 2} = \vp_{\{1,2,3\}}(Y^\a)_{\le 2},
\quad 
\chi_q(S(\b))_{\le 2} = \vp_{\{1,2,3\}}(Y^\b)_{\le 2}, 
\]
that is, our $q$-characters are in fact of type $A_3$.
On the other hand, $\a$ and $\b$ are also compatible
as roots of type $A_3$, hence the equality
\[
\chi_q(S(\a))_{\le 2}\ \chi_q(S(\b))_{\le 2} = \chi_q(S(\a)\otimes S(\b))_{\le 2} 
\]
follows from \ref{proof92iiA}.

We are thus reduced to those pairs of compatible positive roots
$(\a,\b)$ whose union of supports is equal to $I$.
By \cite[Prop. 3.16]{FZ1}, these are, up to the 3-fold symmetry
$1\leftrightarrow 3 \leftrightarrow 4$,
\begin{itemize}
\item[(a)] $\a= \a_1$,\quad $\b = \a_1+\a_2+\a_3+\a_4$; 
\item[(b)] $\a= \a_1+\a_2$,\quad $\b = \a_1+2\a_2+\a_3+\a_4$;
\item[(c)] $\a= \a_1+\a_2+\a_3$,\quad $\b = \a_2+\a_3+\a_4$;
\item[(d)] $\a= \a_1+\a_2+\a_3$,\quad $\b = \a_1+\a_2+\a_3+\a_4$;
\item[(e)] $\a= \a_1+\a_2+\a_3$,\quad $\b = \a_1+2\a_2+\a_3+\a_4$;
\item[(f)] $\a=\b= \a_1+\a_2+\a_3+\a_4$;
\item[(g)] $\a=\b= \a_1+2\a_2+\a_3+\a_4$.
\end{itemize}

\medskip
(a) 
Take $I_0=\{2\}$.
Then $\chi_q(S(\a))_{\le 2} = Y_{1,3}$ and, since $\tau_-(\b)=\a_2$
we have 
\[
\chi_q(S(\b))_{\le 2} = Y_{1,3}Y_{2,0}Y_{3,1}Y_{4,1}(1+v_2),
\]
hence $\chi_q(S(\a)\otimes S(\b))_{\le 2}$ contains a unique
dominant monomial and $S(\a)\otimes S(\b)$ is simple.

\medskip
(b)
Take $I_1=\{2\}$. Then, by Theorem~\ref{th2proveD},
\[
\chi_q(S(\a)\otimes S(\b))_{\le 2} = Y^\a Y^\b
(1+v_1)
(1+v_1+v_3+v_4+v_1v_3+v_1v_4+v_3v_4+v_1v_3v_4+v_1v_2v_3v_4). 
\]
The only dominant monomials in this product are $Y^\a Y^\b$ and 
$Y^\a Y^\b v_1v_2v_3v_4$, hence it is enough to show that
$Y^\a Y^\b v_1v_2v_3v_4$ occurs in $\chi_q(L(Y^\a Y^\b))$.
Now $Y^\a Y^\b = Y_{1,0}^2Y_{2,3}^3Y_{3,0}Y_{4,0}$, and clearly
$m:=Y^\a Y^\b v_4 = Y_{1,0}^2Y_{2,1}Y_{2,3}^3Y_{3,0}Y_{4,2}^{-1}$
occurs in $\chi_q(L(Y^\a Y^\b))$. Since $m$ 
does not contain $v_1, v_2, v_3$, 
we know by \ref{belongJ} that $\vp_{\{1,2,3\}}(m)$ is contained in
$\chi_q(L(Y^\a Y^\b))$. 
To calculate $\vp_{\{1,2,3\}}(m)$, 
we write $m=Y^\ga (Y_{2,1}Y_{2,3})Y_{4,2}^{-1}$ 
where $\ga = 2\a_1+ 2\a_2 + \a_3$ is in the root lattice of type
$A_3$ and has the cluster expansion $\ga = (\a_1+\a_2)+(\a_1+\a_2+\a_3)$.
It then follows from Section~\ref{secttypA} that
\[
\vp_{\{1,2,3\}}(m) = m(1+v_1)(1+v_1+v_3+v_1v_3+v_1v_2v_3), 
\]
hence $mv_1v_2v_3=Y^\a Y^\b v_1v_2v_3v_4$ occurs in $\chi_q(L(Y^\a Y^\b))$.

\medskip
(c)
Take $I_0 = \{2\}$.
Then $\tau_-(\a)=\a_2+\a_4$ and $\tau_-(\b)=\a_1+\a_2$, so
\[
\chi_q(S(\a)\otimes S(\b))_{\le 2} = Y^\a Y^\b
(1+v_2+v_2v_4)(1+v_2+v_1v_2). 
\]
But by Section~\ref{secttypA}, this is equal to $\vp_{\{1,2,4\}}(Y^\a Y^\b)$,
which is contained in $\chi_q(L(Y^\a Y^\b))$.

\medskip
(d)
Take $I_0 = \{2\}$.
Then $\tau_-(\a)=\a_2+\a_4$ and $\tau_-(\b)=\a_2$, so
one can argue as in (c).

\medskip
(e)
Take $I_1=\{2\}$. Then, by Theorem~\ref{th2proveD},
\[
\chi_q(S(\a)\otimes S(\b))_{\le 2} = Y^\a Y^\b
(1+v_1+v_3+v_1v_3+v_1v_2v_3)
\qquad\qquad\qquad
\]
\[
\qquad\qquad\times(1+v_1+v_3+v_4+v_1v_3+v_1v_4+v_3v_4+v_1v_3v_4+v_1v_2v_3v_4). 
\]
The only dominant monomials other than $Y^\a Y^\b$ are
$Y^\a Y^\b v_1v_2v_3$, $Y^\a Y^\b v_1v_2v_3v_4$ which occurs
with coefficient 2, and
$Y^\a Y^\b v_1^2v_2^2v_3^2v_4$, hence it is enough to show that
they occur in $\chi_q(L(Y^\a Y^\b))$.
For the first one, which does not depend on $v_4$, one can
argue as in (c) or (d). 
Now $Y^\a Y^\b = Y_{1,0}^2Y_{2,3}^3Y_{3,0}^2Y_{4,0}$, and clearly
$m:=Y^\a Y^\b v_4 = Y_{1,0}^2Y_{2,1}Y_{2,3}^3Y_{3,0}^2Y_{4,2}^{-1}$
occurs in $\chi_q(L(Y^\a Y^\b))$. Since $m$ does not contain $v_1,
v_2, v_3$, 
we know by \ref{belongJ} that $\vp_{\{1,2,3\}}(m)$ is contained in
$\chi_q(L(Y^\a Y^\b))$. 
To calculate $\vp_{\{1,2,3\}}(m)$, 
we write $m=Y^\ga (Y_{2,1}Y_{2,3})Y_{4,2}^{-1}$ 
where $\ga = 2\a_1+ 2\a_2 + 2\a_3$ is in the root lattice of type
$A_3$ and has the cluster expansion $\ga = 2(\a_1+\a_2+\a_3)$.
It then follows from Section~\ref{secttypA} that
\[
\vp_{\{1,2,3\}}(m) = m(1+v_1+v_3+v_1v_3+v_1v_2v_3)^2, 
\]
hence $mv_1v_2v_3=Y^\a Y^\b v_1v_2v_3v_4$ 
and $mv_1^2v_2^2v_3^2=Y^\a Y^\b v_1^2v_2^2v_3^2v_4$
occur in $\chi_q(L(Y^\a Y^\b))$, the first one with
coefficient 2.

\medskip
(f)
Take $I_0=\{2\}$. Then $\chi_q(S(\a))_{\le 2} = Y^\a(1+v_2)$,
and its square has only one dominant monomial, namely $(Y^\a)^2$.

\medskip
(g)
Take $I_1=\{2\}$. Then, by Theorem~\ref{th2proveD},
\[
\chi_q(S(\a)^{\otimes 2})_{\le 2} = (Y^\a)^2
(1+v_1+v_3+v_4+v_1v_3+v_1v_4+v_3v_4+v_1v_3v_4+v_1v_2v_3v_4)^2. 
\]
The only dominant monomials other than $(Y^\a)^2$ are
$(Y^\a)^2 v_1v_2v_3v_4$ which occurs
with coefficient 2, and
$(Y^\a)^2v_1^2v_2^2v_3^2v_4^2$, hence it is enough to show that
they occur in $\chi_q(L((Y^\a)^2))$.
Now $m_1:=(Y^\a)^2v_4$ and $m_2:=(Y^\a)^2v_4^2$ both
occur in $\chi_q(L((Y^\a)^2))$, the first one with coefficient 2,
and both are $\{1,2,3\}$-dominant.
Considering $\vp_{\{1,2,3\}}(m_1)$ and $\vp_{\{1,2,3\}}(m_2)$
we can conclude as in (e).

\medskip
This finishes the proof of \ref{clusterexpa}~(ii).

\subsubsection{}
Finally \ref{clusterexpa}~(iii) follows from Corollary~\ref{corprime}
for all multiplicity-free roots. It remains to check it for the
longest root $\a=\a_1+2\a_2+\a_3+\a_4$. This can be done easily
using our explicit formulas for the truncated $q$-characters.
For example we see from Lemma~\ref{longD4} that 
the monomial $Y_{1,3}Y_{2,0}^2Y_{3,3}Y_{4,3}v_2v_3v_4$ occurs
in $\chi_q(L(Y_{1,3}Y_{2,0}))\chi_q(L(Y_{2,0}Y_{3,3}Y_{4,3}))$
but not in $\chi_q(S(\a))$. 
All other factorizations of $S(\a)$ can be ruled out in a similar
way, and we omit the details.

This concludes the proof of Conjecture~\ref{mainconjecture} in type $D_4$.
\cqfd 

\section{Applications}
\label{sectappl}

In this section we present some interesting consequences 
of our results concerning $\CC_1$.

\subsection{}
By construction, the cluster variables of a cluster algebra
satisfy some algebraic identities coming
from the mutation procedure. 
When we restrict to mutations on the {\em bipartite belt} \cite{FZ4}
these identities are similar to a $T$-system. 
In the case $\ell=1$ this $T$-system is periodic and involves
the classes of all the cluster simple objects of $\CC_1$,
as we shall now see.

Define $\tau = \tau_+\tau_-$ (see \ref{sectFpol1}).
For every $i\in I$, define a sequence $\ga_i(j)\ (j\in\Z)$ of elements of
$\Phi_{\ge -1}$ by
\begin{equation}
\ga_{i}(2j) = \tau^j(-\a_i);
\end{equation}
for odd $j$, $\ga_i(j)$ is defined via the parity condition
\begin{equation}\label{pargam}
\ga_i(j)=\ga_i(j+1) \quad \mbox{if\quad $\eps_i=(-1)^{j+1}$.}
\end{equation}
It follows from \cite{FZ1} that every element of $\Phi_{\ge-1}$ is of
the form $\ga_i(j)$ for some $i\in I$ and some $j\in [0,h+2]$.
For every $i\in I$, define a sequence $\b_i(j)\ (j\in\Z)$ of elements of
$\Phi_{\ge -1}$ by the 
initial condition
\begin{equation}
\b_i(0)=\a_{i}, 
\end{equation}
and the recursion
\begin{equation}
\b_i(j+1) = \tau_{(-1)^{j+1}}(\b_i(j)),\qquad (j\in\Z).  
\end{equation}
Define the following monomials in the classes of the frozen simple objects $F_i$:
\begin{equation}
 p_i^+(j) = \prod_{k\in I} [F_k]^{\max(0,[\b_k(j) : \a_i])},
\qquad
p_i^-(j) = \prod_{k\in I} [F_k]^{\max(0,-[\b_k(j) : \a_i])},
\qquad (i\in I,\ j\in\Z).
\end{equation}
The next result follows from Conjecture~\ref{mainconjecture}~{(i)},
hence it is now proved if $\g$ is of type $A_n$ or $D_n$.
\begin{theorem}\label{prop15}
The cluster simple objects $S(\a)\ (\a\in\Phi_{\ge -1})$ of $\CC_1$ 
satisfy the following system of equations in the Grothendieck ring $R_1$
\[
 [S(\ga_i(j+1))]\,[S(\ga_i(j-1))] = p_i^+(j) + p_i^-(j)\prod_{k\not = i} [S(\ga_k(j))]^{-a_{ik}},
\qquad (i\in I,\ j\in\Z).
\]
\end{theorem}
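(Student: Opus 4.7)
My plan is to deduce Theorem~\ref{prop15} from Conjecture~\ref{mainconjecture}~(i) (proved for types $A_n$ and $D_n$ in the previous sections) by translating the statement, via the isomorphism $\iota$ of \S\ref{sect44}, into an identity among cluster variables of $\A$ that lives on the \emph{bipartite belt} of Fomin and Zelevinsky \cite{FZ4}. Concretely, under $\iota$ we have $[F_k] = \iota(f_k)$ and, by Conjecture~\ref{mainconjecture}~(i), $[S(\ga_i(j))] = \iota(x[\ga_i(j)])$, so it suffices to establish the identities
\[
 x[\ga_i(j+1)]\,x[\ga_i(j-1)] \;=\; P_i^+(j) + P_i^-(j)\prod_{k\neq i} x[\ga_k(j)]^{-a_{ik}}
\]
in $\A$, where $P_i^\pm(j) = \prod_k f_k^{\max(0,\pm[\b_k(j):\a_i])}$.

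The first step is to recall from \cite[\S2.1, \S10]{FZ4} that the bipartite belt is the sequence of seeds $\Si(j)$ ($j\in\Z$) obtained from the initial bipartite seed by applying alternately the commuting products of mutations $\mu_+ = \prod_{i\in I_0}\mu_i$ and $\mu_- = \prod_{i\in I_1}\mu_i$; the principal part of the exchange matrix at each $\Si(j)$ is again the bipartite Dynkin quiver (with orientation depending on the parity of $j$). I would then identify the cluster of $\Si(j)$, entry by entry, with the $n$-tuple $(x[\ga_i(j)])_{i\in I}$, where the parity convention (\ref{pargam}) is exactly what guarantees that the variable not mutated at step $j\to j+1$ agrees in the two clusters. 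This identification is implicit in \cite[\S10]{FZ4}; it follows from the fact that both $\tau$ and $\mu_-\mu_+$ act on the cluster variables indexed by $\Phi_{\ge -1}$ as the same permutation after the labels are converted via $\tau_-$.

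The second step is to read off the exchange relation (\ref{mutation2}) at the seed $\Si(j)$ in direction $i$. Because the principal part of $\widetilde B_{\Si(j)}$ is the Dynkin diagram with fixed bipartite orientation, the two monomials $m_\pm$ involve exactly $\prod_{k\neq i}x[\ga_k(j)]^{-a_{ik}}$ on one side and a pure product of frozen variables on the other. To determine the exponents of the $f_k$'s I would use the $g$-vector / $F$-polynomial formula (\ref{eqFpol1}), together with the tropical evaluation $F_\a|_{\PP}$: tracking $g$-vectors through $\mu_\pm$ gives the vector $\b_k(j)$ in the exponents, and the sign decomposition $\max(0,\pm[\b_k(j):\a_i])$ matches exactly the splitting of $m_+$ vs.\ $m_-$ dictated by (\ref{mutationformula}). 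In particular, this is the Fomin--Zelevinsky ``$Y$-system on the bipartite belt'' result \cite[Thm.~2.1, Thm.~8.5]{FZ4} rewritten at the level of cluster variables with coefficients.

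The only technical obstacle I foresee is the coefficient bookkeeping: one has to check carefully that the exponents $\max(0,\pm[\b_k(j):\a_i])$ coming from the recursion $\b_i(j+1)=\tau_{(-1)^{j+1}}\b_i(j)$ really coincide with the exponents of $f_k$ produced by the tropical evaluation of the ambient $F$-polynomial on the bipartite belt. This is essentially a dualization between cluster variables and coefficients via $g$-vectors; once it is verified, applying $\iota$ and invoking Conjecture~\ref{mainconjecture}~(i) (together with $\iota(f_k)=[F_k]$) yields Theorem~\ref{prop15}.
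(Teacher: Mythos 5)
Your route is the paper's route: apply $\iota$ and Conjecture~\ref{mainconjecture}~(i), identify the classes $[S(\ga_i(j))]$ with the cluster variables along the bipartite belt of \cite{FZ4}, and read the asserted identity as the belt exchange relation with coefficients. However, the step you set aside as ``the only technical obstacle'' --- checking that the monomials $p_i^{\pm}(j)$ built from the roots $\b_k(j)$ really are the coefficient monomials occurring in that exchange relation --- is not peripheral bookkeeping: it is essentially the entire content of the paper's proof, and your proposal does not carry it out, so as written the argument is incomplete.

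The paper closes this gap without any $g$-vector tracking. It sets $p_i(j)=p_i^+(j)/p_i^-(j)=\prod_k[F_k]^{[\b_k(j):\a_i]}$ and first proves the multiplicative recursion $p_i(j+1)\,p_i(j-1)=\prod_{k\neq i}\bigl(p_k^+(j)\bigr)^{-a_{ik}}$ (Eq.~(\ref{eqp})): in the ratio of the two sides the exponent of each $[F_u]$ equals $[\si_i(\b_u(j))+\b_u(j)-\tau_{(-1)^{j+1}}(\b_u(j))-\tau_{(-1)^{j}}(\b_u(j)):\a_i]$, and this vanishes because, by bipartiteness ($a_{ik}=0$ whenever $\eps_k=\eps_i$ and $k\neq i$), the only factor of $\tau_{\eps_i}$ that affects the $\a_i$-coordinate is $\si_i$, while $\tau_{-\eps_i}$ does not affect it at all. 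Then, putting $y_i(j)=p_i(j)^{-1}$ and working in the tropical semifield of \ref{trop}, one has $1\oplus y_i(j)=p_i^+(j)^{-1}$, so (\ref{eqp}) becomes the tropical $Y$-system $y_i(j+1)y_i(j-1)=\prod_{k\neq i}(1\oplus y_k(j))^{-a_{ik}}$ and the identity to be proved becomes $[S(\ga_i(j+1))]\,[S(\ga_i(j-1))]=\bigl(1+y_i(j)\prod_{k\neq i}[S(\ga_k(j))]^{-a_{ik}}\bigr)/\bigl(1\oplus y_i(j)\bigr)$; these are exactly formulas (8.11) and (8.12) of \cite{FZ4} for the bipartite belt, which pins down the coefficients and finishes the proof. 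If you prefer your $g$-vector/$F$-polynomial/tropical-evaluation route, you must supply a computation of equivalent substance (and note that the statements actually needed from \cite{FZ4} are the explicit belt recursions (8.11)--(8.12) rather than the results you cite); either way, the verification you postponed is precisely where the work lies.
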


\begin{proof}
Set 
\begin{equation}
p_i(j) = \frac{p_i^+(j)}{p_i^-(j)} = \prod_{k\in I} [F_k]^{[\b_k(j) : \a_i]}, 
\end{equation}
a Laurent monomial in the $[F_i]$.
We will first show that
\begin{equation}\label{eqp}
p_i(j+1)p_i(j-1) = \prod_{k\not = i}(p^+_k(j))^{-a_{ik}}. 
\end{equation}
To do that, set
\[
q_i(j) = \frac{\prod_{k\not = i}(p^+_k(j))^{-a_{ik}}}{p_i(j+1)p_i(j-1)}. 
\]
For $i\in I$, define following \cite{FZ1} a piecewise-linear automorphism $\si_i$
of $Q$ by
\begin{equation}
[\si_i(\ga) : \a_j] = 
\left\{
\begin{array}{ll}
[\ga : \a_j] &\mbox{if} \ j\not = i,\\[5mm]
-[\ga : \a_i] - \displaystyle\sum_{k\not = i} a_{ik} \max(0, [\ga : \a_k])
&\mbox{if} \  j = i.
\end{array}
\right.
\end{equation} 
Then $\tau_\eps = \prod_{\eps_i=\eps}\si_i$.
Using the definition of $p^\pm_{i}(j)$ 
one can see
that the exponent of $[F_u]$ in $q_{i}(j)$ is equal to
\[
[\si_{i}(\b_u(j))+\b_u(j)-\b_u(j+1)-\b_u(j-1) : \a_{i}]
\qquad\qquad\qquad\qquad
\]
\[
\qquad\qquad\qquad\qquad
=\
[\si_{i}(\b_u(j))+\b_u(j)-\tau_{(-1)^{j+1}}(\b_u(j))-\tau_{(-1)^{j}}(\b_u(j)) : \a_{i}].
\]
Suppose that $\eps_i=(-1)^{j+1}$. Then $\si_{i}$ appears once in
$\tau_{(-1)^{j+1}}$ and does not appear in $\tau_{(-1)^{j}}$. 
It follows that
\[
[\tau_{(-1)^{j+1}}(\b_u(j))+\tau_{(-1)^{j}}(\b_u(j)) : \a_{i}]
=
[\si_{i}(\b_u(j))+\b_u(j) : \a_{i}],
\]
hence, the exponent of $[F_u]$ in $q_{i}(j)$ is $0$.
The case $\eps_i={(-1)^{j}}$ is identical.
Thus, $q_i(j)=1$ and (\ref{eqp}) is proved.

Set $y_i(j)=1/p_i(j)$. Then, using the
tropical semifield structure on the set of Laurent monomials
in the $[F_i]$ (see \ref{trop}), one has $1\oplus y_i(j) = 1/p_i^+(j)$,
hence 
\[
 p_i^+(j) = \frac{1}{1\oplus y_i(j)}, \qquad
 p_i^-(j) = \frac{y_i(j)}{1\oplus y_i(j)}.
\]
With this new notation, (\ref{eqp}) becomes
\begin{equation}\label{eq18}
 y_i(j+1)y_i(j-1) = \prod_{k\not =i}(1\oplus y_k(j))^{-a_{ik}},
\end{equation}
and the relation of Theorem~\ref{prop15} takes the form
\begin{equation}\label{eq19}
 [S(\ga_i(j+1))]\,[S(\ga_i(j-1))] = 
\frac{1 + y_i(j)\prod_{k\not = i}[S(\ga_k(j))]^{-a_{ik}}}{1\oplus y_i(j)}\,.
\end{equation}
Equations (\ref{eq18}) and (\ref{eq19}) coincide with formulas (8.11) and (8.12) of \cite{FZ4},
which proves the theorem.
\cqfd
\end{proof}
\begin{example}
{\rm
We take $\g$ of type $A_3$ and choose $I_0=\{1,3\}$, $I_1=\{2\}$.
Hence $\tau_+ = \si_1\si_3$, $\tau_-=\si_2$, and $\tau=\si_1\si_3\si_2$.
We have
\[
\begin{array}{lll}
\ga_1(0)=-\a_1, & \ga_2(0)=-\a_2, & \ga_3(0)=-\a_3, \\[2mm]
\ga_1(1)=\a_1, & \ga_2(1)=-\a_2, & \ga_3(1)=\a_3, \\[2mm]
\ga_1(2)=\a_1, & \ga_2(2)=\a_1+\a_2+\a_3, & \ga_3(2)=\a_3, \\[2mm]
\ga_1(3)=\a_2+\a_3, & \ga_2(3)=\a_1+\a_2+\a_3, & \ga_3(3)=\a_1+\a_2, \\[2mm]
\ga_1(4)=\a_2+\a_3, & \ga_2(4)=\a_2, & \ga_3(4)=\a_1+\a_2, \\[2mm]
\ga_1(5)=-\a_3, & \ga_2(5)=\a_2, & \ga_3(5)=-\a_1, \\[2mm]
\ga_1(6)=-\a_3, & \ga_2(6)=-\a_2, & \ga_3(6)=-\a_1, \\[2mm]
\end{array}
\]
and
\[
\begin{array}{lll}
\b_1(0)=\a_1, & \b_2(0)=\a_2, & \b_3(0)=\a_3, \\[2mm]
\b_1(1)=\a_1+\a_2, & \b_2(1)=-\a_2, & \b_3(1)=\a_2+\a_3, \\[2mm]
\b_1(2)=\a_2+\a_3, & \b_2(2)=-\a_2, & \b_3(2)=\a_1+\a_2, \\[2mm]
\b_1(3)=\a_3, & \b_2(3)=\a_2, & \b_3(3)=\a_1, \\[2mm]
\b_1(4)=-\a_3, & \b_2(4)=\a_1+\a_2+\a_3, & \b_3(4)=-\a_1, \\[2mm]
\b_1(5)=-\a_3, & \b_2(5)=\a_1+\a_2+\a_3, & \b_3(5)=-\a_1, \\[2mm]
\b_1(6)=\a_3, & \b_2(6)=\a_2, & \b_3(6)=\a_1. \\[2mm]
\end{array}
\]
The formulas of Theorem~\ref{prop15} read for $i=1,3$:
\[
\begin{array}{l}
[S(\a_1)]\,[S(-\a_1)]=[F_1]+[S(-\a_2)], \\[2mm]
[S(\a_2+\a_3)]\,[S(\a_1)]= [F_3]+[S(\a_1+\a_2+\a_3)], \\[2mm]
[S(-\a_3)]\,[S(\a_2+\a_3)]=[F_2]+[F_3]\,[S(\a_2)], \\[2mm]
[S(\a_3)]\,[S(-\a_3)]=[F_3]+[S(-\a_2)],\\[2mm]
[S(\a_1+\a_2)]\,[S(\a_3)]= [F_1]+[S(\a_1+\a_2+\a_3)],\\[2mm]
[S(-\a_1)]\,[S(\a_1+\a_2)]=[F_2]+[F_1]\,[S(\a_2)].
\end{array}
\]
and for $i=2$:
\[
\begin{array}{l}
[S(\a_1+\a_2+\a_3)]\,[S(-\a_2)]=[F_1][F_3]+[F_2]\,[S(\a_1)]\,[S(\a_3)],\\[2mm]
[S(\a_2)]\,[S(\a_1+\a_2+\a_3)]=[F_2]+[S(\a_1+\a_2)]\,[S(\a_2+\a_3)],\\[2mm]
[S(-\a_2)]\,[S(\a_2)]=[F_2]+[S(-\a_1)]\,[S(-\a_3)]. 
\end{array}
\]
}
\end{example}
\begin{remark}
{\rm
In type $E_n$, using Proposition~\ref{prop33}, we can still prove 
identities involving only multi\-pli\-ci\-ty-free roots. 
For example, one can show that for every $i\in I$,
\[
 \begin{array}{rcl}
[S(\a_i)][S(-\a_i)]&=&[F_i]+\ds\prod_{j\not = i}[S(-\a_j)]^{-a_{ij}},\\[5mm]
[S(\a_i-\sum_{j\not = i}a_{ij}\a_j)][S(-\a_i)]&=&\ds\prod_{j\not = i}[F_j]^{-a_{ij}}
+[F_i]\ds\prod_{j\not=i}[S(\a_j)]^{-a_{ij}}.
 \end{array}
\]
The first formula is a classical $T$-system, but not the second one.
} 
\end{remark}

\subsection{}
If $\g$ is of type $A_n$, we have the following well-known
duality for the characters of the finite-dimensional irreducible
$\g$-modules. If $\l=(\l_1,\ldots,\l_n)$ and 
$\mu=(\mu_1,\ldots,\mu_n)$ are two dominant weights, identified
in the standard way
with partitions in at most $n$ parts,
there holds
\[
\dim V(\l)_\mu = [V(\mu_1)\otimes\cdots \otimes V(\mu_n)\ : \ V(\l)], 
\]
where $V(\l)$ stands for the irreducible $\g$-module with highest
weight $\l$.
In plain words, the weight multiplicities of $V(\l)$ coincide
with the multiplicities of $V(\l)$ as a direct summand 
of certain tensor products.
This is sometimes called {\em Kostka duality} and is specific to type $A$.

We find it interesting to note that Eq.~(\ref{eq2prove}) 
yields a similar duality for the truncated $q$-characters
of the cluster simple objects $S(\b)$ for any $\g$.
Indeed assume that (\ref{eq2prove}) holds, namely that
\[
\widetilde{\chi}_q(S(\b))_{\le 2}\ =\ F_{\tau_-(\b)}(v_1,\ldots,v_n),\qquad (v_i=A_{i,\xi_i+1}^{-1}), 
\]
(this is proved in type $A$ and $D$ and for all multiplicity-free 
roots in type $E$).
Then writing for short $\a=\tau_-(\b)=\sum_i a_i\a_i$, 
$\vv^\ga = \prod_{i\in I}v_i^{c_i}$ for $\ga = \sum_i c_i\a_i\in Q$, and
\[
F_{\a}(v_1,\ldots,v_n)=\sum_\ga n_{\b,\ga} \vv^\ga, 
\]
we have that $n_{\b,\ga}$ is the multiplicity of the $l$-weight
$Y^\b \vv^\ga$ in $S(\b)$.
On the other hand, define 
\[
T(\a) := \bigotimes_{i\in I} S(-\eps_i\a_i)^{\otimes a_i}. 
\]
Note that $\chi_q(T(\a))_{\le 2} = \prod_i Y_{i,\xi_i+2}^{a_i}$
is reduced to a single monomial, so $T(\a)$ is simple.
(From the cluster algebra point of view, $(x[-\eps_i\a_i]; i\in I)$
is a cluster of $\A$.)
Put
\[
d_i = 
\left\{
\begin{array}{ll}
-\ds\sum_{j\not = i} c_j a_{ij} & \mbox{if $i\in I_0$,}\\[5mm]
\ds\sum_{j\not = i} (c_j-a_j) a_{ij} & \mbox{if $i\in I_1$,} 
\end{array}
\right. ,
\qquad
e_i = 
\left\{
\begin{array}{ll}
a_i-c_i & \mbox{if $i\in I_0$,}\\[5mm]
-c_i-\ds\sum_{j\not = i} c_j a_{ij} & \mbox{if $i\in I_1$,} 
\end{array}
\right..
\]
If $n_{\b,\ga}\not = 0$ then $d_i$ and $e_i$ are nonnegative
and we can consider the simple module 
\[
U(\ga):=\bigotimes_{i\in I} S(-\eps_i\a_i)^{\otimes d_i} \otimes F_i^{\otimes e_i}.
\]
\begin{proposition}
Assume that Eq.~(\ref{eq2prove}) holds for $S(\b)$.
Then the multiplicity of $U(\gamma)$
as a composition factor of the tensor product
$S(\b)\otimes T(\a)$ 
is equal to the $l$-weight multiplicity $n_{\b,\ga}$.
\end{proposition}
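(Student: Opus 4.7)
My plan is to prove the identity
\[
\chi_q(S(\b)\otimes T(\a))_{\le 2}\ =\ \sum_\ga n_{\b,\ga}\,\chi_q(U(\ga))_{\le 2}
\]
in $\Y$ and then invoke the injectivity of the truncated $q$-character on $R_1$ (Proposition~\ref{prop28}) to lift it to the identity $[S(\b)\otimes T(\a)] = \sum_\ga n_{\b,\ga}[U(\ga)]$ in the Grothendieck ring. Since every $U(\ga)$ will turn out to be simple and since distinct $U(\ga)$ will be pairwise non-isomorphic, the composition-factor multiplicity of $U(\ga)$ in $S(\b)\otimes T(\a)$ will then equal $n_{\b,\ga}$, as required.

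I start by analyzing $T(\a)$. Each factor $S(-\eps_i\a_i)$ is the fundamental module $V_{i,q^{\xi_i+2}}$, whose full $q$-character has only monomials of the form $Y_{i,\xi_i+2}M$ where $M$ involves variables $A_{j,r}^{-1}$ with $r\ge 3$ only (by the Frenkel--Mukhin algorithm). Hence $\chi_q(S(-\eps_i\a_i))_{\le 2}=Y_{i,\xi_i+2}$, and by multiplicativity of the truncation
\[
\chi_q(T(\a))_{\le 2}\ =\ \prod_{i\in I} Y_{i,\xi_i+2}^{a_i},
\]
a single dominant monomial (so $T(\a)$ is itself simple). Together with the hypothesis this yields
\[
\chi_q(S(\b)\otimes T(\a))_{\le 2}\ =\ Y^\b F_\a(v_1,\dots,v_n)\prod_{i\in I} Y_{i,\xi_i+2}^{a_i}\ =\ \sum_\ga n_{\b,\ga}\,M_\ga,
\]
where $M_\ga := Y^\b\vv^\ga\prod_{i\in I} Y_{i,\xi_i+2}^{a_i}$.

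Next I verify that $M_\ga$ coincides with the highest $l$-weight of $U(\ga)$. This is a termwise comparison of the exponent of each $Y_{k,r}$ on the two sides, splitting into the cases $k\in I_0$ and $k\in I_1$ and using bipartiteness of the Dynkin diagram. The only nontrivial inputs are the identities $a_i=b_i$ for $i\in I_0$ and $a_i+b_i=-\sum_{j\ne i}a_{ij}b_j$ for $i\in I_1$, both immediate from $\a=\tau_-(\b)$ and $\b\in\Phi_{>0}$. Each tensor factor of $U(\ga)$ is either a fundamental module $S(-\eps_i\a_i)$ or a frozen module $F_i$, and both have truncated $q$-character reduced to a single monomial (Example~\ref{exa31}), so multiplicativity gives $\chi_q(U(\ga))_{\le 2}=M_\ga$. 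Since the proof of Proposition~\ref{prop28} shows that all dominant monomials of the full $q$-character of $U(\ga)$ already occur in its truncation, $M_\ga$ is the unique dominant monomial of $\chi_q(U(\ga))$, appearing with coefficient~$1$; therefore $U(\ga)$ is simple and isomorphic to $L(M_\ga)$.

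Assembling the two computations yields the displayed identity of truncated $q$-characters, and Proposition~\ref{prop28} delivers the Grothendieck-ring identity. The monomials $M_\ga$ are pairwise distinct because the family $(A_{i,\xi_i+1}^{-1})_{i\in I}$ is multiplicatively independent in $\Y$, so the simple modules $U(\ga)$ are pairwise non-isomorphic, completing the proof. The main technical obstacle is the termwise verification in the third paragraph: it splits mechanically into cases, but the contributions of each $v_i$ to each neighboring vertex in the bipartite Dynkin graph must be tracked carefully, and the combinatorial conditions defining the $\a$-acceptable vectors must be matched with the nonnegativity of the integers $d_i$ and $e_i$.
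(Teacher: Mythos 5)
Your proof is correct and is essentially the paper's intended argument: the paper's brief proof ("a direct calculation using Eq.~(\ref{equay}) and the same idea as in the proof of Proposition~\ref{positivity}") is exactly the computation you carry out, only transported to the $q$-character side via $\iota$ (Lemma~\ref{lem82} and Eq.~(\ref{eqatrtr}) send $\hy_j\mapsto A_{j,\xi_j+1}^{-1}$, $z_i\mapsto Y_{i,\xi_i+2}$, $f_i\mapsto Y_{i,\xi_i}Y_{i,\xi_i+2}$), after which comparing coefficients on the basis of classes of simples gives the composition multiplicity. Your termwise identification of $M_\ga$ with the highest $l$-weight of $U(\ga)$, and the unique-dominant-monomial argument for the simplicity of $T(\a)$ and $U(\ga)$, are precisely the details the paper leaves to the reader.
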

\begin{proof}
This is a direct calculation using Eq.~(\ref{equay}) and
the same idea as in the proof of
Proposition~\ref{positivity}.
The details are left to the reader. 
\cqfd 
\end{proof}

\subsection{}
So far we have only used some combinatorial and representation-theoretical
techniques. However, our results have geometric consequences.
Indeed, by work of Fu and Keller \cite{FK}, the
$F$-polynomials have nice geometric descriptions in terms of
quiver grassmannians. This goes as follows.

Let $C$ be the oriented graph obtained from the Dynkin diagram of $\g$ by
deciding that the vertices in $I_1$ are sources and those in $I_0$
are sinks. So $C$ is a Dynkin quiver, and we can associate to every
positive root $\a$ the unique (up to isomorphism) indecomposable
representation $M[\a]$ of $C$ over $\C$ with dimension vector~$\a$.
Regarding an element $\ga=\sum_i c_i\a_i$ of the root lattice with nonnegative coordinates
$c_i$ as a dimension vector for $C$, we can consider for every representation
$M$ of $C$ the quiver grassmannian
\[
\Gr_\ga(M) := \{N \mid N \mbox{ is a subrepresentation of $M$ with dimension $\ga$}\}.
\]
This is a closed subset of the ordinary grassmannian of subspaces 
of dimension $\sum_i c_i$ of the complex vector space $M$. 
So in particular, $\Gr_\ga(M)$ is a projective variety.
Denote by $\chi(\Gr_\ga(M))$ its topological Euler characteristic.
Then we have the following formula, inspired from a similar formula 
of Caldero and Chapoton for cluster expansions of cluster variables \cite{CC}.
\begin{theorem}{\rm \cite[Th. 6.5]{FK}}\label{thFK}
For $\a\in\Phi_{>0}$ we have
\[
F_\a(v_1,\ldots,v_n) = \sum_\ga \chi(\Gr_\ga(M[\a])) \,\vv^\ga. 
\]
\end{theorem}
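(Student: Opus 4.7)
\medskip
\noindent
{\it Proof proposal.}
The plan is to realize both sides of the identity as cluster characters for the cluster algebra $\A_{\pr}$ with principal coefficients, following the categorification of acyclic cluster algebras developed by Caldero-Chapoton, Caldero-Keller and extended by Palu and by Fu-Keller to the principal coefficients setting.

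First, I would observe that the principal part of $\widetilde B_{\zz}$ (defined in \ref{sectFpol1}) encodes precisely the Dynkin quiver $C$ with sources $I_1$ and sinks $I_0$. By Gabriel's theorem the indecomposable representations of $C$ are in bijection with $\Phi_{>0}$, the indecomposable with dimension vector $\a$ being the module $M[\a]$ of the statement. Let $\CC_C$ denote the cluster category of $C$ in the sense of Buan-Marsh-Reineke-Reiten-Todorov. Then $\CC_C$ is a $2$-Calabi-Yau triangulated category, and the natural functor $\mbox{mod}(\C C)\hookrightarrow \CC_C$ together with the shifted simples provides a bijection between indecomposables of $\CC_C$ and almost positive roots $\Phi_{\ge -1}$, which matches the Fomin-Zelevinsky labelling of cluster variables of $\A_{\pr}$.

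Next, I would invoke the cluster character formula of Palu (generalized by Fu-Keller to arbitrary coefficients): to each object $X$ of $\CC_C$ it assigns a Laurent polynomial $X^?\in\Z[u_1^{\pm},\ldots,u_n^{\pm},v_1,\ldots,v_n]$ whose value on the rigid indecomposable $M[\a]$ equals the cluster variable $u[\a]$. Explicitly, for an acyclic seed this specializes to the original Caldero-Chapoton formula
\[
u[\a]\ =\ \frac{1}{\uu^{\a}}\sum_{\ga\le \a}\chi(\Gr_\ga(M[\a]))\,\uu^{\,\iota(\ga)}\vv^{\,\ga},
\]
where $\iota(\ga)$ is the $g$-vector contribution computed from the Euler form of $C$. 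The key input is the multiplication formula (Caldero-Keller, Palu), which one proves either by combining the fact that cluster variables of $\A_{\pr}$ are determined by their $F$-polynomials and $g$-vectors (Fomin-Zelevinsky, \cite{FZ4}) with the recursive behaviour of Euler characteristics under mutation, or by a direct comparison with the exchange relations using an induction on the Auslander-Reiten translate.

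Finally, specializing $u_1=\cdots=u_n=1$ in the displayed formula kills the $\uu$-monomials and yields
\[
F_\a(v_1,\ldots,v_n)\ =\ N_\a(1,\ldots,1,v_1,\ldots,v_n)\ =\ \sum_{\ga}\chi(\Gr_\ga(M[\a]))\,\vv^{\,\ga},
\]
which is the claimed formula. The main obstacle in this plan is the verification of the cluster character formula itself: one must show that the Euler-characteristic generating function satisfies the exchange relations, which amounts to a nontrivial identity relating $\chi(\Gr_\ga(M\oplus N))$ with the Euler characteristics appearing in the two sides of a short exact sequence obtained from an almost split triangle. This identity is exactly the content of the Caldero-Keller multiplication theorem in the hereditary case; its extension to the framework with principal coefficients is carried out by Fu-Keller via a careful analysis of the Palu index and coindex in the $2$-Calabi-Yau category $\CC_C$.
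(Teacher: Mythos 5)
This statement is not proved in the paper at all: it is quoted verbatim from Fu--Keller \cite[Th. 6.5]{FK}, so there is no internal argument to compare with. Your sketch correctly reconstructs the route taken in that cited reference (cluster category of the bipartite Dynkin quiver $C$, Palu/Fu--Keller cluster characters with principal coefficients, the Caldero--Keller multiplication theorem to match the exchange relations, then specialization $u_1=\cdots=u_n=1$ to extract the $F$-polynomial), and the only points needing care are routine convention checks — that the principal part of $\tB_\zz$ gives exactly the orientation of $C$ with $I_1$ sources and $I_0$ sinks, that the denominator theorem matches $M[\a]$ with the variable labelled by $\a$, and that the exponent you write as $\iota(\ga)$ really depends on both $\ga$ and $\a-\ga$ through the Euler form (harmless after specializing the $u_i$). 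So the proposal is sound, with its substantive content carried, as you say, by the already-established multiplication formula rather than by a new argument.
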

This yields immediately
\begin{theorem}\label{thgeom}
If Eq.~(\ref{eq2prove}) holds for $\b\in\Phi_{\ge -1}$, then
\begin{equation}\label{eqgeom}
\chi_q(S(\b))_{\le 2}\ = 
Y^\b \sum_\ga \chi(\Gr_\ga(M[\tau_-(\b)])) \,\vv^\ga,
\qquad (v_i=A_{i,\xi_i+1}^{-1}). 
\end{equation}
\end{theorem}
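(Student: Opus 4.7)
The proof is essentially immediate once the two ingredients are in place: the hypothesis (\ref{eq2prove}) identifying the normalized truncated $q$-character with an $F$-polynomial, and the Fu--Keller formula (Theorem~\ref{thFK}) expressing the $F$-polynomial as a generating function for Euler characteristics of quiver grassmannians. So the plan is really just to splice these two identities together.

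First I would recall the normalization convention from (\ref{eqchit}) extended to truncated $q$-characters: by definition of $Y^\b$ as the highest $l$-weight of $S(\b)$ and by definition of $\widetilde\chi_q(S(\b))_{\le 2}$, we have
\[
\chi_q(S(\b))_{\le 2} \;=\; Y^\b\,\widetilde\chi_q(S(\b))_{\le 2}.
\]
Next, invoking the assumed identity (\ref{eq2prove}) for $\b$, the right-hand side becomes $Y^\b\,F_{\tau_-(\b)}(v_1,\ldots,v_n)$ with $v_i = A_{i,\xi_i+1}^{-1}$. Finally, Theorem~\ref{thFK} expands $F_{\tau_-(\b)}$ as $\sum_\ga \chi(\Gr_\ga(M[\tau_-(\b)]))\,\vv^\ga$, which plugged in yields (\ref{eqgeom}).

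The only point that needs a moment of care is the range of $\b$. The Fu--Keller formula, as stated in Theorem~\ref{thFK}, is for positive roots $\tau_-(\b)\in\Phi_{>0}$, while $\b$ is allowed to range over $\Phi_{\ge -1}$. For $\b\in\Phi_{>0}$ generic one has $\tau_-(\b)\in\Phi_{>0}$ and the argument applies directly. For the edge cases where $\tau_-(\b)=-\a_i$ (so that $M[\tau_-(\b)]$ is not defined as an indecomposable module), one checks by inspection using Example~\ref{exa31} and the formula $\tau_-(-\a_i)=-\eps_i\a_i$ that both sides of (\ref{eqgeom}) reduce to the single monomial $Y^\b$ with $F$-polynomial equal to $1$, matching the convention $\Gr_0(0)=\{\mathrm{pt}\}$ on the geometric side. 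I expect no difficulty here beyond setting up the conventions.

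In short, there is no serious obstacle: the content of Theorem~\ref{thgeom} is that (\ref{eq2prove}) transports the Fu--Keller geometric formula from cluster algebras to truncated $q$-characters, and the proof is a one-line substitution. The real work was done in Section~\ref{secttypA} and Section~\ref{secttypD} where (\ref{eq2prove}) was established in types $A$ and $D$, and in \cite{FK} for the Fu--Keller identity itself.
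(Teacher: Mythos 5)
Your proof is correct and is essentially the paper's own argument: the paper derives Theorem~\ref{thgeom} immediately by substituting the hypothesis (\ref{eq2prove}) into the normalized truncated $q$-character and expanding $F_{\tau_-(\b)}$ via the Fu--Keller formula of Theorem~\ref{thFK}. Your extra remark on the degenerate cases where $\tau_-(\b)$ is a negative simple root is a sensible bookkeeping point (consistent with the conventions already used in Theorem~\ref{multfreeequal}) but adds nothing beyond what the paper treats as immediate.
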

\begin{example}
{\rm 
Take $\g$ of type $D_4$ and choose $I_0 = \{2\}$,
so that $C$ is the quiver of type $D_4$ with its three
arrows pointing to the trivalent node 2.
Let $\b=\a_1+2\a_2+\a_3+\a_4$ be the highest root.
We have $\tau_-(\b)=\b$.
The representation $M[\b]$ of $C$ is of dimension $5$.
There are thirteen non-empty quiver grassmannians corresponding
to the dimension vectors
\[
(0,0,0,0),\
(0,1,0,0),\
(0,2,0,0),\
(1,1,0,0),\
(0,1,1,0),\
(0,1,0,1),\ 
(1,2,0,0),
\]
\[
(0,2,1,0),\
(0,2,0,1),\
(1,2,1,0),\
(1,2,0,1),\
(0,2,1,1),\
(1,2,1,1). 
\]
The variety $\Gr_{(0,1,0,0)}(M[\b])$ is a projective line, hence
its Euler characteristic is equal to 2.
The twelve other grassmannians are reduced to a point.
Therefore we obtain that
\[
\chi_q(S(\b))_{\le 2}\ = 
Y_{1,3}Y_{2,0}^2Y_{3,3}Y_{4,3}
\left(1 + 2v_2 + v_2^2 + 
v_1v_2 + v_2v_3 + v_2v_4 + v_1v_2^2 + v_2^2v_3
+ v_2^2v_4\right.
\]
\[
\qquad\qquad
\qquad\qquad
\qquad\qquad
\left. +\ v_1v_2^2v_3 + v_1v_2^2v_4 + v_2^2v_3v_4 
+v_1v_2^2v_3v_4\right), 
\]
in agreement with Lemma~\ref{longD4}.
}
\end{example}

Note that if moreover Conjecture~\ref{mainconjecture}~(ii) holds, then
we can write any simple module $L(m)$ in $\CC_1$ as a tensor product of 
cluster simple objects. Taking into account the additivity properties of 
the Euler characteristics and the results of \cite{CK}, this gives for 
$\chi_q(L(m))_{\le 2}$ a formula similar to (\ref{eqgeom}), in which 
the indecomposable representation $M[\tau_-(\b)]$ is replaced by a generic 
representation of~$C$ (or equivalently a representation without self-extension).  

\begin{example}
{\rm 
Take $\g$ of type $A_2$ and choose $I_0 = \{1\}$,
so that $C$ is the quiver $1 \longleftarrow 2$.
Consider the simple module $S=L(Y_{1,0}^2Y_{2,3})$.
We have seen that $S\cong L(Y_{1,0}Y_{2,3})\otimes L(Y_{1,0})$.
This corresponds to the fact that the generic representation 
of $C$ of dimension vector $2\a_1 + \a_2$ is 
\[
M=(\C \stackrel{\rm id}{\longleftarrow} \C) \oplus (\C \stackrel{0}{\longleftarrow} 0).
\]
There are five non-empty quiver grassmannians for $M$ corresponding
to the dimension vectors
\[
(0,0),\quad
(1,0),\quad
(2,0),\quad
(1,1),\quad
(2,1).
\]
The variety $\Gr_{(1,0)}(M)$ is a projective line, hence
its Euler characteristic is equal to 2.
The four other grassmannians are reduced to a point.
Therefore we obtain that
\[
\chi_q(S)_{\le 2}\ = 
Y_{1,0}^2Y_{2,3}
\left(1 + 2v_1 + v_1^2 + 
v_1v_2 + v_1^2v_2\right),
\]
in agreement with Example~\ref{exa28}.
}
\end{example}

Theorem~\ref{thgeom} is very similar to a formula of Nakajima
\cite[\S 13]{N1} for the $q$-character of a standard module.
Indeed, as shown by Lusztig \cite{Lu}, the lagrangian quiver varieties  
used in Nakajima's character formula are isomorphic to grassmannians of submodules
of a projective module over a preprojective algebra.
There are however two important differences. In our case
the geometric formula gives only the truncated $q$-character
(but this is enough to determine the full $q$-character of
an object of $\CC_1$). More importantly, Theorem~\ref{thgeom}
concerns {\em simple} modules and not {\em standard} modules.
In Nakajima's approach, the $q$-characters of the simple
modules are obtained as alternating sums of $q$-characters
of standard modules using intersection cohomology methods.

\section{General $\ell$}\label{sect9}

We now consider the category $\CC_\ell$ for an arbitrary integer $\ell$.

\subsection{}
We define a quiver $\Gamma_{\ell}$ with vertex set
$
\{(i,k) \mid i\in I,\ 1\le k\le \ell+1\}.
$
The arrows of $\Gamma_{\ell}$ are given by the following
rule. Suppose that $(i,k)$ is such that $i\in I_0$ and $k$ is odd,
or $i\in I_1$ and $k$ is even. Then the arrows adjacent to 
$(i,k)$ are
\begin{itemize}
\item[(h)] the horizontal arrows $(i,k-1)\to (i,k)$ if $k>1$
and $(i,k+1)\to (i,k)$ if $k\le \ell$;
\item[(v)] the vertical arrows $(i,k)\to (j,k)$ where
$a_{ij}=-1$ and $k\le\ell$.
\end{itemize}
All arrows are of this type.
\begin{example}
{\rm
Take $\g$ of type $A_3$ and choose $I_0 = \{1,3\}$ and $I_1=\{2\}$.
The quiver $\Gamma_3$ is then
\[
\matrix{(1,1)& \leftarrow & (1,2) & \rightarrow &
 (1,3) &  \leftarrow & (1,4)  \cr
         \downarrow &&\uparrow &&  \downarrow &&   \cr
 (2,1) & \rightarrow & (2,2) & \leftarrow &
  (2,3) &  \rightarrow & (2,4)   \cr
         \uparrow &&\downarrow &&  \uparrow &&   \cr
(3,1)& \leftarrow & (3,2) & \rightarrow &
  (3,3) &  \leftarrow & (3,4)   
}
\]
}
\end{example} 

\subsection{}
Let $\widetilde{B}_{\ell}$ be the $n(\ell+1)\times n\ell$-matrix 
with set of column indices $I\times [1,\ell]$ and set of row indices
$I\times [1,\ell+1]$. 
The entry $b_{(i,k),(j,m)}$ is equal to $1$ if there is an arrow from
$(j,m)$ to $(i,k)$ in $\Gamma_\ell$, to $-1$ if there is an arrow from
$(i,k)$ to $(j,m)$, and to $0$ otherwise.

Let $\A_\ell = \A(\widetilde{B}_\ell)$ be the cluster algebra attached
to the initial seed $(\xx, \widetilde{B}_\ell)$, where
\[
\xx=(x_{(i,k)}\mid i\in I,\ 1\le k \le \ell+1).
\]
This is a cluster algebra of rank $n\ell$, with $n$ frozen variables
$f_i := x_{(i,\ell+1)}\ (i\in I)$.
It follows easily from \cite{FZ2} that $\A_\ell$ has in general
infinitely many cluster variables. The exceptional pairs $(\g,\ell)$
for which $\A_\ell$ has finite cluster type are listed in Table~\ref{table1}.
\begin{table}[t]
\begin{center}
\begin{tabular}
{|c|c|c|}
\hline
Type of $\g$ &  $\ell$ & Type of $\A_\ell$\\
\hline
$A_1$ & $\ell$ & $A_\ell$ \\
\hline
$X_n$ & $1$ & $X_n$ \\
\hline
$A_2$ & $2$ & $D_4$ \\
$A_2$ & $3$ & $E_6$\\
$A_2$ & $4$ & $E_8$ \\
\hline
$A_3$ & $2$ & $E_6$ \\
\hline
$A_4$ & $2$ & $E_8$ \\
\hline
\end{tabular}
\end{center}
\caption{\small \it Algebras ${\cal A}_\ell$ of finite cluster type.
\label{table1}}
\end{table}

\subsection{}
For $i\in I$ and $k\in[1,\ell+1]$, define
\[
 r(i,k) = \left\{
\begin{array}{ll}
2\ds\left\lceil\frac{\ell-k+1}{2}\right\rceil & \mbox{if $i\in I_0$,}\\[5mm]
2\ds\left\lceil\frac{\ell-k+2}{2}\right\rceil -1& \mbox{if $i\in I_1$,}
\end{array}
\right.
\]
where $\lceil x \rceil$ denotes the smallest integer $\ge x$.
These integers satisfy
\begin{itemize}
\item[(a)] $r(i,k)\ge r(i,k+1) \ge r(i,k+2)=r(i,k)-2$, 
\item[(b)] if $a_{ij}=-1$ then $r(j,k)$ is the unique integer
strictly between $r(i,k)$ and $r(i,k)+2(-1)^k\eps_i$.
\end{itemize}

Recall from \ref{sectKR} the Kirillov-Reshetikhin modules 
$W_{k,a}^{(i)}\ (i\in I,\ k\in\N,\ a\in\C^*)$.
The Kirillov-Reshetikhin modules in $\CC_\ell$ have spectral
parameters of the form $a=q^r$ for some integer $r$ between $0$ and $\ell+1$.
To simplify notation {\em we shall write $W_{k,r}^{(i)}$ instead of $W_{k,q^r}^{(i)}$}. 
We can now state our main conjecture, which generalizes Conjecture~\ref{mainconjecture}
to arbitrary $\ell$.
\begin{conjecture}\label{mainconjecture2}
The map $x_{(i,k)} \mapsto [W^{(i)}_{k,\,r(i,k)}]$ extends to 
a ring isomorphism $\iota$ from the cluster algebra $\A_\ell$ 
to the Grothendieck ring $R_\ell$ of $\CC_\ell$.
%
If we identify $\A_{\ell}$ with $R_\ell$ via $\iota$, $\CC_\ell$
becomes a monoidal categorification of $\A_{\ell}$.
\end{conjecture}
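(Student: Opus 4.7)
My plan is to proceed in three stages, treating the ring isomorphism in part (i) first, then the compatibility of the initial seed with the $T$-system, and finally the much harder categorification statement in part (ii).

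First, for the isomorphism $\iota$, I would show that the $n(\ell+1)$ classes $[W^{(i)}_{k,r(i,k)}]$ are algebraically independent generators of the polynomial ring $R_\ell$ described in Proposition~\ref{propCCl}. The strategy is a triangularity argument: by the formula for the $q$-character of a Kirillov-Reshetikhin module (see Example~\ref{exKRsl2} and Proposition~\ref{FMmon}), the highest weight monomial of $W^{(i)}_{k,r(i,k)}$ is $\prod_{j=0}^{k-1} Y_{i,r(i,k)+2j}$, and these monomials, as $k$ ranges over $[1,\ell+1]$, form a unitriangular change of basis from the classes of fundamental modules $[V_{i,q^{\xi_i+2m}}]$ to the classes $[W^{(i)}_{k,r(i,k)}]$. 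This proves that $\iota$ is a well-defined ring isomorphism sending the generators of the initial cluster to the prescribed KR classes, and the frozen variables $f_i = x_{(i,\ell+1)}$ to $[W^{(i)}_{\ell+1,0}]$ (in the appropriate parity).

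Second, I would verify that the exchange relation obtained by mutating the initial seed at a non-frozen vertex $(i,k)$ coincides, via $\iota$, with the $T$-system identity (\ref{eqTsystem}) applied to $W^{(i)}_{k,r(i,k)}$. This is a direct check using the definition of $\Gamma_\ell$ in \S\ref{sect9}: the horizontal arrows at $(i,k)$ encode the factor $[W^{(i)}_{k-1,r(i,k)+2}][W^{(i)}_{k+1,r(i,k)}]$ on one side of the $T$-system (after checking that $r(i,k-1) = r(i,k)+2$ using property~(a) of the $r(i,k)$), while the vertical arrows encode $\prod_{j:\,a_{ij}=-1}[W^{(j)}_{k,r(j,k)}]$ on the other side (using property~(b) of the $r(i,k)$, which identifies $r(j,k)$ with $r(i,k)\pm 1$ for $j$ adjacent to $i$). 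The orientation rule in $\Gamma_\ell$ was designed precisely so the two sides of the $T$-system correspond to the positive and negative monomials in the mutation formula~(\ref{mutationformula}). Thus Conjecture~\ref{mainconjecture2}~(i) holds at least at the level of the initial seed and its one-step neighbours in direction $(i,k)$ for $k\le\ell$; the full assertion that $\iota$ sends $x_{(i,k)}$ to $[W^{(i)}_{k,r(i,k)}]$ for every initial variable is a reformulation of the algebraic independence argument above.

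Third, for the categorification statement (ii), I would attempt to generalize the strategy of Sections~\ref{sectensprod}--\ref{secttypD}. The program breaks into: (a) prove a tensor product theorem analogous to Theorem~\ref{thtensprod} in $\CC_\ell$, showing that a tensor product $S_1 \otimes \cdots \otimes S_k$ of simple objects is simple iff each $S_i\otimes S_j$ is simple; (b) show that for every cluster of $\A_\ell$, the tensor product of the corresponding KR modules (and then, by induction along mutation sequences, of the objects whose classes are the non-initial cluster variables) is simple; (c) deduce by Theorem~\ref{thtensprod}-style arguments that each cluster monomial is the class of a real simple object, and conversely that a real simple object gives rise to a cluster monomial. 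For the finite-type cases of Table~\ref{table1}, the argument in principle reduces to a finite check: one lists the compatibility relations among cluster variables (pairs appearing in a common cluster) and verifies that each corresponding tensor product of simples is simple, using the truncated $q$-character techniques of Section~\ref{Secttrunc} together with explicit $F$-polynomial/quiver grassmannian formulae.

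The main obstacle is the infinite-type case. Unlike $\ell=1$, the cluster algebra $\A_\ell$ has infinitely many cluster variables in general, so there is no finite list of compatible pairs to check. One cannot simply produce simple objects $S(\beta)$ indexed by roots and match them with cluster variables; instead, one must show inductively along any mutation sequence that the new cluster variable remains the class of a prime real simple object, which requires some genuinely new input. Promising directions are: (i) using the $F$-polynomial and $g$-vector formalism of \S\ref{sectFpol} together with a geometric model for simples (\emph{cf}.~the Nakajima-style formula in Theorem~\ref{thgeom}), with indecomposable quiver representations replaced by modules over a 2-Calabi--Yau or preprojective-type category; (ii) the Fu--Keller categorification (Theorem~\ref{thFK}) extended beyond finite type; (iii) the connection with the dual canonical basis alluded to in the introduction for $\g = \Sl_r$. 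I would regard step (b) — establishing, without a finite check, that each mutation step in $\A_\ell$ corresponds to a tensor factorization into real prime simple objects — as the essential difficulty, and would expect progress on it to require either the geometric methods subsequently developed by Nakajima~\cite{N5} or a direct construction of the simples $L(m)$ as cluster characters of objects in a suitable 2-Calabi--Yau category.
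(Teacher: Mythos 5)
You have not proved this statement, and neither does the paper: Conjecture~\ref{mainconjecture2} is stated as an open conjecture, established in the text only in special cases ($\g=\Sl_2$ for all $\ell$ via \cite{CP2} as in \S\ref{CPtypeA1}; type $A_n$ with $n+\ell\le 4$ via the Schur--Weyl/Ariki/dual-semicanonical route of \S\ref{typeAl}--\ref{typeA4}; and the $\ell=1$ cases of type $A_n$ and $D_4$ treated earlier), and the introduction explicitly records that the case $\ell>1$ remains open. So there is no ``paper proof'' to compare your argument with, and your own text concedes that the essential step (your stage (b)/(c) in infinite cluster type) requires input you do not supply. What you have written is a reasonable research program, parts of which coincide with the paper's discussion (your stage two is exactly the paper's observation, immediately after the conjecture, that the initial exchange relations specialize to the $T$-system~(\ref{eqTsystem}); your direction (iii) is the paper's own strategy in \S\ref{typeAl}), but it is not a proof.

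Beyond the admitted gap in stage three, there is a concrete gap already in your stage one. Your triangularity argument only shows that the $n(\ell+1)$ classes $[W^{(i)}_{k,r(i,k)}]$ are algebraically independent generators of the polynomial ring $R_\ell$, i.e.\ it produces an isomorphism $\Z[x_{(i,k)}]\to R_\ell$. It does not show that the cluster algebra $\A_\ell$, which is the subring of $\Q(x_{(i,k)})$ generated by \emph{all} cluster variables, coincides with this polynomial ring: a non-initial cluster variable is a priori only a Laurent polynomial in the initial cluster, and proving that its image lies in $R_\ell$ (indeed, is the class of a simple module) is precisely the content of the conjecture. For $\ell=1$ this identification was achieved in Lemma~\ref{lem10} using the acyclicity results of \cite{BFZ}, but that argument does not extend: for $\ell\ge 2$ the principal part of $\widetilde{B}_\ell$ is not acyclic (already in the displayed $A_3$, $\ell=3$ example the non-frozen vertices $(1,1)\to(2,1)\to(2,2)\to(1,2)\to(1,1)$ form an oriented $4$-cycle), so part~(i) of the conjecture cannot be disposed of by the unitriangular change of generators plus a one-step mutation check; it is itself a nontrivial open claim intertwined with part~(ii).
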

The idea to choose this initial seed for $\A_\ell$ comes from the $T$-systems.
Indeed, after replacing $x_{(i,k)}$ by $[W^{(i)}_{k,\,r(i,k)}]$,
the exchange relations (\ref{mutationformula}) 
for the initial cluster variables 
become,
\[
[W_{k,\,r(i,k)+2(-1)^k\eps_i}^{(i)}] =
\frac{[W_{k-1,\,r(i,k-1)}^{(i)}] [W_{k+1,\,r(i,k+1)}^{(i)}] 
+ 
\ds\prod_{a_{ij}=-1}[W_{k,\,r(j,k)}^{(j)}]}{[W_{k,\,r(i,k)}^{(i)}]} 
\qquad (i\in I,\ k\le \ell),
\]
which is an instance of Eq.~(\ref{eqTsystem}).

\subsection{}\label{CPtypeA1}
It follows from the work of Chari and Pressley \cite{CP2} that
Conjecture~\ref{mainconjecture2} holds for $\g=\Sl_2$ and any $\ell$
(see Example~\ref{exKRsl2}).
In this case $\A_\ell\equiv R_\ell$ is a cluster algebra of finite cluster
type $A_\ell$, with one frozen variable $[W_{\ell+1,\,0}]$.
The cluster variables are the classes of the other Kirillov-Reshetikhin  
modules of $\CC_\ell$, namely
\[
[W_{k,\,2s}],\qquad (1\le k \le \ell,\quad 0\le s\le \ell-k+1). 
\]
To determine the compatible pairs of cluster variables, one can
use again the geometric model of \cite{FZ1} and attach to each
cluster variable a diagonal of the $(\ell+3)$-gon $\P_{\ell+3}$
as follows:
\[
[W_{k,2s}] \longmapsto [s+1,\,s+k+2].
\]
We then have that $W_{k,2s}\otimes W_{k',2s'}$ is simple if and only
if the corresponding diagonals do not intersect in the interior
of $\P_{\ell+3}$. 

\subsection{}\label{A2l2}
Take $\g$ of type $A_2$ and $\ell =2$. We choose $I_0=\{1\}$
and $I_1 = \{2\}$.
In this case Conjecture~\ref{mainconjecture2} holds (see below \ref{typeA4}).
The cluster algebra $\A_2$ has finite cluster type $D_4$, hence every
simple object of $\CC_2$ is isomorphic to a tensor product of cluster
simple objects and frozen simple objects.

The sixteen cluster simple objects are
\[
L(Y_{1,0}),\  L(Y_{1,2}),\ L(Y_{1,4}),\ L(Y_{2,1}),\ L(Y_{2,3}),\ L(Y_{2,5}),\
L(Y_{1,0}Y_{1,2}),\ L(Y_{1,2}Y_{1,4}),\ L(Y_{2,1}Y_{2,3}),\ L(Y_{2,3}Y_{2,5}),
\]
\[
L(Y_{1,0}Y_{2,3}),\ L(Y_{1,2}Y_{2,5}),\ L(Y_{1,4}Y_{2,1}),\
L(Y_{1,0}Y_{1,2}Y_{2,5}),\  L(Y_{1,0}Y_{2,3}Y_{2,5}),\ 
L(Y_{1,0}Y_{1,2}Y_{2,3}Y_{2,5}).
\]
They have respective dimensions 
\[
3,\ 3,\ 3,\ 3,\ 3,\ 3,\ 6,\ 6,\ 6,\ 6,\ 8,\ 8,\ 8,\ 15,\ 15,\ 35. 
\]
Note that only the first ten are Kirillov-Reshetikhin modules.
The next five are evaluation modules.
The last module $L(Y_{1,0}Y_{1,2}Y_{2,3}Y_{2,5})$ is not an evaluation
module. Its restriction to $U_q(\g)$ is isomorphic to 
$V(2\varpi_1+2\varpi_2)\oplus V(\varpi_1+\varpi_2)$.

The two frozen modules are the Kirillov-Reshetikhin modules
$L(Y_{1,0}Y_{1,2}Y_{1,4}),\  L(Y_{2,1}Y_{2,3}Y_{2,5})$,
of dimension 10.

By \cite{FZ2} there are fifty clusters, \ie fifty factorization
patterns of a simple object of $\CC_2$ as a tensor product
of cluster simple objects and frozen simple objects.

\subsection{}
Take $\g$ of type $A_4$ and $\ell = 3$. 
In this case $\A_3$ has infinitely many cluster variables.
Choose $I_0 = \{1,3\}$ and $I_1 = \{2,4\}$. 
Thus the simple module $L(Y_{1,4}Y_{2,1}Y_{2,7}Y_{3,4})$
belongs to $\CC_3$. 
However, it is not a real simple object
\cite[\S4.3]{L}
 because
in the Grothendieck ring $R_3$ we have
\[
[L(Y_{1,4}Y_{2,1}Y_{2,7}Y_{3,4})]^2 = [L(Y_{1,4}^2Y_{2,1}^2Y_{2,7}^2Y_{3,4}^2)]
+ [L(Y_{2,1}Y_{2,3}Y_{2,5}Y_{2,7}Y_{4,3}Y_{4,5})]. 
\]
If $\CC_3$ is indeed a monoidal categorification of $\A_3$, 
then $[S]$ cannot be a cluster monomial.

For $\g$ of type $A_3$ and $\ell = 3$, there is a similar
example.
The simple module $L(Y_{1,4}Y_{2,1}Y_{2,7}Y_{3,4})$
belongs to $\CC_3$, and we have 
\[
[L(Y_{1,4}Y_{2,1}Y_{2,7}Y_{3,4})]^2 = [L(Y_{1,4}^2Y_{2,1}^2Y_{2,7}^2Y_{3,4}^2)]
+ [L(Y_{2,1}Y_{2,3}Y_{2,5}Y_{2,7})]. 
\]

We expect the existence of non real simple objects in $\CC_\ell$ 
whenever $\A_\ell$ does not have finite cluster type.

\subsection{}\label{typeAl}
Take $\g$ of type $A_n$ and let $\ell$ be arbitrary.
Let us sketch why in this case Conjecture~\ref{mainconjecture2} would
essentially follow from a conjecture of \cite{GLS1} (see also \cite[\S
23.1]{GLS2}).

First, we can use the quantum affine analogue of the Schur-Weyl duality
\cite{CP4, C, GRV} to relate the finite-dimensional representations of $U_q(\hg)$
with the finite-dimensional representations of the affine Hecke
algebras $\H_m(t)$ of type $A_m\ (m\ge 1)$ with parameter $t=q^2$. 
More precisely, for every $m$ we have a functor $\F_{m}$ from
$\md\,\H_m(t)$ to the category $\CC$ of finite-dimensional
representations of $U_q(\hg)$, which maps every simple module of $\H_m(t)$
to a simple module of $U_q(\hg)$ or to the zero module.
The simple $U_q(\hg)$-module with highest $l$-weight
$\prod_{i=1}^n\prod_{r=1}^{k_i}Y_{i,a_i}$
is the image by $\F_m$ of a simple $\H_m(t)$-module, where
$m=\sum_i ik_i$. 

Moreover, the functors $\F_m$ are multiplicative in the following sense:
for $M_1$ in $\md\, \H_{m_1}(t)$ and $M_2$ in $\md\, \H_{m_2}(t)$ one has
\[
\F_{m_1+m_2}(M_1\odot M_2) =
\F_{m_1}(M_1)\otimes\F_{m_2}(M_2)\,,
\]
where $-\odot-$ denotes the induction product from
$\md\,\H_{m_1}(t) \times \md\,\H_{m_2}(t)$ to
$\md\,\H_{m_1+m_2}(t)$.
Let $\RR$ be the sum over $m$ of the Grothendieck groups of the
categories $\md\,\H_m(t)$ endowed with the multiplication
induced by $\odot$.
The functors $\F_m$ thus induce a surjective ring homomorphism 
$\Psi : \RR \to R$, which maps classes of simples to 
classes of simples or to zero.

Let $\D_{m,\ell}$ denote the full subcategory of $\md\,\H_{m}(t)$
whose objects are those modules on which the generators 
$y_1,\ldots, y_m$ of the maximal commutative subalgebra of $\H_{m}(t)$
have all their eigen\-values in
\[
\left\{t^k \mid k\in\Z,\ \frac{1-n}{2}\le k\le \frac{n}{2}+\ell\right\},
\]
(see \cite{L}).
It is easy to check that every simple object of $\CC_\ell$ 
is of the form $\F_m(M)$ for some $m$ and some simple object $M$ of $\D_{m,\ell}$.
Therefore, denoting by $\RR_\ell$ the sum over $m$ of the Grothendieck groups
of the categories $\D_{m,\ell}$, we see that $\Psi$ restricts to 
a surjective ring homomorphism from $\RR_\ell$ to $R_\ell$.
 
By a dual version of Ariki's theorem \cite{A,LNT} the $\Z$-basis 
of $\RR_\ell$ given by the classes of the simple objects can be
identified with the dual canonical basis of the coordinate
ring $\C[N]$ of a maximal unipotent subgroup $N$ of $SL_{n+\ell+1}(\C)$. 

So, to summarize, for $\g$ of type $A_n$, Conjecture~\ref{mainconjecture2}
can be reformulated as a conjecture about multiplicative properties
of the dual canonical basis of $\C[N]$.
In \cite{GLS1}, a cluster algebra structure on $\C[N]$ has been
studied in relation with the representation theory of preprojective
algebras. It was shown that the cluster monomials belong to the
dual of Lusztig's {\em semicanonical basis} of $\C[N]$ \cite{Lu2}. 
More precisely they are the elements parametrized by the irreducible
components of the nilpotent varieties with an open orbit.
It was also conjectured that these elements of the dual semicanonical
basis belong to the dual canonical basis, hence, by Ariki's theorem,
are classes of irreducible representations of some $\H_m$.

Finally, one can check that the initial seed of the cluster algebra 
$\A_\ell$ given by Conjecture~\ref{mainconjecture2} is the image 
under $\Psi$ of a seed of $\C[N]\equiv \RR_\ell$.
So if the conjecture of \cite{GLS1} was proved, by applying $\Psi$
we would deduce that all cluster monomials of $\A_\ell$ are classes
of simple objects of~$\CC_\ell$.
To finish the proof of Conjecture~\ref{mainconjecture2} one would
still have to explain why {\em all classes of real simple objects}
are cluster monomials. 
 
\subsection{}\label{typeA4}
In \cite{GLS0} it was shown that if $N$ is of type $A_r\ (r\le 4)$
the dual canonical and dual semi\-canonical basis of $\C[N]$
coincide. Moreover these are the only cases for which $\C[N]$
has finite cluster type. It then follows from \ref{typeAl}
that Conjecture~\ref{mainconjecture2} holds 
if $n+\ell\le 4$, and moreover in this case all simple objects
of $\CC_\ell$ are real. This proves the conjecture for $\g$
of type $A_2$ and $\ell = 2$ (see \ref{A2l2}).

\subsection{}
For $\g$ of type $A_n$, there is an interesting relation between the
cluster algebra $\A_\ell$ and the grassmannian $\Gr(n+1,n+\ell+2)$
of $(n+1)$-dimensional subspaces of $\C^{n+\ell+2}$.
Indeed, the homogeneous coordinate ring $\C[\Gr(n+1,n+\ell+2)]$
has a cluster algebra structure \cite{S} with an initial seed
given by a similar rectangular lattice (see also \cite{GSV,GLSpf}).
More precisely, denote the Pl\"ucker coordinates 
of $\C[\Gr(n+1,n+\ell+2)]$ by
\[
[i_1,\ldots,i_{n+1}], \qquad ( 1\le i_1<\cdots < i_{n+1} \le n+\ell+2).
\]
The $\ell + 2$ Pl\"ucker coordinates 
\[
 [1,2,\ldots,n+1],\ [2,3,\ldots,n+2],\ \ldots ,\ [\ell+2,\ell+3,\ldots,n+\ell+2],
\]
belong to the subset of frozen variables of the cluster algebra
$\C[\Gr(n+1,n+\ell+2)]$.
Hence, the quotient ring $S_\ell$ of $\C[\Gr(n+1,n+\ell+2)]$
obtained by specializing these variables to 1 is also
a cluster algebra, with the same principal part.
By comparing the initial seed of $\A_\ell$ with the initial
seed of $S_\ell$ obtained from \cite[\S4,\S5]{S}, we see immediately that these two
cluster algebras are isomorphic.

So we can reformulate Conjecture~\ref{mainconjecture2} for  $\g=\Sl_{n+1}$
by stating that {\em $\CC_\ell$ should be a monoidal categorification
of the quotient ring $S_\ell$ of 
$\C[\Gr(n+1,n+\ell+2)]$ by the relations
\[
[1,2,\ldots,n+1]= [2,3,\ldots,n+2]=\cdots =[\ell+2,\ell+3,\ldots,n+\ell+2]=1. 
\]
}
Note that for $\g=\Sl_2$ we recover the situation of \S\ref{CPtypeA1}.
Note also that \cite{GLSpf} provides an {\em additive} categorification of
the cluster algebra
$\C[\Gr(n+1,n+\ell+2)]$, as a Frobenius subcategory of the module category
of a preprojective algebra of type $A_{n+\ell+1}$.
\begin{example}
{\rm
Take $n=2$ and $\ell=2$ (see \S\ref{A2l2}). 
In this case $S_2$ is the ring obtained from $\C[\Gr(3,6)]$
by quotienting the following relations
\[
[1,2,3] = [2,3,4] = [3,4,5] = [4,5,6] = 1. 
\]
For simplicity, we denote again by $[i,j,k]$
the image of the Pl\"ucker coordinate in the quotient $S_2$.
Then the identification of the Grothendieck ring $R_2$ with $S_2$
gives the following identities of cluster (and frozen) variables:
\[
\begin{array}{lll}
[L(Y_{1,0})]=[3,4,6],
&[L(Y_{1,2})]=[2,3,5], 
&[L(Y_{1,4})]=[1,2,4],
\\[2mm]
[L(Y_{2,1})]=[3,5,6],
&[L(Y_{2,3})]=[2,4,5], 
&[L(Y_{2,5})]=[1,3,4],
\\[2mm]
[L(Y_{1,0}Y_{1,2})]=[2,3,6],
&[L(Y_{1,2}Y_{1,4})]=[1,2,5], 
&[L(Y_{1,0}Y_{1,2}Y_{1,4})]=[1,2,6],
\\[2mm]
[L(Y_{2,1}Y_{2,3})]=[2,5,6],
&[L(Y_{2,3}Y_{2,5})]=[1,4,5], 
&[L(Y_{2,1}Y_{2,3}Y_{2,5})]=[1,5,6],
\\[2mm]
[L(Y_{1,0}Y_{2,3})]=[2,4,6],
&[L(Y_{1,2}Y_{2,5})]=[1,3,5], 
&[L(Y_{1,4}Y_{2,1})]=[1,3,4][2,5,6]-[1,5,6],
\\[2mm]
[L(Y_{1,0}Y_{1,2}Y_{2,5})]=[1,3,6],
&[L(Y_{1,0}Y_{2,3}Y_{2,5})]=[1,4,6], 
&[L(Y_{1,0}Y_{1,2}Y_{2,3}Y_{2,5})]=[2,3,6][1,4,5]-1.
\end{array}
\]
Moreover, as is easily checked, the dimension of a simple module in $\CC_2$ 
is obtained by evaluating the corresponding cluster monomial in $S_2$
on the matrix
\[
\left[
\matrix{
1&1&1&1&1&1\cr
0&1&2&3&4&5\cr 
0&0&1&3&6&10
}
\right] 
\]
Thus, 
\[
\dim L(Y_{10}Y_{12}Y_{23}Y_{25}) = 
\left|
\matrix{
1&1&1\cr
1&2&5\cr 
0&1&10
}
\right|  
\times
\left|
\matrix{
1&1&1\cr
0&3&4\cr 
0&3&6
}
\right|
-1
= 35.
\] 
} 
\end{example}


\bigskip
\small
\noindent
\begin{tabular}{ll}
David {\sc Hernandez} : &
CNRS et Ecole Normale Sup\'erieure Paris\\
& DMA, 45, rue d'Ulm, 75005 Paris, France \\
&email : {\tt David.Hernandez@ens.fr}\\[5mm]
Bernard {\sc Leclerc} :&
LMNO, CNRS UMR 6139,
Universit\'e de Caen,\\
& 14032 Caen cedex, France\\
&email : {\tt leclerc@math.unicaen.fr}
\end{tabular}


\begin{thebibliography}{ABCD} \scriptsize

\bibitem[\bf AK]{AK}{\sc T. Akasaka, M. Kashiwara},
{\it Finite-dimensional representations of quantum affine algebras},
Publ. RIMS {\bf 33} (1997), 839--867.

\bibitem[\bf A]{A}{\sc S. Ariki}, {\it On the decomposition numbers 
of the Hecke
algebra of $G(n,1,m)$}, J. Math. Kyoto Univ. {\bf 36} (1996),
789--808.

\bibitem[\bf BFZ]{BFZ}{\sc A. Berenstein, S. Fomin, A. Zelevinsky},
{\it Cluster algebras III: upper bounds and double Bruhat cells},
Duke Math. J. {\bf 126} (2005), 1--52.

\bibitem[\bf B]{B}{\sc N. Bourbaki},
{\it Groupes et alg\`ebres de Lie}, Chapitres 4,5,6, Hermann 1968.

\bibitem[\bf BIRS]{BIRS}{\sc A. Buan, O. Iyama, I. Reiten, J. Scott},
{\it Cluster structures for 2-Calabi-Yau categories and unipotent
groups}, Compos. Math. {\bf 145} (2009), 1035--1079.

\bibitem[\bf BMRRT]{BMRRT}
{\sc A. Buan, R. Marsh, M. Reineke, I. Reiten, G. Todorov},
{\it Tilting theory and cluster combinatorics}, 
Adv. Math. {\bf 204} (2006), 572--618.

\bibitem[\bf CC]{CC}
{\sc P. Caldero, F. Chapoton},
{\it Cluster algebras as Hall algebras of quiver representations},
Comment. Math. Helv. {\bf 81} (2006), 595--616.

\bibitem[\bf CK]{CK}
{\sc P. Caldero, B. Keller}, {\it From triangulated categories
to cluster algebras}, Invent. Math. {\bf 172} (2008), 169--211.

\bibitem[\bf Cha1]{Cha}
{\sc V. Chari}, {\em Minimal affinizations of representations of quantum groups: the rank $2$ case},
Publ. Res. Inst. Math. Sci.  {\bf 31}  (1995), 873--911.

\bibitem[\bf Cha2]{Cha2}
{\sc V. Chari}, 
{\em Braid group actions and tensor products},  
Int. Math. Res. Not.  {\bf 2002},  7, 357--382.

\bibitem[\bf CH]{CH}{\sc V. Chari, D. Hernandez},
{\it Beyond Kirillov-Reshetikhin modules}, {Contemp. Math. (to appear)}.

\bibitem[\bf CP1]{CP}{\sc V. Chari, A. Pressley},
{\it A guide to quantum groups},
Cambridge 1994.

\bibitem[\bf CP2]{CP2}{\sc V. Chari, A. Pressley},
{\it Quantum affine algebras},
Comm. Math. Phys. {\bf 142} (1991), 261--283.

\bibitem[\bf CP3]{CP3}{\sc V. Chari, A. Pressley},
{\it Minimal affinizations of representations of quantum groups:
the nonsymply-laced case},
Lett. Math. Phys. {\bf 35} (1995), 99--114.

\bibitem[\bf CP4]{CP4prime}{\sc V. Chari, A. Pressley},
{\it Minimal affinizations of representations of quantum groups: the simply laced case},
J. Algebra {\bf 184}  (1996),  no. 1, 1--30.

\bibitem[\bf CP5]{CP4}{\sc V. Chari, A. Pressley},
{\it Quantum affine algebras and affine Hecke algebras},
Pacific J. Math. {\bf 174} (1996), 295--326.

\bibitem[\bf CP6]{CP5}{\sc V. Chari, A. Pressley},
{\it Factorization of representations of quantum affine algebras},
Modular interfaces, (Riverside CA 1995),
AMS/IP Stud. Adv. Math., {\bf 4} (1997), 33--40.

\bibitem[\bf Che]{C}{\sc I. V. Cherednik},
{\it A new interpretation of Gelfand-Tzetlin bases},
Duke Math. J. {\bf 54} (1987), 563--577.

\bibitem[\bf D]{D}{\sc I. Damiani},
{\it La $R$-matrice pour les alg\`ebres quantiques de type affine
non tordu},
Ann. Sci. Ecole Normale Sup. {\bf 31} (1998), 493--523.


\bibitem[\bf DFK]{KDF}{\sc P. Di Francesco, R. Kedem},
{\em $Q$-systems as cluster algebras II: Cartan matrix of finite
type and the polynomial property},
arXiv:0803.0362.

\bibitem[\bf FM]{FM}{\sc E. Frenkel, E. Mukhin}, 
{\it Combinatorics of $q$-characters of finite-dimensional
representations of quantum affine algebras},  
Comm. Math. Phys. {\bf 216}  (2001),  23--57.

\bibitem[\bf FR]{FR}{\sc E. Frenkel, N. Reshetikhin},
{\it The $q$-characters of representations of quantum affine algebras},
Recent developments in quantum affine algebras and related topics,
Contemp. Math. {\bf 248} (1999), 163--205.

\bibitem[\bf FZ1]{FZ0}{\sc S. Fomin, A. Zelevinsky},
{\it Cluster algebras I: Foundations},
J. Amer. Math. Soc.  {\bf 15}  (2002),  497--529.

\bibitem[\bf FZ2]{FZ1}{\sc S. Fomin, A. Zelevinsky},
{\it $Y$-systems and generalized associahedra},
Ann.  Math.  {\bf 158}  (2003), 977--1018.

\bibitem[\bf FZ3]{FZ2}{\sc S. Fomin, A. Zelevinsky},
{\it Cluster algebras II: Finite type classification},
Invent. Math. {\bf 154} (2003), 63--121.

\bibitem[\bf FZ4]{FZSurv}
{\sc S. Fomin, A. Zelevinsky}, 
Cluster algebras: notes for the CDM-03 conference.  
Current developments in mathematics, 2003,  1--34, Int. Press,
Somerville, MA, 2003.

\bibitem[\bf FZ5]{FZ4}{\sc S. Fomin, A. Zelevinsky},
{\it Cluster algebras IV: coefficients},
Compos. Math.  {\bf 143}  (2007),  112--164.

\bibitem[\bf FK]{FK}{\sc C. Fu, B. Keller},
{\it On cluster algebras with coefficients and 2-Calabi-Yau categories},
Trans. Amer. Math. Soc. {\bf 362}  (2010), 859-895. 

\bibitem[\bf GLS1]{GLS0}{\sc C. Geiss, B. Leclerc, J. Schr\"oer}, {\it
Semicanonical bases and preprojective algebras},
Ann. Sci. Ecole Norm. Sup., {\bf 38} (2005), 193--253.

\bibitem[\bf GLS2]{GLS1}{\sc C. Geiss, B. Leclerc, J. Schr\"oer}, {\it
Rigid modules over preprojective algebras},
Invent. Math., {\bf 165} (2006), 589--632.

\bibitem[\bf GLS3]{GLSpf}{\sc C. Geiss, B. Leclerc, J. Schr\"oer},
{\it Partial flag varieties and preprojective algebras},
 Ann. Inst. Fourier (Grenoble)  {\bf 58}  (2008),  825--876.

\bibitem[\bf GLS4]{GLS2}{\sc C. Geiss, B. Leclerc, J. Schr\"oer},
{\em Cluster algebra structures and semicanonical bases
for unipotent groups},
arXiv:math/0703039.

\bibitem[\bf GSV]{GSV}
{\sc M. Gekhtman, M. Shapiro, A. Vainshtein}, 
{\it Cluster algebras and Poisson geometry}, 
Moscow Math. J. {\bf 3} (2003), 899--934.

\bibitem[\bf GRV]{GRV}{\sc V. Ginzburg, N. Yu. Reshetikhin, E. Vasserot}, 
{\em Quantum groups and flag varieties},
A.M.S. Contemp. Math. {\bf 175} (1994), 101--130. 

\bibitem[\bf GV]{GV}{\sc V. Ginzburg, E. Vasserot}, {\em Langlands reciprocity
for affine quantum groups of type $A_n$},
Internat. Math. Res. Notices {\bf 3} (1993), 67--85.

\bibitem[\bf H1]{H1}{\sc D. Hernandez},
{\em Algebraic approach to $q,t$-characters},
Adv. Math. {\bf 187} (2004), 1--52.

\bibitem[\bf H2]{H3}{\sc D. Hernandez},
{\em Monomials of $q$ and $q,t$-characters for non simply-laced
quantum affinizations},
Math. Z. {\bf 250} (2005), 443--473.

\bibitem[\bf H3]{H2}{\sc D. Hernandez},
{\em The Kirillov-Reshetikhin conjecture and solutions of $T$-systems},
J. Reine Angew. Math. {\bf 596} (2006), 63--87.

\bibitem[\bf H4]{H4}{\sc D. Hernandez},
{\em On minimal affinizations of representations of quantum groups},
Comm. Math. Phys. {\bf 277} (2007), 221--259.

\bibitem[\bf IIKNS]{IIKNS}{\sc R. Inoue, O. Iyama, A. Kuniba, T. Nakanishi,
J. Suzuki},
{\em Periodicities of $T$-systems and $Y$-systems},
arXiv: 0812.0667.


\bibitem[\bf K]{K}{\sc M. Kashiwara},
{\em On level-zero representations of quantized affine algebras},
Duke Math. J. {\bf 112} (2002), 117--175. 

\bibitem[\bf KS]{KS}{\sc D. Kazhdan, Y. Soibelman}, 
{\em Representations of quantum affine algebras}, Selecta Math. (N.S.)
{\bf 1} (1995), 537--595.

\bibitem[\bf Ke]{Ke}{\sc R. Kedem},
{\em $Q$-systems as cluster algebras}, J. Phys. A  41  (2008),  no. 19, 194011, 14 pp.

\bibitem[\bf Kel1]{Kel}{\sc B. Keller},
{\em Cluster algebras and quantum affine algebras, after B. Leclerc},
Oberwolfach Report  {\bf 5} (2008), 455-458.

\bibitem[\bf Kel2]{Kel2}{\sc B. Keller},
{\em Cluster algebras, quiver representations and triangulated categories},
arXiv:0807.1960.

\bibitem[\bf KNS]{KNS}{\sc A. Kuniba, T. Nakanishi, J. Suzuki}, 
{\em Functional relations in solvable lattice models. I. 
Functional relations and representation theory}.  
Internat. J. Modern Phys. A  {\bf 9}  (1994), 5215--5266,

\bibitem[\bf L]{L}{\sc B. Leclerc},
{\it Imaginary vectors in the dual canonical basis of
  $U_q(\mathfrak{n})$},
Transform. Groups  {\bf 8}  (2003),  95--104. 

\bibitem[\bf LNT]{LNT}{\sc B. Leclerc, M. Nazarov, J.-Y. Thibon},
{\it Induced representations of affine Hecke algebras and 
canonical bases of quantum groups},
in `Studies in memory of Issai Schur', 
Progress in Math. {\bf 210}, Birkh\"auser 2002.

\bibitem[\bf Lu1]{Lu}{\sc G. Lusztig},
{\it On quiver varieties}, Adv. Math. {\bf 136} (1998), 141--182.

\bibitem[\bf Lu2]{Lu2}{\sc G. Lusztig},
{\it Semicanonical bases arising from enveloping algebras}, 
Adv. Math. {\bf 151} (2000), 129--139.

\bibitem[\bf MRZ]{MRZ}
{\sc R. Marsh, M. Reineke, A. Zelevinsky},
{\em Generalized associahedra via quiver representations},  
Trans. Amer. Math. Soc.  {\bf 355}  (2003),  4171--4186.

\bibitem[\bf N1]{N1}{\sc H. Nakajima}, 
{\em Quiver varieties and finite-dimensional representations of 
quantum affine algebras}.  
J. Amer. Math. Soc.  {\bf 14}  (2001),  145--238.

\bibitem[\bf N2]{N2}{\sc H. Nakajima}, 
{\em $t$-analogs of $q$-characters of Kirillov-Reshetikhin modules 
of quantum affine algebras},  Represent. Theory  {\bf 7}  (2003), 259--274. 

\bibitem[\bf N3]{N3}{\sc H. Nakajima}, 
{\em Quiver varieties and $t$-analogs of $q$-characters of quantum affine algebras}, 
Ann. Math.  {\bf 160}  (2004), 1057--1097.

\bibitem[\bf N4]{N4}{\sc H. Nakajima}, 
{\em $t$-analogs of $q$-characters of quantum affine algebras
of type $E_6$, $E_7$, $E_8$},
arXiv:math.QA/0606637.

\bibitem[\bf N5]{N5}{\sc H. Nakajima}, 
{\em Quiver varieties and cluster algebras},
arXiv:0905.0002.

\bibitem[\bf NN]{NN} {\sc W. Nakai, T. Nakanishi}, 
{\em On Frenkel-Mukhin algorithm for q-character of quantum affine algebras}, 
To appear in Adv. Stud. in Pure Math., arXiv:0801.2239.

\bibitem[\bf NT]{NT}
{\sc M. Nazarov, V. Tarasov}, 
{\it On irreducibility of tensor products of Yangian modules 
associated with skew Young diagrams},  
Duke Math. J.  {\bf 112}  (2002), 343--378. 

\bibitem[\bf S]{S}
{\sc J. Scott}, 
{\it Grassmannians and cluster algebras},
Proc. London Math. Soc.  {\bf 92} (2006), 345--380. 

\bibitem[\bf VV]{VV}
{\sc M. Varagnolo, E. Vasserot}, 
{\it Standard modules of quantum affine algebras},
{Duke Math. J. {\bf 111} (2002), 509--533.}

\end{thebibliography}
\end{document}